\newif\ifpreprint
\ifdefined\buildjournal
  \preprintfalse
\else
  \preprinttrue
\fi

\ifpreprint
  \documentclass[11pt,reqno]{amsart}
\else
  \documentclass[aap]{imsart}
\fi

\RequirePackage{amsthm,amsmath,amsfonts,amssymb}
\RequirePackage[numbers]{natbib}
\RequirePackage[colorlinks,citecolor=red,urlcolor=black,hypertexnames=false]{hyperref}
\RequirePackage{graphicx}
\usepackage{mathrsfs}
\usepackage[textsize=tiny]{todonotes} 
\usepackage{algorithm}
\usepackage{comment}
\usepackage{algpseudocode} 
\usepackage{algorithmicx}     
\usepackage{booktabs, mathtools}
\usepackage{tikz}
\usetikzlibrary{arrows.meta, positioning, calc, fit, backgrounds}
\usepackage{csquotes}
\usepackage{thmtools}
\usepackage{afterpage}

\MakeOuterQuote{"}
\ifpreprint
\usepackage[T1]{fontenc}
\ifpreprint
\usepackage[left=1.2in, right=1.2in]{geometry}
\usepackage{lmodern}
\usepackage{fix-cm}
\fi
\fi

\ifpreprint
\ifdefined\startlocaldefs\else\let\startlocaldefs\relax\fi
\ifdefined\endlocaldefs\else\let\endlocaldefs\relax\fi

\theoremstyle{plain}
\newtheorem{theorem}{Theorem}[section]
\newtheorem{proposition}[theorem]{Proposition}
\newtheorem{lemma}[theorem]{Lemma}
\newtheorem{corollary}[theorem]{Corollary}

\theoremstyle{definition}
\newtheorem{assumption}[theorem]{Assumption}
\newtheorem{remark}[theorem]{Remark}
\newtheorem{definition}[theorem]{Definition}
\newtheorem{example}[theorem]{Example}
\else
\theoremstyle{definition}
\newtheorem{proposition}{Proposition}
\newtheorem{lemma}{Lemma}
\newtheorem{corollary}{Corollary}
\newtheorem{theorem}{Theorem}
\newtheorem{assumption}{Assumption}
\newtheorem{remark}{Remark}

\fi
\newtheorem*{proposition*}{Proposition} 
\newtheorem*{lemma*}{Lemma} 
\newtheorem*{corollary*}{Corollary} 
\newtheorem*{theorem*}{Theorem}

\newcommand{\E}{\mathbb{E}}   
\renewcommand{\P}{\mathbb{P}}
\newcommand{\Cov}{\mathrm{Cov}}
\newcommand{\Law}{\mathrm{Law}}
\newcommand{\KL}{D_{\mathrm{KL}}}
\newcommand{\Pp}{\mathcal{P}} 
\newcommand{\R}{\mathbb{R}}   
\newcommand{\N}{\mathbb{N}}   
\newcommand{\Tr}{\text{Tr}}   
\newcommand{\Trcov}{\Tr\Cov}   
\newcommand{\dd}{\mathrm{d}}
\newcommand{\opnorm}[1]{\left\|#1\right\|_{2}}
\newcommand{\pr}{\operatorname{pr}}

\newcommand{\map}{T}

\DeclareMathOperator*{\argmin}{arg\,min}

\startlocaldefs

\endlocaldefs

\ifpreprint
\newcommand{\papertitle}{{\Large Quantitative Wasserstein Propagation of Chaos for\\[3pt] Transport Ensemble Filters}}
\else
\newcommand{\papertitle}{Quantitative Wasserstein Propagation of Chaos for Transport Ensemble Filters}
\fi
\newcommand{\papershorttitle}{Propagation of Chaos for Transport Ensemble Filters}
\newcommand{\papershortauthors}{Jorgensen, Baptista, Hoffmann, and Marzouk}
\newcommand{\paperkeywords}{nonlinear filtering, ensemble Kalman filter, transport ensemble filters, mean-field, propagation of chaos, transport maps, Wasserstein distance, high-dimensional state estimation, interacting particle systems}

\ifpreprint
\title[\papershorttitle]{\papertitle}
\author[Jorgensen]{Frederic J.\ N.\ Jorgensen}
\address{Department of Mathematics, Massachusetts Institute of Technology, MA, USA}
\email{fjorgen@mit.edu}
\author[Baptista]{Ricardo Baptista}
\address{Department of Statistical Sciences, University of Toronto, Canada}
\email{r.baptista@utoronto.ca}
\author[Hoffmann]{Franca Hoffmann}
\address{Department of Computing and Mathematical Sciences, California Institute of Technology, CA, USA}
\email{franca.hoffmann@caltech.edu}
\author[Marzouk]{Youssef Marzouk}
\address{Laboratory for Information and Decision Systems, Massachusetts Institute of Technology, MA, USA}
\email{ymarz@mit.edu}
\subjclass[2010]{Primary 60G35, 60G25, 65C35; secondary 93E11}
\keywords{\paperkeywords}
\fi

\begin{document}

\ifpreprint
\else
\begin{frontmatter}
\title{\papertitle}
\runauthor{\papershortauthors}
\runtitle{\papershorttitle}

\begin{aug}

\author[A]{\fnms{Frederic J.\ N.}~\snm{Jorgensen}\ead[label=e1]{fjorgen@mit.edu}}
\author[D]{\fnms{Ricardo}~\snm{Baptista}\ead[label=e2]{r.baptista@utoronto.ca}}
\author[F]{\fnms{Franca}~\snm{Hoffmann}\ead[label=e3]{franca.hoffmann@caltech.edu}} 
\author[E]{\fnms{Youssef}~\snm{Marzouk}\ead[label=e4]{ymarz@mit.edu}}

\address[A]{Department of Mathematics, Massachusetts Institute of Technology, MA, USA\printead[presep={,\ }]{e1}}
\address[D]{Department of Statistical Sciences, University of Toronto, Canada\printead[presep={,\ }]{e2}}
\address[F]{Department of Computing and Mathematical Sciences, California Institute of Technology, CA, USA\printead[presep={,\ }]{e3}}
\address[E]{Laboratory for Information and Decision Systems, Massachusetts Institute of Technology, MA, USA\printead[presep={,\ }]{e4}}
\end{aug}
\fi

\begin{abstract} 
We develop a general probabilistic framework for analyzing propagation of chaos in \emph{transport ensemble filters} (TEFs), a broad class of interacting particle systems that are used to approximate the sequence of state distributions in hidden Markov models given a history of observations. This class of transport-based filtering algorithms includes the widely used ensemble Kalman filter (EnKF), based on affine
updates at each filtering step, as well as the ensemble stochastic map filter (EnSMF), which employs nonlinear updates. For this class, we identify the limiting mean-field dynamics. We then establish non-asymptotic, high-probability, pathwise Wasserstein convergence of the interacting particle system to an i.i.d.\ ensemble drawn from this mean-field limit at the Monte Carlo rate. Convergence to the mean-field law itself follows with the usual dimension-dependent empirical Wasserstein rate. The proof combines a synchronous coupling construction with stability of moments and tails under conditioning, together with quantitative estimates for the propagation of the underlying dynamics through the interacting particle system. Applying our theory to both the EnKF and the EnSMF yields the first non-asymptotic, high-probability convergence guarantees for TEFs.
\end{abstract}

\ifpreprint
\maketitle
\else
\begin{keyword}[class=MSC]
\kwdgroup[type=primary]{\kwd{60G35}\kwd{60G25}\kwd{65C35}}
\kwdgroup[type=secondary]{\kwd{93E11}}
\end{keyword}

\begin{keyword}
\kwd{nonlinear filtering}
\kwd{ensemble Kalman filter}
\kwd{transport ensemble filters}
\kwd{mean-field}
\kwd{propagation of chaos}
\kwd{transport maps}
\kwd{Wasserstein distance}
\kwd{high-dimensional state estimation}
\kwd{interacting particle systems}
\end{keyword}

\end{frontmatter}
\fi

\ifpreprint
\tableofcontents
\fi

\section{Introduction}
\label{subsection:filtering_problem}
\subsection{Problem Statement}
Given measurable maps $\Psi:\mathbb{R}^n \rightarrow\mathbb{R}^n$ and $h:\mathbb{R}^n\rightarrow \mathbb{R}^m$, we consider a sequence of random variables $\{(X^j, Y^{j+1})\}_{j\geq 0}$, with $(X^j, Y^{j+1})$ taking values in $\mathbb{R}^n\times \mathbb{R}^m$, defined by the dynamics and observation models: 
\begin{subequations} 
\label{eq:dynamics}
\begin{align}
    X^{j+1}&=\Psi\left(X^{j}\right)+\xi^{j},  \\ 
    Y^{j+1}& =h\bigl(X^{j+1}\bigr)+\eta^{j+1}, \label{eq:observation_model}
\end{align}
\end{subequations}
respectively, for all $j \in \mathbb{N}$, where $X^0 \sim \mu^0 \in \Pp(\R^n)$ is the initial condition, $\xi^j \sim \nu^j_\xi  \in \Pp(\R^n)$ for $j\ge 0$ are process noises, and $\eta^j \sim \nu_\eta^j \in\Pp(\R^m)$ for $j\ge 1$ are observation noises. The goal of the \emph{filtering problem} is to characterize the regular conditional probability distribution, 
\begin{equation}
\label{eq:filtered_distribution}
\mu_X^{a,j} \coloneqq  \Law(X^{j}|Y^{1:j}=y^{1:j}),
\end{equation}
for any $j \in \N_{\geq 1}$ and a given observation path $y^{1:j}\in(\mathbb{R}^{m})^j$. We refer to $\mu_X^{a,j} $ as the \emph{filtering} or \emph{analysis distribution}.
Throughout, we assume 
that $\{(X^j, Y^{j+1})\}_{j\geq 0}$ follow
a standard hidden Markov model (HMM), so that $X^0$, $\{\xi^j\}_j$, and $\{\eta^j\}_j$ are jointly independent random variables. With this HMM structure, $\mu_X^{a,j}$ admits a recursive dependence on the filtered law $\mu_X^{a,j-1}$ from the previous time step, a property that is exploited to develop efficient filtering algorithms~\cite{reich2015probabilistic, law2015data}.

The recursive update for the filtering distribution is defined using a two-step procedure. First, we define 
a joint measure on $(X,Y)$, called the \emph{joint forecast distribution}, obtained by propagating the analysis distribution $\mu^{a,j-1}_X$ through the dynamics and observation models in Equation~\eqref{eq:dynamics}:
\begin{equation}
      \mu_{XY}^{f, j} = \Law(X^j, Y^j|Y^{1:j-1} = y^{1:j-1}) \, . \label{eq:joint_forecast}
\end{equation}
(For \(j=1\), conditioning on \(Y^{1:0}\) denotes conditioning on the trivial \(\sigma\)-algebra.)
Formally, we define the dynamics operator $P^j: \Pp(\mathbb{R}^n)\rightarrow \Pp(\mathbb{R}^n)$ acting on measures
$$P^j(\pi) \coloneqq (\Psi_\sharp\pi) \star \nu^j_\xi,$$
where $\star$ denotes the convolution for Borel measures and $\Psi_\sharp \pi$ is the pushforward measure of $\pi$ through the map $\Psi$. Similarly, we define the observation operator $Q^j:\Pp(\mathbb{R}^n )\rightarrow \Pp(\mathbb{R}^n\times \mathbb{R}^m)$ acting on measures 
$$Q^j(\pi) \coloneqq  H_\sharp(\pi\otimes \nu_\eta^j),$$
where
\begin{equation*}
H: \R^n\times \R^m \rightarrow \R^n \times \R^m, \quad  H(x, z) = \left(x, h(x) + z\right),
\end{equation*} 
implements the observation step in Equation \eqref{eq:dynamics} by lifting the measure on $X$ to a joint measure on $(X,Y)$. The pushforward through $H$ implicitly carries out the convolution with the noise measure. Using these operators, the forecast distribution can be expressed in terms of the filtering distribution as 
\begin{equation}
\label{eq:dynamics_measure_perspective_joint}
    \mu_{XY}^{f,j} = Q^{j}P^{j-1}\mu_X^{a,j-1}.
\end{equation}

Second, we obtain the analysis distribution by conditioning the joint forecast distribution on a new observation $y^j$. That is,
\begin{equation}
\label{eq:analysis_step}
\mu_X^{a,j} = B_{y^j}\mu_{XY}^{f,j}.
\end{equation}
Here $B_y$ is the conditioning operator, defined by
\begin{equation}
\label{eq:cond_operator}
B: \R^m \times \Pp(\R^n\times\R^m)\to \Pp(\R^n), \quad     B(y,\pi)\coloneqq \pi_{X\mid Y=y},
\end{equation}
where \(\pi_{X\mid Y=y}\) denotes a version of the regular conditional distribution of \(X\) given a fixed \(Y=y\). We abbreviate \(B_y\pi\coloneqq B(y,\pi)\). Since regular conditional distributions are unique only \(\pi_Y\)-almost surely, statements involving \(B_Y\pi\) for a random observation \(Y\sim\pi_Y\) are understood \(\pi_Y\)-a.s., which is the setting in which the exact filtering recursion is used below. In summary, the analysis distribution evolves as
\begin{equation}
\label{eq:dynamics_measure_perspective}
    {\mu_X^{a,j} = B_{y^j} Q^{j}  P^{j-1} \mu_X^{a,j-1}},
\end{equation}
for all \(j \in \N_{\ge 1}\)~\cite{reich2015probabilistic, law2015data}.  

Implementing Equation \eqref{eq:dynamics_measure_perspective} for $\mu_X^{a,j}$ computationally poses two obstacles:
\begin{enumerate} \itemsep0pt 
\item Probability measures of the form $\mu^{a,j}_X$ or $\mu^{f,j}_{XY}$ cannot in general be represented exactly on a computer as members of a parametric class of distributions;
\item Computing $B_{y^j}\mu_{XY}^{f,j}$ requires high-dimensional integration (e.g., to compute normalizing constants), which is typically intractable for models as in Equation~\eqref{eq:dynamics} with high-dimensional variables.
\end{enumerate}
Two principal methods have been developed to address some of these challenges: particle filters and ensemble filters~\cite{gordon1993novel,doucet2001introduction,evensen2003ensemble}. While particle filters come with consistency guarantees in the large sample limit~\cite{crisan2002survey}, they suffer from the curse of dimensionality~\cite{bengtsson2008curse}. This work focuses on ensemble filters, which remain less well studied. 

In ensemble filters, the first problem is resolved by approximating $\mu^{a,j-1}_X$  with the empirical measure 
\begin{equation}
\label{eq:filtering_ensemble}
    \hat\mu_{X,N}^{a,j-1} \coloneqq  \frac{1}{N}\sum_{\ell=1}^N \delta_{x_{\ell}^{a,j-1}},
\end{equation}
given an ensemble of $N$ particles $x_{\ell}^{a,j-1} \in \R^n, \ell=1,\ldots,N$. 
(Throughout, hats on measures denote interacting empirical measures.)
The forecast step in Equation~\eqref{eq:dynamics_measure_perspective_joint}
is then implemented by evaluating the forecast model at each particle $x_{\ell}^{a,j-1}$ and adding independent realizations of the process noise and observation noise, i.e., a single draw of $Q^{j}P^{j-1}$, so that the ensemble size $N$ remains fixed. That is, the propagation of the ensemble is given by: 
\begin{subequations}
\label{eq:particles_dynamics}
\begin{align}
x_{\ell}^{f,j} &= \Psi(x_{\ell}^{a,j-1}) + \xi_\ell^{j-1}, 
& \xi_\ell^{j-1} \overset{\text{i.i.d.}}{\sim} \nu_\xi^{j-1}, 
& \quad \ell=1,\dots,N, \\
y_{\ell}^{f,j} &= h(x_{\ell}^{f,j}) + \eta_\ell^{j}, 
& \eta_\ell^{j} \overset{\text{i.i.d.}}{\sim} \nu_\eta^{j}, 
& \quad \ell=1,\dots,N.
\end{align}
\end{subequations}
This yields an approximate forecast distribution given by the empirical measure
\begin{equation}
\label{eq:forecast_ensemble}
\hat \mu_{XY, N}^{f,j} \coloneqq \frac{1}{N}\sum_{\ell=1}^N \delta_{(x_{\ell}^{f,j}, y_{\ell}^{f,j})}.
\end{equation}
Finally, the analysis step in Equation~\eqref{eq:analysis_step} is implemented in ensemble filtering by approximating the conditioning (``Bayes'') operator \(B_{y}\) using a transport map \(\map_{y}^{\pi}\) that acts on particles from the joint forecast distribution $\pi$.
The map implements an approximate conditioning operator $\tilde B_{y}$ that is close to the true Bayes operator $B_{y}$. 
Specifically, let $\kappa\in\Pp_2\left(\R^a\right)$ be a fixed latent distribution, naturally including deterministic maps. Then the map \(\map_{y}^{\pi}\) acts on $\pi$ and $\kappa$ as
$$
\tilde B_{y} \pi := \bigl(\map_{y}^\pi\bigr)_\sharp(\pi\otimes\kappa) \approx B_{y}\pi,
$$
for ``typical'' forecast joint distributions $\pi\in\Pp_2\left(\R^n\times\R^m\right)$.
A popular choice for $\map_y^\pi$ is the ensemble Kalman filter (EnKF)~\cite{evensen2003ensemble}, where the transport map is an \textit{affine} function of $x$ and $y$, defined as
\begin{equation}
\label{eq:enkf_beta_map}
\map_{y^j}^{\pi}(x,y) = x + \Cov(\pi)_{XY}
\Cov(\pi_Y)^{\dagger}(y^{j}-y) \, .
\end{equation}
Here $\Cov(\pi)_{XY}$ is the cross-covariance of $(X,Y)$ under the joint forecast $\pi$, $\Cov(\pi_Y)$
is the covariance of the $Y$-marginal $\pi_Y$ of $\pi$,\footnote{For the observation model in Equation~\eqref{eq:observation_model} with independent noise, the covariance of the $Y$-marginal is given by $\Cov(\pi_Y) = \Cov(h_{\sharp}\pi_X) + \Cov(\nu_\eta^j)$ where $\Cov(h_{\sharp}\pi_X)$ is the predictive covariance of $Y$ induced by $h$ and $\pi_X$ and $\Cov(\nu_\eta^j)$ is the observation-noise covariance at time $j$. We will analyze the EnKF map with this form of the marginal covariance $\Cov(\pi_Y)$ in Subsection~\ref{subsection:application_enkf}.}
and ${}^\dagger$ denotes the Moore--Penrose pseudoinverse. 
Other choices of $\map$---in particular, \textit{nonlinear} maps---are also possible; specific choices yield ensemble filtering algorithms such as the ensemble stochastic map filter (EnSMF)~\cite{spantini2022coupling}, the conditional mean filter (CMF)~\cite{hoang2021machine}, learning-enhanced ensemble filters~\cite{bach2025learning}, or other nonlinear filters~\cite{bao2025nonlinear}.
To include such possibilities in our theory, we keep the map $\map_{y^j}^{\pi}$ general. More formally, taking $\Pp_2(\R^{n}\times \R^m)$ endowed with the $2$-Wasserstein topology, we consider Borel measurable maps
\[
\map:\R^m\times \Pp_2(\R^{n} \times \R^{m})\times\R^{n}\times\R^{m} \times \R^a\to \R^n.
\]
For $y^\star\in\R^m$ and $\pi\in\Pp_2(\R^{n}\times\R^{m})$, we abbreviate the map as
\[
\map_{y^\star}^\pi(x,y,\omega) \coloneqq  \map\bigl(y^\star,\pi,x,y,\omega\bigr).
\]
Here $\omega \in \R^a$ is an auxiliary random variable drawn from $\kappa$ to allow for stochastic transport mechanisms as, for example, in \cite{bao2025nonlinear}.

In practice, the transport map is usually constructed from the empirical joint forecast $\hat\mu_{XY,N}^{f,j}$ and the new observation $y^j$. Then, the analysis particles are obtained by evaluating the map at each sample from the forecast distribution in Equation~\eqref{eq:forecast_ensemble}. That is,
\begin{equation}
\label{eq:particle_stochastic_updates}
x_{\ell}^{a,j}=  \map^{\hat \mu^{f, j}_{XY,N}}_{y^j} \bigl ( x_{\ell}^{f,j},  y_{\ell}^{f,j}, \omega_\ell^j \bigr), \quad \omega_\ell^{j} \overset{\text{i.i.d.}}{\sim} \kappa, \qquad \ell = 1,\dots,N.
\end{equation}
Consequently, the $x_{\ell}^{a,j}$ are generally \emph{not} i.i.d.; they interact through their common dependence on the forecast \(\hat \mu_{XY,N}^{f,j}\). Equation \eqref{eq:particle_stochastic_updates}  completes the recursion for \(\hat\mu_{X,N}^{a,j}\) in time and yields a fully specified filtering algorithm via Equations \eqref{eq:filtering_ensemble}, \eqref{eq:particles_dynamics}, \eqref{eq:forecast_ensemble}, and \eqref{eq:particle_stochastic_updates}. We summarize this scheme in Algorithm \ref{alg:TEF} and call this class of algorithms \textit{transport ensemble filters}. 
The term \textit{transport ensemble filters} should not be confused with ensemble \textit{transform} Kalman filters~\cite{bishop2001adaptive,takeda2024uniform}, or with more general ensemble transform methods and particle filters based on optimal transport~\cite{reich2013nonparametric,reich2015probabilistic,aljarrah2023optimal}. In our terminology, \textit{transport} refers broadly to the particle-wise analysis map in Algorithm~\ref{alg:TEF}; in this sense, ensemble transport filters~\cite{zeng2024ensemble} can be viewed as falling under our template, whereas the cited optimal-transport particle filters and ensemble transform Kalman filters are distinct notions and are not meant to be included here.

\begin{algorithm}[t]
\caption{Transport Ensemble Filter}\label{alg:TEF}
\begin{algorithmic}[1]
\State \textbf{Input:} ensemble size $N$ and sequentially acquired data $\{y^{j}\}_{j \geq 1}$.
\State \textbf{Initialization:} $x_{\ell}^{a,0} \overset{\text{i.i.d.}}{\sim} \mu^0,\qquad 1 \leq \ell \leq N.$
\For{$j = 1, 2, \ldots$}
    \State \textbf{Forecast: }
    \begin{align*}
    x_{\ell}^{f,j} & = \Psi(x_{\ell}^{a,j-1}) + \xi_\ell^{j-1}, \qquad \xi_\ell^{j-1} \overset{\text{i.i.d.}}{\sim} \nu_\xi^{j-1}, \qquad 1 \leq \ell \leq N, \\
    y_{\ell}^{f,j} &= h( x_{\ell}^{f,j}) + \eta_\ell^{j}, \qquad \eta_\ell^{j} \overset{\text{i.i.d.}}{\sim} \nu_\eta^{j}, \qquad 1 \leq \ell \leq N.
    \end{align*}
    \State \textbf{Analysis:}
    \[\hat \mu^{f, j}_{XY,N} =  \frac{1}{N}\sum\limits_{\ell = 1}^N\delta_{ (x_{\ell}^{f,j},  y_{\ell}^{f,j})} \]
    \[
x_{\ell}^{a,j}=  \map^{\hat \mu^{f, j}_{XY,N}}_{y^j} ( x_{\ell}^{f,j},  y_{\ell}^{f,j}, \omega_\ell^j), \qquad \omega_\ell^{j} \overset{\text{i.i.d.}}{\sim} \kappa, \qquad 1 \leq \ell \leq N.
\]
\EndFor
\State \textbf{Output:} Approximate filtering distribution $\hat\mu_{X,N}^{a,j} = \frac{1}{N}\sum\limits_{\ell = 1}^N\delta_{x_{\ell}^{a,j}} $ for $j = 1, 2, \ldots$
\end{algorithmic}
\end{algorithm}

This work addresses the central question of consistency of transport ensemble filtering algorithms: what is the limiting behavior of the empirical measure $\hat\mu_{X,N}^{a,j}$ generated by the interacting particle scheme in Equation~\eqref{eq:particle_stochastic_updates}, and how is it related to the true filtering distribution $\mu_X^{a,j}$ for each time $j$? Intuitively, under mild regularity of $\map$, increasing $N$ weakens the interaction among particles through the forecast $\hat\mu_{XY,N}^{f,j}$, suggesting convergence as $N\to\infty$. This phenomenon---particles becoming asymptotically independent---is known as propagation of chaos~\cite{kac1956foundations, sznitman2006topics}. The main objective of this paper is to make this intuition precise and to answer the following two questions:
\begin{enumerate}
    \item What is the mean-field law \(\tilde\mu_X^{a,j}\) to which \(\hat\mu_{X,N}^{a,j}\) converges in a suitable sense as the number of particles grows 
    $N\rightarrow \infty$?
    \item At what rate does \(\hat\mu_{X,N}^{a,j}\) converge to the mean-field law $\tilde\mu_X^{a,j}$?
\end{enumerate}

\begin{remark}
We work on \(\mathcal{P}_2(\R^{n}\times\R^{m})\) for two reasons. First, most linear and nonlinear transport ensemble filters (e.g., EnKF, EnSMF, CMF) are formulated at least in terms of first- and second-order moments (e.g., means, covariances, and/or covariances of nonlinear features).
Second, \(\mathcal{P}_2\) equips the space with the 2-Wasserstein distance \(W_2\), which provides a geometric and statistically meaningful metric for both (i) assessing prediction error of the empirical law \(\hat\mu_{X,N}^{a,j}\) and (ii) analyzing empirical measures. 
\end{remark}

\subsection{Related Work}
To the best of our knowledge, there are no results establishing mean-field convergence bounds for \emph{nonlinear} transport ensemble filters---i.e., when the analysis step is implemented via a nonlinear map \(\map^\pi_{y^j}\) in Equation \eqref{eq:particle_stochastic_updates}---as in, for example, the EnSMF or CMF~\cite{spantini2022coupling,hoang2021machine}. 

By contrast, for the specific case of EnKF-type algorithms in which $\map^\pi_{y^j}$ is an affine map, some results are available in the literature. The earliest mean-field convergence results for the EnKF are due to Mandel et al.~\cite{mandel2011convergence} and Le~Gland et al.~\cite{le2009large}. 
In the linear-Gaussian setting (linear $\Psi$, linear $h$, and Gaussian noise/initialization in Equation \eqref{eq:dynamics}), Mandel et al.\ prove convergence of the first two moments of the empirical analysis ensemble $\hat \mu_{X,N}^{a,j}$ as \(N\to\infty\). Thus, in expectation over the particle draws of $(\xi_\ell^{j-1}, \eta_\ell^{j})$, the ensemble statistics of $\hat \mu_{X,N}^{a,j}$ converge in \(L^{p}\), for all \(p\in[1,\infty)\), to those of the mean-field $\tilde\mu_X^{a,j}$ (which coincides with \(\mu_X^{a,j}\) in the linear-Gaussian setting). 
Le~Gland et al.\ extend this line of analysis: for Gaussian noise variables, initial condition $\mu^0$ with bounded moments of all orders, locally Lipschitz $\Psi$, and linear \(h\) in Equation~\eqref{eq:dynamics}, the authors identify the mean-field limit \(\tilde\mu_X^{a,j}\) and show that, for locally Lipschitz test functions with polynomial growth at infinity \(\varphi:\mathbb{R}^n\to\mathbb{R}\), we have 
\begin{equation}
\label{eq:previous_work_bound}
\left(\mathbb{E}\Bigl|\textstyle\int \varphi \, \dd\hat\mu_{X,N}^{a,j}-\int \varphi \, \dd \tilde\mu_{X}^{a,j}\Bigr|^p\right)^{1/p}
=O\left(N^{-1/2}\right)
\end{equation}
for any $p \geq 1$.  A more concise proof, with explicit dependence of the $O(N^{-1/2})$ constant on the relevant moments, is given in the recent work of Calvello et al.~\cite{calvello2024accuracy}. Law et al.~\cite{law2016deterministic} restate the result in Equation \eqref{eq:previous_work_bound} and broaden the class of admissible test functions $\varphi$ to include bounded functions as well. For the ensemble square root filter (ESRF), Kwiatkowski and Mandel~\cite{kwiatkowski2015convergence} obtain analogous mean-field convergence using similar techniques, and the recent work of Al-Ghattas and Sanz-Alonso~\cite{al2024non} proves the first dimension-independent and non-asymptotic $O\left(N^{-1/2}\right)$ convergence rates of the first two moments for the \emph{first} ESRF update (i.e., at time $j = 1$) in the linear-Gaussian setting. 
A related  but distinct thread studies \emph{state-estimation accuracy} of EnKF-type algorithms (rather than convergence of empirical measures), typically under dissipativity of the dynamics, linear and sufficiently informative observation operators, Gaussian noise variables, and Gaussian initial condition. 
Representative results include well-posedness and estimation accuracy for the perturbed-observation EnKF~\cite{kelly2014well}, accuracy and stability of  localized variants~(based on sparse regularization of the covariances in the EnKF map)~\cite{tong2015nonlinear,tong2023localized}, and long-time/stability analyses under structural assumptions on the dynamics and observations~\cite{sanz2024long}. 

There is also a parallel literature on the continuous-time limit of filtering, often called the ensemble Kalman--Bucy filter (EnKBF). These works use techniques different from the discrete-time setting to establish mean-field limits, stability, and propagation-of-chaos-type results for the associated McKean--Vlasov dynamics; see, e.g., Del~Moral and coauthors~\cite{del2017stability,del2018stability}, de~Wiljes et al.~\cite{de2018long}, and recent overviews such as Bishop and Del~Moral~\cite{bishop2023mathematical}. 


\subsection{Main Results}
Our results advance the theory of
ensemble filtering by developing a unified mean-field convergence framework for the broad class of transport ensemble filters defined in Algorithm~\ref{alg:TEF}. This framework includes the EnKF as a special case, thereby furthering EnKF theory, while allowing for genuinely nonlinear analysis maps, such as the EnSMF~\cite{spantini2022coupling}. For transports in this broad class, we identify the mean-field limit \(\tilde \mu^{a,j}_{X}\) in Equation \eqref{eq:mf_alg_measure_perspective}.
Our main technical result in Theorem~\ref{thm:ips_converges_to_iid_particles} proves convergence of the algorithmic ensemble $\hat \mu^{a,j}_{X,N}$ to this mean-field limit. More precisely,
we construct a coupling of the interacting analysis ensemble \(\hat\mu_{X,N}^{a,j}\) with an empirical measure \(\tilde\mu_{X,N}^{a,j}\) of i.i.d.\ mean-field particles such that for a finite time horizon $J\in\N$ we have the pathwise convergence
\begin{equation}
\label{eq:informal_1}
\sup_{1 \leq j \leq J}W_2\bigl(\hat\mu_{X,N}^{a,j},\tilde\mu_{X, N}^{a,j}\bigr)
  \lesssim N^{-1/2},
\end{equation}
with high probability (Theorem~\ref{thm:ips_converges_to_iid_particles}) and in expectation (Corollary~\ref{cor:ips_converges_to_iid_particles_exp}). In both results, the probability and expectation are taken over both the particle draws in Algorithm~\ref{alg:TEF} and the observation realizations \(y^{1:j}\). 

\begin{remark}
Our proofs also imply that with high probability we have the pathwise bound
$$
\sup_{1 \leq j \leq J}\sqrt{\frac{1}{N}\sum\limits_{\ell = 1}^N \left\|x_{\ell}^{a,j} - v_\ell^{a,j}\right\|_2^2}\lesssim N^{-1/2},
$$
where $v_\ell^{a,j}$ are mean-field i.i.d.\ particles. This is a tighter bound and closer to the classical way of stating propagation of chaos results in the literature~\cite{sznitman2006topics, chaintron2022propagation}. We will state our results by comparing measures, i.e., 
in the form of Equation~\eqref{eq:informal_1}, for ease of presentation.  
\end{remark}

Our results show that the interacting particle system is quantitatively close to an i.i.d.\ sample from the mean-field law at the \(N^{-1/2}\) rate.  For a confidence parameter \(k\ge2\), corresponding to events of probability at least \(1-1/k\), the component of the standard i.i.d.\ Wasserstein convergence rate~\cite{larsson2024concentration, fournier2015rate} is
\begin{equation}
\label{eq:standard_iid_wasserstein_rate}
\gamma_{k,N}^{p,n}
=
\begin{cases}
\left(\frac{\log k}{N}\right)^{1/n}
&\text{if } 1\le p < \frac n2,\\
\left(\frac{\log k}{N}\right)^{1/n}\bigl(\log(2+N)\bigr)^{2/n}
&\text{if } p = \frac n2,\\
\left(\frac{\log k}{N}\right)^{1/(2p)}
&\text{if } \frac n2 < p \le 2.
\end{cases}
\end{equation}
Combining Equation \eqref{eq:informal_1} with the rate in Equation~\eqref{eq:standard_iid_wasserstein_rate} yields a convergence result
\begin{equation}
\label{eq:informal_2}
\sup_{1 \leq j \leq J}W_p\bigl(\hat\mu_{X,N}^{a,j},\tilde\mu_{X}^{a,j}\bigr)
  \lesssim \gamma_{k,N}^{p,n}+\frac{(\log k)^{1+J/2}}{\sqrt N}.
\end{equation}
This bound holds with probability \(1-1/k\) over the particle realizations and observations for \(1 \le p \le 2\). In the high-dimensional regime \(n>2p\), the empirical-measure term reduces to \(N^{-1/n}\) and dominates the Monte Carlo interaction term for fixed \(k\) and \(J\). We state the result formally in Corollary~\ref{cor:finite_particle_converges_to_mean_field}. 
  
The main ideas and the full proofs of our results are developed in Sections~\ref{section:proof_intuitive} and~\ref{section:proof_formal}. Our proofs rely on a classical synchronous coupling argument, by driving $\hat\mu_{X,N}^{a,j}$ and $\tilde\mu_{X, N}^{a,j}$ with the same noise (Subsection \ref{subsection:idea_proof}), and the main technical challenge lies in handling the moments of the intermediate distributions (Subsections~\ref{subsection:moments_mean_field} and \ref{subsection:moments_ips}) as well as their dependence on the random observations \(y^{1:j}\) 
(Subsections~\ref{subsection:dynamics_control} and \ref{subsection:moments_mean_field}). In the process, we derive an interesting technical result in Proposition~\ref{prop:subG_stability_conditioning} on the high-probability stability of sub-Gaussianity under disintegration and conditioning. In order to prove this result, we introduce an equivalent characterization of sub-Gaussianity that extends the moment-generating-function definition to
non-centered 
random variables.
  
Section~\ref{sec:applications} presents applications of the results derived in the earlier sections to specific transport ensemble filters. Subsections~\ref{subsection:application_enkf} and~\ref{subsection:application_ensmf} illustrate how Equations \eqref{eq:informal_1} and \eqref{eq:informal_2} specialize to the EnKF and to the nonlinear transport analogue, the EnSMF, respectively. In the EnKF case, these results improve the convergence analyses in \cite{le2009large}, \cite{mandel2011convergence}, and \cite{kwiatkowski2015convergence} along several axes: first, our statements account for the randomness of the observation path, rather than conditioning on a fixed \(y^{1:J}\).
Second, we control (with high probability) the observational and algorithmic randomness, rather than only obtaining convergence in expectation. Third, our bounds are stated in Wasserstein metrics on \(\Pp_2( \R^n)\) (as opposed to single test-function averages), thereby directly quantifying distributional error. Fourth, we allow more general noise variables (i.e., sub-Gaussian, as opposed to Gaussian). Fifth, we replace the linear observation assumption by a Lipschitz condition on \(h\), accommodating nonlinear observation operators. Finally, our guarantees are non-asymptotic in \(N\), delivering the first explicit finite-sample rates, whereas all previous results for the EnKF held in the asymptotic regime.

\begin{remark}
Another way to relate these results to bounds of the form in Equation \eqref{eq:previous_work_bound} appearing in the literature is as follows. Since the $W_p$ distance for $p\ge 1$ dominates the $W_1$ distance, and $W_1$ admits a variational characterization in terms of Lipschitz functions $\varphi$ with constant $\mathrm{Lip}(\varphi)$, it follows from Equations \eqref{eq:informal_1} and \eqref{eq:informal_2} that the following \emph{uniform} convergence bounds hold:
\begin{align*}
\sup_{1 \le j \le J} \sup_{\mathrm{Lip}(\varphi)\le 1}
\Bigl|\textstyle\int \varphi \, \dd \hat\mu_{X,N}^{a,j}-\int \varphi \, \dd \tilde\mu_{X, N}^{a,j}\Bigr|
&\lesssim N^{-1/2}\\
\sup_{1 \le j \le J} \sup_{\mathrm{Lip}(\varphi)\le 1}
\Bigl|\textstyle\int \varphi \, \dd \hat\mu_{X,N}^{a,j}-\int \varphi \, \dd \tilde\mu_{X}^{a,j}\Bigr|
&\lesssim \gamma_{k,N}^{p,n}
   +\frac{(\log k)^{1+J/2}}{\sqrt N},
\end{align*}
where $\mathrm{Lip}(\varphi)$ denotes the Lipschitz constant of $\varphi$.
The first bound holds both with high probability and in expectation. The second one holds with high probability as displayed, and in expectation with \(\gamma_{k,N}^{p,n}\) replaced by a $k$-independent factor \(\bar\gamma_N^{p,n}\).
\end{remark}

\subsection{Notation and Basic Definitions}
\label{subsection:notation_definition}
Throughout the paper we use the conventions $\N=\{0,1,2,\ldots\}$ and $\N_{\ge 1}=\{1,2,\ldots\}$. Moreover, we work on finite-dimensional Euclidean spaces:  vectors are equipped with the Euclidean norm $\|x\|_{2}$ and matrices with the induced operator norm $\opnorm{A}=\sup_{\|x\|_{2}=1}\|Ax\|_{2}$.  All Lipschitz constants are taken with respect to these norms.   The {Moore--Penrose pseudoinverse} of a matrix $A$ is denoted by $A^{\dagger}$. For $u \in \N_{\ge 1}$, the identity map on $\R^{u}$ is $I_{u}$; more generally $I_{V}$ denotes the identity on a vector space $V$. For any Lipschitz function $f$ between Euclidean vector spaces, we define $\mathrm{Lip}(f)$ as the smallest Lipschitz constant of $f$.  For \(x \in \mathbb{R}^u\) and any integer \(1 \le k \le u\), we define the subvector $x_{1:k}  =  (x_1,x_2,\dots,x_k)  \in\mathbb{R}^k$.
For a sequence of  vectors $y^j \in \R^d$, $d\in \N_{\geq 1}$ we write $y^{1:J}=(y^{1},\dots,y^{J})\in(\R^{d})^{J}$.
All probability measures are defined on the Borel $\sigma$-algebra $\mathcal{B}(\R^{u})$ of the relevant space.  We use $\Pp(\R^{u})$ for all Borel probability measures on $\R^{u}$ and consider the subset of measures
\begin{align*}
  \Pp_{p}(\R^{u}) &=\Bigl\{\mu\in\Pp(\R^{u}) : \int\|x\|_{2}^{p} \, \dd \mu(x)<\infty\Bigr\}.
\end{align*}
A real random variable \(X\) is {sub-Gaussian} with parameter $K$ (cf.~Def.~2.5.5 of \cite{vershynin2018high}) if there exists \(K>0\) such that
\[
  \E\bigl(e^{X^2/K^2}\bigr)\le2.
\]
Equivalently, by introducing the {\(\psi_2\)-Orlicz norm}
\begin{equation} \label{eq:Orlicz}
  \|X\|_{\psi_2}
  :=
  \inf\Bigl\{t>0 : \E\bigl(e^{(X/t)^2}\bigr)\le2\Bigr\},
\end{equation}
we have that \(X\) is sub-Gaussian when \(\|X\|_{\psi_2}<\infty\). Note that the infimum in~\eqref{eq:Orlicz} is attained unless $X$ is almost surely $0$.  We extend the definition to probability measures through $\|\mu\|_{\psi_2} = \|X\|_{\psi_2}$ for $X \sim \mu$. The notion generalizes directly to vectors (cf.~\cite{vershynin2018high}): a random vector \(X\in\R^u\) is {sub-Gaussian} if
\[
  \|X\|_{\psi_2}
  \coloneqq \sup_{\|v\|_2=1}\|v^\top X\|_{\psi_2}
  <\infty.
\]
To take the sub-Gaussian norm of the Euclidean norm of a random vector we will write 
$$
\|X\|_{2,\psi_2} \coloneqq  \left\|\|X\|_2\right\|_{\psi_2},
$$
instead, and similarly for measures $\mu \in \Pp(\R^u)$, we write $\|\mu\|_{2,\psi_2} = \|X\|_{2,\psi_2}$ for $X\sim \mu$. 
We fix dimensions $n,m\in\N_{\ge1}$ corresponding to the state and observation spaces $\R^{n}$ and $\R^{m}$, respectively. For a joint law $\mu\in\mathcal{P}\!\bigl(\R^{n}\times\R^{m}\bigr)$, we define the coordinate projections
\[
  \pr_X:\R^{n}\times\R^{m}\to\R^{n},\quad\pr_X(x,y)=x,
  \qquad
  \pr_Y:\R^{n}\times\R^{m}\to\R^{m},\quad\pr_Y(x,y)=y.
\]
The {$X$-marginal} and {$Y$-marginal} of $\mu$ are given by the pushforward distributions
\[
  \mu_X \coloneqq  (\pr_X)_{\sharp}\mu,
  \qquad
  \mu_Y \coloneqq  (\pr_Y)_{\sharp}\mu.
\]
Moreover, we write $\mu_{XY}=\mu$ for the full joint distribution.  Because $\R^{n}$ and $\R^{m}$ are Polish spaces (hence standard Borel), the disintegration theorem  (e.g.,~\cite{kallenberg1997foundations}) ensures the existence of a {regular conditional probability} measure given by the map 
\[
  y \mapsto \mu_{X\mid Y=y}\in\mathcal{P}(\R^{n}),
\]
satisfying
\[
  \mu(A\times B)=\int_{B}\mu_{X\mid Y=y}(A) \, \dd \mu_{Y}(y),
  \qquad
  \forall A\in\mathcal{B}(\R^{n}),B\in\mathcal{B}(\R^{m}),
\]
where $y\mapsto\mu_{X\mid Y=y}(A)$ is Borel-measurable for every fixed $A$. We call $\mu_{X\mid Y=y}$ the {conditional distribution of $X$ given $Y=y$}; the map $y\mapsto\mu_{X\mid Y=y}$ is unique $\mu_{Y}$-a.s.
For random vectors \(X\in\R^{u_1}\) and \(Y\in\R^{u_2}\)  we write $\E(X)$ for \emph{expectation}, 
\[
  \Cov(X,Y)
  \coloneqq \E\,\!\bigl((X-\E X)(Y-\E Y)^{\mathsf T}\bigr)
  \in\R^{u_1\times u_2},
\]
for the cross-covariance matrix of \(X\) and \(Y\) (if it exists), and $\Cov(X) \coloneqq  \Cov(X,X)$ for the {covariance matrix}. In particular, note that $ \Cov((X,Y)) \in \R^{(u_1+u_2)\times(u_1+u_2)}$ is the covariance of the random variable $(X,Y)$. $\Law(X)$ is the distribution of $X$, the {trace} of a matrix is $\Tr(\cdot)$,  and we abbreviate
$\Trcov(X)\coloneqq \Tr\bigl(\Cov(X)\bigr).$ 
For \(\mu\in\Pp_1(\R^u)\), we write \(\E\mu\) for its mean and define the centered measure
\[
\mu-\E\mu \coloneqq (x\mapsto x-\E\mu)_\sharp\mu.
\]
Similarly, \(\|\mu-\E\mu\|_{2,\psi_2}\) means \(\|X-\E X\|_{2,\psi_2}\) and \(\|\mu\|_{2,\psi_2}\) means \(\|X\|_{2,\psi_2}\) for any \(X\sim\mu\), when finite. For \(\mu\in\Pp_2(\R^u)\), covariance-related quantities are defined by considering \(X\sim\mu\); in particular, \(\Cov(\mu)\coloneqq\Cov(X)\) and \(\Trcov(\mu)\coloneqq\Tr(\Cov(\mu))\). If the displayed Orlicz norm is finite, then $\Trcov(\mu)\le 2 \|\mu-\E\mu\|_{2,\psi_2}^2\le 8 \|\mu\|_{2,\psi_2}^2$.  For $\mu\in\Pp_{q}(\R^{u})$ and $q\ge1$ we use the {centered $q$-th moment}
\[
  \overline{M}_{q}(\mu)=
  \Bigl(\int\|x-\E\mu\|_{2}^{q}\,\dd \mu(x)\Bigr)^{1/q}.
\]
For $\mu,\nu\in\Pp_{p}(\R^{u})$ ($p\ge1$), the {$p$-Wasserstein distance} is
\[
  W_{p}(\mu,\nu)=
  \Bigl(\inf_{\pi\in\Pi(\mu,\nu)}\int\|x-y\|_{2}^{p} \, \dd \pi(x,y)\Bigr)^{1/p},
\]
where $\Pi(\mu,\nu)$ is the set of couplings with marginals $\mu,\nu$.  Tildes denote mean-field quantities, hats denote interacting empirical measures, and a subscript \(N\) denotes an empirical measure.
\section{Mean-Field Approximation and Proof Strategy}
\label{section:proof_intuitive}
\subsection{Deriving Mean-Field Dynamics}
\label{subsection:deriving_mean_field}
In the transport ensemble filter, the particles are coupled through the update map $\map$:  each particle's position after the analysis step depends not only on its own forecast position $(x_{\ell}^{f,j},y_{\ell}^{f,j})$ 
but also on the empirical joint forecast distribution $\hat\mu_{XY,N}^{f,j}$. This creates an $N$-particle interacting system.  The mean-field approximation is a standard way to handle such models:  instead of tracking all interactions explicitly, we approximate the random empirical laws  $\hat\mu_{XY,N}^{f,j}$ and  $\hat\mu_{X,N}^{a,j}$ with deterministic measures  $\tilde\mu_{XY}^{f,j}$ and  $\tilde\mu_{X}^{a,j}$ (given $y^{1:j}$) in the regime where $N$ becomes infinite.  In this infinite-ensemble limit, particles become independent with  common law, and their interaction through $\map$ is replaced by the response to a fixed mean-field. To make this idea more concrete, consider a representative mean-field particle $U^{j-1}\sim \tilde\mu_X^{a,j-1}$. By drawing fresh noise, 
\[
\xi^{j-1}\sim \nu_\xi^{j-1}, 
\qquad \eta^{j}\sim \nu_\eta^{j},
\]
the forecast step in the mean field stays the same, i.e.,
\[
(U^{f,j},Y^{f,j})
=\Bigl(\Psi(U^{j-1})+\xi^{j-1}, \,  h(\Psi(U^{j-1})+\xi^{j-1})+\eta^{j}\Bigr),
\]
in law or, equivalently,
$$
\tilde\mu_{XY}^{f,j} =Q^jP^{j-1} \tilde\mu_{X}^{a,j-1}.
$$
For the analysis step in Equation \eqref{eq:particle_stochastic_updates}, however, we replace the random empirical law by the deterministic forecast law $\tilde\mu_{XY}^{f,j}$.
This means that the mean-field particle is updated using the map
\[
U^{j}
=\map_{y^j}^{\tilde\mu_{XY}^{f,j}}\!\bigl(U^{f,j},Y^{f,j},\omega^{j}\bigr),
\]
where $\omega^{j}\sim \kappa$ is an independent auxiliary random sample and we assume that the expression above is well-defined (i.e., $\tilde\mu_{XY}^{f,j}\in\Pp_2\left(\R^n\times\R^m\right)$; we will verify this under sufficient regularity of Equation \eqref{eq:dynamics} in the next section). We define the mean-field approximate conditioning operator given $\nu \in \Pp_2\left(\R^n\times\R^m \right)$, whenever the following pushforward lies in \(\Pp_2(\R^n)\) (as will be ensured under the conditions established in the next section), as 
\begin{equation}
\label{eq:mean_field_operator}
\tilde B_{y^j}^\nu:\Pp_2(\R^{n}\times \R^{m})\to \Pp_2(\R^n),
\qquad
\tilde  B_{y^j}^\nu\pi\coloneqq  \bigl(\map_{y^j}^\nu\bigr)_\sharp\!\bigl(\pi\otimes \kappa\bigr),
\end{equation}
with the abbreviation \( \tilde B_{y^j} \pi\coloneqq  \tilde B_{y^j}^\pi\pi\). Using $\tilde B$, we rewrite the update on the law of $U^{j}$ as 
\[
\tilde\mu_X^{a,j}=\Law(U^{j}) = \tilde B_{y^j}\tilde\mu_{XY}^{f,j}.
\]
Then, collecting both steps from above, the derived limiting mean-field system is
\begin{subequations}
\label{eq:mf_alg_measure_perspective}
\begin{align}
\tilde\mu_X^{a,0} &= \mu^0,\\
\tilde\mu_X^{a,j} &= \tilde B_{y^j} Q^j P^{j-1}\tilde\mu_X^{a,j-1}.
\end{align}
\end{subequations}
This recursion mirrors the exact filtering recursion in Equation \eqref{eq:dynamics_measure_perspective}, 
with $B_{y^j}$ replaced by the approximate conditioning operator $\tilde B_{y^j}$ defined through $\map$.  The argument above is heuristic, and we will prove formally that $\hat\mu_{X,N}^{a,j}$ converges to the mean-field system $\tilde \mu^{a,j}_{X}$ in the next sections.
\subsection{Idea of the Proof}
\label{subsection:idea_proof}
Our goal is to show that the interacting particle system $\hat\mu_{X,N}^{a,j}$ converges to the mean-field system $\tilde \mu^{a,j}_{X}$ in $W_p$-distance.  To prove this, we apply the classic idea of a \emph{synchronous coupling}~\cite{mckean1967propagation, sznitman2006topics}: we construct an i.i.d.\ mean-field ensemble of size $N$, $(v_\ell^{a,j})_{\ell=1}^N$, whose empirical laws $(\tilde\mu^{f,j}_{XY,N},\tilde\mu^{a,j}_{X,N})$ are sampled from the mean-field recursion. When the observation path is random, this i.i.d.\ statement is understood conditional on that path; for readability, we omit this qualifier below. The key idea is to couple  $(\tilde\mu^{f,j}_{XY,N},\tilde\mu^{a,j}_{X,N})$ and $(\hat\mu^{f,j}_{XY,N},\hat\mu^{a,j}_{X,N})$ by driving them with the same randomness $\{(\xi_\ell^{j-1},\eta_\ell^j,\omega_\ell^j)\}_{\ell=1}^N$ at every step \(j\). This ensures that any discrepancy stems only from the interaction caused by the dependence of $\map$ on the empirical forecast distribution $\hat\mu_{XY,N}^{f,j}$, which we will refer to as an \emph{interaction term}. This allows us to make an inductive argument. For fixed $N$ and for each $j$, we draw i.i.d.\ initializations $\{x_{\ell}^{a,0}\}_{\ell=1}^N \sim \mu^0$ and i.i.d.\ noise samples $\{(\xi_\ell^{j-1},\eta_\ell^{j},\omega_\ell^{j})\}_{\ell=1}^N
\sim \nu_\xi^{j-1}\!\otimes\!\nu_\eta^{j}\!\otimes\!\kappa$,
independent across $j$. Then, we define the coupling as follows, letting $ v_\ell^{a,0} =  x_{\ell}^{a,0}$:
\begin{equation}
\label{eq:sync-coupling}
\begin{minipage}{.45\linewidth}
\center{interacting particle system}
\[
\begin{aligned}
x_{\ell}^{f,j}&=\Psi(x_{\ell}^{a,j-1})+\xi_\ell^{j-1}\\
y_{\ell}^{f,j}&=h(x_{\ell}^{f,j})+\eta_\ell^{j}\\
\hat\mu_{XY,N}^{f,j}&=\tfrac1N\sum_{\ell=1}^N\delta_{(x_{\ell}^{f,j},y_{\ell}^{f,j})}\\
x_{\ell}^{a,j}&=\map_{y^j}^{\hat\mu_{XY,N}^{f,j}}\!\big(x_{\ell}^{f,j},y_{\ell}^{f,j},\omega_\ell^{j}\big)\\
\hat\mu_{X,N}^{a,j}&=\tfrac1N\sum_{\ell=1}^N\delta_{x_{\ell}^{a,j}}
\end{aligned}
\]
\end{minipage}\hfill
\begin{minipage}{.45\linewidth}
\center{i.i.d.\ mean-field}
\[
\begin{aligned}
v_\ell^{f,j}&=\Psi(v_\ell^{a,j-1})+\xi_\ell^{j-1}\\
o_\ell^{f,j}&=h(v_\ell^{f,j})+\eta_\ell^{j}\\
\tilde\mu_{XY,N}^{f,j}&=\tfrac1N\sum_{\ell=1}^N\delta_{(v_\ell^{f,j},o_\ell^{f,j})}\\
v_\ell^{a,j}&=\map_{y^j}^{\tilde\mu_{XY}^{f,j}}\!\big(v_\ell^{f,j},o_\ell^{f,j},\omega_\ell^{j}\big)\\
\tilde\mu_{X,N}^{a,j}&=\tfrac1N\sum_{\ell=1}^N\delta_{v_\ell^{a,j}}.
\end{aligned}
\]
\end{minipage}
\end{equation}
To compare $\tilde\mu_{X,N}^{a,j}$ to the mean-field distribution $\tilde\mu_{X}^{a,j}$ defined in Equation~\eqref{eq:mf_alg_measure_perspective}, it is instructive to apply the triangle inequality
\begin{equation}
\label{eq:triangle_intuitive_1}
W_p(\hat\mu^{a,j}_{X,N},\tilde\mu^{a,j}_{X}) \leq W_p(\hat\mu^{a,j}_{X,N},\tilde\mu^{a,j}_{X, N}) +W_p(\tilde\mu^{a,j}_{X, N},\tilde\mu^{a,j}_{X}).
\end{equation}
Since $\tilde\mu^{a,j}_{X, N}$ consists of i.i.d.\ particles, the second term $W_p(\tilde\mu^{a,j}_{X, N},\tilde\mu^{a,j}_{X})$ can be explicitly controlled in expectation or with high probability, non-recursively, through a rate $\gamma(N)$ using modern Wasserstein convergence results~\cite{fournier2015rate}, as long as we can control higher moments of $\tilde\mu^{a,j}_{X}$.  $\gamma(N)$ typically suffers from the curse of dimensionality: for $p$-Wasserstein distances in $\R^n$, it scales as $\gamma(N)\asymp N^{-1/n}$, which is known to be sharp in general~\cite{fournier2015rate}. For this reason, simply controlling  $W_p(\hat\mu^{a,j}_{X,N},\tilde\mu^{a,j}_{X})$ conflates the interaction error with the slow i.i.d.\ sampling rate $W_p(\tilde\mu^{a,j}_{X,N},\tilde\mu^{a,j}_{X})$ in high dimensions. 

As a result, it is informative to separate the scale of the second term in the upper bound in Equation~\eqref{eq:triangle_intuitive_1} and to study the first term $W_p(\hat\mu^{a,j}_{X,N},\tilde\mu^{a,j}_{X,N})$ in isolation: it captures the algorithmic interaction error by quantifying how close the interacting ensemble is to behaving like an i.i.d.\ sample from the mean-field law. Given sufficient Lipschitz regularity of $\map$ (Assumption  \ref{assumption:algorithm_finite_particle}) and the dynamics model (Assumption \ref{assumption:dyn_assumptions}), we will show that it satisfies a self-recursion 
\begin{equation}
\label{eq:triangle_intuitive_2}
W_p(\hat\mu^{a,j}_{X,N},\tilde\mu^{a,j}_{X, N}) \leq C_1W_p(\hat\mu^{a,j-1}_{X,N},\tilde\mu^{a,j-1}_{X, N})   + C_2/\sqrt{N},
\end{equation}
where $C_1$ and $C_2$ are random variables. The second term enters this upper bound because of the discrepancy caused by the interaction measure in $\map$ between $\hat\mu^{a,j}_{X,N}$ and $\tilde\mu^{a,j}_{X, N}$. Then, Equation~\eqref{eq:triangle_intuitive_2} yields an inductive bound $W_p(\hat\mu^{a,j}_{X,N},\tilde\mu^{a,j}_{X,N})\lesssim N^{-1/2}$ with high probability and in expectation, and this is essentially the proof of the main result (Theorem \ref{thm:ips_converges_to_iid_particles}). As a direct corollary, we obtain the corresponding high-probability and expectation bounds for $W_p(\hat\mu^{a,j}_{X,N},\tilde\mu^{a,j}_{X})$ using the empirical-measure rates in Corollary \ref{cor:finite_particle_converges_to_mean_field}. The key ingredient in these proofs is to obtain   high-probability bounds on the random variables $C_1$ and $C_2$ in Equation \eqref{eq:triangle_intuitive_2}. To obtain such bounds, we need to control the moments and tails of the distributions $\mu^{f,j}_{XY}$, $\hat\mu^{f,j}_{XY,N}$, $\tilde\mu^{f,j}_{XY,N}$, and $\tilde\mu^{f,j}_{XY}$ as well as some of their differences (with high probability for the random empirical measures).

Before formulating these bounds, we make explicit one additional source of randomness: all the measures we defined (e.g., $\hat\mu_{X,N}^{a,j}$, $\tilde\mu_{X}^{a,j}$, $\mu^{a,j}_{X}$) depend implicitly on the observation path $y^{1:j}$. Under the model in Equation \eqref{eq:dynamics}, however, the observation sequence $Y^{1:j}$ itself is random, and this randomness should be taken into account when formulating the bounds with high probability or in expectation. Letting $J\in\N$ be a finite time horizon and using the stochastic process $Y^{1:j}$ given by Equation~\eqref{eq:dynamics}, we define
\begin{equation}
\label{eq:rho_def}
\rho^J \coloneqq  \Law(Y^{1:J}),
\end{equation}
and promote $y^{1:J}$ to a random draw \(Y^{1:J}\sim\rho^J\). This observation path is taken independent of the algorithmic particle, noise, and auxiliary variables used to construct the interacting and mean-field ensembles. Henceforth, all bounds will be stated in probability or in expectation with respect to this joint law. 

\section{Proving Convergence to the Mean-Field}
\label{section:proof_formal}
In this section, we will present and prove our main result, Theorem \ref{thm:ips_converges_to_iid_particles} (Subsection~\ref{subsection:convergence_intro}). The argument hinges on three ingredients: (i) high-probability control of moments of the forecast distribution under random observations (Subsection~\ref{subsection:dynamics_control}), (ii) analogous moment bounds for the mean-field dynamics (Subsection~\ref{subsection:moments_mean_field}), and (iii) concentration of empirical moments for the interacting system (Subsection~\ref{subsection:moments_ips}). After building up these technical preliminaries, we combine them to prove Theorem~\ref{thm:ips_converges_to_iid_particles} in Subsection \ref{subsection:convergence_proof}.

\subsection{Mean-Field Limit Statements}
\label{subsection:convergence_intro}
The main result of this section is the control of the distance between the interacting particles and the i.i.d.\ draws from the corresponding mean-field distribution as defined in Equation \eqref{eq:sync-coupling}. This result holds under Assumptions~\ref{assumption:dyn_assumptions} and~\ref{assumption:algorithm_finite_particle}, which are introduced in the following sections.

\begin{theorem}
\label{thm:ips_converges_to_iid_particles}
Fix $J\in\N_{\ge1}$, draw $Y^{1:J} \sim \rho^J$, and suppose Assumptions~\ref{assumption:dyn_assumptions} and ~\ref{assumption:algorithm_finite_particle} hold. Let
$\{\hat\mu_{X,N}^{a,j}\}_{j=1}^{J}$ be the analysis empirical measures generated by the interacting-particle system
and $\{\tilde\mu_{X, N}^{a,j}\}_{j=1}^{J}$ the corresponding i.i.d.\ mean-field sequence  as coupled in Equation \eqref{eq:sync-coupling}, both using observations $y^{1:J}=Y^{1:J}$. Then there exists a constant $C\coloneqq C(\mathcal{Q})>0$ depending only on the quantities
\[
\mathcal{Q} \coloneqq  (n,m,a,b,\sigma_{\max},\lambda_{\max},\sigma_X,
L_\Psi,L_h,C_{\mathrm{L}},L_y,e_{\mathrm{L}},
C_{\mathrm{est}},e_{\mathrm{est},1},e_{\mathrm{est},2},e_y,
\sigma_\kappa,J,L_g)
\]
such that for all $k\ge 2$, with probability at least $1-\frac1k$,
$$
W_2\bigl(\hat\mu_{X,N}^{a,j},\tilde\mu_{X, N}^{a,j}\bigr)\leq C \left(1 + \left( \frac{\log k}{\sqrt{N}}\right)^{p_j}\right)\frac{(\log k)^{1 + j/2}}{\sqrt{N}} \text{ for all } 1 \leq j \leq J,
$$
where $p_j \coloneqq(1+2e_\mathrm{L})^{J-1}\left(e_{\mathrm{est},1} + (j-1)\max\left(e_\mathrm{L}, e_{\mathrm{est},1} +\frac{1}{2}\right)\right)$.
In particular, if $N\geq (\log k)^2$, 
$$
\sup_{1 \leq j \leq J}W_2\bigl(\hat\mu_{X,N}^{a,j},\tilde\mu_{X, N}^{a,j}\bigr)\leq C\frac{(\log k)^{1 +J/2}}{\sqrt{N}},
$$
with the same probability.
The randomness in the expression $W_2\bigl(\hat\mu_{X,N}^{a,j},\tilde\mu_{X, N}^{a,j}\bigr)$  is over particle realizations and over the assumed values of the observations $Y^{1:J}$.
\end{theorem}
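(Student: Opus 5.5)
We prove the theorem through the synchronous coupling in Equation~\eqref{eq:sync-coupling}. We bound the particle-wise discrepancy
\[
D^j\coloneqq\Bigl(\tfrac1N\sum_{\ell=1}^N\|x_\ell^{a,j}-v_\ell^{a,j}\|_2^2\Bigr)^{1/2}\ \ge\ W_2\bigl(\hat\mu^{a,j}_{X,N},\tilde\mu^{a,j}_{X,N}\bigr)
\]
by induction on $j$, starting from $D^0=0$.

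\textbf{Forecast step.} Both systems use the same noise $\xi_\ell^{j-1},\eta_\ell^j$, so the Lipschitz bounds on $\Psi$ and $h$ give
\[
\|(x^{f,j}_\ell,y^{f,j}_\ell)-(v^{f,j}_\ell,o^{f,j}_\ell)\|_2\le L_\Psi(1+L_h)\|x^{a,j-1}_\ell-v^{a,j-1}_\ell\|_2 .
\]
Consequently $W_2(\hat\mu^{f,j}_{XY,N},\tilde\mu^{f,j}_{XY,N})\le L_\Psi(1+L_h)D^{j-1}$.

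\textbf{Analysis step.} Both systems also share $\omega_\ell^j$. We split
\[
x^{a,j}_\ell-v^{a,j}_\ell=\bigl[\map^{\hat\mu^{f,j}_{XY,N}}_{y^j}(x^{f,j}_\ell,y^{f,j}_\ell,\omega^j_\ell)-\map^{\hat\mu^{f,j}_{XY,N}}_{y^j}(v^{f,j}_\ell,o^{f,j}_\ell,\omega^j_\ell)\bigr]+\bigl[\map^{\hat\mu^{f,j}_{XY,N}}_{y^j}-\map^{\tilde\mu^{f,j}_{XY}}_{y^j}\bigr](v^{f,j}_\ell,o^{f,j}_\ell,\omega^j_\ell).
\]
We control the two brackets with the Lipschitz and stability bounds of Assumption~\ref{assumption:algorithm_finite_particle}, which I expect to read as follows. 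The spatial Lipschitz constant is at most $C_{\mathrm L}(1+\text{moments of the measure}+\|y^j\|)^{e_{\mathrm L}}$. The measure-stability bound is at most $C_{\mathrm{est}}(\ldots)^{e_{\mathrm{est},1}}$ times a $W_2$/moment discrepancy, plus an $(\ldots)^{e_{\mathrm{est},2}}$ factor on the third argument.

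The second bracket is then bounded by a moment-dependent factor times
\[
W_2(\hat\mu^{f,j}_{XY,N},\tilde\mu^{f,j}_{XY,N})+\Delta_N^j,
\]
where $\Delta^j_N$ is the discrepancy between the i.i.d.\ empirical forecast $\tilde\mu^{f,j}_{XY,N}$ and its law $\tilde\mu^{f,j}_{XY}$. Only the statistics that $\map$ actually uses enter $\Delta^j_N$ (e.g.\ means and covariances, or feature moments through $L_g$), not the full $W_2$. These statistics concentrate at rate $\log k/\sqrt N$ by sub-Gaussian/sub-exponential concentration. Averaging over $\ell$, and using the moments of $\tilde\mu^{f,j}$ to control the $\omega$-dependent factor, yields the recursion
\[
D^j\le C_1^j D^{j-1}+C_2^j N^{-1/2},
\]
where $C_1^j,C_2^j$ are polynomials in the empirical and mean-field moments and in $\|Y^j\|$.

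\textbf{Bounding the random coefficients.} The moments entering $C_1^j,C_2^j$ are controlled with probability $1-1/(kJ\cdot\text{const})$ by three ingredients. Subsection~\ref{subsection:dynamics_control} gives the sub-Gaussian tails of $Y^{1:J}\sim\rho^J$, which produce the factors $\sqrt{\log k}$. Subsection~\ref{subsection:moments_mean_field} gives the mean-field moments, including Proposition~\ref{prop:subG_stability_conditioning} to keep sub-Gaussianity along random observations. Subsection~\ref{subsection:moments_ips} gives the empirical moment concentration. A union bound over $j\le J$ then places all of these on one event.

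\textbf{Main obstacle.} The hard part is that the empirical moments of the interacting system are not known a priori. They are bounded by the mean-field moments plus $D^{j-1}$ itself. Hence $C_1^j$ depends on $(1+D^{j-1})^{e_{\mathrm L}}$, which makes the recursion nonlinear. I would handle this inductively: on the good event, $D^{j-1}\lesssim(\log k)^{(j+1)/2}N^{-1/2}(1+(\log k/\sqrt N)^{p_{j-1}})$. Substituting this into the polynomial coefficients raises the exponent by factors $(1+2e_{\mathrm L})$ and adds $\max(e_{\mathrm L},e_{\mathrm{est},1}+\tfrac12)$ at each step, which reproduces the stated $p_j$. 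Each step also contributes one extra $\sqrt{\log k}$ factor from $\|Y^j\|$ and moment tails, which gives $(\log k)^{1+j/2}$.

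Finally, when $N\ge(\log k)^2$ we have $\log k/\sqrt N\le1$, so the bracket $1+(\log k/\sqrt N)^{p_j}$ is at most $2$, and taking the maximum over $j\le J$ gives the uniform bound.
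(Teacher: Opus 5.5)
Your architecture matches the paper's: synchronous coupling, a split into a same-measure/different-points term and a same-points/different-measures term, and a random-coefficient recursion closed on one high-probability event. There is, however, a genuine gap in how you control the observation-dependent factor.

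\textbf{The innovation factor.} In Assumption~\ref{assumption:algorithm_finite_particle}(1b) the covariance discrepancy is multiplied by $\|\omega\|_2+\|y-y^\star\|_2$. In your second bracket this is evaluated at $y=o^{f,j}_\ell$ and $y^\star=Y^j$. You propose to handle it through tails of $\|Y^j\|$ and moments of $\tilde\mu^{f,j}_{XY}$, but only \emph{centered} control is available:
\begin{itemize}
\item[] $\|Y^j-\E(Y^j\mid Y^{1:j-1})\|_2\lesssim\sqrt{\log k}$;
\item[] $\frac1N\sum_\ell\|o^{f,j}_\ell-m_f^j\|_2^2\lesssim 1+\log k/\sqrt N$, where $m_f^j\coloneqq\E(\tilde Y^j_f\mid Y^{1:j-1})$.
\end{itemize}
Neither $\|Y^j\|$ nor $\|m_f^j\|$ is bounded in terms of $\mathcal Q$. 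The mean-field centered moments are uniform over observation paths (Proposition~\ref{prop:mf_dynamics_higher_moments}); what depends on the random path is the mean-field \emph{mean}. You therefore need the mean-field predicted observation $m_f^j$ to stay close to the true predicted observation $\E(Y^j\mid Y^{1:j-1})$. A priori there is no reason for this: $m_f^j$ comes from a filter whose analysis maps are driven by random past observations, and nothing in your plan ties that filter to the data-generating process.

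The missing ingredient is Proposition~\ref{prop:mf_y_stability}:
\begin{itemize}
\item[] Couple the true HMM trajectory with a mean-field trajectory that is driven by the same observations but independent noises.
\item[] Use Assumption~\ref{assumption:algorithm_finite_particle}(3): the analysis map moves a point by at most $L_y(1+\Trcov(\nu)^{e_y})(\|y-y^\star\|_2+\|\omega\|_2)$.
\item[] Combine this with the path-uniform covariance bound to show inductively that $\|X^j-\tilde X^j_f\|_2$ is sub-Gaussian with a deterministic parameter.
\item[] Jensen and Proposition~\ref{prop:subG_stability_cond_exp} then give $\|m_f^j-\E(Y^j\mid Y^{1:j-1})\|_2\lesssim\sqrt{\log k}$ with high probability.
\end{itemize}
Your plan never invokes Assumption~\ref{assumption:algorithm_finite_particle}(3), although $L_y,e_y\in\mathcal Q$. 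This is where it enters, and it is the actual source of the extra $\sqrt{\log k}$ per step.

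\textbf{The interacting moments.} A second, smaller problem is your bound $\overline M_2(\hat\mu^{f,j}_{XY,N})\le\overline M_2(\tilde\mu^{f,j}_{XY,N})+L\,D^{j-1}$, followed by substitution of the inductive bound. This closes, but it leaves extra powers of $\log k$, so it does not give the stated $p_j$ or the threshold $N\ge(\log k)^2$. Write $\delta_{k,N}=\log k/\sqrt N$. Already at $j=2$ the Lipschitz factor contains
\[
(D^{1})^{2e_{\mathrm L}}\sim(\log k)^{e_{\mathrm L}}\delta_{k,N}^{2e_{\mathrm L}},
\]
and the factor $(\log k)^{e_{\mathrm L}}$ cannot be absorbed into $C(1+\delta_{k,N}^{p})$ when $N\asymp(\log k)^2$. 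The paper avoids this with Proposition~\ref{prop:algorithm_second_moment}, which bounds $\overline M_2(\hat\mu^{f,j}_{XY,N})\le C(1+\delta_{k,N}^{r_j/2})$ directly. It pushes the concentrated empirical moments of the i.i.d.\ inputs (initial particles, noises, seeds) through the $j$ forecast/analysis maps, without reference to the coupling. That recursion is where the factor $(1+2e_{\mathrm L})^{J-1}$ in $p_j$ comes from.
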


\begin{remark}
   The constant $C(\mathcal{Q})$ diverges to infinity as $J\to\infty$ and thus the bounds in Theorem~\ref{thm:ips_converges_to_iid_particles} only hold for finite time. Extending these bounds to infinite time horizons is of key interest for the analysis of filtering algorithms.
\end{remark}

As an immediate corollary, through the elementary equality for non-negative scalar random variables $X$ with finite expectation $\E(X) = \int_0^{\infty} \P(X > t)dt$, we can bound the expectation over particle realizations and observations. 

\begin{corollary}
\label{cor:ips_converges_to_iid_particles_exp}
Consider the same setting as in Theorem \ref{thm:ips_converges_to_iid_particles}. 
Then,
\[
\E \left(\sup_{1 \leq j \leq J}W_2 \bigl(\hat\mu_{X,N}^{a,j},\tilde\mu_{X, N}^{a,j}\bigr)\right)
\ \le\
\frac{C'(\mathcal{Q})}{\sqrt{N}}
\]
with a constant $C' \coloneqq C'(\mathcal{Q})>0$. 
\end{corollary}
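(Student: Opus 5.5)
The plan is to convert the high-probability bound of Theorem~\ref{thm:ips_converges_to_iid_particles} into a tail bound and integrate it with the layer-cake formula
\[
\E Z=\int_0^\infty \P(Z>t)\,\dd t ,
\qquad
Z\coloneqq \sup_{1\le j\le J}W_2\bigl(\hat\mu_{X,N}^{a,j},\tilde\mu_{X,N}^{a,j}\bigr).
\]
By the theorem, for every $k\ge2$ and with probability at least $1-1/k$,
\[
Z\le \max_{j\le J}\, C\left(1+\left(\frac{\log k}{\sqrt N}\right)^{p_j}\right)\frac{(\log k)^{1+j/2}}{\sqrt N}\eqqcolon t_N(k).
\]
Set $s=\log k\ge\log 2$. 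Then $t_N$ is continuous and increasing in $s$, and
\[
t_N\le \frac{C}{\sqrt N}\,\bigl(1+s^{p}N^{-p/2}\bigr)\,s^{1+J/2},
\qquad p\coloneqq \max_j p_j .
\]
The term $(s/\sqrt N)^{p_j}$ can exceed one only when $s>\sqrt N$, and then it is bounded by $(s/\sqrt N)^{p}$. Without loss of generality take $s\ge1$ by enlarging $C$.

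The key point is that the dependence on $N$ factors as $N^{-1/2}$ times a function of $s$ that grows only polynomially. Substitute $t=t_N(e^s)$ in the tail integral, using $\P(Z>t_N(e^s))\le e^{-s}$:
\[
\E Z\le t_N(2)+\int_{\log 2}^\infty e^{-s}\,\partial_s t_N(e^s)\,\dd s .
\]
Since $N^{-p/2}\le1$ for $N\ge1$, we have $\partial_s t_N\le \frac{C}{\sqrt N}\,P(s)$ for a polynomial $P$ of degree about $p+J/2$, whose coefficients depend only on $C$, $p$ and $J$. Because $\int e^{-s}P(s)\,\dd s<\infty$, this gives
\[
\E Z\le \frac{C'}{\sqrt N},
\qquad
C'=C\Bigl(P(\log2)+\int_{\log2}^\infty e^{-s}P(s)\,\dd s\Bigr).
\]
The constant $C'$ depends only on $\mathcal{Q}$, since $p_j$ is built from $e_{\mathrm L}$, $e_{\mathrm{est},1}$ and $J$. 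Equivalently, one can integrate by parts: $\int_0^\infty\P(Z>t)\,\dd t\le t_N(2)+\sum_{k\ge2}$-type dyadic blocks, each weighted by $1/k$.

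Finiteness of $\E Z$ itself (needed for the layer-cake identity) is automatic, since the identity holds for any nonnegative random variable with values in $[0,\infty]$. The argument only requires the bound to hold for all $k\ge2$, which the theorem provides. The main thing to check is that $C$ and $p_j$ are uniform in $k$ and $N$, which the theorem's statement guarantees. I do not expect a genuine obstacle beyond correctly handling the small-$N$ regime $N<(\log k)^2$. There the factor $(\log k/\sqrt N)^{p_j}$ is large, but it is still polynomial in $\log k$ once $N\ge1$, so it is absorbed by the exponential tail $e^{-s}$.
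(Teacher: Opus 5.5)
Your proposal is correct and follows the paper's route: the paper obtains this corollary in one line, by integrating the tail bound of Theorem~\ref{thm:ips_converges_to_iid_particles} via $\E(X)=\int_0^\infty \P(X>t)\,\dd t$. Your substitution $t=t_N(e^s)$, together with the observation that $N\ge 1$ lets you dominate $(\log k/\sqrt N)^{p_j}$ by a polynomial in $\log k$ so that only the prefactor $N^{-1/2}$ depends on $N$, supplies exactly the details that one line leaves implicit, and your remark that the layer-cake identity needs no a priori finiteness of the expectation is slightly more careful than the paper's phrasing.
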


\begin{remark}
Note that sub-Gaussian tail assumptions are only needed for the high--probability statements in Theorem \ref{thm:ips_converges_to_iid_particles}. For the bound
in Corollary~\ref{cor:ips_converges_to_iid_particles_exp} it would also suffice to assume
moment control of the driving noises and the initialization in Assumptions \ref{assumption:dyn_assumptions} and  \ref{assumption:algorithm_finite_particle}.
\end{remark}

Next, we would like to bound $W_p\bigl(\hat\mu_{X,N}^{a,j},\tilde\mu_X^{a,j}\bigr)$ for $1\le p \le 2$, for which we use the concentration result Proposition~\ref{cor:wp_concentration} in Appendix~\ref{subsection:sup_MFL} following Example~3.4 of \cite{larsson2024concentration} (we also refer to Theorem~2 of \cite{fournier2015rate}).
We use the high-probability empirical-measure rate \(\tilde\gamma_{k,N}^{p,d}\) defined in Proposition~\ref{cor:wp_concentration}. For the expectation bound, define the corresponding \(k\)-free empirical-measure rate
\[
\bar\gamma_N^{p,d} =
\begin{cases}
N^{-1/d}
&\text{if } 1\le p < \frac d2,\\
N^{-1/d}\bigl(\log(2+N)\bigr)^{2/d}
&\text{if } p = \frac{d}{2},\\
N^{-1/(2p)}
&\text{if } \frac d2 < p \le 2.
\end{cases}
\]

\begin{corollary}
\label{cor:finite_particle_converges_to_mean_field}
Consider the setting of Theorem \ref{thm:ips_converges_to_iid_particles} and the mean-field sequence $\{\tilde\mu_{X}^{a,j}\}_{j=1}^{J}$ as defined in Equation \eqref{eq:mf_alg_measure_perspective}  using the observation $y^{1:J}=Y^{1:J}$.  
Pick any $p\in[1,2]$. Then there exists a constant  $C''=C''(\mathcal{Q})>0$, depending only on $\mathcal{Q}$ and $p$, such that for all $k\geq 2$  and $N\geq (\log k)^2$, with probability at least $1-\frac1k$, 
\[
W_p\bigl(\hat\mu_{X,N}^{a,j},\tilde\mu_X^{a,j}\bigr)
\leq C''\left(\gamma_{k,N}^{p,n} +\frac{(\log k)^{1 +j/2}}{\sqrt{N}}\right) \text{ for all }1 \leq j \leq J.
\]
 Moreover,
\[
\E \left(\sup\limits_{1\le j\le J} W_p\bigl(\hat\mu_{X,N}^{a,j},\tilde\mu_X^{a,j}\bigr)\right)
 \le  C'' \bar\gamma_N^{p,n} .
\]
\end{corollary}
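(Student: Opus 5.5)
The proof combines Theorem~\ref{thm:ips_converges_to_iid_particles} with the empirical-measure concentration of Proposition~\ref{cor:wp_concentration}, through the triangle inequality~\eqref{eq:triangle_intuitive_1}:
\[
W_p(\hat\mu^{a,j}_{X,N},\tilde\mu^{a,j}_{X}) \le W_p(\hat\mu^{a,j}_{X,N},\tilde\mu^{a,j}_{X,N}) + W_p(\tilde\mu^{a,j}_{X,N},\tilde\mu^{a,j}_{X}).
\]
For the first term, $p\le 2$ gives $W_p\le W_2$. On an event of probability at least $1-\frac{1}{2k}$, Theorem~\ref{thm:ips_converges_to_iid_particles} applied with $2k$ in place of $k$ bounds it by $C(\log 2k)^{1+j/2}/\sqrt N$. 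Since $\log 2k\le 2\log k$ for $k\ge2$, this is at most $C'(\log k)^{1+j/2}/\sqrt N$. The theorem's condition $N\ge(\log 2k)^2$ is absorbed by enlarging constants, since $\log 2k\le 2\log k$. One may either restate the first display of the theorem, whose extra factor is bounded when $N\ge(\log k)^2$, or simply enlarge $C''$.

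For the second term, we condition on $Y^{1:J}$. Given the observation path, the $v_\ell^{a,j}$ are i.i.d.\ with law $\tilde\mu^{a,j}_X$. This holds because the driving noises are independent of $Y^{1:J}$, and because the mean-field map uses the deterministic (given $y^{1:j}$) law $\tilde\mu^{f,j}_{XY}$. Proposition~\ref{cor:wp_concentration} then gives
\[
W_p(\tilde\mu^{a,j}_{X,N},\tilde\mu^{a,j}_X)\lesssim \gamma^{p,n}_{k,N}
\]
with conditional probability at least $1-\frac{1}{2kJ}$ for each $j$. The implied constant depends on a sub-Gaussian or moment bound for $\tilde\mu^{a,j}_X$, for instance $\|\tilde\mu^{a,j}_X\|_{2,\psi_2}$.

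\textbf{Main obstacle.} This moment bound is random through $Y^{1:J}$, and it is only controlled with high probability by the mean-field moment results of Subsection~\ref{subsection:moments_mean_field}; heavier use of $\log k$ may be needed there. I would therefore intersect with a further event of probability $1-\frac{1}{4k}$ on which these moments are bounded by a polylogarithmic function of $k$. The dependence of the constant on $\log k$ then has to be absorbed into the rate. For $\gamma$ of the form $(\log k/N)^{1/n}$, a moment scale $M$ multiplies the bound linearly. I expect the precise statement of Proposition~\ref{cor:wp_concentration}, following Example~3.4 of \cite{larsson2024concentration}, to be phrased in terms of sub-Gaussian constants that can be bounded deterministically in terms of $\mathcal Q$ with high probability. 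A union bound over $j\le J$ and over the three events (split as $\frac{1}{2k}$, $\frac{1}{4k}$, $\frac{1}{4k}$) finishes the high-probability statement, with $C''$ depending on $\mathcal Q$ and $p$. The replacement of $k$ by $2k$ or $4kJ$ inside $\gamma$ is absorbed into the constant, again using $\log(ck)\lesssim\log k$.

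For the expectation, I would avoid the tail-integration route and instead use
\[
\E\sup_j W_p(\hat\mu^{a,j}_{X,N},\tilde\mu^{a,j}_X)\le \E\sup_j W_2(\hat\mu^{a,j}_{X,N},\tilde\mu^{a,j}_{X,N})+\sum_{j\le J}\E\, W_p(\tilde\mu^{a,j}_{X,N},\tilde\mu^{a,j}_X).
\]
The first term is at most $C'/\sqrt N$ by Corollary~\ref{cor:ips_converges_to_iid_particles_exp}. The second is bounded by Fournier--Guillin, Theorem~2 of \cite{fournier2015rate}, conditionally on $Y^{1:J}$. This gives $\bar\gamma^{p,n}_N$ times a conditional moment of order $q>p$ of $\tilde\mu^{a,j}_X$, whose expectation over $Y^{1:J}$ is finite by the mean-field moment bounds, using H\"older if needed. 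Finally, $N^{-1/2}\le\bar\gamma^{p,n}_N$ in every case: $1/n\le 1/2$ when $p<n/2$ forces $n\ge 3$, and $1/(2p)\le1/2$ otherwise. So the $N^{-1/2}$ term is absorbed, giving the bound $C''\bar\gamma^{p,n}_N$.
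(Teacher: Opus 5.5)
Your architecture matches the paper's proof. You use the triangle inequality through $\tilde\mu^{a,j}_{X,N}$, and Theorem~\ref{thm:ips_converges_to_iid_particles} with $W_p\le W_2$ for the interaction term. You apply Proposition~\ref{cor:wp_concentration} conditionally on $Y^{1:J}$ with a union bound over $j$. For the expectation you combine Corollary~\ref{cor:ips_converges_to_iid_particles_exp} with a $k$-free empirical rate. Your bookkeeping of confidence levels ($2k$ for $k$, the prefactor bounded when $N\ge(\log k)^2$) and the absorption $N^{-1/2}\le\bar\gamma^{p,n}_N$ are fine.

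The gap is the step you call the main obstacle, and your workaround does not give the stated bound. Suppose the centered sub-Gaussian constant of $\tilde\mu^{a,j}_X$ were only bounded by $(\log k)^c$ on a high-probability event. By the linear scaling you note, Proposition~\ref{cor:wp_concentration} would then give $W_p(\tilde\mu^{a,j}_{X,N},\tilde\mu^{a,j}_X)\lesssim(\log k)^c\,\gamma^{p,n}_{k,N}$. That factor cannot be absorbed into a $k$-independent $C''$. Take $n>2p$ and $N$ a large power of $\log k$: then $\gamma^{p,n}_{k,N}$ dominates $(\log k)^{1+j/2}/\sqrt N$, and the ratio to the claimed bound grows like $(\log k)^c$ as $k\to\infty$. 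Your closing expectation that the constants can be bounded ``deterministically in terms of $\mathcal Q$ with high probability'' is a hope, not an argument.

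The missing observation is that the constant is deterministic for every observation path, so no extra event is needed.
\begin{itemize}
\item Proposition~\ref{prop:mf_dynamics_higher_moments} holds for each fixed $y^{1:J}$ with a constant depending only on model and algorithm parameters. Hence $\Trcov(\tilde\mu^{f,j}_{XY})\le C_{\mathrm{tr}}\tilde C_{\mathrm{subG}}^2$ uniformly in the path.
\item By Assumption~\ref{assumption:algorithm_finite_particle}(1a), the Lipschitz constant of $\map^{\tilde\mu^{f,j}_{XY}}_{y^j}$ is then bounded independently of $y^j$.
\item Proposition~\ref{prop:subg_lip} therefore bounds $\|\tilde\mu^{a,j}_X-\E\tilde\mu^{a,j}_X\|_{2,\psi_2}$ uniformly. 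This is the bound on $a^j$ inside the proof of Proposition~\ref{prop:mf_dynamics_higher_moments}, which is what the paper uses.
\item The only remaining path dependence is the mean of $\tilde\mu^{a,j}_X$. It is irrelevant, because $W_p$ is translation invariant and Proposition~\ref{cor:wp_concentration} only needs the centered condition.
\end{itemize}
The conditional bound thus holds with a path-independent constant, and integrating over $Y^{1:J}$ is immediate. For the same reason your H\"older step in the expectation part is unnecessary. Use centered moments, or simply the expectation part of Proposition~\ref{cor:wp_concentration}.

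A smaller point: Proposition~\ref{cor:wp_concentration} gives $\tilde\gamma^{p,n}_{k,N}=\max\{\gamma^{p,n}_{k,N},\sqrt{\log k/N}\}$, not $\gamma^{p,n}_{k,N}$. The paper absorbs $\sqrt{\log k/N}$ into the interaction term. Alternatively, $N\ge(\log k)^2$ forces $\log k\le N$, and then $\gamma^{p,n}_{k,N}\ge\sqrt{\log k/N}$ in all three cases. Either way, this replacement has to be justified rather than made silently.
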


\begin{proof}
Proposition \ref{prop:mf_dynamics_higher_moments} states the sub-Gaussian bound for the forecast laws \(\tilde \mu_{XY}^{f,j}\), but its proof also controls the corresponding centered analysis variables \(\tilde X^{a,j}\) at each step of the same induction. Hence, for fixed observation paths \(y^{1:j}\), the analysis laws satisfy \(\left\|\tilde \mu^{a,j}_{X}-\E\tilde \mu^{a,j}_{X}\right\|_{2,\psi_2}\leq \tilde C_{\mathrm{subG}}\) for all $1\le j\le J$, after increasing the constant if necessary, with a constant uniform over observation paths.
Let us condition on \(Y^{1:J}\). Then \(\tilde\mu_{X,N}^{a,j}\) is an empirical measure of i.i.d.\ samples from the deterministic law \(\tilde\mu_X^{a,j}\), so Proposition~\ref{cor:wp_concentration} and a union bound over \(j=1,\ldots,J\) give
\[
\mathbb P\left(W_p(\tilde\mu_{X,N}^{a,j},\tilde\mu_X^{a,j})\le C\tilde\gamma_{k,N}^{p,n}\ \text{for all }1\le j\le J\,\middle|\,Y^{1:J}\right)\ge 1-\frac{1}{2k},
\]
after adjusting the constants. By the definition of \(\tilde\gamma_{k,N}^{p,n}\) and the elementary bound \(\max(a,b)\le a+b\),
\[
\tilde\gamma_{k,N}^{p,n}\le \gamma_{k,N}^{p,n}+\sqrt{\log k/N}.
\]
Since \(\sqrt{\log k/N}\lesssim_J (\log k)^{1+j/2}/\sqrt N\) for \(k\ge2\) and \(1\le j\le J\), the square-root term is absorbed by the interaction term. Integrating this conditional statement over \(Y^{1:J}\), intersecting it with the event from Theorem~\ref{thm:ips_converges_to_iid_particles}, and applying the triangle inequality yields the high-probability bound. The expectation bound follows similarly by conditioning on \(Y^{1:J}\), applying the \(k\)-free empirical-measure expectation rate \(\bar\gamma_N^{p,n}\) uniformly in the observation path, and combining with Corollary~\ref{cor:ips_converges_to_iid_particles_exp}. The latter term is dominated by \(\bar\gamma_N^{p,n}\) up to constants for \(p\in[1,2]\).
\end{proof}

\begin{remark}
It is well known that the empirical Wasserstein rate as in Corollary~\ref{cor:finite_particle_converges_to_mean_field} (i.e., \(N^{-1/d}\) in high dimensions) is sharp in general~\cite{dudley1969speed,dobric1995asymptotics}; see also Theorem~2 of \cite{fournier2015rate}. 
While there are regimes in which this curse of dimensionality can be alleviated~\cite{fournier2015rate, weed2019sharp}, identifying structural assumptions natural to filtering
that ensure improved rates lies beyond the scope of this work.
\end{remark}

As illustrated in Figure \ref{fig:decomposition}, our approach 
decomposes the total error into an {interaction} term---quantifying how far the finite-\(N\) interacting ensemble is from behaving as i.i.d.\ draws from the mean field (Theorem \ref{thm:ips_converges_to_iid_particles})---and an unavoidable {sampling} term that no ensemble-based algorithm can improve without additional structure (as in the proof of Corollary \ref{cor:finite_particle_converges_to_mean_field}). The interaction term scales at the optimal Monte Carlo rate \(N^{-1/2}\), while the i.i.d.\ term scales as \(N^{-1/n}\) in the high-dimensional regime \(n>2p\), reflecting the intrinsic curse of dimensionality for  \(W_p\) approximation.  
\begin{figure}[t]
\centering
\begin{tikzpicture}[
  >=LaTeX, node distance=40mm,
  meas/.style={draw, rounded corners=2pt, inner sep=2pt, fill=gray!5, align=center},
  lab/.style={font=\scriptsize, fill=white, inner sep=1pt}
]
\node[meas] (hat) {$\hat\mu_{X,N}^{a,j}$};
\node[meas, right=of hat] (iid) {$\tilde\mu_{X,N}^{a,j}$};
\node[meas, right=of iid] (mf) {$\tilde\mu_{X}^{a,j}$};
\draw[<->, thick] (hat) -- node[lab, above]{\(N^{-1/2}\) } (iid);
\draw[<->, thick] (iid) -- node[lab, above]{\(\tilde\gamma_{k,N}^{p,n} \ (N^{-1/n} \text{ for } n > 2p)\)} (mf);
\end{tikzpicture}
\caption{In $W_p$-distance, the interacting ensemble is very close to an i.i.d.\ ensemble from the mean-field law (Monte Carlo rate), but the i.i.d.\ sample can still be far from the mean-field law in the high-dimensional regime due to the curse of dimensionality.}
\label{fig:decomposition}
\end{figure}

\begin{remark}
While these theorems provide finite-horizon convergence guarantees for a broad class of interacting particle systems, natural directions for future work concern stronger concentration bounds and long-time behavior. The main challenge in the expectation analysis is to ensure that the
contraction induced by the analysis step quantitatively dominates the
expansive effect of the forecast, so that the net one-step map remains
contractive on average. Natural structural assumptions on the underlying dynamics that can allow for this dominance include
dissipation, observability, or ergodicity---and possibly stabilization mechanisms embedded in the analysis map~$\map$ (e.g., covariance inflation/localization in the linear case for $\map$~\cite{kelly2014well,tong2015nonlinear,tong2023localized, sanz2024long}).   Obtaining  time-uniform concentration bounds likely requires {simultaneously} controlling randomness across time, particles, and observations---rather than our current inductive approach that bounds each step in isolation. 
\end{remark}

\subsection{Controlling the Dynamics}
\label{subsection:dynamics_control}
To handle the randomness introduced by the observation path $y^{1:j}$, we develop control of the moments of the conditional forecast laws $\mu^{f,j}_{XY}$. These estimates will be a key ingredient for bounding the random Lipschitz  constants arising in the coupling argument. We will derive these bounds under the following assumptions on the dynamics and observation model in Equation \eqref{eq:dynamics}. 

\begin{assumption}[Dynamics]
\label{assumption:dyn_assumptions}
The dynamics satisfy the following properties:
\begin{enumerate} 
 \item  \emph{Regularity of Noise:} There are parameters $\sigma_{\max}, \lambda_{\max} \geq 0$ such that
 $$
 \|\eta^j\|_{2,\psi_2}  \leq \lambda_{\max}\quad (j\geq 1),
 \qquad
 \| \xi^j \|_{2,\psi_2} \leq \sigma_{\max}\quad (j\geq 0).
 $$
 \item \emph{HMM:} $X^0$, $\{\xi^j\}$, and $\{\eta^j\}$ are jointly independent.
 \item \emph{Regularity of Initialization: }$\|\mu^0\|_{2,\psi_2} \leq  \sigma_X$ for some $\sigma_X > 0$. 
    \item \emph{Lipschitzness of Dynamics: } $\Psi: \mathbb{R}^n \rightarrow \mathbb{R}^n$ is Lipschitz with constant $L_\Psi \geq 0$. 
    \item  \emph{Lipschitzness of Observation Map: } $h: \mathbb{R}^n \rightarrow \mathbb{R}^m$ is Lipschitz with constant $L_h \geq 0$. 
\end{enumerate}
\end{assumption}

Our main goal in this subsection will be to prove the following result.

\begin{proposition}
 \label{prop:process_covariance} 
Suppose Assumption \ref{assumption:dyn_assumptions} holds.  Fix some $J \in\N_{\ge1}$, $q\geq 1$,  and consider the random observation $Y^{1:J} \sim \rho^J$. Let $\{\mu_{XY}^{f,j} \}_{j= 1}^J$ be the random forecast distributions obtained from the observation $y^{1:J} = Y^{1:J}$ as defined in Equation \eqref{eq:joint_forecast}.  Then, there is a constant
\[
C_M\coloneqq  C_M(\sigma_X,\sigma_{\max}, \lambda_{\max}, L_\Psi, L_h, J, q)
\]
such that for all $k \geq 2$ and $1\leq q^\prime \leq q$,
\begin{align*}
\P\Bigl(&\overline M_{q^\prime}\left(\mu_{XY}^{f,j}\right) \leq C_M \log k
\text{ and }
\|Y^j - \E(Y^j|Y^{1:j-1})\|_2^2  \leq C_M{\log k},\\
&\hspace{7cm}\forall j \in \{1, \ldots, J\}\Bigr)
\geq 1 - \frac{1}{k}.
\end{align*}
\end{proposition}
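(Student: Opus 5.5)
The plan is to work with the unconditional joint law of the whole path $(X^0,\ldots,X^J,Y^1,\ldots,Y^J)$, which under Assumption~\ref{assumption:dyn_assumptions} is sub-Gaussian, and then transfer tail control to the conditional laws $\mu_{XY}^{f,j}=\Law(X^j,Y^j\mid Y^{1:j-1})$ by a Markov-inequality argument on conditional moment generating functions.

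\textbf{Step 1 (unconditional sub-Gaussianity).} By induction on $j$, I will show that $\|X^j\|_{2,\psi_2}$ and $\|Y^j\|_{2,\psi_2}$ are bounded by a constant depending only on $(\sigma_X,\sigma_{\max},\lambda_{\max},L_\Psi,L_h,J)$. The Lipschitz bounds give
\[
\|\Psi(x)\|_2\le \|\Psi(0)\|_2+L_\Psi\|x\|_2 .
\]
The $\psi_2$-norm of the Euclidean norm satisfies a triangle inequality, so
\[
\|X^{j+1}\|_{2,\psi_2}\lesssim \|\Psi(0)\|_2+L_\Psi\|X^j\|_{2,\psi_2}+\sigma_{\max},
\]
and similarly for $Y^{j+1}$ through $h$ and $\eta^{j+1}$. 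The constants $\Psi(0)$ and $h(0)$ appear here; I will either absorb them or note that they can be removed by centering, since the statement only involves centered quantities. The centered conditional moment $\overline M_{q'}$ and $Y^j-\E(Y^j\mid Y^{1:j-1})$ are invariant under adding constants, so I can WLOG assume $\Psi(0)=0$ and $h(0)=0$. Indeed, adding a constant to $\Psi$ shifts $X^{j}$ deterministically and then shifts the $h$-argument. That does not remove the offset fully, but $\|h(x+c)\|_2\le \|h(c)\|_2+L_h\|x\|_2$ still leaves a deterministic constant. A cleaner route is to bound the oscillation directly: $\overline M_{q'}(\mu)\le 2\bigl(\int\|z-z_0\|^{q'}\,\dd\mu\bigr)^{1/q'}$ for any fixed $z_0$. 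I will therefore compare $(X^j,Y^j)$ with the deterministic noiseless trajectory $(\bar x^j,h(\bar x^j))$, where $\bar x^{j+1}=\Psi(\bar x^j)$ and $\bar x^0=0$. The differences then obey the same recursion with zero offsets.

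\textbf{Step 2 (conditional moments via exponential moments).} Let $Z^j=(X^j-\bar x^j,\,Y^j-h(\bar x^j))$ with $\|Z^j\|_{2,\psi_2}\le K$. Then $\E\bigl(e^{\|Z^j\|_2^2/K^2}\bigr)\le 2$. By the tower property, the random variable
\[
G_j\coloneqq \E\bigl(e^{\|Z^j\|_2^2/K^2}\mid Y^{1:j-1}\bigr)
\]
has mean at most $2$, so Markov gives $\P(G_j>2Jk)\le 1/(2Jk)$. On the event $G_j\le 2Jk$, Jensen gives
\[
\bigl(\E(\|Z^j\|_2^{q'}\mid Y^{1:j-1})\bigr)^{2/q'}\le K^2\, c_q\log(2Jk).
\]
This works because $t\mapsto (K^2\log t)^{q'/2}$ is concave for large $t$; the small-$t$ regime can be handled by $x^{q'/2}\le c_q e^{x}$-type bounds. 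Using the oscillation bound, this yields $\overline M_{q'}(\mu^{f,j}_{XY})\lesssim K\sqrt{\log k}\le C_M\log k$ since $k\ge2$, uniformly over $q'\le q$ via monotonicity of $L^{q'}$ norms.

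\textbf{Step 3 (innovation term).} For $\|Y^j-\E(Y^j\mid Y^{1:j-1})\|_2^2$, I split
\[
\|Y^j-\E(Y^j\mid\cdot)\|_2\le \|Y^j-h(\bar x^j)\|_2+\E(\|Y^j-h(\bar x^j)\|_2\mid\cdot).
\]
The first term has $\P(\|\cdot\|_2^2>K^2\log(4Jk))\le 2/(4Jk)$ by the sub-Gaussian tail. The second term is bounded on the Step~2 event by $K\sqrt{c\log(2Jk)}$. Squaring gives a bound $C_M\log k$, absorbing $\log(4J)$ into $C_M$ using $k\ge2$. A union bound over the $2J$ events finishes the proof.

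I expect the main obstacle to be Step~2: making the conditional Jensen step precise for regular conditional distributions, and keeping the resulting bound uniform in the observation path. A pointwise-in-$y$ bound fails because conditioning can inflate tails on rare observation values, which is exactly why only a high-probability statement is possible.
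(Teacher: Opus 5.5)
Your proof is correct, and it differs from the paper's at the key step. The paper centers at the unconditional mean $\E(X^j,Y^j)$ (Lemma~\ref{lem:process_subG}) and transfers tail control to $\mu^{f,j}_{XY}$ through Proposition~\ref{prop:subG_stability_conditioning}. That proof uses the non-centered MGF characterization of Proposition~\ref{prop:subg_mgf_away_from_0}, a union bound over a dyadic net of $\lambda$'s, and log-convexity. It yields a conditional $\psi_2$-norm of order $(1+c)$ with probability $1-e^{-c}$, hence conditional moments of order $\log k$. The innovation term is then handled separately via Proposition~\ref{prop:subG_stability_cond_exp}.

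You instead apply Markov's inequality once to the conditional Orlicz functional $G_j$, which controls all conditional moments at once. No net over $\lambda$ is needed, and the same event serves both $\overline M_{q'}$ (uniformly in $q'\le q$) and the conditional-mean part of the innovation. This is simpler and also sharper. On $\{G_j\le M\}$ with $M\ge 2$, Jensen for the concave map $x\mapsto x^{1/\log_2 M}$ gives $\E\bigl(e^{\|Z^j\|_2^2/(K^2\log_2 M)}\mid Y^{1:j-1}\bigr)\le 2$. That is a conditional $\psi_2$-norm at most $K\sqrt{\log_2 M}$, so the conditional moments are of order $\sqrt{\log k}$ rather than $\log k$. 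The same one-line argument also improves the $(1+c)$ factor in Proposition~\ref{prop:subG_stability_conditioning} to order $\sqrt{1+c}$.

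This power-Jensen step is the cleanest way to make your concave-majorant remark rigorous. The measurability issue you flag as the main obstacle is harmless: for each fixed path $y^{1:j-1}$, Jensen is applied to the single probability measure $\mu^{f,j}_{XY}$.

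Centering at the noiseless trajectory $(\bar x^j,h(\bar x^j))$ works as well as centering at $\E(X^j,Y^j)$, and it even avoids the factor $2$ of Proposition~\ref{prop:subg_lip}. You should drop the detour that assumes $\Psi(0)=h(0)=0$ without loss of generality (which you yourself note fails) and start directly from that comparison.

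What the paper's route buys is a standalone MGF-form statement about sub-Gaussianity under disintegration, which it presents as of independent interest. For the proposition at hand, your argument suffices.

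One arithmetic slip needs fixing. $\E G_j\le 2$ gives $\P(G_j>2Jk)\le 1/(Jk)$, not $1/(2Jk)$, so with your thresholds the union bound over the $2J$ events totals $3/(2k)$. Raise the threshold to $4Jk$ to close it at $1/k$.
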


To prove Proposition \ref{prop:process_covariance}, our strategy is to prove sub-Gaussianity of
\[
\|Y^j - \E(Y^j|Y^{1:j-1})\|_2
\]
and the norm of $\mu_{XY}^{f,j}$ with high probability over the random observation path \(Y^{1:j}\). 
Proving that under Assumption \ref{assumption:dyn_assumptions} the Euclidean norm of  $(X^j, Y^j)$ as defined in Equation \eqref{eq:dynamics} has sub-Gaussian tails is simple.

\begin{lemma}
    \label{lem:process_subG} 
Assume that $(X^j, Y^j)$ is defined as in Equation \eqref{eq:dynamics} and suppose that Assumption \ref{assumption:dyn_assumptions} holds. Let $J \in \N$. Then the norm of the centered process is sub-Gaussian for $1\leq j \leq J$, 
$$
\left\|(X^j, Y^j) - \E(X^j, Y^j)\right\|_{2,\psi_2} \leq C_{\text{subG}}
$$ with $C_{\text{subG}} \coloneqq C_{\text{subG}} (\sigma_X, \sigma_{\max}, \lambda_{\max}, L_\Psi, L_h, J)$. 
\end{lemma}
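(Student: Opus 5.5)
The plan is an induction on $j$ that controls the uncentered norms $\|X^j\|_{2,\psi_2}$ and $\|Y^j\|_{2,\psi_2}$ of the unconditioned process, and then passes to the centered quantities at the end. Throughout we use two facts. First, $\|\cdot\|_{2,\psi_2}$ is a norm on random vectors, so it satisfies the triangle inequality. Second, a deterministic vector $c$ has $\|c\|_{2,\psi_2}=\|c\|_2/\sqrt{\log 2}$.

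\textbf{Base case.} For $j=0$, Assumption~\ref{assumption:dyn_assumptions} gives $\|X^0\|_{2,\psi_2}\le\sigma_X$.

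\textbf{State step.} Since $\Psi$ is $L_\Psi$-Lipschitz,
\[
\|\Psi(x)\|_2\le \|\Psi(0)\|_2+L_\Psi\|x\|_2 .
\]
Monotonicity of the Orlicz norm in the pointwise order of nonnegative variables, together with the triangle inequality, then yields
\[
\|X^{j+1}\|_{2,\psi_2}\le \tfrac{\|\Psi(0)\|_2}{\sqrt{\log 2}}+L_\Psi\|X^j\|_{2,\psi_2}+\sigma_{\max}.
\]
Independence of $\xi^j$ is not even needed here, because the triangle inequality holds for arbitrary dependence.

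\textbf{Observation step.} In the same way,
\[
\|Y^{j}\|_{2,\psi_2}\le \tfrac{\|h(0)\|_2}{\sqrt{\log 2}}+L_h\|X^j\|_{2,\psi_2}+\lambda_{\max}.
\]

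\textbf{Joint vector.} Using $\|(x,y)\|_2\le \|x\|_2+\|y\|_2$, the joint vector satisfies
\[
\|(X^j,Y^j)\|_{2,\psi_2}\le \|X^j\|_{2,\psi_2}+\|Y^j\|_{2,\psi_2}.
\]
Iterating these recursions up to $J$ gives a bound that is geometric in $L_\Psi$.

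\textbf{Centering.} We have $\|\E Z\|_2\le \E\|Z\|_2\le C\|Z\|_{2,\psi_2}$, since $\psi_2$-bounded variables have $L^1$ norm at most a constant times the $\psi_2$ norm. Applying the triangle inequality once more gives
\[
\|Z-\E Z\|_{2,\psi_2}\le (1+C/\sqrt{\log 2})\,\|Z\|_{2,\psi_2}.
\]

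\textbf{Main obstacle.} The difficulty is bookkeeping the constant dependence. Strictly, the constant depends on $\|\Psi(0)\|_2$ and $\|h(0)\|_2$, which are not in the listed parameter set. I would avoid them by working with centered quantities from the start. Write $\tilde X^j=X^j-\E X^j$, and use independent copies $X'$ to symmetrize: for any $x_0$,
\[
\|\Psi(X)-\E\Psi(X)\|_2\le \E'\|\Psi(X)-\Psi(X')\|_2\le L_\Psi\bigl(\|\tilde X\|_2+\E\|\tilde X'\|_2\bigr).
\]
By Jensen's inequality for the convex Orlicz functional, this gives
\[
\|\Psi(X)-\E\Psi(X)\|_{2,\psi_2}\le L_\Psi\bigl(1+C\bigr)\,\|\tilde X\|_{2,\psi_2},
\]
with no offset term. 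The same argument applies to $h$. The noise is handled by
\[
\|\xi-\E\xi\|_{2,\psi_2}\le (1+C)\,\sigma_{\max},
\]
and similarly for $\eta$ with $\lambda_{\max}$. The initialization satisfies $\|\tilde X^0\|_{2,\psi_2}\le (1+C)\sigma_X$. Running the induction on the centered variables then yields $C_{\mathrm{subG}}$ depending only on $(\sigma_X,\sigma_{\max},\lambda_{\max},L_\Psi,L_h,J)$, as claimed.
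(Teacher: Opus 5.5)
Your final, centered argument is correct and is essentially the paper's proof. The paper also inducts on $\|X^j-\E X^j\|_{2,\psi_2}$ starting from $2\sigma_X$, handles the Lipschitz step by symmetrizing against an independent copy (its Proposition~\ref{prop:subg_lip}, which gives the same factor $2L$ your constants produce), adds the centered noise by the triangle inequality, and gets the joint bound from $\|(x,y)\|_2\le\|x\|_2+\|y\|_2$. You are right to drop the uncentered first attempt, since it would bring in $\|\Psi(0)\|_2$ and $\|h(0)\|_2$.
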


The proof of this lemma is a straightforward induction using standard properties of sub-Gaussian random variables; see Appendix~\ref{subsection:sup_moments_dyn_process}. 
The main difficulty in Proposition~\ref{prop:process_covariance} stems from conditioning: both the term  
$\|Y^j-\E(Y^j\mid Y^{1:j-1})\|_2$ and the forecast law $\mu_{XY}^{f,j}$ are random and $Y^{1:j}$-dependent. 
The sub-Gaussianity of $\|Y^j-\E(Y^j\mid Y^{1:j-1})\|_2$ is immediate from that of $Y^j$ and follows from the following proposition.

\begin{proposition}
\label{prop:subG_stability_cond_exp}
Let $X$ be a sub-Gaussian random vector. Consider any other random variable $Y$ on the same probability space. Then 
$$
\left\|\E(X|Y)\right\|_{2,\psi_2} \leq  \|X\|_{2,\psi_2}. 
$$
\end{proposition}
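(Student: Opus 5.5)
The plan is to prove this by conditional Jensen applied to the convex function $\varphi_t(u)=\exp(u^2/t^2)$ composed with the Euclidean norm. Set $t=\|X\|_{2,\psi_2}$. If $t=0$, then $X=0$ almost surely, hence $\E(X|Y)=0$ almost surely and there is nothing to prove. Otherwise the infimum in \eqref{eq:Orlicz} is attained, so $\E\exp(\|X\|_2^2/t^2)\le 2$. Note also that $X$ sub-Gaussian implies $X\in L^1$, since $\|X\|_2$ has finite moments of all orders. Hence $\E(X|Y)$ is well defined, for instance as $\int x\,\dd\mu_{X|Y}(x)$ using the regular conditional distribution.

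The key observation is that the map $F:\R^u\to\R$, $F(x)=\exp(\|x\|_2^2/t^2)$, is convex. Indeed, $x\mapsto\|x\|_2$ is convex and nonnegative, $s\mapsto s^2/t^2$ is convex and nondecreasing on $[0,\infty)$, and $\exp$ is convex and nondecreasing. A composition of convex nondecreasing functions with a convex function is convex. The conditional Jensen inequality for vector-valued integrands and a convex $F$ then gives
\[
F\bigl(\E(X|Y)\bigr)\le \E\bigl(F(X)\,\big|\,Y\bigr)\quad\text{a.s.}
\]
This can be justified via the regular conditional distribution $\mu_{X|Y=y}$, applying ordinary Jensen to each $\mu_{X|Y=y}$. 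That step needs $\int\|x\|_2\,\dd\mu_{X|Y=y}<\infty$, which holds for $\mu_Y$-a.e.\ $y$ because $\E\|X\|_2<\infty$. Alternatively, one can represent $F$ as a supremum of countably many affine functions.

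Taking expectations yields
\[
\E\exp\bigl(\|\E(X|Y)\|_2^2/t^2\bigr)\le \E\exp\bigl(\|X\|_2^2/t^2\bigr)\le 2,
\]
so $t$ is admissible in the definition of $\|\E(X|Y)\|_{2,\psi_2}$, which gives the claim.

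The only mildly delicate point is the vector-valued conditional Jensen step, specifically the integrability and measurability bookkeeping. I would handle it through the disintegration, as in the paper's notation section, on the full-measure set of $y$ where the conditional first moment is finite. Since $F$ might take large values, I would apply Jensen in its form for nonnegative convex functions, which needs no integrability of $F(X)$ beyond what is already given.
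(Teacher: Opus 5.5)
Your proposal is correct and is essentially the paper's proof: apply conditional Jensen to the convex map $x\mapsto\exp(\|x\|_2^2/t^2)$, then take expectations to see that every $t$ admissible for $X$ in the Orlicz infimum is also admissible for $\E(X|Y)$. The differences are cosmetic: you fix $t=\|X\|_{2,\psi_2}$ using attainment of the infimum and treat $t=0$ separately, whereas the paper compares the two infima directly, and you supply the convexity and integrability details that the paper leaves implicit.
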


A short proof is included in Appendix \ref{subsection:sup_moments_dyn_process}. 
By contrast, controlling $\mu_{XY}^{f,j}$ is more subtle: even after conditioning on $Y^{1:j}$, it remains a   probability measure. To obtain usable bounds, we must understand how sub-Gaussianity behaves under conditioning and how such stability properties transfer to measure-valued objects.  We recall  the following basic fact from \cite{vershynin2018high}. 

\begin{proposition}[Prop.~2.5.2 of \cite{vershynin2018high}]
\label{prop:equiv_subg}
Let $X$ be a random variable. The following statements are equivalent.
\begin{enumerate}
    \item $X$ is sub-Gaussian, namely $K_1 \coloneqq  \|X\|_{\psi_2}< \infty$. 
    \item The tails of $X$ satisfy 
    $$
    \P\bigl(|X|\ge t\bigr)\leq 2\exp \bigl(-t^2/K_2^2\bigr)\quad \text{ for all }  t\geq0 
    $$
    for some constant $K_2 > 0$.
  \item  The moments of $X$ grow as 
  $$
\E(|X|^p)^{\frac{1}{p}}\leq K_3 \sqrt{p} \quad \text{ for all } p \geq 1
  $$
  for some constant $K_3 > 0$.
\end{enumerate}
Moreover, if $\E(X) = 0$, then all the above are also equivalent to the following fourth property:
$$
\E \exp (\lambda X) \leq \exp \left(\lambda^{2}K_4^2\right) \quad \text{ for all }\lambda \in \mathbb{R}.
$$
for some constant $K_4 > 0$. Further, there exists a universal constant $\tilde C_{\psi_2} \geq 1$ such that if Property $j\in\{1,2,3\}$ holds with parameter $K_j> 0$, then Property $i\in\{1,2,3\}$ also holds with parameter $K_i>0$ that is bounded as
$$
K_i \leq  \tilde C_{\psi_2}K_j
$$
for all $i, j \in \{1,2,3\}$. In the centered case, the same statement holds for all $i,j\in\{1,2,3,4\}$, where Property \(4\) is the moment-generating-function bound above.
\end{proposition}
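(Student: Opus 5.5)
The proof will follow the standard cycle of implications $1\Rightarrow 2\Rightarrow 3\Rightarrow 1$, tracking constants so that each passage inflates the parameter by at most a universal factor. The centered case is then handled separately via $3\Rightarrow 4\Rightarrow 2$. By homogeneity we may rescale $X$ so that the relevant parameter equals $1$; every constant below is then universal.

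\emph{Step $1\Rightarrow 2$.} This is Markov's inequality applied to $e^{X^2/K_1^2}$:
\[
\P(|X|\ge t)=\P\bigl(e^{X^2/K_1^2}\ge e^{t^2/K_1^2}\bigr)\le 2e^{-t^2/K_1^2}.
\]
So Property 2 holds with $K_2=K_1$.

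\emph{Step $2\Rightarrow 3$.} Use the layer-cake formula together with the tail bound:
\[
\E|X|^p=\int_0^\infty p t^{p-1}\P(|X|\ge t)\,\dd t\le 2p\int_0^\infty t^{p-1}e^{-t^2/K_2^2}\,\dd t = pK_2^p\,\Gamma(p/2).
\]
Since $\Gamma(p/2)\le (p/2)^{p/2}$ for $p\ge 1$ (up to an absolute factor), and $p^{1/p}\le e^{1/e}$, taking $p$-th roots gives $(\E|X|^p)^{1/p}\lesssim K_2\sqrt p$.

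\emph{Step $3\Rightarrow 1$.} Expand the exponential in its Taylor series and use the moment bound with $p=2q$:
\[
\E e^{\lambda^2X^2}=1+\sum_{q\ge1}\frac{\lambda^{2q}\E X^{2q}}{q!}\le 1+\sum_{q\ge 1}\frac{(2\lambda^2K_3^2 q)^q}{q!}.
\]
Stirling's bound $q!\ge (q/e)^q$ makes this at most $1+\sum_q (2e\lambda^2K_3^2)^q$. Choosing $\lambda^2=1/(6eK_3^2)$ gives a geometric series summing to at most $2$, hence $K_1\lesssim K_3$. With the three steps done, every pair $i,j\in\{1,2,3\}$ is comparable, and $\tilde C_{\psi_2}$ is the product of the three universal factors.

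\emph{Centered case.} For $3\Rightarrow 4$ with $\E X=0$, expand $\E e^{\lambda X}=1+\sum_{p\ge2}\lambda^p\E X^p/p!$; the linear term vanishes because $X$ is centered. For $|\lambda|\lesssim 1/K_3$, the bound $\E|X|^p\le (K_3\sqrt p)^p$ and Stirling give $\E e^{\lambda X}\le 1+C\lambda^2K_3^2\le e^{C\lambda^2K_3^2}$. For large $|\lambda|$, use $\lambda X\le \lambda^2K^2+X^2/(4K^2)$ together with Property 1, which gives $\E e^{\lambda X}\le 2e^{\lambda^2K^2}\le e^{C\lambda^2K^2}$ once $\lambda^2K^2\gtrsim 1$. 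For $4\Rightarrow 2$, apply the Chernoff bound $\P(X\ge t)\le e^{-\lambda t+\lambda^2K_4^2}$ and optimize at $\lambda=t/(2K_4^2)$, which gives $e^{-t^2/(4K_4^2)}$; the symmetric bound for $-X$ yields the factor $2$.

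\emph{Main obstacle.} The only delicate point is the bookkeeping in $3\Rightarrow 4$, where the small-$\lambda$ and large-$\lambda$ regimes must be glued with a single constant. Since this is Prop.~2.5.2 of \cite{vershynin2018high}, I would cite it there after presenting the above sketch.
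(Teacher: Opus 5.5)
Your proposal is correct and is essentially the standard proof of Prop.~2.5.2 in \cite{vershynin2018high}, which the paper cites without giving a proof of its own. The only departure is that your $3\Rightarrow 1$ step absorbs Vershynin's intermediate moment-generating-function bound for $X^2$ directly into the $\psi_2$ bound. In the $3\Rightarrow 4$ gluing you should take $K=\sqrt{6e}\,K_3\ (\ge K_1)$ in the large-$|\lambda|$ estimate, so that both regimes are split at the same threshold $|\lambda|\asymp 1/K_3$ and a single constant results.
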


It turns out that an alternative characterization of sub-Gaussian random variables, which extends the moment generating function condition to settings where $\E(X)$ is nonzero, will be useful for demonstrating the stability of sub-Gaussianity under conditioning.

\begin{proposition}
\label{prop:subg_mgf_away_from_0}
A random variable $X$ is sub-Gaussian if and only if there is a constant $K_5 > 0$ such that for any $|t|\geq 1/K_5$ it holds that
\begin{equation}
\label{eq:subG_mgf}
    \E\exp(tX) \leq \exp\left( {t^2K_5^2}\right). 
\end{equation}
This property is monotone: if $\tilde K_5 \ge K_5$ and $|t| \ge 1/\tilde K_5$, then 
\[
\mathbb{E}\exp(tX) \le \exp\left(t^{2}\tilde K_5^{2}\right).
\]
Further, there exists a universal constant $C_{\psi_2} \geq 1$, satisfying \(C_{\psi_2}\ge \tilde C_{\psi_2}\) for the equivalence constant \(\tilde C_{\psi_2}\) in Proposition~\ref{prop:equiv_subg}, such that \eqref{eq:subG_mgf} holds for some \(K_5>0\) with \(K_5\le C_{\psi_2}\|X\|_{\psi_2}\) for nonzero \(X\), and conversely any \(K_5>0\) satisfying \eqref{eq:subG_mgf} obeys \(\|X\|_{\psi_2}\le C_{\psi_2}K_5\).
\end{proposition}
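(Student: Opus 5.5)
We prove the two directions separately and track constants through Proposition~\ref{prop:equiv_subg}. Throughout, write $K_1=\|X\|_{\psi_2}$ and $\mu=\E X$; note that $|\mu|\le \E|X|\le \tilde C_{\psi_2}K_1$ by Property~3 with $p=1$.

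\emph{Sub-Gaussian $\Rightarrow$ \eqref{eq:subG_mgf}.} Split $tX = t(X-\mu) + t\mu$. The centered variable satisfies $\|X-\mu\|_{\psi_2}\le C K_1$, by the triangle inequality for the Orlicz norm together with $\|\mu\|_{\psi_2}\lesssim|\mu|$. Property~4 then gives
\[
\E\exp(t(X-\mu))\le \exp(t^2 C^2K_1^2)
\]
for all $t$, and hence
\[
\E\exp(tX)\le \exp\bigl(t^2C^2K_1^2+|t||\mu|\bigr).
\]
The linear term is absorbed on the region $|t|\ge 1/K_5$: there $|t|\le t^2K_5$, so $|t||\mu|\le t^2 K_5|\mu|\le t^2 K_5\tilde C_{\psi_2}K_1$. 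Choosing $K_5=cK_1$ with $c$ a large universal constant gives $C^2K_1^2+cK_1\cdot\tilde C_{\psi_2}K_1\le c^2K_1^2$ once $c^2\ge C^2+c\tilde C_{\psi_2}$. This yields $K_5\le C_{\psi_2}K_1$. If $X\equiv 0$ the bound is trivial for any $K_5$.

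\emph{Monotonicity.} If $\tilde K_5\ge K_5$ and $|t|\ge 1/\tilde K_5$, there are two cases. If $|t|\ge 1/K_5$, the claim follows directly from the original bound, since $\tilde K_5\ge K_5$. The intermediate range $1/\tilde K_5\le |t|<1/K_5$ is the main obstacle. Here I would use Jensen/convexity: $\phi(t)=\log\E e^{tX}$ is convex with $\phi(0)=0$, so for $0<s<1$ we have $\phi(st_0)\le s\phi(t_0)$. Take $t_0=\pm1/K_5$ and $s=|t|K_5$. This gives
\[
\phi(t)\le |t|K_5\cdot K_5^2/K_5^2 = |t|K_5 .
\]
We need $|t|K_5\le t^2\tilde K_5^2$, i.e.\ $|t|\ge K_5/\tilde K_5^2$, which holds because $|t|\ge 1/\tilde K_5\ge K_5/\tilde K_5^2$. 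Monotonicity follows.

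\emph{\eqref{eq:subG_mgf} $\Rightarrow$ sub-Gaussian.} Apply Markov's inequality with $t=\pm s/(2K_5^2)$: for $u\ge 2$ (so that $|t|=u/(2K_5)\ge 1/K_5$ when $s=uK_5$),
\[
\P(\pm X\ge s)\le \exp\bigl(t^2K_5^2-ts\bigr)=\exp\bigl(-s^2/(4K_5^2)\bigr).
\]
For $s<2K_5$ the bound $\P(|X|\ge s)\le 1\le 2e^{-s^2/(K K_5)^2}$ holds trivially after enlarging the constant, e.g.\ to $K_2=4K_5$. This establishes Property~2 with $K_2\lesssim K_5$, and Proposition~\ref{prop:equiv_subg} then gives $\|X\|_{\psi_2}\le \tilde C_{\psi_2}K_2\le C_{\psi_2}K_5$. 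Finally, enlarge $C_{\psi_2}$ to dominate all universal constants above and $\tilde C_{\psi_2}$.
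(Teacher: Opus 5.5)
Your proof is correct. The converse direction (Chernoff at $t=s/(2K_5^2)$, admissible once $s\ge 2K_5$, then a trivial extension to small $s$) matches the paper, as does the monotonicity argument (log-convexity of $t\mapsto\log\E e^{tX}$ between $0$ and $\pm 1/K_5$, then $|t|K_5\le t^2\tilde K_5^2$). The only difference there is your choice $K_2=4K_5$ in place of $2K_5/\sqrt{\log 2}$.

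You genuinely differ from the paper in the forward direction. The paper never centers $X$. It takes the tail bound from Property~2 and applies, for $t>0$, the layer-cake formula $\E e^{tX}\le 1+t\int_0^\infty e^{tu}\P(X\ge u)\,\dd u$. Evaluating the resulting Gaussian integral gives an all-$t$ bound $\E e^{tX}\le\exp(\tilde K_2^2t^2+\log 2)$. The additive $\log 2$ is then absorbed into $t^2K_5^2$ for $|t|\ge 1/K_5$ with $K_5=4\tilde K_2$.

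You instead center, use the centered mgf characterization (Property~4) as a black box for $X-\E X$, and absorb the linear drift $|t|\,|\E X|$ via $|t|\le t^2K_5$ on $|t|\ge 1/K_5$. Your route is shorter and shows exactly why non-centered variables need the restriction away from $t=0$: the mean contributes a term linear in $t$, which no quadratic can dominate near the origin. The cost is leaning on more black-box facts: a bound on $|\E X|$, the Orlicz triangle inequality for centering, and Property~4 with its equivalence constant.

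The paper's route is more self-contained, since it needs only the tail characterization and one explicit integral, and it yields explicit numerical constants. Its absorbed term is a constant rather than a linear one. The mechanism is the same in both: a lower-order term is dominated by $t^2K_5^2$ once $|t|\ge 1/K_5$ and $K_5$ is a large universal multiple of $\|X\|_{\psi_2}$.
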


A proof of this result is given in Appendix \ref{subsection:sup_moments_dyn_process}.   
Using this new characterization, we can prove the stability of sub-Gaussianity under conditioning. 

\begin{proposition}
\label{prop:subG_stability_conditioning}
Let $X$ be a sub-Gaussian random variable. Consider any other random variable $Y$ defined on the same probability space and taking values in a Euclidean space; let $\pi_{XY}$ be their joint measure and let $\pi_{X|Y=y}$ be a regular conditional measure. Then there is a universal constant $C_{\mathrm{cond}} > 0$ such that we can bound the sub-Gaussianity of $\pi_{X|Y=y}$ with high probability over realizations of $Y$: 
$$
\P\left(\left\|\pi_{X|Y=Y}\right\|_{\psi_2} \leq C_{\mathrm{cond}} (1 + c)\|X\|_{\psi_2} \right) \geq 1 -\exp({-c}), \;\forall c > 0.
$$
$\|\pi_{X\mid Y=y}\|_{\psi_2}$ is the Orlicz norm of the  regular conditional  distribution evaluated at $y$. Thus the expression above is a high-probability bound on the $Y$-measurable real-valued quantity $\|\pi_{X\mid Y=Y}\|_{\psi_2}$.
\end{proposition}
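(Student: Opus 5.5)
The plan is to use the characterization of Proposition~\ref{prop:subg_mgf_away_from_0} together with Markov's inequality applied to conditional moment generating functions. Set $K\coloneqq\|X\|_{\psi_2}$; if $X=0$ a.s.\ there is nothing to prove, so assume $K>0$. By Proposition~\ref{prop:subg_mgf_away_from_0} there is $K_5\le C_{\psi_2}K$ with $\E\exp(tX)\le \exp(t^2K_5^2)$ for all $|t|\ge 1/K_5$. For each fixed $t$, the tower property gives
\[
\E\bigl[\,M_Y(t)\,\bigr]=\E\exp(tX)\le \exp(t^2K_5^2),\qquad M_y(t)\coloneqq \int e^{tx}\,\dd\pi_{X\mid Y=y}(x).
\]
Markov's inequality then yields $\P\bigl(M_Y(t)>e^{c}\exp(t^2K_5^2)\bigr)\le e^{-c}$. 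The key point is that the multiplicative factor $e^{c}$ can be absorbed into the Gaussian exponent once $|t|$ is bounded away from zero: if $|t|\ge 1/\tilde K$ with $\tilde K\ge K_5$, then $c\le c\,t^2\tilde K^2$, and so
\[
e^{c}\exp(t^2K_5^2)\le \exp\bigl(t^2(K_5^2+c\tilde K^2)\bigr).
\]
This is exactly why the "away from zero" MGF characterization is the right tool; the centered version (Property 4 of Proposition~\ref{prop:equiv_subg}) would not tolerate the factor $e^c$ near $t=0$.

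The main obstacle is that Markov's inequality only controls one $t$ at a time, whereas the conditional sub-Gaussian norm requires the bound simultaneously for all $|t|\ge 1/K_5'$. I will handle this by restricting to two test points, using convexity and monotonicity. The function $t\mapsto \log M_y(t)$ is convex, and I would exploit this as follows.
\begin{itemize}
\item Rather than verifying the MGF bound at every $t$, it suffices, by Proposition~\ref{prop:equiv_subg}, to bound the conditional $\psi_2$ norm through tails.
\item Applying Markov's inequality at the two points $t=\pm s$, with $s$ of order $1/K$, gives tail bounds only at scale $K\sqrt{c}$, not sub-Gaussian tails at all scales. So two points alone are not enough.
\end{itemize}

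To fix this, I will work directly with the Orlicz functional instead. Set $\Phi(y)\coloneqq\int \exp(x^2/K^2)\,\dd\pi_{X\mid Y=y}$. Then $\E\,\Phi(Y)=\E e^{X^2/K^2}\le 2$, so Markov's inequality gives $\Phi(Y)\le 2e^{c}$ with probability at least $1-e^{-c}$. On this event, Jensen's inequality applied with the exponent $1/\lambda$, for $\lambda\ge1$ (the map $u\mapsto u^{1/\lambda}$ is concave), yields
\[
\int \exp\bigl(x^2/(\lambda K^2)\bigr)\,\dd\pi_{X\mid Y}\le (2e^{c})^{1/\lambda}.
\]
Choosing $\lambda=(1+c)/\log 2\cdot\log(2e)$, or more simply any $\lambda\ge (\log2+c)/\log2$, makes the right-hand side at most $2$. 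Hence $\|\pi_{X\mid Y=Y}\|_{\psi_2}\le \sqrt{\lambda}\,K\lesssim \sqrt{1+c}\,K\le C_{\mathrm{cond}}(1+c)K$.

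This is a cleaner route than the MGF one, and I would adopt it. I would keep the MGF argument above only as a remark explaining why Proposition~\ref{prop:subg_mgf_away_from_0} was introduced. The remaining details are routine: the measurability of $y\mapsto\Phi(y)$, which follows from the regular conditional probability with monotone convergence, and the a.s.\ uniqueness of the conditional law.
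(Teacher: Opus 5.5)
Your final argument is correct and takes a genuinely different, much shorter route than the paper.

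The paper works with the uncentered moment generating function. It takes $K_5=C_{\psi_2}\|X\|_{\psi_2}$ from Proposition~\ref{prop:subg_mgf_away_from_0} and applies Markov's inequality to $M_\lambda(Y)$ at the dyadic points $\lambda=\pm 2^j/K_5$. The thresholds are $\exp((1+c)K_5^2\lambda^2)$, so the failure probabilities $\exp(-c4^j)$ are summable (Lemma~\ref{lem:simple_exponential_sum}). It then uses log-convexity of $\lambda\mapsto M_\lambda(Y)$ to interpolate between net points and down to $|\lambda|\ge 1/(2(1+c)K_5)$, which is where its factor $1+c$ comes from. This is how the paper resolves the uniformity-in-$t$ obstacle you identified in your first attempt: the Markov threshold grows like $\lambda^2$ along a net, instead of using two fixed test points.

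Your Orlicz route avoids all of this. The quantity $\Phi(y)=\int e^{x^2/K^2}\,\dd\pi_{X\mid Y=y}$ is a single scalar functional of the conditional law with $\E\,\Phi(Y)\le 2$. One Markov inequality gives $\Phi(Y)<2e^c$ with probability at least $1-e^{-c}$. Jensen for $u\mapsto u^{1/\lambda}$ then turns the multiplicative loss $e^c$ into the rescaling $\lambda=1+c/\log 2$. Your first displayed choice of $\lambda$ is garbled, but it exceeds this threshold, so it is harmless.

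What your route buys:
\begin{itemize}
\item You need neither Proposition~\ref{prop:subg_mgf_away_from_0} nor the net, and you get the explicit constant $C_{\mathrm{cond}}=(\log 2)^{-1/2}$.
\item You actually prove the sharper bound $\|\pi_{X\mid Y=Y}\|_{\psi_2}\le\sqrt{1+c/\log 2}\,\|X\|_{\psi_2}$, with $\sqrt{1+c}$ in place of $1+c$.
\item Used with $c\asymp\log k$ in Term~I of the proof of Proposition~\ref{prop:process_covariance}, this would sharpen the moment bound on $\mu_{XY}^{f,j}$ from order $\log k$ to order $\sqrt{\log k}$.
\end{itemize}
The paper's argument is mainly of interest as an application of its non-centered MGF characterization. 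For this particular statement, your proof is both simpler and stronger.
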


\begin{proof}
Pick $K_5 =C_{\psi_2}\|X\|_{\psi_2}$ as in Proposition \ref{prop:subg_mgf_away_from_0} such that Equation \eqref{eq:subG_mgf} holds. For any $\lambda \in \R$, the moment generating function of $\pi_{X|Y=y} \sim X^\prime $ is   $M_\lambda(y) \coloneqq \E\exp\left(\lambda X^\prime \right).$

\textit{Bound mgf for a fixed value:} Pick any $|\lambda|\geq 1/K_5$.   As a consequence of  the disintegration theorem and sub-Gaussianity of $X$, the following  is true:
\begin{align*}
   \ \E M_\lambda(Y) &=   \E\exp\left(\lambda X \right)  \; \leq  \;{\exp\left(K_5^2|\lambda|^2\right)}. 
\end{align*}
By Markov's inequality, for all $k\geq 1$,  $\P\big(M_\lambda(Y) \geq  k\E(M_\lambda(Y)) \big) \leq k^{-1}.$ 
Since we have an upper bound on the expectation, we plug in $k = \exp\left(cK_5^2|\lambda|^2\right)$ for some $c \geq 0$ and conclude 
$$
\P\left(M_\lambda(Y) \geq  \exp\left(K_{5,c}^2|\lambda|^2\right) \right) \leq \exp\left( -cK_5^2|\lambda|^2\right)
$$
for any $|\lambda| \geq 1/K_5$ and $K_{5,c}\coloneqq  \sqrt{1 + c}K_5$.

\textit{Bound on dyadic net:} Define the set and the event
$$
\Lambda = \{\pm 2^{j}/K_{5}  |j \in \N \}, \quad
 A_{c} = \left\{\exists \lambda \in \Lambda: M_\lambda(Y) \geq \exp\left(K_{5,c}^2|\lambda|^2\right) \right\}.
$$
Apply a union bound over the net, for any $c>0$
\begin{equation}
\label{eq:prob_ac_bound}
    \P\left(A_{c} \right) \leq  2\sum\limits_{j = 0}^\infty  \exp\left( - {c}2^{2j}\right) \;\leq\; 2\exp({-c})\left(1+\frac{1}{c\log 4}\right). 
\end{equation}
The upper bound in the sum above is derived in Lemma \ref{lem:simple_exponential_sum} (Appendix \ref{subsection:sup_moments_dyn_process}).

\textit{Extending outside the dyadic net:} Define the event
$$
 B_{c} = \left\{ \pi_{X|Y=Y} \text{ is not sub-Gaussian with parameter } \tilde K_{5,c}\right\}
$$
for $\tilde K_{5,c} = 2\sqrt{1+c}K_{5,c}$ and some choice of the regular conditional $\pi_{X|Y}$. The parameter we refer to in the definition of $B_c$ is the one as in Equation \eqref{eq:subG_mgf} of Proposition \ref{prop:subg_mgf_away_from_0}. The randomness in the event $B_{c}$ comes from the randomness of the conditioning variable $Y$. Therefore, equivalently,
\[
B_{c}
= \left\{ \exists |\lambda| \geq 1/ \tilde K_{5, c} \text{ s.t. } M_{\lambda} (Y) > \exp\left({\tilde K_{5,c}^2|\lambda|^2}\right)\right\}.
\]
We will show that $B_{c} \subseteq A_{c}$. Let $|\lambda| \geq 1/ \tilde K_{5, c}$ be such that
\[
M_{\lambda} (Y) > \exp\left({\tilde K_{5,c}^2|\lambda|^2}\right).
\]
In what follows, we split into the cases (1) $|\lambda| \geq 1/  K_{5}$, and (2) $1/K_{5} \geq |\lambda| \geq 1/ \tilde K_{5, c}$.
\begin{itemize}
    \item \textit{Case 1:} Assume that $|\lambda| \geq 1/  K_{5}$.
By the mean value theorem, there are $j \in \N, p \in [0, 1], s \in \{\pm 1\}$ such that we have the convex decomposition
$$
\lambda = p \cdot s2^{j}/K_5  + (1-p)\cdot s2^{j+1}/K_5.
$$
By log-convexity, this shows that 
\begin{align*}
  K_{5, c}^2\left(\frac{2^{j+1}}{K_5}\right)^2& \leq  \tilde K_{5,c}^2 |\lambda|^2  
< \log M_{\lambda}(Y)   \\
& \leq p \log M_{s2^{j}/K_5}(Y) + (1-p)\log M_{s2^{j+1}/K_5}(Y).
\end{align*}
By the pigeonhole principle, this means that either  $\log M_{s2^{j}/K_5}(Y) >  K_{5, c}^2\left(\frac{2^{j + 1}}{K_5}\right)^2$  or $\log M_{s2^{j + 1}/K_5}(Y) >  K_{5, c}^2\left(\frac{2^{j+1}}{K_5}\right)^2$, each of which  implies that we are on $A_{c}$.
\item \textit{Case 2:} Assume that $1/K_{5} \geq |\lambda| \geq 1/ \tilde K_{5, c}$. Pick $s= \text{sgn}(\lambda)$ and decompose convexly as 
$$
\lambda = \left(1-|\lambda| K_5\right)\cdot 0  + \left(|\lambda| K_5\right)\cdot  \frac{s}{K_5}.
$$
Log-convexity implies
\begin{align*}
\tilde K_{5,c}^2 |\lambda|^2  < \log M_{\lambda}(Y) &
 \;\leq\; \left(1-|\lambda| K_5\right)\cdot\log M_0(Y) +   \left(|\lambda| K_5\right) \cdot\log M_{\frac{s}{K_5}}(Y)\\
& \;=\; (|\lambda| K_5)\cdot\log M_{\frac{s}{K_5}}(Y).
\end{align*}
This means, in particular, that 
\begin{align*}
   2(1 + c)=  \frac{\tilde K_{5,c}}{K_5} \;\leq\; \frac{\tilde K_{5,c}^2 |\lambda|^2 }{|\lambda| K_5} \;<\; \log M_{\frac{s}{K_5}}(Y)
\end{align*}
which is equivalent to 
$$
M_{\frac{s}{K_5}}(Y)>  \exp\left(2(1+c)\right) \geq \exp\left(K_{5,c}^2\left|\frac{s}{K_5}\right|^2\right)
$$
and so we are on the event $A_{c}$. 
\end{itemize}
This concludes the proof of $B_{c} \subseteq A_{c}$. To conclude  the entire argument, define any  $\Delta C>0$ sufficiently large, set \(C_{\mathrm{cond}}\coloneqq 2C_{\psi_2}^2(1+\Delta C)\), use equivalence of sub-Gaussian definitions (Proposition \ref{prop:subg_mgf_away_from_0}) and apply $B_{c} \subseteq A_{c}$:
\begin{align*}
    \P\bigl(\|\pi_{X|Y=Y}\|_{\psi_2} > C_{\mathrm{cond}}(1 + c)\|X\|_{\psi_2} \bigr)
    & \leq  \P\bigl(\|\pi_{X|Y=Y}\|_{\psi_2}\\
    &\qquad > 2C_{\psi_2}^2(1 + c +\Delta C)\|X\|_{\psi_2} \bigr) \\
    & \leq \P(B_{c + \Delta C}) \\
    & \leq \P(A_{c + \Delta C}) \\
    &\leq \exp(-c).
\end{align*}
By inequality  \eqref{eq:prob_ac_bound}, the last line holds for sufficiently large $\Delta C>0$, independent of $c$ and $\|X\|_{\psi_2}$, completing the proof.
\end{proof}

With Proposition \ref{prop:subG_stability_conditioning} and Lemma \ref{lem:process_subG} it is straightforward to prove Proposition \ref{prop:process_covariance}. 

\begin{proof}[Proof of Proposition \ref{prop:process_covariance}]
Let $k\geq 2$. For \(j=1\), we interpret \(Y^{1:0}\) as the trivial \(\sigma\)-algebra, so that \(\E(Y^1\mid Y^{1:0})=\E Y^1\). Lemma \ref{lem:process_subG} shows that the Euclidean norm of the centered process satisfies
$$
\left\|(X^j, Y^j) - \E(X^j, Y^j)\right\|_{2,\psi_2} \leq C_{\text{subG}}
$$
for $1\leq j \leq J$.

\textit{Conditional expectation: } 
In particular, $\left\| Y^j - \E(Y^j)\right\|_2$ is sub-Gaussian with the same Orlicz-norm bound. Further,  note that 
$$
\|Y^j - \E(Y^j|Y^{1:j-1})\|_2 \leq \|Y^j - \E(Y^j)\|_2  + \|\E(Y^j|Y^{1:j-1})- \E(Y^j)\|_2.
$$
By Proposition \ref{prop:subG_stability_cond_exp}, $\|\E(Y^j|Y^{1:j-1})- \E(Y^j)\|_2$ is sub-Gaussian with the same Orlicz-norm bound. This shows  $ \left\|Y^j - \E(Y^j|Y^{1:j-1})\right\|_{2,\psi_2} \leq 2 C_{\text{subG}}$ for all  $1 \leq j \leq J$. Appealing to the  equivalent definitions of sub-Gaussianity in Proposition \ref{prop:subg_mgf_away_from_0}, we showed that 
$$
\P\left( \|Y^j - \E(Y^j|Y^{1:j-1})\|_2  \geq c_1\sqrt{\log k}\right) \leq \frac{1}{3kJ}
$$
for a suitable choice of  $c_1 \coloneqq c_1(\sigma_X,\sigma_{\max}, \lambda_{\max}, L_\Psi, L_h, J)$ since $C_{\text{subG}}$ is a constant depending only on $\sigma_X,\sigma_{\max}, \lambda_{\max}, L_\Psi, L_h, J$ according to Lemma \ref{lem:process_subG} .

\textit{Moments: }
Next, consider a version of the regular conditional $\mu_{X Y}^{f, j}$ and put  
$$
Z(y^{1:j-1}) \sim \mu_{X Y}^{f, j}(y^{1:j-1})\;  \forall y^{1:j-1} \in (\R^m)^{j-1}
$$
where we make the dependence on the observations $y^{1:j-1}$ explicit here. In particular, by disintegration $Z(Y^{1:j-1})$ is the same in law as $(X^j,Y^j)$.  
By definition, for every $q \geq 1$,  
\begin{align*}
    \overline M_q\left( Z(y^{1:j-1})\right)
    & = \left(\E\left\|Z(y^{1:j-1})- \E \left(Z(y^{1:j-1}) \right) \right\|_2^q\right)^\frac{1}{q} \\
    & \leq \underbrace{\left(\E\left\|Z(y^{1:j-1})- \E \left(X^j, Y^j \right) \right\|_2^q\right)^\frac{1}{q}}_{\text{I}}\\
    &\quad +\underbrace{ \left\|\E(X^j, Y^j)- \E \left(Z(y^{1:j-1}) \right) \right\|_2}_{\text{II}}.
\end{align*}
We proceed term-by-term.

\textit{Term I:} By the disintegration theorem, we have
\[
\Law\left(\|Z(Y^{1:j-1})- \E (X^j, Y^j)\|_2\right)
= \Law\left( \|(X^j, Y^j)- \E (X^j, Y^j)\|_2\right).
\]
Therefore, by Lemma \ref{lem:process_subG}, the scalar random variable $\|Z(Y^{1:j-1})- \E (X^j, Y^j)\|_2$ is sub-Gaussian with Orlicz norm bounded by $C_{\text{subG}}$. We now apply Proposition \ref{prop:subG_stability_conditioning} to this scalar norm, with conditioning variable $Y^{1:j-1}$. Thus, for all $c \geq 0$ it holds that  
$$
\left\|Z(y^{1:j-1})- \E (X^j, Y^j)\right\|_{2,\psi_2} \leq C_{\mathrm{cond}}(1+c) C_{\text{subG}}
$$  with probability $ 1 -  \exp\left( -c\right)$ in $y^{1:j-1}  = Y^{1:j-1}$.   Using the moment definition of sub-Gaussianity in Proposition \ref{prop:subg_mgf_away_from_0}, we have that with the same probability and for all $q\geq 1$,
$$
\left(\E\|Z(y^{1:j-1})- \E (X^j, Y^j)\|_2^q\right)^{\frac{1}{q}} \leq \sqrt{q} C_{\psi_2}C_{\mathrm{cond}}(1+c) C_{\text{subG}}. 
$$
Reminding ourselves that $k\geq 2$, put differently, there is a constant
\[
c_2 \coloneqq c_2(\sigma_X,\sigma_{\max}, \lambda_{\max}, L_\Psi, L_h, J)
\]
such that with probability $1 - \frac{1}{3kJ}$ in $y^{1:j-1}=Y^{1:j-1}$  
$$
\left(\E\|Z(y^{1:j-1})- \E (X^j, Y^j)\|_2^q\right)^{\frac{1}{q}} \leq c_2\sqrt{q} \log k.
$$
\textit{Term II:} Further, by Proposition \ref{prop:subG_stability_cond_exp},  $\left\|\E(X^j,Y^j)- \E \left(Z(Y^{1:j-1})\right)\right\|_{2}$ is sub-Gaussian with norm $C_{\text{subG}}$. Therefore,  there is a constant $c_3 = c_3(\sigma_X,\sigma_{\max}, \lambda_{\max}, L_\Psi, L_h, J)$  such that with probability $1 - \frac{1}{3kJ}$ in $y^{1:j-1} = Y^{1:j-1}$ it holds that 
$$
\left\|\E(X^j, Y^j)- \E \left(Z(y^{1:j-1}) \right) \right\|_2 \leq c_3 \sqrt{\log k}.
$$
When \(j=1\), \(Y^{1:0}\) is trivial and \((\R^m)^0\) is a singleton; then \(Z(Y^{1:0})\sim\Law(X^1,Y^1)\), so term II is zero and the same estimates apply.

\textit{Union Bound:} Taking a union bound, we showed that there is a constant
\[
C_M\coloneqq C_M(\sigma_X,\sigma_{\max}, \lambda_{\max}, L_\Psi, L_h, J, q)
\]
such that for all $k \geq 2$ and $1\leq q^\prime \leq q$,
\begin{align*}
\P&\left( \overline M_{q^\prime}\left(\mu_{XY}^{f,j}\right) \leq   C_M\log k \right.\\
\quad \text{and} \quad &\left.\|Y^j - \E(Y^j|Y^{1:j-1})\|_2^2  \leq C_M{\log k} \text{ for all }j \in \{1, \ldots, J\}\right) \geq 1 - \frac{1}{k}.
\end{align*}
\end{proof}

\begin{remark}
\label{rmk:f-sub-G}
Our argument in the proof of Proposition~\ref{prop:process_covariance} also shows that the random measure \(\mu_{X Y}^{f, j}\) is sub-Gaussian with high probability. 
\end{remark}

\subsection{Bounding Moments of Mean-Field Measure}
\label{subsection:moments_mean_field}
Next, having established control of the dynamical process (Equation \eqref{eq:dynamics}) and the latent stochasticity due to the observation $y^{1:j}$, we turn to bounding higher moments of the mean-field forecast law $\tilde\mu^{f,j}_{XY}$.  To this end, we impose the following assumption on the transport ensemble filter, as defined through $\map$ and $\kappa$ in Algorithm \ref{alg:TEF}. 

\begin{assumption}[Transport Ensemble Filter]
    \label{assumption:algorithm_finite_particle}
    We assume the following to hold.
\begin{enumerate} 
\item \emph{Lipschitz properties.}
    \begin{enumerate}
        \item \emph{Lipschitz after estimation:} There are constants $C_{\mathrm{L}}, e_{\mathrm{L}} \geq 0$ such that, for every $\mu \in \Pp_2\left(\R^n\times\R^m\right)$ and $y \in \R^m$, the map $\map_y^\mu:\R^n\times\R^m\times\R^a \to \R^n$ is Lipschitz continuous in its non-measure arguments with constant
        \[
        L_\map^\mu = C_{\mathrm{L}} \left(1  + \Trcov(\mu)^{e_{\mathrm{L}}}\right).
        \]
        \item \emph{Estimation stability:}  There are constants  $e_{\mathrm{est},1}, e_{\mathrm{est},2}, C_{\mathrm{est}} \geq 0 $ and a Lipschitz function $g:\R^{n}\times \R^{m} \rightarrow \R^b$ with Lipschitz constant $L_g$ such that for every  \( \mu , \nu \in \mathcal{P}_{2}(\mathbb{R}^n \times \R^m)\) and  all $x\in\R^n$, $y,y^\star\in\R^m$, $\omega\in\R^a$,
\begin{align*}
\left\|\map^{\nu}_{y^\star}(x,y, \omega) -  \map^{\mu}_{y^\star}(x, y, \omega)\right\|_2 & \leq C_{\mathrm{est}}   \left(1 +  \Trcov( \nu)^{e_{\mathrm{est},1}} + \Trcov(\mu)^{e_{\mathrm{est},2}}  \right)\\
&\cdot \left(\|\omega\|_2+ \|y - y^\star\|_2\right)\left\|\Cov(g_\sharp \nu) - \Cov(g_\sharp \mu)\right\|_2.
\end{align*}
\end{enumerate}
\item \emph{Sub-Gaussian auxiliary noise: } There is a finite constant $\sigma_\kappa \geq0$ such that  $\left\|\kappa\right\|_{2,\psi_2} \leq \sigma_\kappa$.
\item \emph{Transport sensitivity:} There are constants $L_y, e_y \geq 0$ such that for all
\[
\begin{gathered}
\nu \in \Pp_2\left(\R^n\times\R^m\right), \quad x\in\R^n, \quad y\in\R^m,\\
y^\star \in\R^m, \quad \omega \in \R^a,
\end{gathered}
\]
\begin{equation}
    \label{eq:y_lipschitz} \left\|\map^\nu_{y^\star}(x,y, \omega ) - x \right\|_2 \leq  L_y\left(1 +  \Trcov\left( \nu\right)^{e_y}\right) \left( \left\|y - y^\star \right\|_2+ \left\|\omega\right\|_2\right).
\end{equation}
\end{enumerate}
\end{assumption}

Under this assumption, the required moment bounds for $\tilde\mu^{f,j}_{XY}$ follow from a straightforward induction on $j$, which we prove in Appendix  \ref{subsection:aux_results_mf_moments}. 

\begin{proposition}
\label{prop:mf_dynamics_higher_moments}
Fix $J\in\N_{\ge1}$ and an observation path $y^{1:J}$. Suppose  Assumptions \ref{assumption:dyn_assumptions} and \ref{assumption:algorithm_finite_particle} hold. 
Then, there exists a constant
\[
\tilde C_{\mathrm{subG}}\coloneqq
\tilde C_{\mathrm{subG}}\big(L_\Psi,L_h,J,\sigma_X,\sigma_{\max},
\lambda_{\max},\sigma_\kappa,C_{\mathrm{L}},e_{\mathrm{L}}\big)
\]
such that
\[
\left\|\tilde\mu_{XY}^{f,j} -\E\tilde\mu_{XY}^{f,j}\right\|_{2,\psi_2} \le\ \tilde C_{\mathrm{subG}}\qquad\text{for all }1\le j\le J.
\]
\end{proposition}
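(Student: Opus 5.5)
We argue by induction on $j$, carrying two statements along the recursion \eqref{eq:mf_alg_measure_perspective}. The first is a bound on the centered analysis law, $\|\tilde\mu_X^{a,j-1}-\E\tilde\mu_X^{a,j-1}\|_{2,\psi_2}\le A_{j-1}$. The second is the claimed bound on the centered forecast law, $\|\tilde\mu_{XY}^{f,j}-\E\tilde\mu_{XY}^{f,j}\|_{2,\psi_2}\le \tilde C_{\mathrm{subG}}$. All constants depend only on the listed quantities and never on $y^{1:J}$. The base case is $A_0\le 2\sigma_X$, since $\|\mu^0-\E\mu^0\|_{2,\psi_2}\le 2\|\mu^0\|_{2,\psi_2}$; here we use that $|\E\|X\|_2|\lesssim\|X\|_{2,\psi_2}$.

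\emph{Forecast step.} Let $U\sim\tilde\mu_X^{a,j-1}$, let $U'$ be an independent copy, and let $\xi,\eta$ be independent noises. Centering at $\E U$ and using Lipschitzness of $\Psi$ gives
\[
\|\Psi(U)-\E\Psi(U)\|_2\le L_\Psi\bigl(\|U-\E U\|_2+\E\|U'-\E U\|_2\bigr).
\]
Hence $\|U^{f}-\E U^{f}\|_{2,\psi_2}\lesssim L_\Psi A_{j-1}+\sigma_{\max}$, by the triangle inequality for $\psi_2$-norms and the fact that centering costs at most a universal factor. The same argument applied to $Y^f=h(U^f)+\eta$ gives $\|Y^f-\E Y^f\|_{2,\psi_2}\lesssim L_h(L_\Psi A_{j-1}+\sigma_{\max})+\lambda_{\max}$. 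Since $\|(a,b)\|_2\le\|a\|_2+\|b\|_2$, this proves the forecast bound at step $j$ with a constant $F_j$ that is affine in $A_{j-1}$. In particular $\Trcov(\tilde\mu^{f,j}_{XY})\le 2F_j^2$, so $\tilde\mu^{f,j}_{XY}\in\Pp_2$ and the map $\map_{y^j}^{\tilde\mu^{f,j}_{XY}}$ is well defined.

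\emph{Analysis step (main obstacle).} The natural tool is the transport sensitivity \eqref{eq:y_lipschitz}, but it produces a term $\|y^j-Y^f\|_2$. The observation $y^j$ is arbitrary, so this would give a constant depending on the path, which is not allowed. We therefore use Assumption~\ref{assumption:algorithm_finite_particle}(1a) instead. Set $\mu=\tilde\mu^{f,j}_{XY}$ and $L=C_{\mathrm L}(1+(2F_j^2)^{e_{\mathrm L}})$. Let $(U^f,Y^f,\omega)$ be an independent copy of $(U^f,Y^f,\omega)$, written with primes. Lipschitzness in the non-measure arguments gives
\[
\|\map^\mu_{y^j}(U^f,Y^f,\omega)-\map^\mu_{y^j}(U^{f\prime},Y^{f\prime},\omega')\|_2\le L\bigl(\|(U^f,Y^f)-(U^{f\prime},Y^{f\prime})\|_2+\|\omega-\omega'\|_2\bigr).
\]
Now write $\tilde U^j=\map^\mu_{y^j}(U^f,Y^f,\omega)$ and $V(z)=\map^\mu_{y^j}(z)$. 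Using $\E\tilde U^j=\E' V(z')$ and Jensen's inequality,
\[
\|\tilde U^j-\E\tilde U^j\|_2\le \E'\|V(z)-V(z')\|_2\le L\bigl(\|z-\E z\|_2+\E\|z'-\E z'\|_2\bigr),
\]
where $z=(U^f,Y^f,\omega)$. The dependence on $y^j$ has cancelled because both evaluations use the same $y^j$. Bounding the right-hand side via the forecast bound and $\sigma_\kappa$ yields $A_j\lesssim L(F_j+\sigma_\kappa)$, which is independent of $y^{1:J}$.

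\emph{Conclusion.} Iterating the two steps for $j=1,\dots,J$ gives constants depending only on $L_\Psi,L_h,J,\sigma_X,\sigma_{\max},\lambda_{\max},\sigma_\kappa,C_{\mathrm L},e_{\mathrm L}$, uniformly over observation paths. Taking the maximum of the $F_j$ defines $\tilde C_{\mathrm{subG}}$, and the $A_j$ simultaneously furnish the analysis bound used in Corollary~\ref{cor:finite_particle_converges_to_mean_field}.
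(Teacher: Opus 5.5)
Your proof is correct and follows the paper's route. The paper runs the same induction, alternating a forecast bound (Lipschitz pushforward plus independent noise, packaged there through the combined map $\Phi$ with constant $L_{PQ}$) with an analysis bound that uses the Lipschitz constant from Assumption~\ref{assumption:algorithm_finite_particle}(1a), with the trace of the covariance controlled by the squared centered $\psi_2$-norm. Your independent-copy/Jensen estimate is exactly the paper's Proposition~\ref{prop:subg_lip}, and your observation that the transport sensitivity (3) would introduce dependence on the observation path while (1a) does not is why the paper uses (1a) here.
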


As noted in Subsection~\ref{subsection:idea_proof}, another key step towards showing Theorem \ref{thm:ips_converges_to_iid_particles} is to control the gap between the update maps  $\map^{\hat \mu^{f,j}_{XY,N}}_{Y^j}$ and $\map^{\tilde \mu^{f,j}_{XY}}_{Y^j}$. Unavoidably, such bounds hinge on the ``innovation'' term $(\tilde Y_f^j - Y^j)$, which must be controlled with high probability.  Here $Y^j$ is generated by the true dynamics (Equation \eqref{eq:dynamics}) while  $\tilde Y_f^j$ denotes the mean-field forecast observation given the observation $y^{1:j-1}=Y^{1:j-1}$, i.e.,\ the $Y$-component of $(\tilde X_f^j,\tilde Y_f^j)\sim \tilde\mu^{f,j}_{XY}$ from Equation \eqref{eq:mf_alg_measure_perspective}. A priori, these two objects are unrelated, and there is no direct reason to expect the approximate and exact conditioning operators  $\tilde B_y$ and  $B_y$  to be close, so $(\tilde Y_f^j - Y^j)$ need not be small. The crucial leverage comes from the transport stability in  Assumption~\ref{assumption:algorithm_finite_particle} (3): the map 
$y^\star\mapsto \map^\nu_{y^\star}(x,y,\omega)$ varies smoothly in the observation argument. Applied with $y^\star=Y^j$ and $y=\tilde Y_f^j$, this provides exactly the mechanism we need to control $\left\|\E\left( \tilde Y^j_f - Y^j\left|Y^{1:j-1}\right.\right) \right\|_2$ with high     probability. 
By Proposition~\ref{prop:mf_dynamics_higher_moments}, after increasing constants if necessary, there is a universal constant \(C_{\mathrm{tr}}\) such that
\[
\Trcov(\tilde\mu_{XY}^{f,j}) \le C_{\mathrm{tr}}\tilde C_{\mathrm{subG}}^2
\qquad \text{for all } 1\le j\le J,
\]
uniformly over observation paths. Define the deterministic sequence by
\[
\bar t^1 = 2L_\Psi\sigma_X + 2\sigma_{\max},
\]
and, for \(j\ge 2\),
\[
\bar t^j
=
L_\Psi\left[
L_y\left(1+\left(C_{\mathrm{tr}}\tilde C_{\mathrm{subG}}^2\right)^{e_y}\right)
\left(L_h\bar t^{j-1}+2\lambda_{\max}+\sigma_\kappa\right)
+\bar t^{j-1}
\right]
+2\sigma_{\max}.
\]

\begin{proposition}
\label{prop:mf_y_stability}
Suppose Assumptions \ref{assumption:dyn_assumptions} and   \ref{assumption:algorithm_finite_particle} hold. Fix $J\in\N_{\geq 1}$, consider the dynamical process $\{(X^j, Y^{j+1})\}_{j \geq 0}$ as defined in Equation \eqref{eq:dynamics}, and let  $(\tilde X^j_f, \tilde Y^j_f) \sim \tilde \mu_{XY}^{f,j}$ for the mean-field algorithm $\{\tilde \mu_{XY}^{f, j}\}_{j=1}^J$ defined in Equation \eqref{eq:mf_alg_measure_perspective},  using the observation $y^{1:J}=Y^{1:J}$. Then, for any $k\geq 1$ with probability $1-\frac{1}{k}$ in $Y^{1:J}$
\begin{equation}
\label{eq:stability_mf_true}
\left\|\E\left( \tilde Y^j_f - Y^j\left|Y^{1:j-1}\right.\right) \right\|_2\leq L_hC_{\psi_2}\bar t^j\sqrt{\log(2kJ)} +2 \lambda_{\max} \text{ for all }j =1,\ldots, J. 
\end{equation}
\end{proposition}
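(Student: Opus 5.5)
The plan is to build, on one probability space, the true hidden Markov chain $\{(X^j,Y^j)\}$ together with a \emph{mean-field shadow chain} $\{(\tilde X^j_f,\tilde Y^j_f,U^j)\}$ that is driven by the true observations. Then $\E(\tilde Y^j_f-Y^j\mid Y^{1:j-1})$ becomes a conditional expectation of a pathwise difference. We bound that difference in $\psi_2$-norm by $\bar t^j$ and finish with Proposition~\ref{prop:subG_stability_cond_exp} and a union bound.

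\textbf{Step 1: the shadow chain.} Draw $\tilde X^0\sim\mu^0$, and fresh noises $\tilde\xi^{j-1}\sim\nu_\xi^{j-1}$, $\tilde\eta^j\sim\nu_\eta^j$, $\omega^j\sim\kappa$. All of these are independent of each other and of the true chain. Define
\[
\tilde X^1_f=\Psi(\tilde X^0)+\tilde\xi^0,\qquad \tilde Y^j_f=h(\tilde X^j_f)+\tilde\eta^j,
\]
\[
U^j=\map^{\tilde\mu^{f,j}_{XY}}_{Y^j}(\tilde X^j_f,\tilde Y^j_f,\omega^j),\qquad \tilde X^{j+1}_f=\Psi(U^j)+\tilde\xi^j .
\]
Here $\tilde\mu^{f,j}_{XY}=\tilde\mu^{f,j}_{XY}(Y^{1:j-1})$ is the mean-field forecast along the true path. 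Conditionally on $Y^{1:j-1}$, the shadow randomness is independent of the true randomness. By induction on $j$, this gives $\Law(\tilde X^j_f,\tilde Y^j_f\mid Y^{1:j-1})=\tilde\mu^{f,j}_{XY}$, which is exactly the recursion \eqref{eq:mf_alg_measure_perspective} with $y^{1:j-1}=Y^{1:j-1}$. The true state enters $U^j$ only through $Y^j$, which is already in the conditioning at the next step. Consequently $\E(\tilde Y^j_f\mid Y^{1:j-1})$ is the mean of the $Y$-marginal of $\tilde\mu^{f,j}_{XY}$, as required.

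\textbf{Step 2: pathwise recursion for $D^j\coloneqq\|\tilde X^j_f-X^j\|_2$.}

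For $j=1$ we have $D^1\le L_\Psi(\|\tilde X^0\|_2+\|X^0\|_2)+\|\tilde\xi^0\|_2+\|\xi^0\|_2$. By the triangle inequality for $\|\cdot\|_{\psi_2}$, this gives $\|D^1\|_{\psi_2}\le \bar t^1$.

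For $j\ge2$ we write
\[
D^{j}\le L_\Psi\bigl(\|U^{j-1}-\tilde X^{j-1}_f\|_2+D^{j-1}\bigr)+\|\tilde\xi^{j-1}\|_2+\|\xi^{j-1}\|_2 .
\]
We bound the first term with transport sensitivity, Assumption~\ref{assumption:algorithm_finite_particle}(3), and the uniform bound $\Trcov(\tilde\mu^{f,j-1}_{XY})\le C_{\mathrm{tr}}\tilde C_{\mathrm{subG}}^2$ from Proposition~\ref{prop:mf_dynamics_higher_moments}:
\[
\|U^{j-1}-\tilde X^{j-1}_f\|_2\le L_y\bigl(1+(C_{\mathrm{tr}}\tilde C_{\mathrm{subG}}^2)^{e_y}\bigr)\bigl(\|\tilde Y^{j-1}_f-Y^{j-1}\|_2+\|\omega^{j-1}\|_2\bigr).
\]
The innovation term satisfies
\[
\|\tilde Y^{j-1}_f-Y^{j-1}\|_2\le L_hD^{j-1}+\|\tilde\eta^{j-1}\|_2+\|\eta^{j-1}\|_2 .
\]
Because this coefficient is deterministic and uniform over observation paths, taking $\psi_2$-norms and applying the triangle inequality gives $\|D^j\|_{\psi_2}\le\bar t^j$. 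This reproduces the recursion for $\bar t^j$ exactly.

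\textbf{Step 3: conditioning and tails.} Write $\tilde Y^j_f-Y^j=h(\tilde X^j_f)-h(X^j)+\tilde\eta^j-\eta^j$. Then
\[
\bigl\|\E(\tilde Y^j_f-Y^j\mid Y^{1:j-1})\bigr\|_2\le L_h\,\E(D^j\mid Y^{1:j-1})+2\lambda_{\max},
\]
since $\|\E\eta\|_2\le\E\|\eta\|_2\lesssim\|\eta\|_{2,\psi_2}\le\lambda_{\max}$. This can be normalized to constant one with the $\psi_2$ convention, or absorbed into $C_{\psi_2}$. By Proposition~\ref{prop:subG_stability_cond_exp}, $\|\E(D^j\mid Y^{1:j-1})\|_{\psi_2}\le\bar t^j$. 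The tail form in Propositions~\ref{prop:equiv_subg} and~\ref{prop:subg_mgf_away_from_0} then gives
\[
\P\Bigl(\E(D^j\mid Y^{1:j-1})>C_{\psi_2}\bar t^j\sqrt{\log(2kJ)}\Bigr)\le 2e^{-\log(2kJ)}=\frac1{kJ}.
\]
A union bound over $j=1,\dots,J$ yields \eqref{eq:stability_mf_true} with probability $1-1/k$.

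\textbf{Main obstacle.} The main obstacle is Step~1: justifying rigorously that the shadow chain has the correct conditional law given $Y^{1:j-1}$. This needs a jointly measurable version of $y^{1:j-1}\mapsto\tilde\mu^{f,j}_{XY}$, so that $\map$ can be evaluated at a random measure. It also needs the observation that $U^{j-1}$ depends on the true chain only through $Y^{j-1}$. To handle this, I would argue by induction using the Borel measurability of $\map$ on $\Pp_2$ and the disintegration theorem, as in the proof of Proposition~\ref{prop:process_covariance}. Once the conditional law is established, the remaining steps are routine $\psi_2$ arithmetic.
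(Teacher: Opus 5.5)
Your proposal is correct and follows the paper's proof essentially step for step. The paper uses the same coupling of the true chain with an independent mean-field chain whose analysis map is evaluated at the true $Y^j$, the same induction giving $\|D^j\|_{\psi_2}\le\bar t^j$ via transport sensitivity and the path-uniform trace bound, and the same finish via Proposition~\ref{prop:subG_stability_cond_exp}, the tail bound, and a union bound over $j$. The one loose point is your hedge on the noise term: Jensen applied to the Orlicz definition gives $\E\|\eta^j\|_2\le\|\eta^j\|_{2,\psi_2}\le\lambda_{\max}$ directly with constant one. That constant is what the stated additive term $2\lambda_{\max}$, which is not multiplied by $C_{\psi_2}$, requires, so "absorbing it into $C_{\psi_2}$" is not an option.
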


\begin{proof}
Inequality \eqref{eq:stability_mf_true} does not depend on the particular coupling of $(X^j,Y^j)|Y^{1:j-1}$ and $(\tilde X_f^j,\tilde Y_f^j)|Y^{1:j-1}$, so we are free to choose whichever coupling we want, as long as the marginals are correct. The joint distribution of $\{(X^j, Y^j)\}_{j\geq 1}$ is described through the dynamics in Equation \eqref{eq:dynamics}, namely
\begin{align*}
    X^{j}&=\Psi\left(X^{j-1}\right)+\xi^{j-1}, \qquad \xi^{j-1} \sim \nu_\xi^{j-1} \text{ independently}  \\ 
    Y^{j}& =h\left(X^{j}\right)+\eta^{j}, \qquad \eta^{j} \sim \nu_\eta^{j} \text{ independently}
\end{align*}
for all $j\geq 1$ and $X^0 \sim \mu^0$. We couple $\left\{(\tilde X^j_f, \tilde Y^j_f)\right\}_{j\geq 1}$ through
\begin{align*}
    \tilde X^j_f &= \Psi\left(\tilde X^{j-1}\right)+\tilde \xi^{j-1},\qquad \tilde \xi^{j-1} \sim \nu_\xi^{j-1} \text{ independently} \\
     \tilde Y^{j}_f& =h\left(\tilde X^{j}_f\right)+\tilde\eta^{j}, \qquad \tilde\eta^{j} \sim \nu_\eta^{j} \text{ independently} \\
    \tilde X^{j} &= \map^{\tilde \mu^{f, j}_{XY}}_{Y^j} ( \tilde X^j_f,  \tilde Y^j_f, \omega^j), \quad \omega^{j} \sim \kappa \text{ independently}
\end{align*}
for all $j\geq 1$ and $\tilde X^0 \sim\mu^0$ independently.\\ \textit{Step I.} We begin the proof by showing, inductively, that
\[
\Delta^j \coloneqq \|X^j - \tilde X_f^j\|_2
\]
is sub-Gaussian with deterministic parameter \(\bar t^j\). We will use this fact later to control
\[
\left\|\mathbb{E}\left(\tilde Y_f^j - Y^j \mid Y^{1:j-1}\right)\right\|_2 .
\]
For $j = 1$, we have that $\left\|\Delta^1\right\|_{\psi_2} \leq 2L_\Psi\sigma_X + 2 \sigma_{\max}=\bar t^1$.   Let $j\geq 2$ and assume $\left\|\Delta^{j-1}\right\|_{\psi_2} \leq \bar t^{j-1}.$ Then,
\begin{align*}
    \Delta^j &= \left\|X^j -\tilde X^j_f\right\|_2  \;\leq\; L_\Psi\left\|\tilde X^{j-1}- X^{j-1}\right\|_2 +\left\|\xi^{j-1} \right\|_2 + \left\|\tilde \xi^{j-1}\right\|_2 . 
\end{align*}
Apply Equation \eqref{eq:y_lipschitz} to bound
\begin{align*}
    \left\|\tilde X^{j-1}- X^{j-1}\right\|_2 &= \left\|\map^{\tilde \mu^{f, j-1}_{XY}}_{Y^{j-1}} ( \tilde X^{j-1}_f,  \tilde Y^{j-1}_f, \omega^{j-1}) -   X^{j-1}\right\|_2 \\
     &\leq \left\|\map^{\tilde \mu^{f, j-1}_{XY}}_{Y^{j-1}} ( \tilde X^{j-1}_f,  \tilde Y^{j-1}_f, \omega^{j-1}) -   \tilde X^{j-1}_f\right\|_2 +   \left\|X^{j-1} - \tilde X^{j-1}_f\right\|_2\\
    & \leq L_y\left(1 +  \Trcov\left( \tilde \mu^{f, j-1}_{XY}\right)^{e_y}\right) \left( \left\|\tilde Y^{j-1}_f - Y^{j-1} \right\|_2+ \left\|\omega^{j-1}\right\|_2\right)+   \Delta^{j-1}.
\end{align*}
\textit{Step II. }Further, we have $\left\|\tilde Y^{j-1}_f - Y^{j-1} \right\|_2 \leq L_h\Delta^{j-1} + \left\|\tilde \eta^{j-1}\right\|_2 +\left\| \eta^{j-1} \right\|_2.$ Combining these estimates yields
\begin{align*}
   \left\|\Delta^j \right\|_{\psi_2}
   & \leq L_\Psi\Bigl(
   L_y\left(1 +  \Trcov\left( \tilde \mu^{f, j-1}_{XY}\right)^{e_y}\right)
   \\
   &\qquad\qquad\cdot
   \left(  L_h\left\|\Delta^{j-1}\right\|_{\psi_2}+2\lambda_{\max}+\sigma_\kappa\right)\\
   &\qquad\qquad
   +   \left\|\Delta^{j-1}\right\|_{\psi_2}\Bigr)+2\sigma_{\max} \\
     & \leq L_\Psi\Bigl(
     L_y\left(1 +  \left(C_{\mathrm{tr}}\tilde C_{\mathrm{subG}}^2\right)^{e_y}\right)
     \\
     &\qquad\qquad\cdot
     \left(  L_h\bar t^{j-1}+2\lambda_{\max}+\sigma_\kappa\right)
     + \bar t^{j-1}\Bigr)+2\sigma_{\max}
     =\bar t^j\,,
\end{align*}
and this completes the induction.

We have
\begin{align*}
 \left\|\E\left( \tilde Y^j_f - Y^j\left|Y^{1:j-1}\right.\right) \right\|_2& \leq  \E\left( \left\|\tilde Y^j_f - Y^j\right\|_2\left|Y^{1:j-1}\right.\right)  \\
 &=  \E\left( \left\|h\left(\tilde X^{j}_f\right)+\tilde \eta^{j} - h\left(X^{j}\right)-\eta^{j}\right\|_2\left|Y^{1:j-1}\right.\right) \\
 &\leq  \E\left( \left\|h\left(\tilde X^{j}_f\right) - h\left(X^{j}\right)\right\|_2\left|Y^{1:j-1}\right.\right) + 2\E\left( \left\|\eta^{j}\right\|_2\right) \\
 & \leq  L_h\E\left( \Delta^j\left|Y^{1:j-1}\right.\right)+ 2 \lambda_{\max}.
\end{align*}
Here we used \(\E\|\eta^j\|_2\le \|\eta^j\|_{2,\psi_2}\le\lambda_{\max}\), which follows from Jensen's inequality and the definition of \(\|\cdot\|_{2,\psi_2}\) in Equation~\eqref{eq:Orlicz}.
Step I and Proposition \ref{prop:subG_stability_cond_exp} prove that $\E\left( \Delta^j\left|Y^{1:j-1}\right.\right) $ is sub-Gaussian with parameter $\bar t^j$. Further, Property 2 in Proposition \ref{prop:equiv_subg} proves that 
$$
 \P\bigl(\E\left( \Delta^j\left|Y^{1:j-1}\right.\right)\ge \alpha\bigr)\leq 2\exp\!\bigl(-\alpha^2/K_2^2\bigr)\quad \text{ for all }  \alpha\geq0 
    $$
for $K_2 = C_{\psi_2}\bar t^j$. Therefore, for any $k\geq 1$, with probability $1 - \frac{1}{kJ}$,
    $$
    \E\left( \Delta^j\left|Y^{1:j-1}\right.\right)< C_{\psi_2}\bar t^j\sqrt{\log(2kJ)}
    $$
and with probability $1 - \frac{1}{k}$
    $$
    \left\|\E\left( \tilde Y^j_f - Y^j\left|Y^{1:j-1}\right.\right) \right\|_2  \leq  L_hC_{\psi_2}\bar t^j\sqrt{\log(2kJ)} +{2}\lambda_{\max} \text{ for all }j =1,\ldots, J. 
    $$
\end{proof}


\subsection{High-Probability Moment Bounds for the Interacting Particle System}
\label{subsection:moments_ips}
The final ingredient for controlling the random coefficients $C_1$ and $C_2$ in Equation \eqref{eq:triangle_intuitive_2}---and hence for proving convergence to the mean-field---is a high-probability bound on the second moment of the interacting finite-$N$ forecast law $\hat\mu_{XY,N}^{f,j}$. Although the forecast ensemble $\hat\mu^{f,j}_{XY,N}$ is \emph{not} i.i.d., to control its moments it suffices to understand how the centered second empirical moment of i.i.d.\ sub-Gaussian samples concentrates. In this and the next subsection, we will apply the following concentration bounds, which are simple consequences of standard concentration of measure results~\cite{wainwright2019high}.  

\begin{lemma}
\label{lem:mean-cov-subg}
Let $X_1,\dots,X_N\in\mathbb{R}^d$ be i.i.d.\ with mean $M=\mathbb{E}X_1$ and covariance $\Sigma=\E\left((X_1-M)(X_1-M)^\top\right)$. Assume $\big\|X_1\big\|_{2,\psi_2}\le K$. Write 
\[
\begin{aligned}
\overline X_N&=\frac1N\sum_{i=1}^N X_i,\\
\Sigma_N&=\frac1N\sum_{i=1}^N (X_i-\overline X_N)(X_i-\overline X_N)^\top,\\
\widetilde\Sigma_N&=\frac1N\sum_{i=1}^N (X_i-M)(X_i-M)^\top.
\end{aligned}
\]
There exist universal constants $c_1,c_2, c_3 \geq 0$ such that for all $u\ge \log 2$:
\begin{enumerate}
\item[\emph{(i)}] With probability at least $1-e^{-u}$,
\[
\|\overline X_N-M\|_2 \le c_1K\sqrt{\frac{du}{N}}.
\]
\item[\emph{(ii)}] With probability at least $1-e^{-u}$,
\[
\|\Sigma_N-\Sigma\|_2 \le c_2K^2d{\frac{u}{\sqrt{N}}}.
\]
\item[\emph{(iii)}] With probability at least $1-e^{-u}$,
$$
\left|\Tr(\widetilde\Sigma_N - \Sigma)\right|  \leq c_3K^2 d^2\frac{u}{\sqrt{N}},
\qquad
\left|\Tr(\Sigma_N - \Sigma)\right|  \leq c_3K^2 d^2\frac{u}{\sqrt{N}}.  
$$
\end{enumerate}
\end{lemma}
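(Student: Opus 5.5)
The plan is to reduce all three statements to standard concentration for sums of i.i.d.\ sub-Gaussian and sub-exponential variables, working coordinatewise so that the factors of $d$ appear explicitly. Throughout, write $Z_i \coloneqq X_i - M$. Since $\|X_1\|_{2,\psi_2}\le K$, Jensen gives $\|M\|_2\le \E\|X_1\|_2\lesssim K$, so $\|Z_i\|_{2,\psi_2}\lesssim K$. Each coordinate $Z_i^{(r)}$ is then centered sub-Gaussian with $\|Z_i^{(r)}\|_{\psi_2}\lesssim K$, because $|Z_i^{(r)}|\le\|Z_i\|_2$.

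\emph{(i)} For each coordinate $r\le d$, apply Hoeffding's inequality for centered sub-Gaussian summands (the centered MGF characterization in Proposition~\ref{prop:equiv_subg}). This gives $|\overline X_N^{(r)}-M^{(r)}|\le cK\sqrt{(u+\log d)/N}$ with probability $1-e^{-u}/d$. A union bound over $r$ and $\|v\|_2\le\sqrt d\,\|v\|_\infty$ then yield $\|\overline X_N-M\|_2\le cK\sqrt{d(u+\log d)/N}$. The $\log d$ is absorbed into $u$ as follows: for $u\ge\log 2$ one has $u+\log d\le C\,u\,d$, which costs at most an extra $\sqrt d$. If a cleaner bound is wanted, one can instead apply a sub-Gaussian bound directly to the norm $\|\sum_i Z_i\|_2$, using $\E\|\sum_i Z_i\|_2^2 = N\Trcov(X_1)\lesssim NK^2$ together with bounded-differences-type concentration. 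Since the claimed rate already allows a factor $\sqrt d$, the coordinatewise route suffices.

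\emph{(ii)} and \emph{(iii)} Decompose
\[
\Sigma_N-\Sigma=(\widetilde\Sigma_N-\Sigma)-(\overline X_N-M)(\overline X_N-M)^\top .
\]
For $\widetilde\Sigma_N-\Sigma$, each entry $(r,s)$ is an average of the i.i.d.\ centered variables $Z_i^{(r)}Z_i^{(s)}-\Sigma_{rs}$. These are sub-exponential with $\psi_1$-norm $\lesssim K^2$, since a product of sub-Gaussians is sub-exponential. Bernstein's inequality then gives each entry a bound of order $K^2\bigl(\sqrt{t/N}+t/N\bigr)$ with probability $1-2e^{-t}$. Choosing $t=u+2\log d$, taking a union bound over the $d^2$ entries, and using $\|A\|_2\le\|A\|_F\le d\max_{rs}|A_{rs}|$, we obtain $\|\widetilde\Sigma_N-\Sigma\|_2\lesssim K^2 d\,u/\sqrt N$ after simplification. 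This simplification uses $\sqrt{t/N}+t/N\le 2t/\sqrt N$ for $N\ge1$ (at the cost of a loose but permitted rate) and $\log d\lesssim u\,d$.

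The rank-one correction is bounded by $\|\overline X_N-M\|_2^2$, which by (i) is $\lesssim K^2 d u/N\le K^2du/\sqrt N$. Splitting the failure probability $e^{-u}$ between the two events, by shifting $u$ by $\log 2$ and adjusting constants using $u\ge\log2$, gives (ii). For (iii), $|\Tr A|\le d\|A\|_2$ converts the operator-norm bounds for both $\widetilde\Sigma_N-\Sigma$ and $\Sigma_N-\Sigma$ into the stated $d^2$ bounds. Alternatively, one can apply Bernstein directly to the scalar $\frac1N\sum_i\|Z_i\|_2^2$.

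The only step needing care is the bookkeeping that absorbs the union-bound terms $\log d$ and the probability splitting into universal constants under the constraint $u\ge\log 2$. Handling the two regimes of Bernstein's inequality with the single form $u/\sqrt N$ is the other point of attention. There is no deep obstacle: the stated dimension factors are generous enough that crude coordinatewise union bounds suffice.
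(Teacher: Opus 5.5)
\paragraph{Where the argument breaks.} Your decomposition $\Sigma_N-\Sigma=(\widetilde\Sigma_N-\Sigma)-(\overline X_N-M)(\overline X_N-M)^\top$ and the trace step $|\Tr A|\le d\|A\|_2$ are exactly the paper's. The dimension bookkeeping, however, does not close. Since the lemma asserts universal constants with explicit powers of $d$, this is a genuine gap.

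In (i), coordinatewise Hoeffding plus a union bound over the $d$ coordinates gives $\|\overline X_N-M\|_2\le cK\sqrt{d(u+\log d)/N}$. Reaching $c_1K\sqrt{du/N}$ would require $\log d\lesssim u$, which fails for $u$ near $\log 2$ and large $d$. Your substitute $u+\log d\le C\,ud$ turns the bound into $cK\,d\sqrt{u/N}$. The claimed rate has no spare $\sqrt d$ to absorb this: its $\sqrt d$ is precisely the one you already spent in $\|v\|_2\le\sqrt d\,\|v\|_\infty$.

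The same problem occurs in (ii). Entrywise Bernstein with $t=u+2\log d$ and $\|A\|_2\le d\max_{r,s}|A_{rs}|$ give $\|\widetilde\Sigma_N-\Sigma\|_2\lesssim K^2d(u+\log d)/\sqrt N$. Inserting $\log d\lesssim ud$ yields $K^2d^2u/\sqrt N$, not the $K^2du/\sqrt N$ you assert. The rank-one correction inherits the defect of (i): for $N=1$ and $u=\log 2$, the quantity $d(u+\log d)/N$ is not $O(du/\sqrt N)$ uniformly in $d$. Your primary route to (iii) inherits it too. Your fallback there does rescue (iii): apply Bernstein directly to $\frac1N\sum_i\|Z_i\|_2^2$ using $\bigl\|\,\|Z_1\|_2^2\bigr\|_{\psi_1}\lesssim K^2$. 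This even gives a dimension-free bound for $\Tr(\widetilde\Sigma_N-\Sigma)$.

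\paragraph{How the paper avoids it.} The repair is to avoid union bounds over coordinates or entries, which is what the paper does.
\begin{itemize}
\item For (i), the paper notes that every marginal $\langle v,\overline X_N-M\rangle$ has $\psi_2$-norm $\lesssim K/\sqrt N$, then passes to the Euclidean norm via Vershynin's Exercise~3.39. Concretely, the triangle inequality in $\psi_1$ gives $\bigl\|\,\|\overline X_N-M\|_2^2\bigr\|_{\psi_1}\le\sum_{r=1}^d\|\overline X_N^{(r)}-M^{(r)}\|_{\psi_2}^2\lesssim dK^2/N$. The tail bound then produces $c_1K\sqrt{du/N}$ with no $\log d$.
\item For (ii), the paper applies the net-based covariance bound (Wainwright, Theorem~6.5) to the centered vectors, obtaining $\|\widetilde\Sigma_N-\Sigma\|_2\lesssim K^2(d+u)/\sqrt N\lesssim K^2du/\sqrt N$. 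The entropy of the sphere enters additively as $d+u$, rather than being multiplied by the factor $d$ from your Frobenius conversion.
\end{itemize}
With these two inputs the rest of your argument goes through. Your explicit splitting of the failure probability between the two events is in fact more careful than the paper's.

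Finally, the bounded-differences alternative you mention for (i) does not apply as stated, because sub-Gaussian summands are unbounded. A Hilbert-space (Pinelis-type) martingale inequality would be needed instead.
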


\begin{proof}
Let $Y_i\coloneqq X_i-M$.

\emph{(i)} By the monotonicity and centering properties of $\psi_2$ norms (see \cite{vershynin2025high}), for each $v\in\mathbb{S}^{d-1}$ the marginal $\langle v,Y_1\rangle$ is centered sub-Gaussian with parameter $2K$. Therefore, $\langle v,\overline X_N-M\rangle$ is  sub-Gaussian with parameter $2K/\sqrt{N}$. In particular, $\|\overline{X}_N - M\|_{\psi_2} \leq \frac{2K}{\sqrt{N}}$. By Exercise~3.39 of \cite{vershynin2025high},  this implies 
$$
\left\|\overline X_N-M\right\|_{2,\psi_2} \lesssim \sqrt{d/N}K.
$$
The claim then follows by Proposition~\ref{prop:equiv_subg} (Property 2), after choosing the universal constant $c_1$ sufficiently large.

\emph{(ii)} Because the $Y_i$ are $2K$-sub-Gaussian and $\widetilde\Sigma_N=\frac1N\sum_{i=1}^N (X_i-M)(X_i-M)^\top$,  Theorem~6.5 of \cite{wainwright2019high} gives, for all $s\ge0$ and universal $\tilde c_1, \tilde c_2, \tilde c_3\geq 0$,
\[
\mathbb{P}\!\left(
\|\widetilde\Sigma_N-\Sigma\|_2 >
\tilde c_14 K^2\left(\sqrt{\frac{d}{N}}+\frac{d}{N}+s\right)
\right)\le \tilde c_2e^{-\tilde c_3N\min\{s,s^2\}}.
\]
Choosing
\[
s= \tilde c_4\left(\sqrt{u/N}+u/N\right)
\]
with a sufficiently large constant $\tilde c_4$ gives
\[
\tilde c_2e^{-\tilde c_3N\min\{s,s^2\}} \leq e^{-u}.
\]
Therefore, with probability at least $1 - e^{-u}$
$$
\|\widetilde\Sigma_N-\Sigma\|_2 \le \tilde c_5K^2 \frac{d + u}{\sqrt{N}}
$$
for another universal constant $\tilde c_5$. Finally, consider the event from (i). Since  $\Sigma_N=\widetilde\Sigma_N-(\overline X_N-M)(\overline X_N-M)^\top$, the decomposition $\|\Sigma_N-\Sigma\|_2\le \|\widetilde\Sigma_N-\Sigma\|_2+\|\overline X_N-M\|_2^2$  concludes the proof.

\emph{(iii)} Finally, 
$$
\left|\Tr(\widetilde\Sigma_N - \Sigma)\right| \leq d \left\|\widetilde\Sigma_N - \Sigma\right\|_2 
$$
shows the first inequality. The other inequality follows from the same argument, utilizing the result from \emph{(ii)}.
\end{proof}

\begin{remark}
More sophisticated dimension-free bounds exist that are applicable to strictly sub-Gaussian random variables~\cite{koltchinskii2017concentration, hsu2012tail}. Such bounds are of interest for extending our argument to infinite-dimensional Hilbert spaces. 
\end{remark}

Using Lemma~\ref{lem:mean-cov-subg} as the i.i.d.\ input at each step and leveraging the Lipschitz stability provided by our assumptions, we obtain the desired high-probability bounds for the second moment of the interacting forecast law $\hat\mu_{XY,N}^{f,j}$.

\begin{proposition}
\label{prop:algorithm_second_moment}
Let $J,N\in\N_{\ge1}$, and let $\{\hat\mu_{XY,N}^{f,j}\}_{j=1}^{J}$ be as defined in Equation \eqref{eq:forecast_ensemble} with observations $y^{1:j}$ fixed.  Suppose Assumptions \ref{assumption:dyn_assumptions} and \ref{assumption:algorithm_finite_particle} hold. Define $r_j \coloneqq (1+2e_\mathrm{L})^{j-1}$. Then there exists a constant
\[
C_{\mathrm{ips}}\coloneqq C_{\mathrm{ips}}\big(L_\Psi,L_h,C_{\mathrm{L}},e_{\mathrm{L}},J,\sigma_X,\sigma_{\max},\lambda_{\max},\sigma_\kappa,n,m,a\big)
\]
such that, for all $k\ge2$,
\[
\overline M_2\big(\hat\mu_{XY,N}^{f,j}\big)\le  C_{\mathrm{ips}} \left(1+ \left(\frac{\log k}{\sqrt{N}}\right)^{r_j/2}\right) \text{ for } 1\leq j\leq J
\]
with probability at least $1-\frac{1}{k}$. 
\end{proposition}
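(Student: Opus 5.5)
We argue by induction on $j$, tracking the empirical centered second moment $s_j \coloneqq \overline M_2(\hat\mu_{XY,N}^{f,j})$ together with the analogous quantity $a_{j} \coloneqq \overline M_2(\hat\mu_{X,N}^{a,j})$ for the analysis ensemble. The key structural fact is that, conditionally on the forecast ensemble at step $j$, the analysis step applies one and the same deterministic Lipschitz map $\map_{y^j}^{\hat\mu^{f,j}_{XY,N}}$ (in its non-measure arguments) to the i.i.d.\ auxiliary variables $\omega_\ell^j$ and to the particles. Likewise, the forecast step applies the fixed Lipschitz maps $\Psi$ and $h$ plus fresh i.i.d.\ noise. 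So the ensemble is never i.i.d., but each step is "Lipschitz image of the previous ensemble plus i.i.d.\ noise", and centered empirical second moments behave well under both operations. We work with fixed $y^{1:J}$, spending failure probability $1/(3kJ)$ on each of $O(J)$ noise events and taking a union bound at the end.

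\emph{Forecast step.} Write $x_\ell^{f,j}=\Psi(x_\ell^{a,j-1})+\xi_\ell^{j-1}$. Since the centered empirical second moment is a seminorm in $L^2$ of the empirical measure (the minimum over centers $c$ of $(\frac1N\sum\|z_\ell-c\|^2)^{1/2}$), the triangle inequality gives
\[
\overline M_2(\hat\mu^{f,j}_{X,N})\le L_\Psi a_{j-1}+\overline M_2\bigl(\tfrac1N\textstyle\sum_\ell\delta_{\xi^{j-1}_\ell}\bigr).
\]
Here we use that the centered moment of a Lipschitz image is bounded by $L$ times the centered moment, choosing as center the image of the mean. The noise term is an i.i.d.\ sub-Gaussian empirical moment, so Lemma~\ref{lem:mean-cov-subg}(iii) with $u=\log(3kJ)$ bounds it by $C(1+\log k/\sqrt N)^{1/2}$. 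The same argument handles $y_\ell^{f,j}=h(x_\ell^{f,j})+\eta_\ell^j$, giving
\[
s_j\le C\,(a_{j-1}+1)+C(\log k/\sqrt N)^{1/2}.
\]
For $j=1$, the initial ensemble is i.i.d.\ from $\mu^0$, so $a_0\lesssim 1+(\log k/\sqrt N)^{1/2}$, matching $r_1=1$.

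\emph{Analysis step.} Set $L_j\coloneqq C_{\mathrm L}(1+\Trcov(\hat\mu^{f,j}_{XY,N})^{e_{\mathrm L}})\le C_{\mathrm L}(1+s_j^{2e_{\mathrm L}})$, and let $T$ denote the map $\map_{y^j}^{\hat\mu^{f,j}_{XY,N}}$. Taking as center $T(\bar x,\bar y,\bar\omega)$, with bars denoting empirical means, Assumption~\ref{assumption:algorithm_finite_particle}(1a) gives
\[
a_j\le L_j\bigl(s_j+\overline M_2(\tfrac1N\textstyle\sum\delta_{\omega^j_\ell})\bigr).
\]
The $\omega$-term is again controlled by Lemma~\ref{lem:mean-cov-subg} using $\sigma_\kappa$. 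Hence $a_j\lesssim (1+s_j^{2e_{\mathrm L}})(s_j+1+\epsilon)$, where $\epsilon\coloneqq(\log k/\sqrt N)^{1/2}$. This gives the recursion
\[
s_{j+1}\lesssim 1+\epsilon+(1+s_j)^{1+2e_{\mathrm L}}.
\]

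\emph{Closing the recursion.} By induction, $s_j\le C_j(1+\epsilon^{r_j})$. Indeed, $(1+\epsilon^{r_j})^{1+2e_{\mathrm L}}\lesssim 1+\epsilon^{r_j(1+2e_{\mathrm L})}=1+\epsilon^{r_{j+1}}$, and $\epsilon\le 1+\epsilon^{r_{j+1}}$ because $r_{j+1}\ge1$. This yields the claimed $(\log k/\sqrt N)^{r_j/2}$ with constants depending only on the listed parameters, and $n,m,a$ enter through Lemma~\ref{lem:mean-cov-subg}.

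The main obstacle is bookkeeping rather than ideas. One point needs care: the noise $\xi^{j-1}_\ell$ is independent of the earlier ensemble, but we never need that independence, because the triangle-inequality decomposition separates it off exactly. The other point is that Lemma~\ref{lem:mean-cov-subg} applies to the uncentered-by-true-mean noise empirical moments, so we use the bound $\overline M_2^2\le \Tr\widetilde\Sigma_N$ relative to the true mean. If we are unsure whether the seminorm argument is clean, the fallback is to compare directly with the centered second moment via $\frac1N\sum\|z_\ell-c\|^2$ for an arbitrary $c$, which only costs constants.
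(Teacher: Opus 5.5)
Your proposal follows the paper's proof essentially step for step. It treats the forecast as a Lipschitz image of the previous analysis ensemble plus i.i.d.\ noise, uses the triangle inequality for centered empirical second moments, applies Lemma~\ref{lem:mean-cov-subg}(iii) to the initial, noise, and seed ensembles, uses the analysis Lipschitz constant $C_{\mathrm L}(1+s_j^{2e_{\mathrm L}})$, and closes the polynomial recursion by induction with exponents $r_j$.

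There is one bookkeeping slip. From $a_j\lesssim(1+s_j^{2e_{\mathrm L}})(s_j+1+\epsilon)$ you cannot pass to $s_{j+1}\lesssim 1+\epsilon+(1+s_j)^{1+2e_{\mathrm L}}$. That step drops the cross term $\epsilon(1+s_j)^{2e_{\mathrm L}}$, which dominates the retained terms when $\epsilon\gg s_j\gg 1$.

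The fix is to keep the term and bound it using the inductive hypothesis $s_j\le C_j(1+\epsilon^{r_j})$. This gives $\epsilon(1+s_j)^{2e_{\mathrm L}}\lesssim\epsilon+\epsilon^{1+2e_{\mathrm L}r_j}$. Because $r_j\ge 1$, we have $1+2e_{\mathrm L}r_j\le(1+2e_{\mathrm L})r_j=r_{j+1}$, so the term is $\lesssim 1+\epsilon^{r_{j+1}}$ and the induction closes. This is exactly the inequality $(\beta-1)r_{j-1}+1\le\beta r_{j-1}$ that the paper uses to absorb the same term $(1+d_{XY}^{j-1})^{\beta-1}(1+A)$.
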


\begin{proof}
We will repeatedly use the following two standard bounds: if $f$ is $L$-Lipschitz and $Z$ has second moments 
\begin{equation}\label{eq:two-sample}
\overline M_2\big(\Law(f(Z))\big)
\le L\overline M_2(\Law(Z))
\end{equation}
Further, for any joint random variable $(U,V) \in \R^n \times \R^m$,
\begin{equation}\label{eq:prod-moment-bound}
\overline M_2(\Law(U,V))\le \overline M_2(\Law(U))+\overline M_2(\Law(V)) .
\end{equation}
Introduce the independent noise-pair and seed samples
\[
\varepsilon_\ell^{j-1}\coloneqq (\xi_\ell^{j-1},\eta_\ell^{j}) \sim \nu_\xi^{j-1} \otimes \nu_\eta^{j},
\qquad
\omega_\ell^{j}\sim\kappa,
\]
and the empirical measures
\begin{gather*}
   \hat \mu^{a,0}_{X,N}=\tfrac1N\sum_{\ell=1}^N\delta_{x_{\ell}^{a,0}},\quad
\hat \mu^{j-1}_{\varepsilon,N}=\tfrac1N\sum_{\ell=1}^N\delta_{\varepsilon_\ell^{j-1}},\\
\hat \mu^{f,j}_{\omega,N}=\tfrac1N\sum_{\ell=1}^N\delta_{\omega_\ell^{j}}, \quad \hat \mu^{j-1}_{X\varepsilon,N} = \tfrac1N\sum_{\ell=1}^N\delta_{(x_{\ell}^{a,j-1}, \varepsilon_\ell^{j-1})}. 
\end{gather*}
By Assumption \ref{assumption:dyn_assumptions}, $x_{\ell}^{a,0}\sim\mu^0$ are sub-Gaussian with parameter $\sigma_X$,
$\xi_\ell^{j-1}$ and $\eta_\ell^{j}$ are sub-Gaussian with parameters $\sigma_{\max}$ and $\lambda_{\max}$, and $\omega_\ell^{j}$ has parameter $\sigma_\kappa$.
Further, note that for any two distributions $\mu, \nu \in \Pp_2\left(\R^u\right)$, $\overline M_2^2(\mu) = \Trcov(\mu) \leq |\Trcov(\mu) -\Trcov(\nu)| +\Trcov(\nu)$. Therefore, by \emph{(iii)} in Lemma~\ref{lem:mean-cov-subg} and a union bound over $j=1,\dots,J$, we have that the event
\[
\mathcal E_k\coloneqq \left\{
\max\left(\overline M_2(\hat \mu^{a,0}_{X,N}), \overline M_2(\hat \mu^{j-1}_{\varepsilon,N}), \overline M_2(\hat \mu^{f,j}_{\omega,N})\right)\le C_1 + C_2\sqrt{\frac{\log k}{\sqrt{N}}}\quad \forall j\le J
\right\}
\]
has probability at least $1-\frac{1}{k}$ for constants $C_1, C_2$ depending only on $\sigma_X$, $\sigma_{\max}$, $\lambda_{\max}$, $\sigma_\kappa$, $n$, $m$, $a$, $J$. While the constant $C_2$ bounds the concentration terms (e.g., $|\Trcov(\hat\mu^{a,0}_{X,N}) - \Trcov(\mu^0)|$), the constant $C_1$ controls the constant terms (e.g., $\Trcov( \mu^{0})$). Next, we condition on $\mathcal{E}_k$. 
Write
\[
d_{XY}^j\coloneqq \overline M_2(\hat\mu_{XY,N}^{f,j}),
\qquad
d_X^{j-1}\coloneqq \overline M_2(\hat\mu_{X,N}^{a,j-1}).
\]
Define the forecast map
$\Phi(x,\xi,\eta)\coloneqq \big(\Psi(x)+\xi, h(\Psi(x)+\xi)+\eta\big),$ 
and its Lipschitz constant inherited from Equation \eqref{eq:dynamics}:
$L_{PQ}\coloneqq (1+L_h)(1+L_\Psi).$ 
By the definitions in Equations \eqref{eq:particles_dynamics} and  \eqref{eq:forecast_ensemble},  \(
\hat\mu_{XY,N}^{f,j}=\Phi_{\sharp}\hat \mu^{j-1}_{X\varepsilon,N},
\)  so by Equations  \eqref{eq:two-sample} and \eqref{eq:prod-moment-bound},
\begin{equation}\label{eq:forecast-moment}
d_{XY}^j
=\overline M_2\big(\Phi_{\sharp}\hat \mu^{j-1}_{X\varepsilon,N}\big)
\;\le\; L_{PQ}\overline M_2(\hat \mu^{j-1}_{X\varepsilon,N})
\;\le\;  L_{PQ}\left(d_X^{j-1}+C_1 +C_2\sqrt{\frac{\log k}{\sqrt{N}}}\right).
\end{equation}
Let 
\(
\hat \mu^{f,j-1}_{XY\omega,N}
=\tfrac1N\sum_{\ell=1}^N\delta_{(x_{\ell}^{f,j-1},y_{\ell}^{f,j-1},\omega_\ell^{j-1})}.
\) 
Then 
\(
\hat\mu_{X,N}^{a,j-1}
=(\map_{y^{j-1}}^{\hat\mu_{XY,N}^{f,j-1}})_{\sharp}\hat \mu^{f,j-1}_{XY\omega,N}.
\) 
By Assumption \ref{assumption:algorithm_finite_particle} (1a),
\[
\mathrm{Lip}\Big(\map_{y^{j-1}}^{\hat\mu_{XY,N}^{f,j-1}}\Big)
\le C_{\mathrm{L}}\Big(1+\Trcov(\hat\mu_{XY,N}^{f,j-1})^{e_{\mathrm{L}}}\Big) = C_{\mathrm{L}}(1+(d_{XY}^{j-1})^{2e_{\mathrm{L}}}).
\]
Another application of Equations  \eqref{eq:two-sample} and \eqref{eq:prod-moment-bound} yields
$$
d_X^{j-1}
\le C_{\mathrm{L}}\big(1+(d_{XY}^{j-1})^{2e_{\mathrm{L}}}\big)
\overline M_2(\hat \mu^{f,j-1}_{XY\omega,N})
\le C_{\mathrm{L}}\big(1+(d_{XY}^{j-1})^{2e_{\mathrm{L}}}\big)
\big(d_{XY}^{j-1}+\overline M_2(\hat \mu^{f,j-1}_{\omega,N})\big).
$$
We can simplify this further to 
\begin{equation}\label{eq:analysis-moment}
d_X^{j-1}
\le C_{\mathrm{L}}\big(1+(d_{XY}^{j-1})^{2e_{\mathrm{L}}}\big)
\left(d_{XY}^{j-1}+C_1 +C_2\sqrt{\frac{\log k}{\sqrt{N}}}\right).
\end{equation}
Set $A\coloneqq  C_1 +C_2\sqrt{\frac{\log k}{\sqrt{N}}}$.  Insert Equation \eqref{eq:analysis-moment} into Equation \eqref{eq:forecast-moment} to get, for all $j\ge2$,
\begin{equation}\label{eq:main-recursion}
d_{XY}^{j}
\ \le\ L_{PQ}\Big(
C_{\mathrm{L}}\big(1+(d_{XY}^{j-1})^{2e_{\mathrm{L}}}\big)\big(d_{XY}^{j-1}+A\big)
+A\Big).
\end{equation}
For $j=1$, Equation \eqref{eq:forecast-moment} gives
\[
d_{XY}^{1}\leq 2 L_{PQ}A.
\]
Choosing sufficiently large  $\alpha\geq 1$ depending only on $L_{PQ}$ and $ C_{\mathrm{L}}$,  Equation \eqref{eq:main-recursion} implies that, for all \(j\ge2\),
\begin{align*}
1 +d_{XY}^{j} &\le  L_{PQ}\Big(
C_{\mathrm{L}}\big(1+(d_{XY}^{j-1})^{2e_{\mathrm{L}}}\big)\big(d_{XY}^{j-1}+A\big) +A\Big) + 1 \\
& \leq  \alpha\left(\big(1 +d_{XY}^{j-1}\big)^{2e_{\mathrm{L}}  + 1} + \big(1 +d_{XY}^{j-1}\big)^{2e_{\mathrm{L}}}  (1+A)\right)
\end{align*}
and $d_{XY}^{1}\le \alpha A$. Define $\beta\coloneqq 2e_{\mathrm{L}}  + 1$, $r_1\coloneqq1$, $r_j\coloneqq\beta r_{j-1}$ for $j\ge 2$, and $D_1\coloneqq\alpha$. Since $\alpha\ge1$, we have $1 +d_{XY}^{1} \le D_1 (1+A)^{r_1}$. Assume $1 +d_{XY}^{j-1} \le D_{j-1} (1+A)^{r_{j-1}}$. Then 
\begin{align*}
    1 +d_{XY}^{j} &\le 
    \alpha\left(\big(1 +d_{XY}^{j-1}\big)^{\beta} + \big(1 +d_{XY}^{j-1}\big)^{\beta-1}  (1+A)\right)\\
    &\le \alpha\left(D_{j-1}^\beta(1+A)^{\beta r_{j-1}} + D_{j-1}^{\beta-1} (1+A)^{(\beta-1)r_{j-1}+1}\right)\,.
\end{align*}
Since $r_{j-1}\ge 1$, we have $(\beta-1)r_{j-1}+1\le \beta r_{j-1}= r_j$. Hence, define
\[
D_j\coloneqq \alpha \left( D_{j-1}^\beta+D_{j-1}^{\beta-1}\right).
\]
Then $1 +d_{XY}^{j} \le D_j (1+A)^{r_j}$, and so this estimate holds for any $j\ge 1$ by induction. Define $C_{\mathrm{ips}}\coloneqq\max_{1\le j \le J} D_j$. Then $C_{\mathrm{ips}}$ depends only on $L_\Psi,L_h,C_{\mathrm{L}},e_{\mathrm{L}},J$, and it holds
$$
1 +d^j_{XY} \leq C_{\mathrm{ips}} (1+ A)^{r_j}
$$
for $ r_j \coloneqq (1 +2 e_{\mathrm{L}})^{j-1}$  and all $j\in\{1,\ldots J\}$.
Since $k\geq 2, N\geq 1$, we can redefine $ C_{\mathrm{ips}}$ such that
$$
d^j_{XY} \leq C_{\mathrm{ips}} \left(1+ \left(\frac{\log k}{\sqrt{N}}\right)^{r_j/2}\right)
$$
and introducing additional dependencies on $\sigma_X,\sigma_{\max},\lambda_{\max},\sigma_\kappa,n,m,a$. 
\end{proof}

\begin{remark}
Crucially, $C_{\mathrm{ips}}$ does \emph{not} depend on the particular choice of $y^{1:j}$ and will therefore hold for random choices of $y^{1:j}$ too.
\end{remark}


\subsection{Proof of Main Mean-Field Limit Result}
\label{subsection:convergence_proof}
Finally, we can combine our results from the previous sections to prove the main result, Theorem~\ref{thm:ips_converges_to_iid_particles}.

\begin{proof}[Proof of Theorem~\ref{thm:ips_converges_to_iid_particles}]
Whenever we say something depends on the allowed quantities, we mean it depends on $\mathcal{Q}$. We write $\lesssim$ if the constant in the bound depends only on these quantities. Define the empirical input measures (including the latent seed) as 
\[
\hat\mu^{j}_{XY\omega,N}\coloneqq \frac1N\sum_{\ell=1}^N \delta_{(x_{\ell}^{f,j},y_{\ell}^{f,j},\omega_\ell^{j})},
\qquad
\tilde\mu^{j}_{XY\omega,N}\coloneqq \frac1N\sum_{\ell=1}^N \delta_{(v_\ell^{f,j},o_\ell^{f,j},\omega_\ell^{j})}.
\]
Let $(\tilde X^j_f, \tilde Y^j_f) \sim \tilde \mu_{XY}^{f,j}$ denote samples from the mean-field algorithm $\{\tilde \mu_{XY}^{f, j}\}_{j=1}^J$ defined in Equation \eqref{eq:mf_alg_measure_perspective}, independent of all other random variables. We also abbreviate 
$$
\delta_{k,N} = \frac{\log k}{\sqrt{N}},
$$
as this term will come up repeatedly throughout our argument. 
Since we use the synchronous coupling as defined in Equation \eqref{eq:sync-coupling}, note that 
\begin{equation}
\label{eq:forecast_coupling}
\|(x_{\ell}^{f,j},y_{\ell}^{f,j})-(v_\ell^{f,j},o_\ell^{f,j})\|_2
\le L_{PQ}\|x_{\ell}^{a,j-1}-v_\ell^{a,j-1}\|_2,
\end{equation}
for $L_{PQ}\coloneqq (1+L_h)(1+L_\Psi)$. Set
\[
\varepsilon^j\coloneqq  \sqrt{\frac{1}{N}\sum\limits_{\ell= 1}^N\left\|x_{\ell}^{a,j} - v_\ell^{a,j}\right\|_2^2}.
\]
Then $\varepsilon^0=0$ by initialization. Our argument will be inductive over $j$. We start by fixing $k\ge2$ and it will be helpful to condition on an event  $\mathcal A_k$ that we define as the intersection of:
\begin{itemize}
\item The dynamical-process moment bounds
$$
\overline M_{2}^2\left(\mu_{XY}^{f,j}\right) \lesssim    (\log k)^2 \text{ and }   \|Y^j - \E(Y^j|Y^{1:j-1})\|_2^2  \lesssim {\log k}
\text{for all } j=1,\ldots,J,
$$
from Proposition \ref{prop:process_covariance}.
\item The mean-field stability bounds
$$\left\|\E\left( \tilde Y^j_f - Y^j\left|Y^{1:j-1}\right.\right) \right\|_2\lesssim  \sqrt{\log k}\text{ for all }j =1,\ldots, J,$$
as in Proposition~\ref{prop:mf_y_stability}, using Proposition~\ref{prop:mf_dynamics_higher_moments} to bound \(\Trcov\left(\tilde{\mu}_{X Y}^{f, j-1}\right)\). 
\item  Defining $\tilde \mu^{f,j}_{Y}$  as the $Y$-marginal of $\tilde \mu^{f,j}_{XY}$, noting that $o^{f,j}_\ell \sim \tilde \mu^{f,j}_{Y}$ i.i.d., and writing \(m_f^j\coloneqq\E(\tilde Y_f^j|Y^{1:j-1})\),
\begin{align*}
\frac{1}{N}\sum\limits_{\ell = 1}^N\left\|o^{f,j}_\ell - m_f^j\right\|_2^2
&= \Tr\left(\frac{1}{N}\sum\limits_{\ell = 1}^N
\bigl(o^{f,j}_\ell - m_f^j\bigr)
\bigl(o^{f,j}_\ell - m_f^j\bigr)^\top\right. \\
&\left.\qquad-\Cov(\tilde Y^{j}_f|Y^{1:j-1}) \right)
+\Tr\bigl(\Cov(\tilde Y^{j}_f|Y^{1:j-1})\bigr)\\
&\lesssim\delta_{k,N}   +1
\qquad \text{for all } j=1,\ldots,J,
\end{align*}
where Lemma \ref{lem:mean-cov-subg} controls the first term and Proposition \ref{prop:mf_dynamics_higher_moments} the second. 
\item The moment bounds $\overline M_2\big(\hat\mu_{XY,N}^{f,j}\big)\lesssim 1+ \delta_{k,N}^{(1+2e_\mathrm{L})^{J-1}/2}$ for all \(j=1,\ldots,J\), following from Proposition~\ref{prop:algorithm_second_moment}.
\item Concentration of  mean-field covariance and mean: for all $j = 1, \ldots, J$
\begin{align*}
\|\E\tilde \mu^{f,j}_{XY,N}-\E\tilde \mu^{f,j}_{XY}\|_2 &\lesssim \sqrt{\frac{\log k}{N }} \,,\\
\|\Cov(\tilde \mu^{f,j}_{XY,N})-\Cov(\tilde \mu^{f,j}_{XY}) \|_2&\lesssim \delta_{k,N}\,,
\end{align*}
using Proposition~\ref{prop:mf_dynamics_higher_moments} and Lemma~\ref{lem:mean-cov-subg}. We also include the corresponding direct concentration bound after applying \(g\) to the i.i.d.\ mean-field samples:
\[
\|\Cov(g_\sharp\tilde\mu^{f,j}_{XY,N})-\Cov(g_\sharp\tilde\mu^{f,j}_{XY}) \|_2\lesssim \delta_{k,N}
\qquad \text{for all }j = 1,\ldots,J,
\]
which follows from the same concentration lemma applied to \(g_\sharp\tilde\mu^{f,j}_{XY}\), using the Lipschitz continuity of \(g\).
\item Concentration of auxiliary noise: for all $j = 1, ..., J$,
$$
\sum\limits_{\ell = 1}^N\frac{1}{N} \|\omega^j_\ell\|_2^2 \lesssim  \delta_{k,N}+  1,
$$
through a similar decomposition as for $\frac{1}{N}\sum\limits_{\ell = 1}^N\left\|o^{f,j}_\ell - m_f^j\right\|_2^2 $, relying on Lemma \ref{lem:mean-cov-subg} and Proposition \ref{prop:subg_mgf_away_from_0}.
\end{itemize} 
By a union bound and our standing assumptions, we can tune the confidence levels in the constituent events so that their intersection satisfies $\P(\mathcal A_k)\;\ge\;1-\tfrac{1}{k}$ for all $k\ge 2.$ In particular, although each marginal statement is stated with probability \(1-\tfrac{1}{k}\), we may assign them probabilities \(1-\tfrac{1}{C k}\) (for a fixed constant \(C\)) so that the total failure probability remains \(\le \tfrac{1}{k}\) and the resulting $C$-factors are absorbed into harmless constants. The lower bound $2$ for $k$ is chosen arbitrarily in all proofs simply to ensure $\log k$  is lower bounded by some fixed positive constant. 
Now, let \(j\geq 1\) and use the triangle inequality to decompose
\[
\begin{aligned}
\varepsilon^j
&= \sqrt{\sum\limits_{\ell = 1}^N\frac{1}{N} \left\|\map_{y^j}^{\hat\mu_{XY,N}^{f,j}}(x_{\ell}^{f,j},y_{\ell}^{f,j},\omega_\ell^{j}) - \map_{y^j}^{\tilde\mu_{XY}^{f,j}}(v_\ell^{f,j},o_\ell^{f,j},\omega_\ell^{j})\right\|_2^2} \\
&\leq \underbrace{\sqrt{\sum\limits_{\ell = 1}^N\frac{1}{N} \left\|\map_{y^j}^{\hat\mu_{XY,N}^{f,j}}(x_{\ell}^{f,j},y_{\ell}^{f,j},\omega_\ell^{j}) -\map_{y^j}^{\hat\mu_{XY,N}^{f,j}}(v_\ell^{f,j},o_\ell^{f,j},\omega_\ell^{j})\right\|_2^2}}_{\text{Term A}}\\
&+ \underbrace{\sqrt{\sum\limits_{\ell = 1}^N\frac{1}{N} \left\|\map_{y^j}^{\hat\mu_{XY,N}^{f,j}}(v_\ell^{f,j},o_\ell^{f,j},\omega_\ell^{j}) - \map_{y^j}^{\tilde\mu_{XY}^{f,j}}(v_\ell^{f,j},o_\ell^{f,j},\omega_\ell^{j})\right\|_2^2}}_{\text{Term B}}
\end{aligned}
\]
\emph{Term A.} Note that for $j=1$, $(x_{\ell}^{f,1},y_{\ell}^{f,1}) =(v_\ell^{f,1},o_\ell^{f,1})$, and so Term A vanishes. For $j\ge 2$, Assumption~\ref{assumption:algorithm_finite_particle} (1a) and Equation \eqref{eq:forecast_coupling} yield
\begin{align*}
\text{Term A} & \le 
L_\map^{\hat\mu_{XY,N}^{f,j}}{\sqrt{\sum\limits_{\ell = 1}^N\frac{1}{N} \left\|(x_{\ell}^{f,j},y_{\ell}^{f,j},\omega_\ell^{j}) -(v_\ell^{f,j},o_\ell^{f,j},\omega_\ell^{j})\right\|_2^2}}\\
& \leq L_\map^{\hat\mu_{XY,N}^{f,j}}L_{PQ}\varepsilon^{j-1}.
\end{align*}
On $\mathcal A_k$, 
\[
L_\map^{\hat\mu_{XY,N}^{f,j}}
=C_{\mathrm{L}}\bigl(1+\Trcov(\hat\mu_{XY,N}^{f,j})^{e_{\mathrm{L}}}\bigr) \lesssim 1 +  \delta_{k,N}^{(1+2e_\mathrm{L})^{J-1}e_\mathrm{L}}.
\]
Combining both inequalities, 
\begin{equation}
\label{eq:termA_final}
\text{Term A} \lesssim \left(1 +  \delta_{k,N}^{(1+2e_\mathrm{L})^{J-1}e_\mathrm{L}}\right)\varepsilon^{j-1}.
\end{equation}
\emph{Term B.} Apply Assumption~\ref{assumption:algorithm_finite_particle} (1b) with $\nu=\hat\mu_{XY,N}^{f,j}$, $\mu=\tilde\mu_{XY}^{f,j}$ 
\begin{align*}
\text{Term B} &=    \sqrt{\sum\limits_{\ell = 1}^N\frac{1}{N} \left\|\map_{y^j}^{\hat\mu_{XY,N}^{f,j}}(v_\ell^{f,j},o_\ell^{f,j},\omega_\ell^{j}) - \map_{y^j}^{\tilde\mu_{XY}^{f,j}}(v_\ell^{f,j},o_\ell^{f,j},\omega_\ell^{j})\right\|_2^2} \\
& \lesssim  \left(1 +  \Trcov\left( \hat\mu_{XY,N}^{f,j}\right)^{e_{\mathrm{est},1}} + \Trcov(\tilde\mu_{XY}^{f,j})^{e_{\mathrm{est},2}}  \right)\\
& \cdot\left\|\Cov(g_\sharp \hat\mu_{XY,N}^{f,j}) - \Cov(g_\sharp \tilde\mu_{XY}^{f,j})\right\|_2\sqrt{\sum\limits_{\ell = 1}^N\frac{1}{N} 
\left(\|\omega^j_\ell\|_2+ \|o^{f,j}_\ell - y^j\|_2\right)^2}\\
& \lesssim \left(1 +  \delta_{k,N}^{(1+2e_\mathrm{L})^{J-1}e_{\mathrm{est},1}}\right)\left\|\Cov(g_\sharp \hat\mu_{XY,N}^{f,j}) - \Cov(g_\sharp \tilde\mu_{XY}^{f,j})\right\|_2\\
& \cdot\left(\sqrt{\sum\limits_{\ell = 1}^N\frac{1}{N} 
\|o^{f,j}_\ell - y^j\|_2^2}+ \sqrt{\sum\limits_{\ell = 1}^N\frac{1}{N} \|\omega^j_\ell\|_2^2}\right)
\end{align*}
We bound the covariance term and the innovation term separately. Note that 
\begin{align*}
\overline M_2^2(\tilde\mu_{XY,N}^{f,j}) &= \Tr\left(\Cov(\tilde\mu_{XY,N}^{f,j}) - \Cov(\tilde\mu_{XY}^{f,j})\right) + \Trcov(\tilde\mu_{XY}^{f,j}) \\
& \lesssim 1 +\delta_{k,N}
\end{align*}
and by inequality \eqref{eq:forecast_coupling}, $W_2\left(\hat\mu_{XY,N}^{f,j},\tilde\mu_{XY,N}^{f,j}\right) \lesssim \varepsilon^{j-1}$. 
Therefore, for the covariance term, we have by Lemma \ref{lem:cov_perturbation_W2} (Appendix \ref{subsection:fp_aux})
\begin{align*}
\left\|\Cov(g_\sharp \hat\mu_{XY,N}^{f,j})
- \Cov(g_\sharp \tilde\mu_{XY}^{f,j})\right\|_2
&\leq \left\|\Cov(g_\sharp \hat\mu_{XY,N}^{f,j})
-\Cov(g_\sharp \tilde\mu_{XY,N}^{f,j}) \right\|_2\\
&\quad +\left\|\Cov(g_\sharp \tilde\mu_{XY,N}^{f,j})
 - \Cov(g_\sharp \tilde\mu_{XY}^{f,j})\right\|_2 \\
&\lesssim \left(\overline M_2(g_\sharp \hat\mu_{XY,N}^{f,j})
+ \overline M_2(g_\sharp \tilde\mu_{XY,N}^{f,j})\right)\\
&\quad\cdot W_2\left(g_\sharp \hat\mu_{XY,N}^{f,j},
g_\sharp \tilde\mu_{XY,N}^{f,j}\right)\\
&\quad+\left\|\Cov(g_\sharp \tilde\mu_{XY,N}^{f,j})
 - \Cov(g_\sharp \tilde\mu_{XY}^{f,j})\right\|_2\\
&\lesssim \left(\overline M_2(\hat\mu_{XY,N}^{f,j})
+\overline M_2(\tilde\mu_{XY,N}^{f,j})\right)\\
&\quad\cdot W_2\left(\hat\mu_{XY,N}^{f,j},\tilde\mu_{XY,N}^{f,j}\right)\\
&\quad+\left\|\Cov(g_\sharp\tilde\mu_{XY,N}^{f,j})
 - \Cov(g_\sharp\tilde\mu_{XY}^{f,j})\right\|_2\\
&\lesssim  \left(1 +  \delta_{k,N}^{(1+2e_\mathrm{L})^{J-1}/2}\right)\varepsilon^{j-1} + \delta_{k,N}\,,
\end{align*}
where the last estimate holds on the event $\mathcal A_k$. 
We can bound the innovation term as:
\begin{align*}
\sqrt{\frac{1}{N}\sum\limits_{\ell = 1}^N\|o^{f,j}_\ell - y^j\|_2^2 }
&\leq \sqrt{\frac{1}{N}\sum\limits_{\ell = 1}^N\|o^{f,j}_\ell - m_f^j\|_2^2} 
 + \|m_f^j- \E(Y^{j}|Y^{1:j-1})\|_2 \\
&\quad +\|Y^j - \E(Y^{j}|Y^{1:j-1})\|_2 \\
&\lesssim   \sqrt{\delta_{k,N}}+ 1+\sqrt{\log k} + \sqrt{\log k} 
\lesssim \sqrt{\log k}.
\end{align*}
Finally, noting $ \sqrt{\sum\limits_{\ell = 1}^N\frac{1}{N} \|\omega^j_\ell\|_2^2}\lesssim \sqrt{\log k}$ on $\mathcal A_k$,  we can combine these estimates to 
\begin{align*}
\text{Term B} &\lesssim  \left(1 +  \delta_{k,N}^{(1+2e_\mathrm{L})^{J-1}e_{\mathrm{est},1}}\right)\left(\left(1 +  \delta_{k,N}^{(1+2e_\mathrm{L})^{J-1}/2}\right)\varepsilon^{j-1} + \delta_{k,N}\right) \sqrt{\log k} \\
& \lesssim \left(1 +  \delta_{k,N}^{(1+2e_\mathrm{L})^{J-1}(e_{\mathrm{est},1} +\frac{1}{2})}\right)\sqrt{\log k}\varepsilon^{j-1}  + \left(1 +  \delta_{k,N}^{(1+2e_\mathrm{L})^{J-1}e_{\mathrm{est},1}}\right)\frac{(\log k)^{3/2}}{\sqrt{N}}.
\end{align*}
\emph{Closing the recursion.} Combining Term A and B yields
\[
\varepsilon^j \lesssim \left(1 +  \delta_{k,N}^{e_{\mathrm{rec},1}}\right)\sqrt{\log k}\varepsilon^{j-1} + \left(1 +  \delta_{k,N}^{e_{\mathrm{rec}, 2}}\right)\frac{(\log k)^{3/2}}{\sqrt{N}}\text{ for } j=1,\ldots,J,\qquad \varepsilon^0=0
\]
for $e_{\mathrm{rec}, 1} \coloneqq  (1+2e_\mathrm{L})^{J-1}\max\left(e_\mathrm{L}, e_{\mathrm{est},1} +\frac{1}{2}\right)$ and $e_{\mathrm{rec}, 2} \coloneqq  (1+2e_\mathrm{L})^{J-1}e_{\mathrm{est},1}$.
Inducting this inequality over $1\leq j\le J$ and defining   $(q_1, p_1)= (3/2,e_{\mathrm{rec},2})$, $(q_{j+1}, p_{j+1}) =  (q_j +\frac{1}{2}, p_j +e_{\mathrm{rec},1})$ shows we can bound this  by 
\[
\varepsilon^j\ \lesssim \left(1 + \delta_{k,N}^{p_j}\right)\frac{(\log k)^{q_j}}{\sqrt{N}}\,.
\]
For the induction step \(j\ge2\), if $\varepsilon^{j-1}\lesssim \left(1 + \delta_{k,N}^{p_{j-1}}\right)\frac{(\log k)^{q_{j-1}}}{\sqrt{N}}$ holds, then we have
\begin{align*}
\varepsilon^j & \lesssim \left(1 +  \delta_{k,N}^{e_{\mathrm{rec},1}}\right)\sqrt{\log k}\varepsilon^{j-1} + \left(1 +  \delta_{k,N}^{e_{\mathrm{rec}, 2}}\right)\frac{(\log k)^{3/2}}{\sqrt{N}}\\
& \lesssim \left(1 +  \delta_{k,N}^{e_{\mathrm{rec},1}}\right)\sqrt{\log k}\left(1 + \delta_{k,N}^{p_{j-1}}\right)\frac{(\log k)^{q_{j-1}}}{\sqrt{N}} + \left(1 +  \delta_{k,N}^{e_{\mathrm{rec}, 2}}\right)\frac{(\log k)^{3/2}}{\sqrt{N}}\\
& \lesssim\left(1 + \delta_{k,N}^{p_{j-1} + e_{\mathrm{rec},1}}\right)\frac{(\log k)^{q_{j-1} +\frac{1}{2}}}{\sqrt{N}} + \left(1 +  \delta_{k,N}^{e_{\mathrm{rec}, 2}}\right)\frac{(\log k)^{3/2}}{\sqrt{N}}\\
& \lesssim\left(1 + \delta_{k,N}^{p_{j}}\right)\frac{(\log k)^{q_{j} }}{\sqrt{N}}\,.
\end{align*}
Finally, the coupling \(\frac1N\sum_{\ell=1}^N\delta_{(x_{\ell}^{a,j},v_\ell^{a,j})}\) is an admissible coupling between \(\hat\mu_{X,N}^{a,j}\) and \(\tilde\mu_{X,N}^{a,j}\), and the parameters $p_j, q_j$ can be explicitly expressed as
\begin{align*}
p_j \coloneqq(1+2e_\mathrm{L})^{J-1}\left(e_{\mathrm{est},1} + (j-1)\max\left(e_\mathrm{L}, e_{\mathrm{est},1} +\frac{1}{2}\right)\right)\,,\qquad q_j=1+j/2\,.
\end{align*}
Therefore, for all $1\le j \le J$,
\[
W_2\bigl(\hat\mu_{X,N}^{a,j},\tilde\mu_{X,N}^{a,j}\bigr)\le \varepsilon^j \lesssim \left(1 + \left( \frac{\log k}{\sqrt{N}}\right)^{p_j}\right)\frac{(\log k)^{1 + j/2}}{\sqrt{N}},
\]
which gives the stated bound.
\end{proof}

\section{Applications to Filtering Algorithms} \label{sec:applications}
In this section we apply the general mean-field convergence framework of
Section~\ref{section:proof_formal} to concrete ensemble filtering algorithms.
In Subsection~\ref{subsection:application_enkf} we derive mean-field convergence bounds for the ensemble Kalman filter (EnKF) that improve on previous work as described in the introduction. In Subsection~\ref{subsection:application_ensmf} we derive bounds for the ensemble stochastic map filter (EnSMF), a nonlinear extension of the EnKF.

\subsection{Mean-Field Convergence of the EnKF}
\label{subsection:application_enkf}
To develop the theory for the EnKF, we recall that it is a special case of Algorithm \ref{alg:TEF} for maps $\map$ that are affine functions of $x$ and $y$; see the summary in Algorithm~\ref{alg:EnKF}. 
For simplicity of exposition in this subsection, we
assume the distribution for the observation noise is independent of time, i.e.,
$\nu_\eta^{j} \equiv \nu_\eta$ for all $j\in\mathbb{N}_{\ge 1},$
and additionally assume the corresponding covariance matrix is positive definite:
\begin{equation}
\label{eq:enkf_observation_covariance}
\Lambda\coloneqq \Cov(\eta^j) \succeq \lambda_{\min} I_m
\quad \text{for some }\lambda_{\min}>0.
\end{equation}
Then, the EnKF update that is applied to samples from a joint forecast distribution $\pi$ is given by the affine map 
\begin{equation}
\label{eq:enkf_beta_map_gen}
\map_{y^j}^{\mathrm{K},\pi}(x,y) \coloneqq x + \Cov(\pi)_{XY}
\bigl(\Cov(h_{\sharp}\pi_X)+\Lambda\bigr)^{-1}(y^{j}-y),
\end{equation}
where the superscript \(\mathrm K\) is used to denote the Kalman map.
\begin{algorithm}[t]
\caption{Ensemble Kalman Filter (EnKF)}\label{alg:EnKF}
\begin{algorithmic}[1]
\State \textbf{Input:} ensemble size $N$ and sequentially acquired data $\{y^{j}\}_{j \geq 1}$.
\State \textbf{Initialization:} $x_{\ell}^{a,0} \overset{\text{i.i.d.}}{\sim} \mu^0,\qquad 1 \leq \ell \leq N.$
\For{$j = 1, 2, \ldots$}
    \State \textbf{Forecast: }
    \[
    x_{\ell}^{f,j} = \Psi(x_{\ell}^{a,j-1}) + \xi_\ell^{j-1}, \qquad \xi_\ell^{j-1} \overset{\text{i.i.d.}}{\sim} \nu_\xi^{j-1}, \qquad 1 \leq \ell \leq N.
    \]      \[
    y_{\ell}^{f,j} = h(x_{\ell}^{f,j}) + \eta_\ell^{j}, \qquad \eta_\ell^{j} \overset{\text{i.i.d.}}{\sim} \nu_\eta,\qquad 1 \leq \ell \leq N.
    \]
    \State \textbf{Analysis:}
    \[\hat \mu^{f, j}_{XY,N} =  \frac{1}{N}\sum\limits_{\ell = 1}^N\delta_{ (x_{\ell}^{f,j},  y_{\ell}^{f,j})} \]
    \[
x_{\ell}^{a,j}=  \map^{\mathrm{K},\hat \mu^{f, j}_{XY,N}}_{y^j} \left(x_{\ell}^{f,j},  y_{\ell}^{f,j}\right), \qquad 1 \leq \ell \leq N.
\]
\EndFor
\State \textbf{Output:} Approximate filtering distribution $\hat\mu_{X,N}^{a,j} = \frac{1}{N}\sum\limits_{\ell = 1}^N\delta_{x_{\ell}^{a,j}} $ for $j = 1, 2, \ldots$
\end{algorithmic}
\end{algorithm}
As outlined in Subsection~\ref{subsection:deriving_mean_field}, the mean-field dynamics associated with Algorithm~\ref{alg:EnKF} are given by the distributions
\begin{subequations}
\label{eq:mf_alg_measure_perspective_enkf}
\begin{align}
\tilde\mu_X^{a,0} &= \mu^0,\\
\tilde\mu_X^{a,j} &= \tilde B_{y^j}^{\mathrm{K}} Q^j P^{j-1}\tilde\mu_X^{a,j-1},
\end{align}
\end{subequations}
where \(\tilde B ^ {\mathrm{K}}_y\) is the approximate conditioning operator as defined 
in Equation~\eqref{eq:mean_field_operator}. To establish the convergence of \(\hat\mu_{X,N}^{a,j}\) to \(\tilde\mu_{X}^{a,j}\) rigorously, we invoke the general results of Section~\ref{section:proof_formal}. In order to apply these theorems in the EnKF setting, it remains to verify that the transport map \(\map_y^{\mathrm K,\pi}\) in Equation~\eqref{eq:enkf_beta_map_gen} satisfies the required Lipschitz and stability properties in Assumption~\ref{assumption:algorithm_finite_particle}. 

\begin{lemma}
\label{lem:enkf_satisfies_assumptions}
Assume Equation~\eqref{eq:enkf_observation_covariance} holds. The map $\map^{\mathrm{K},\mu}_{y}$ as defined in Equation~\eqref{eq:enkf_beta_map_gen} for all $\mu \in \Pp_2(\R^n \times \R^m)$ and $y \in \R^m$ satisfies Assumption \ref{assumption:algorithm_finite_particle} with the constants in Table \ref{tab:enkf_constants_values}.
\end{lemma}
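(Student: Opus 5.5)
We verify the three parts of Assumption~\ref{assumption:algorithm_finite_particle} directly, exploiting the affine structure of $\map^{\mathrm K,\mu}_y$. The map is deterministic, so we take $a=0$ (or $\kappa=\delta_0$, giving $\sigma_\kappa=0$). Part (2) is then trivial.

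Write $K(\mu)\coloneqq \Cov(\mu)_{XY}\bigl(\Cov(h_\sharp\mu_X)+\Lambda\bigr)^{-1}$ for the Kalman gain. The basic estimate is
\[
\|\Cov(\mu)_{XY}\|_2\le \Trcov(\mu),\qquad \bigl\|(\Cov(h_\sharp\mu_X)+\Lambda)^{-1}\bigr\|_2\le \lambda_{\min}^{-1},
\]
where the second bound uses $\Cov(h_\sharp\mu_X)\succeq 0$ together with Equation~\eqref{eq:enkf_observation_covariance}. Hence $\|K(\mu)\|_2\le \lambda_{\min}^{-1}\Trcov(\mu)$. (One could sharpen this to $\Trcov(\mu)^{1/2}$ via Cauchy--Schwarz, but the linear bound suffices.)

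\textbf{Part (1a).} The map $(x,y)\mapsto x+K(\mu)(y^\star-y)$ is Lipschitz with constant at most $1+\|K(\mu)\|_2$. This gives $C_{\mathrm L}=\max(1,\lambda_{\min}^{-1})$ and $e_{\mathrm L}=1$.

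\textbf{Part (3).} We have $\|\map^{\mathrm K,\nu}_{y^\star}(x,y)-x\|_2\le\|K(\nu)\|_2\|y-y^\star\|_2$. This gives $L_y=\max(1,\lambda_{\min}^{-1})$ and $e_y=1$.

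\textbf{Part (1b).} This is the main step. Choose the feature map $g(x,y)=(x,y,h(x))\in\R^{n+2m}$, which has $L_g\le 1+L_h$. Then $\Cov(\mu)_{XY}$ and $\Cov(h_\sharp\mu_X)$ are both sub-blocks of $\Cov(g_\sharp\mu)$, so each perturbation is bounded by $\|\Cov(g_\sharp\nu)-\Cov(g_\sharp\mu)\|_2$. We have
\[
\map^{\mathrm K,\nu}_{y^\star}(x,y)-\map^{\mathrm K,\mu}_{y^\star}(x,y)=\bigl(K(\nu)-K(\mu)\bigr)(y^\star-y),
\]
so it suffices to bound $\|K(\nu)-K(\mu)\|_2$. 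Write $S_\mu=\Cov(h_\sharp\mu_X)+\Lambda$ and decompose
\[
K(\nu)-K(\mu)=\bigl(\Cov(\nu)_{XY}-\Cov(\mu)_{XY}\bigr)S_\nu^{-1}+\Cov(\mu)_{XY}\bigl(S_\nu^{-1}-S_\mu^{-1}\bigr).
\]
Apply the resolvent identity $S_\nu^{-1}-S_\mu^{-1}=S_\nu^{-1}(S_\mu-S_\nu)S_\mu^{-1}$ together with $\|S^{-1}\|_2\le\lambda_{\min}^{-1}$. This yields
\[
\|K(\nu)-K(\mu)\|_2\le\bigl(\lambda_{\min}^{-1}+\lambda_{\min}^{-2}\Trcov(\mu)\bigr)\|\Cov(g_\sharp\nu)-\Cov(g_\sharp\mu)\|_2.
\]
Therefore we may take $C_{\mathrm{est}}=\max(\lambda_{\min}^{-1},\lambda_{\min}^{-2})$, $e_{\mathrm{est},1}=0$, and $e_{\mathrm{est},2}=1$, with $b=n+2m$.

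The one delicate point is that the factor $1+\Trcov(\nu)^{0}$ is harmless. We must also decide which measure carries the $\Trcov$ factor in the resolvent term. Placing $\Cov(\mu)_{XY}$, rather than $\Cov(\nu)_{XY}$, in the second summand keeps $e_{\mathrm{est},1}=0$. This matters because $\nu$ is the empirical measure in the main theorem, so its exponent should be as small as possible.

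Collecting these constants gives the entries of Table~\ref{tab:enkf_constants_values}.
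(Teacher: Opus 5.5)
Your verification is correct and is essentially the paper's proof. You use the same feature map $g(x,y)=((x,y),h(x))$, the same sub-block bounds $\|\Cov(\mu)_{XY}\|_2\le\Trcov(\mu)$ and $\|(\Cov(h_\sharp\mu_X)+\Lambda)^{-1}\|_2\le\lambda_{\min}^{-1}$, and the same resolvent identity.

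The only substantive difference is the split in (1b). The paper writes $K(\nu)-K(\mu)=(\Cov(\nu)_{XY}-\Cov(\mu)_{XY})S_\mu^{-1}+\Cov(\nu)_{XY}(S_\nu^{-1}-S_\mu^{-1})$. The trace factor therefore lands on $\nu$, giving the table's $(e_{\mathrm{est},1},e_{\mathrm{est},2})=(1,0)$, whereas your split gives $(0,1)$. So your closing sentence, that the collected constants are the entries of Table~\ref{tab:enkf_constants_values}, is not literally true.

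The fix is one line. Condition (1b) must hold for every ordered pair $(\mu,\nu)$, and both $\|\map^{\mathrm K,\nu}_{y^\star}(x,y)-\map^{\mathrm K,\mu}_{y^\star}(x,y)\|_2$ and $\|\Cov(g_\sharp\nu)-\Cov(g_\sharp\mu)\|_2$ are symmetric in $(\mu,\nu)$. Applying your bound to the swapped pair gives the factor $2+\Trcov(\nu)$, hence the table's exponents.

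This symmetry also shows that your placement does not matter for validity. Its only effect is that it lets one feed the smaller $e_{\mathrm{est},1}=0$ into Theorem~\ref{thm:ips_converges_to_iid_particles}, where it genuinely shrinks $p_j$.

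You also state looser constants than the table needs. The table's $L_y=\lambda_{\min}^{-1}$ follows from $\lambda_{\min}^{-1}\Trcov(\nu)\le\lambda_{\min}^{-1}(1+\Trcov(\nu))$. The table's $L_g=\sqrt{1+L_h^2}$ follows from $\|g(z)-g(z')\|_2^2=\|z-z'\|_2^2+\|h(x)-h(x')\|_2^2$. There is no reason to state $\max(1,\lambda_{\min}^{-1})$ and $1+L_h$ instead.

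Finally, drop the parenthetical claim that $\|K(\mu)\|_2$ can be sharpened to order $\Trcov(\mu)^{1/2}$. For general (e.g.\ empirical) $\mu$, the matrix $\Cov(h_\sharp\mu_X)+\Lambda$ is unrelated to $\Cov(\mu_Y)$, so no Cauchy--Schwarz argument applies. Concretely, take $n=m=1$, $h\equiv0$ and $\mu=\Law(tZ,tZ)$ with $Z$ standard normal. Then $\|K(\mu)\|_2=t^2\Lambda^{-1}$, while $\Trcov(\mu)^{1/2}=\sqrt2\,t$. Since you never use this claim, the argument is unaffected.
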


\begin{proof}
We will show the three conditions required by Assumption~\ref{assumption:algorithm_finite_particle}.

\textit{(1a)} By definition of $\map^{\text{K},\mu}_{y^\star}$, for all $x,x^\prime \in \R^n$  and  $y,y^\prime \in \R^m$ it holds that 
\begin{align*}
   &\left\| \map^{\text{K},\mu}_{y^\star}(x,y)
   - \map^{\text{K},\mu}_{y^\star}(x^\prime, y^\prime)\right\|_2\\
   &\quad = \left\|(x - x^\prime)
   +  \Cov\left(\mu\right)_{XY}
   \left(\Cov\left(h_\sharp\mu_X\right) + \Lambda\right)^{-1}(y^\prime - y)\right\|_2 \\
   &\leq \left( 1 + \opnorm{\Lambda^{-1}}\opnorm{\Cov\left(\mu\right)_{XY}}\right)\left\|(x,y) - (x^\prime,y^\prime)\right\|_2 \\
    &\leq \left( 1 + \opnorm{\Lambda^{-1}}{\Trcov\left(\mu\right)}\right)\left\|(x,y) - (x^\prime,y^\prime)\right\|_2 \\
    &\leq \left( 1 + \lambda_{\min}^{-1}\right)\left( 1 + {\Trcov\left(\mu\right)}\right)\left\|(x,y) - (x^\prime,y^\prime)\right\|_2,
\end{align*}
where we use Equation~\eqref{eq:enkf_observation_covariance}.

\noindent \textit{(1b)}  We abbreviate $\Delta y = y^\star - y$ and set
\[
H_\rho\coloneqq \Cov\left(h_\sharp\rho_X\right),
\qquad
A_\rho\coloneqq H_\rho+\Lambda .
\]
Let $x \in \R^n$, $y, y^\star \in \R^m$, $\nu, \mu \in \Pp_{2}(\R^n\times\R^m)$. Then,
\begin{align*}
   \left\| \map^{\text{K},\nu}_{y^\star}(x,y)  -  \map^{\text{K},\mu}_{y^\star}(x,y)\right\|_2
   &= \left\|\Cov\left(\nu\right)_{XY}A_\nu^{-1}\Delta y
   - \Cov\left(\mu\right)_{XY}A_\mu^{-1}\Delta y \right\|_2 \\
   &\leq \left\|\Cov\left(\nu\right)_{XY}A_\nu^{-1}
   - \Cov\left(\mu\right)_{XY}A_\mu^{-1}\right\|
   \left\|\Delta y \right\|_2 \\
      &\leq \opnorm{\Lambda^{-1}}\opnorm{\Cov\left(\nu\right)_{XY}  - \Cov\left(\mu\right)_{XY}}\left\|\Delta y \right\|_2 \\ 
      & + \opnorm{\Cov(\nu)_{XY}}
      \opnorm{A_\nu^{-1}-A_\mu^{-1}}
      \left\|\Delta y \right\|_2 \\
      &\leq \opnorm{\Lambda^{-1}}\opnorm{\Cov\left(\nu\right)  - \Cov\left(\mu\right)}\left\|\Delta y \right\|_2 \\ 
      & + \opnorm{\Cov(\nu)}
      \opnorm{A_\nu^{-1}-A_\mu^{-1}}
      \left\|\Delta y \right\|_2\\
       &\leq \opnorm{\Lambda^{-1}}\opnorm{\Cov\left(\nu\right)  - \Cov\left(\mu\right)}
       \left\|\Delta y \right\|_2 \\ 
      & + \Trcov(\nu)\opnorm{\Lambda^{-1}}^2
      \opnorm{ H_\nu - H_\mu}
      \left\|\Delta y \right\|_2.
\end{align*}
Stacking the covariance matrices together, i.e., inserting $g(x,y) = ((x,y), h(x))$, verifies \textit{(1b)}.

\textit{(3)} Finally, let $\nu \in \Pp_2\left(\R^n\times\R^m\right), x\in\R^n,y\in\R^m, y^\star \in\R^m$. Then, 
\begin{align*}
        \left\|\map^{\text{K},\nu}_{y^\star}(x,y) - x \right\|_2
        &= \left\| \Cov(\nu)_{XY}A_\nu^{-1}(y^\star - y)\right\|_2 \\
    &\leq \lambda_{\min}^{-1}\left(1 + \Trcov(\nu)\right)\left\|y^\star - y\right\|_2.
\end{align*}
\end{proof}

\ifpreprint
\begin{table}[H]
\else
\begin{table}[t]
\fi
\centering
\setlength{\tabcolsep}{4pt} 
\renewcommand{\arraystretch}{1.2} 
\resizebox{\textwidth}{!}{%
\begin{tabular}{cccccccccccc}
\toprule
$C_{\mathrm{L}}$ & $e_{\mathrm{L}}$ & $g(x,y)$ & $b$ & $L_g$ & $C_{\mathrm{est}}$ & $e_{\mathrm{est},1}$ & $e_{\mathrm{est},2}$ & $a$ & $\sigma_{\kappa}$ & $L_y$ & $e_y$ \\
\midrule
$1+\lambda_{\min}^{-1}$ & $1$ & $((x,y),h(x))$ & $n+2m$& $\sqrt{1+L_h^2}$ &
$\lambda_{\min}^{-1}+\lambda_{\min}^{-2}$ & $1$ & $0$ & $0$ & $0$ &
$\lambda_{\min}^{-1}$ & $1$ \\
\bottomrule
\end{tabular}
}
\caption{Constants for the ensemble Kalman filter in Assumption~\ref{assumption:algorithm_finite_particle}.}
\label{tab:enkf_constants_values}
\end{table}

Now, we apply the results of Section \ref{section:proof_formal} to obtain the mean-field convergence of the EnKF as an immediate consequence of Theorem~\ref{thm:ips_converges_to_iid_particles} and Lemma~\ref{lem:enkf_satisfies_assumptions}.


\begin{theorem}
\label{thm:enkf_finite_sample}
Fix $J\in\N_{\ge1}$, draw observations $Y^{1:J}$ as in Equation~\eqref{eq:dynamics} (i.e., $Y^{1:J} \sim \rho^J$ as defined in Equation~\eqref{eq:rho_def}) 
and suppose Assumption~\ref{assumption:dyn_assumptions} holds with time-independent observation noise \(\nu_\eta^j\equiv\nu_\eta\). Assume also that Equation~\eqref{eq:enkf_observation_covariance} holds. Let
$\{\hat\mu_{X,N}^{a,j}\}_{j=1}^{J}$ be the analysis empirical measures for the interacting-particle system produced by Algorithm~\ref{alg:EnKF}, and let \(\tilde\mu^{a,j}_X\) be the mean-field sequence of distributions defined by Equation~\eqref{eq:mf_alg_measure_perspective_enkf}, both from observations \(y^{1:J}=Y^{1:J}\). Then, for all \(k\ge 2\) and $N\geq (\log k)^2$, there exists a coupling of i.i.d.\ particles \(v_\ell^{a,j}\sim\tilde\mu_X^{a,j}\) with the interacting ensemble such that the corresponding empirical measure \(\tilde\mu^{a,j}_{X,N}=\frac1N\sum_{\ell=1}^N\delta_{v_\ell^{a,j}}\) satisfies
\begin{equation} \label{eq:EnKF_W2_convergence_meanfield}
W_2\bigl(\hat\mu_{X,N}^{a,j},\tilde\mu_{X,N}^{a,j}\bigr)
\ \le\ C\frac{(\log k)^{1 +j/2}}{\sqrt{N}} \text{ for all }1 \leq j \leq J,
\end{equation}
with probability at least $1 - \frac{1}{k}$ over the joint randomness of the particle draws in Algorithm~\ref{alg:EnKF} and the observation path $Y^{1:J}$, where \(C \coloneqq C(\mathcal{Q})\) depends only on
\[
\mathcal{Q} \coloneqq (n,m,\sigma_{\max},\lambda_{\max},\lambda_{\min},\sigma_X,L_\Psi,L_h,J).
\]
Taking expectations over this randomness yields the non-asymptotic bound
\[
\E \left(\sup\limits_{1\le j\le J} W_2\bigl(\hat\mu_{X,N}^{a,j},\tilde\mu_{X,N}^{a,j}\bigr)\right)
 \le \frac{C'(\mathcal{Q})}{\sqrt{N}},
\]
for a constant \(C'(\mathcal{Q})\) depending only on \(\mathcal{Q}\).
\end{theorem}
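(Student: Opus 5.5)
The plan is to obtain Theorem~\ref{thm:enkf_finite_sample} as a direct specialization of Theorem~\ref{thm:ips_converges_to_iid_particles} and Corollary~\ref{cor:ips_converges_to_iid_particles_exp}, with Lemma~\ref{lem:enkf_satisfies_assumptions} supplying the hypotheses. First I will check that Assumption~\ref{assumption:dyn_assumptions} holds as stated; time-independence of $\nu_\eta$ only simplifies it. Lemma~\ref{lem:enkf_satisfies_assumptions} then gives Assumption~\ref{assumption:algorithm_finite_particle} with the constants of Table~\ref{tab:enkf_constants_values}. The EnKF map is deterministic, so I take $a=0$ and $\kappa=\delta_0$ on the trivial space. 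Then $\sigma_\kappa=0$ and the $\|\omega\|_2$ terms in (1b) and (3) vanish, so Assumption~\ref{assumption:algorithm_finite_particle}(2) holds trivially.

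Second, I will trace the dependence of the constants. Every entry of Table~\ref{tab:enkf_constants_values} ($C_{\mathrm L}, e_{\mathrm L}, b, L_g, C_{\mathrm{est}}, e_{\mathrm{est},1}, e_{\mathrm{est},2}, a, \sigma_\kappa, L_y, e_y$) is either a numerical constant or a function of $\lambda_{\min}$, $L_h$, $n$, $m$. Hence the constant $C(\mathcal Q)$ of Theorem~\ref{thm:ips_converges_to_iid_particles}, and likewise $C'$ of Corollary~\ref{cor:ips_converges_to_iid_particles_exp}, depends only on $(n,m,\sigma_{\max},\lambda_{\max},\lambda_{\min},\sigma_X,L_\Psi,L_h,J)$, which is the reduced set $\mathcal Q$ in the statement.

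Third, I will specialize the rate. Assuming $N\ge(\log k)^2$, we have $\log k/\sqrt N\le 1$, so the factor $1+(\log k/\sqrt N)^{p_j}$ is at most $2$ for every $j$. This holds with exponent $p_j=3^{J-1}(1+(j-1)\cdot\tfrac32)$, since $e_{\mathrm L}=e_{\mathrm{est},1}=1$. Theorem~\ref{thm:ips_converges_to_iid_particles} therefore gives Equation~\eqref{eq:EnKF_W2_convergence_meanfield} for all $j\le J$ simultaneously, with probability at least $1-1/k$. The coupling asserted in the statement is the synchronous coupling of Equation~\eqref{eq:sync-coupling}. Conditional on $Y^{1:J}$, its mean-field particles $v_\ell^{a,j}$ are i.i.d.\ with law $\tilde\mu_X^{a,j}$ from Equation~\eqref{eq:mf_alg_measure_perspective_enkf}, because $\tilde B^{\mathrm K}_{y}$ is exactly the operator in Equation~\eqref{eq:mean_field_operator} built from $\map^{\mathrm K}$. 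The expectation bound is then Corollary~\ref{cor:ips_converges_to_iid_particles_exp} verbatim.

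The main obstacle I expect is formal rather than analytic: the degenerate latent space $a=0$. Assumption~\ref{assumption:algorithm_finite_particle} requires $\kappa\in\Pp_2(\R^a)$, and several concentration events in the proof of Theorem~\ref{thm:ips_converges_to_iid_particles} involve $\omega$. I will argue that all these events hold trivially with $\omega\equiv0$. If the dimension-$0$ convention is uncomfortable, the alternative is to embed the map in $\R^1$ with $\kappa=\delta_0$, which gives $\sigma_\kappa=0$ and leaves all constants unchanged.
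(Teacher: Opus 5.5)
Your proposal is correct and matches the paper, which obtains this theorem as an immediate consequence of Theorem~\ref{thm:ips_converges_to_iid_particles} and Corollary~\ref{cor:ips_converges_to_iid_particles_exp}, once Lemma~\ref{lem:enkf_satisfies_assumptions} verifies Assumption~\ref{assumption:algorithm_finite_particle} with the constants of Table~\ref{tab:enkf_constants_values} (including $a=0$ and $\sigma_\kappa=0$). Your extra bookkeeping only makes explicit what the paper leaves implicit: the reduction of $\mathcal Q$, the exponent $p_j=3^{J-1}\bigl(1+\tfrac32(j-1)\bigr)$, and the use of $N\ge(\log k)^2$ to bound $1+(\log k/\sqrt N)^{p_j}$ by $2$.
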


Theorem~\ref{thm:enkf_finite_sample} establishes the first non-asymptotic, high-probability convergence guarantee in Wasserstein distance for the discrete-time EnKF under general Lipschitz dynamics and observation maps with sub-Gaussian process and observation noise and random observations. The result shows that the empirical EnKF ensemble behaves approximately as i.i.d.\ samples from its mean-field limit, with the optimal Monte Carlo rate \(N^{-1/2}\) in \(W_2\). Combined with standard i.i.d.\ concentration inequalities, this yields the dimension-dependent \(W_p\) convergence rate \(N^{-1/n}\) in the high-dimensional regime \(n>2p\). For completeness, we also quantify the $W_p$-distance to the mean-field distribution $\tilde\mu_{X}^{a,j}$, which
follows directly from Corollary \ref{cor:finite_particle_converges_to_mean_field}.

\begin{corollary}
Pick any $p\in[1,2]$. Suppose the conditions in Theorem \ref{thm:enkf_finite_sample} apply. Then,  there exists a constant $C\geq0$ depending only on $\mathcal{Q}$ and $p$ such that for all $k\ge 2$ and \(N\ge(\log k)^2\), with probability at least $1-\frac1k$, we have
\[
W_p\bigl(\hat\mu_{X,N}^{a,j},\tilde\mu_X^{a,j}\bigr)
\ \le C\left( \gamma_{k,N}^{p,n}  +\frac{(\log k)^{1 +j/2}}{\sqrt{N}}\right) \text{ for all }1 \leq j \leq J,
\]
with the rate \(\gamma_{k,N}^{p,n}\) as in Equation~\eqref{eq:standard_iid_wasserstein_rate} and
\[
\E \left(\sup\limits_{1\le j\le J} W_p\bigl(\hat\mu_{X,N}^{a,j},\tilde\mu_X^{a,j}\bigr)\right)
 \le  C \bar\gamma_N^{p,n}.
\]
\end{corollary}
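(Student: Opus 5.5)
This corollary follows from Corollary~\ref{cor:finite_particle_converges_to_mean_field}, applied with the EnKF map $\map^{\mathrm{K}}$. The hypotheses of that corollary are those of Theorem~\ref{thm:ips_converges_to_iid_particles}, namely Assumptions~\ref{assumption:dyn_assumptions} and~\ref{assumption:algorithm_finite_particle}. The first is assumed directly in Theorem~\ref{thm:enkf_finite_sample}, with time-independent $\nu_\eta$, and this is a special case. The second is exactly the content of Lemma~\ref{lem:enkf_satisfies_assumptions} under Equation~\eqref{eq:enkf_observation_covariance}. The mean-field sequence of Equation~\eqref{eq:mf_alg_measure_perspective_enkf} is the general mean-field recursion~\eqref{eq:mf_alg_measure_perspective} with $\tilde B=\tilde B^{\mathrm K}$, so the objects compared are the same.

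The one real check is the dependence of constants. The general constant $C''(\mathcal Q)$ depends on $(n,m,a,b,\sigma_{\max},\lambda_{\max},\sigma_X,L_\Psi,L_h,C_{\mathrm L},L_y,e_{\mathrm L},C_{\mathrm{est}},e_{\mathrm{est},1},e_{\mathrm{est},2},e_y,\sigma_\kappa,J,L_g)$ and on $p$. By Table~\ref{tab:enkf_constants_values}, each of these is either a quantity in the EnKF list $(n,m,\sigma_{\max},\lambda_{\max},\lambda_{\min},\sigma_X,L_\Psi,L_h,J)$ or an explicit function of it:
\[
a=0,\quad \sigma_\kappa=0,\quad b=n+2m,\quad L_g=\sqrt{1+L_h^2},\quad C_{\mathrm L}=1+\lambda_{\min}^{-1},\quad C_{\mathrm{est}}=\lambda_{\min}^{-1}+\lambda_{\min}^{-2},\quad L_y=\lambda_{\min}^{-1},
\]
and the exponents $e_{\mathrm L}=e_{\mathrm{est},1}=e_y=1$, $e_{\mathrm{est},2}=0$ are fixed numbers. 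Hence $C''$ depends only on the EnKF $\mathcal Q$ and $p$.

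Two minor points need attention. First, $\kappa$ is degenerate ($a=0$, $\sigma_\kappa=0$), so the seed terms $\|\omega\|_2$ vanish; the general argument still goes through with these terms set to zero. Second, Assumption~\ref{assumption:algorithm_finite_particle}(1a) for EnKF is stated in the non-measure arguments $(x,y)$ only, which is consistent with $a=0$.

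With these checks, the high-probability bound for $k\ge2$ and $N\ge(\log k)^2$, and the expectation bound with rate $\bar\gamma_N^{p,n}$, follow verbatim from Corollary~\ref{cor:finite_particle_converges_to_mean_field}. The mean-field $\tilde\mu^{a,j}_X$ there is exactly the law in~\eqref{eq:mf_alg_measure_perspective_enkf}. There is no real obstacle; the only care needed is the bookkeeping of constants and the degenerate seed.
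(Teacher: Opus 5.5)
Your proposal is correct and follows the paper's route: the paper obtains this corollary directly from Corollary~\ref{cor:finite_particle_converges_to_mean_field}, using Lemma~\ref{lem:enkf_satisfies_assumptions} and the constants in Table~\ref{tab:enkf_constants_values} to verify the hypotheses. Your explicit check that every entry of the general $\mathcal{Q}$ reduces to the EnKF parameters (with $a=0$, $\sigma_\kappa=0$ and fixed exponents) is exactly the bookkeeping the paper leaves implicit.
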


We emphasize that Theorem~\ref{thm:enkf_finite_sample} is, however, the more informative statement: it shows that the EnKF ensemble is (with high probability) as close as possible to an i.i.d.\ sample from \(\tilde\mu_{X}^{a,j}\) at the Monte Carlo scale. Once this i.i.d.\ behavior is in place, the residual discrepancy to the mean-field is precisely the standard empirical-process error \(\bar\gamma_N^{p,n}\), which reduces to \(N^{-1/n}\) when \(n>2p\) and is known to be sharp in the absence of further assumptions~\cite{fournier2015rate}.

\subsection{Mean-Field Convergence of the EnSMF}
\label{subsection:application_ensmf} 
In this subsection, we illustrate how our theory also applies to nonlinear transport ensemble filters by analyzing the ensemble stochastic map filter (EnSMF)~\cite{spantini2022coupling} as a representative example. The EnSMF generalizes the 
 linear ansatz of the EnKF 
by learning a (possibly) nonlinear map $T$ using a variational approach based on samples from the joint forecast distribution. 
This approach overcomes a limitation of the EnKF's linear update, which fails to accurately characterize analysis distributions whose dependence on the observations is nonlinear~\cite{jorgensen2026exactaffineconditioninggaussians}, as is the case for general non-Gaussian distributions. 
We also refer to \cite{spantini2022coupling,ramgraber2023Bensemble} for evidence of the EnSMF's lower bias as compared to the EnKF for capturing posterior moments when using nonlinear, rather than linear, transport maps. We introduce stochastic maps and the EnSMF precisely in Subsection~\ref{subsection:smf_intro} and 
establish our mean-field convergence result for the EnSMF in Subsection~\ref{subsubsection:convergence_ensmf}.

\subsubsection{Introduction to Stochastic Maps and the EnSMF} 
\label{subsection:smf_intro}
The EnSMF differs from the EnKF (Algorithm~\ref{alg:EnKF}) only in the analysis step where it replaces the EnKF's affine map $\map^{\mathrm K}_{y}$ with a (possibly nonlinear) transport map that approximates the exact conditioning operator $B_{y^\star}$ in Equation~\eqref{eq:cond_operator} for 
an arbitrary observation \(y^\star\). We begin with an informal sketch of the EnSMF analysis step, whose precise, rigorous formulation will follow. 

To define the nonlinear transport, one first seeks a map $\tilde S\colon\R^n\times\R^m\longrightarrow\R^n\times\R^m$  
 that pushes the joint forecast \(\mu^{f,j}_{XY}\) onto the product distribution of a simple reference measure $\gamma_\text{G} \in \Pp_2\left(\R^n \right)$, such as a standard Gaussian, and the marginal measure for the observations $\mu_Y^{f,j} \in \Pp_2\left(\R^m \right)$. That is,
 \begin{equation} \label{eq:push_forward_condition}
 \tilde S_\sharp\mu_{XY}^{f,j} = \gamma_{\mathrm G}\otimes\mu_Y^{f,j}.
 \end{equation}
A convenient structure for conditional simulation is to use invertible transport maps with the block-triangular form 
$\tilde S(x,y) :=\bigl(S(x,y),y\bigr),$ where both components depend on the observation vector \(y\), but only the first component \(S:\R^n\times\R^m\longrightarrow\R^n\) depends on $x$. 
As shown in \cite[Thm.~2.4]{baptista2024conditional}, if the map satisfies Equation~\eqref{eq:push_forward_condition}, then the inverse of $x \mapsto S(x,y^j)$, denoted by $S(\cdot,y^j)^{-1}$, enables posterior sampling via the pushforward
$$\mu_{X}^{a,j}
= \bigl(S(\cdot,y^j)^{-1}\bigr)_\sharp \gamma_{\mathrm G}.$$
This is the main idea behind conditional sampling with triangular transport~\cite{parno2018transport, marzouk2017sampling}. In the ensemble-filtering setting, where one uses a particle ensemble to approximate \(\mu_{XY}^{f,j}\), \cite{spantini2022coupling} proposes a ``prior-to-posterior'' scheme that replaces the reference measure $\gamma_{\mathrm G}$ with samples from the pushforward distribution $ S_\sharp\mu_{XY}^{f,j}$. This yields the conditioning operator
\begin{equation}
\label{eq:sm_idea_map}
 \mu_{X}^{a,j} = \bigl(S(\cdot,y^j)^{-1}\circ  S\bigr)_\sharp  \mu_{XY}^{f,j}.
\end{equation}
This procedure is called the ``stochastic map'' and is precisely the analysis step in the EnSMF. It preserves the ensemble size when $\mu_{XY}^{f,j}$ is replaced by an empirical measure, and it is exact in the case $\tilde S_\sharp\mu_{XY}^{f,j}=  \gamma_{\mathrm G}\otimes\mu_Y^{f,j}$. In the following paragraphs we outline the parametric form of the transport map, the learning problem for the map, and regularization techniques for estimating the map.

\textit{Map Parameterization.}
In practice, it is common to
construct the map $S\colon \R^n \times\R^m \rightarrow \R^n$ by considering a parametric family of transport maps $\mathcal{S}$. Following \cite{spantini2022coupling}, we 
define $S$ to be a strictly triangular map of the form
$$S(x,y) = \begin{bmatrix*}[l] S_1(x_1,y) \\ S_2(x_1,x_2,y) \\ \vdots \\ S_n(x_1,\dots,x_n,y)  \end{bmatrix*},$$
where each component $S_k$ for $k=1,\dots,n$ is given by
\begin{equation} \label{eq:S_components}
S_k(x,y) = \alpha_k x_k + c_k + \sum_{i=1}^{\iota_k} \theta_k^i f_k^i(x_{1:k-1},y),
\end{equation} 
where \(\iota_k \in \mathbb{N}\) and \(\{f_k^i\}_{k=1,\ldots,n; \, i=1,\ldots,\iota_k}\) is a given collection of functions 
\(f_k^i: \mathbb{R}^{k-1} \times \mathbb{R}^m \to \mathbb{R}\), with the convention that \(x_{1:0}\) is the unique element of \(\mathbb{R}^0\). The parameters of all components are then defined by the coefficients \(\alpha \in \mathbb{R}_{\geq \alpha_{\min}}^n\) for some fixed \(\alpha_{\text{min}} > 0\), \(c \in \mathbb{R}^n\) and scalars \(\{\theta_k^i\}_{k=1,\ldots,n; \,  i=1,\ldots,\iota_k}\) with \(\theta_k^i \in \mathbb{R}\). In what follows, we rewrite the coefficients \(\{\theta_k^i\}_{k,i}\) and functions \(\{f_k^i\}_{k,i}\) using vectors:
\begin{align*}
 \theta_k &\coloneqq \left( \theta_k^1, \ldots,  \theta_k^{\iota_k}\right)^\top \\
    f_k &\coloneqq \left( f_k^1, \ldots,  f_k^{\iota_k}\right)^\top.
\end{align*}
When a full state vector is supplied as the first argument, \(f_k^i(x,y)\) means \(f_k^i(x_{1:k-1},y)\), and \(f_k(x,y)\) means the corresponding vector of feature evaluations.
We write \(\iota\coloneqq(\iota_1,\ldots,\iota_n)\) for the vector of feature dimensions.
Moreover, we will write $S = \mathcal{S}(\alpha, c, \theta) \in \mathcal{S}$ 
to denote instances of such maps in the parametric family $\mathcal{S}$. For our results, the functions \(f_k^i\) are assumed to satisfy the following conditions.

\begin{assumption}
\label{assumption:sm_assumption}
For each \(k \in \{1, \ldots, n\}\), we have
\begin{enumerate}
    \item \textit{Lipschitz continuity:} Each function \( f_k\)
    is \(L_f\)-Lipschitz for a fixed constant \(L_f \geq 0\).
    \item \textit{Differentiability:}   Each function \( f_k^i \) is continuously differentiable in $x$. 
    \item \textit{Triangular variable dependence:}  For each $i \in \{1,\ldots, \iota_k\}$ and $j \geq k$, $\partial_{x_j}f_k^i =0$. 
\end{enumerate}
\end{assumption}

Concrete choices of the basis functions \(f_{k}^i\), as described in \cite{spantini2022coupling}, include polynomial (monomial) bases---typically regularized at infinity to ensure global Lipschitz continuity---and Gaussian radial basis functions. Both of these examples satisfy Assumption~\ref{assumption:sm_assumption}.

\textit{Learning the Map.} 
As explained at the beginning of this section, a natural approach is to choose \(S\) so that \(\tilde S_\sharp\mu\) has small Kullback--Leibler (KL)  divergence from \(\gamma_{\mathrm G}\otimes\mu_Y\). When this quantity is well defined, this approach leads to the variational problem
\begin{equation}
\label{eq:KL_min_sm}
\min_{S \in \mathcal{S}} \KL\bigl(\tilde S_\sharp\mu\,\|\,\gamma_{\mathrm G}\otimes\mu_Y\bigr),
\end{equation}
with \(\tilde S(x,y)\coloneqq(S(x,y),y)\), where \(\KL(\pi\,\|\,\pi')\) denotes the KL divergence from \(\pi\) to \(\pi'\). As is typical in stochastic-map constructions~\cite{parno2018transport,spantini2022coupling}, we take \(\gamma_{\mathrm G}=\mathcal N(0,I_n)\) as the reference measure. The triangular structure of \(S\), together with the constraint \(\alpha_k\ge \alpha_{\min}>0\), implies---by a straightforward induction over the state dimension---that, for every \(S\in\mathcal{S}\) and \(y\in\mathbb{R}^m\), the map \(S(\cdot,y):x\mapsto S(x,y)\) is a diffeomorphism; see Lemma~\ref{lem:inverting_sm} in the Appendix. Thus the pullback measure \(\left(S(\cdot, y)\right)^\sharp\gamma_{\mathrm G}\coloneqq \left(S(\cdot, y)^{-1}\right)_\sharp\gamma_{\mathrm G}\) has a density.

To avoid potential well-posedness issues in Equation~\eqref{eq:KL_min_sm}, while preserving its minimizers whenever the KL formulation is valid, we choose \(S\) directly as the maximum-likelihood estimator, i.e., as a minimizer of the cross-entropy objective~\cite{parno2018transport,spantini2022coupling}
\begin{equation}
\label{eq:cross_entropy_min_sm}
\min_{S \in \mathcal{S}} -\int \dd \mu(x,y)\log \left(\left(S(\cdot, y)\right)^\sharp\gamma_{\mathrm G}\right)(x).
\end{equation}
This objective depends on the joint distribution \(\mu\) only through the expectation and is therefore amenable to empirical approximation, e.g., using forecast samples. The following result shows that, under suitable conditions, the Gaussian reference choice \(\gamma_{\mathrm G}\) leads to a well-posed stochastic-map optimization in Equation~\eqref{eq:cross_entropy_min_sm}. A proof is included in the Appendix.
\begin{restatable}{proposition}{propexistenceminimizer}
\label{prop:existence_minimizer_sm_KL}
Let $\mu \in\Pp_2\left(\R^n \times \R^m \right)$ satisfy $\Cov\left(\left( f_k\right)_\sharp \mu \right)  \succ 0$ and 
\begin{equation*}
 R_k(\mu) \coloneqq \Cov\left(X_k\right)  -  \Cov\left(  f_k(X,Y), X_k \right) ^\top\Cov\left( f_k(X,Y) \right)^{-1} \Cov\left(  f_k(X,Y), X_k \right) > 0,
\end{equation*}
for every $k = 1, \ldots, n$ and $(X, Y) \sim \mu$. Then, Equation~\eqref{eq:cross_entropy_min_sm} has a unique minimizer $S^\mu \in \mathcal{S}$
that is defined by the parameters
\begin{subequations}
\label{eq:params_sm}
\begin{align}
    \alpha_k &=  \max\left(\frac{1}{\sqrt{R_k(\mu)}}, \alpha_{\min}\right) \\
\theta_k &=  -  \alpha_k \Cov\left( f_k(X,Y) \right)^{-1} \Cov\left(  f_k(X,Y), X_k \right) \\
c_k  &=- \E\left(\alpha_kX_k  + \sum_{i=1}^{\iota_k}   \theta_k^i f_k^i(X_{1:k-1},Y)\right).
\end{align}
\end{subequations}
\end{restatable}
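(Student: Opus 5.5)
The plan is to compute the cross-entropy objective in Equation~\eqref{eq:cross_entropy_min_sm} explicitly and check that it splits into $n$ independent scalar problems, one per component $S_k$. Fix $y$. Since $S(\cdot,y)$ is triangular and its diagonal derivatives are $\partial_{x_k}S_k=\alpha_k$ (by Assumption~\ref{assumption:sm_assumption}(3), the features $f_k$ do not depend on $x_k$), the change-of-variables formula gives
\[
\log\bigl((S(\cdot,y))^\sharp\gamma_{\mathrm G}\bigr)(x)=\sum_{k=1}^n\Bigl(-\tfrac12 S_k(x,y)^2+\log\alpha_k\Bigr)-\tfrac n2\log(2\pi).
\]
Lemma~\ref{lem:inverting_sm} guarantees the pullback density is well defined. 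The objective is therefore, up to a constant,
\[
\sum_{k=1}^n J_k(\alpha_k,c_k,\theta_k),\qquad J_k\coloneqq \E_\mu\Bigl[\tfrac12\bigl(\alpha_kX_k+c_k+\theta_k^\top f_k(X,Y)\bigr)^2\Bigr]-\log\alpha_k .
\]
Each $J_k$ is finite because $\mu\in\Pp_2$ and $f_k$ is Lipschitz, so $f_k(X,Y)$ has second moments. Since the parameter sets factor as products over $k$, it suffices to minimize each $J_k$ separately over $\alpha_k\ge\alpha_{\min}$, $c_k\in\R$, $\theta_k\in\R^{\iota_k}$.

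For fixed $k$ I would minimize in three stages: first $c_k$, then $\theta_k$ for fixed $\alpha_k$, then $\alpha_k$.
\begin{enumerate}
\item Write $Z=\alpha_kX_k+\theta_k^\top f_k$. Then $\E[(Z+c_k)^2]=\Cov(Z)+(\E Z+c_k)^2$, which is minimized uniquely at $c_k=-\E Z$. This is the formula for $c_k$ in \eqref{eq:params_sm}.
\item The remaining quantity $\tfrac12\Cov(\alpha_kX_k+\theta_k^\top f_k)$ is a strictly convex quadratic in $\theta_k$, because $\Cov(f_k)\succ0$. Setting its gradient to zero gives $\theta_k=-\alpha_k\Cov(f_k)^{-1}\Cov(f_k,X_k)$.
\item Substituting back, the minimized value is $\tfrac12\alpha_k^2R_k(\mu)$, where $R_k(\mu)$ is the Schur complement in the statement. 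The reduced problem is $\phi(\alpha)=\tfrac12\alpha^2R_k-\log\alpha$ on $[\alpha_{\min},\infty)$.
\item Because $R_k>0$, $\phi$ is strictly convex, tends to $+\infty$ as $\alpha\to\infty$, and has its unconstrained minimizer at $\alpha=R_k^{-1/2}$. Its constrained minimizer is therefore the projection $\max(R_k^{-1/2},\alpha_{\min})$.
\end{enumerate}

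Uniqueness follows because each stage has a unique minimizer, and the profiled minimization equals the joint minimization: $\inf_{\alpha,c,\theta}=\inf_\alpha\inf_\theta\inf_c$. I should also justify that distinct parameters give distinct minimizers in $\mathcal S$. If uniqueness is meant at the level of parameters, this is immediate. If it is meant at the level of maps, it follows since $\Cov(f_k)\succ0$ rules out affine dependencies among the features $\mu$-a.s.

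I expect the main obstacle to be bookkeeping rather than difficulty. Two points need care. One is to verify that the Jacobian determinant really is $\prod_k\alpha_k$, which uses Assumption~\ref{assumption:sm_assumption}(3) and the convention that $f_k$ depends only on $x_{1:k-1}$. The other is to confirm that the pullback density formula applies for every $y$, which follows from the diffeomorphism property in Lemma~\ref{lem:inverting_sm}.
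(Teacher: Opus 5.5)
Your proposal is correct and follows essentially the same route as the paper: split the cross-entropy into the per-component objectives $J_k$ using $|\det\nabla_x S|=\prod_k\alpha_k$, profile out $c_k$ and then $\theta_k$ to obtain the Schur complement $R_k(\mu)$, and minimize the convex function $\tfrac12\alpha^2R_k(\mu)-\log\alpha$ over $[\alpha_{\min},\infty)$. The only cosmetic difference is that the paper first rescales to $\tilde\theta_k=\theta_k/\alpha_k$, so that $J_k=\tfrac{\alpha_k^2}{2}\tilde J_k-\log\alpha_k$ factors, whereas you profile out $\theta_k$ directly for fixed $\alpha_k$; the two are equivalent.
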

For suitable $\mu \in\Pp_2\left(\R^n\times\R^m\right)$ satisfying the condition in Proposition \ref{prop:existence_minimizer_sm_KL} and $y^\star \in \R^m$, we now construct the prior-to-posterior transport map in~\eqref{eq:sm_idea_map} that defines the approximate conditioning operator $\tilde B_{y^\star}$ using the stochastic map
\[
\map_{y^\star}^{\mu}(x,y) := \left(S^\mu(\cdot,y^\star)^{-1}\circ S^\mu\right)(x,y).
\]
For the map components in Equation~\eqref{eq:S_components}, we obtain a closed form for the stochastic map by direct calculation.
Defining the map's output as $w = \map_{y^\star}^\mu(x,y) \in \R^n$, the following recursive equations hold for all $k\geq 1$ with $\alpha, \theta$ defined as in Proposition \ref{prop:existence_minimizer_sm_KL}: 
\begin{equation*}
w_k  = x_k + \frac{\theta_k^\top}{\alpha_k} \left(f_k(x,y) -f_k(w,y^\star)\right).
\end{equation*}
Since $w_k$
depends only on the ratio  
$\frac{\theta_k}{\alpha_k}
= -\Cov\!\left(f_k(X,Y)\right)^{-1}\Cov\!\left(f_k(X,Y), X_k\right),$
the definition can be naturally extended to cases where $R_k(\mu)=0$ by defining the stochastic map directly in terms of the moments of $\mu$.
Defining the map's output as $\chi_{y^\star}(x,y;\theta) \in \R^n$ with $\theta_k \in \R^{\iota_k}$ for each $k$, the following recursive equations hold:
\begin{equation}
\label{eq:sm_approximate_conditioning_recursion}
(\chi_{y^\star}(x,y;\theta))_k  = x_k - \theta_k^\top\left(f_k(x,y) -f_k(\chi_{y^\star}(x,y),y^\star)\right).
\end{equation}
Indeed, under the assumptions of Proposition \ref{prop:existence_minimizer_sm_KL}, the prior-to-posterior map \(\map_{y^\star}^{\mu}\) obtained from the minimizer \(S^\mu\) in Equation \eqref{eq:cross_entropy_min_sm} coincides with \(\chi_{y^\star}(\cdot,\cdot;\theta)\) when
\[
\theta_k = \Cov\left( f_k(X,Y) \right)^{-1} \Cov\left(  f_k(X,Y), X_k \right).
\]

\textit{Map Regularization.}
To extend this expression stably when $\Cov\!\left(f_k(X,Y)\right)$ is singular or ill-conditioned, it is common to incorporate regularization of the covariance matrix~\cite{spantini2022coupling}. Given a symmetric matrix $C \in \R^{u \times u}$ and a regularization constant $\upsilon> 0$, we define $C_{\upsilon}$ as the closest positive semidefinite approximation to $C$ that is bounded below by $\upsilon I_{u}$. That is
\begin{equation}
\label{eq:frob_proj}
C_{\upsilon} = \argmin_{\tilde C \succeq \upsilon I_{u}} \left\|\tilde C - C\right\|_F^2. 
\end{equation}
With a fixed regularization level \(\hat\sigma_f>0\), we define the stochastic map for $y^\star \in \R^m$ and any $\mu \in   \Pp_2\left(\R^n\times\R^m\right)$ as 
\begin{equation}
\label{eq:alg_conditioning_transport_SM}
   \map^{\mathrm{SM},\mu}_{y^\star}(x,y) = \chi_{y^\star}(x,y;\theta),
\end{equation}
for the parameter $\theta_k \coloneqq \Cov_{\hat \sigma_f}\left( f_k( X, Y) \right)^{-1} \Cov\left(  f_k( X, Y),  X_k \right)$ and $(X,  Y) \sim  \mu$, where \(\Cov_{\hat\sigma_f}(Z)\coloneqq(\Cov(Z))_{\hat\sigma_f}\).

The update step in~\eqref{eq:alg_conditioning_transport_SM} results in the transport ensemble filter. Taking $\mu$ to be the empirical measure from the forecast step yields the ensemble stochastic map filter, which is summarized in Algorithm~\ref{alg:SMF}.

\begin{remark}
Alternative regularizations (e.g., Tikhonov) lead to an essentially identical analysis and comparable bounds. Here, we adopt the projection scheme in~\eqref{eq:frob_proj}, which is equivalent to eigenvalue thresholding, because it yields a cleaner and more informative mean-field limit. A brief note on the computational implementation of this scheme is provided in Appendix~\ref{subsection:implementation_regularization}.
\end{remark}

\begin{algorithm}[t]
\caption{Ensemble Stochastic Map Filter (EnSMF)}\label{alg:SMF}
\begin{algorithmic}[1]
\State \textbf{Input:} ensemble size $N$ and sequentially acquired data $\{y^{j}\}_{j \geq 1}$.
\State \textbf{Initialization:} $x_{\ell}^{a,0} \overset{\text{i.i.d.}}{\sim} \mu^0,\qquad 1 \leq \ell \leq N.$
\For{$j = 1, 2, \ldots$}
    \State \textbf{Forecast: }
    \begin{align*}
    x_{\ell}^{f,j} &= \Psi(x_{\ell}^{a,j-1}) + \xi_\ell^{j-1}, \qquad \xi_\ell^{j-1} \overset{\text{i.i.d.}}{\sim} \nu_\xi^{j-1}, \qquad 1 \leq \ell \leq N, \\
    y_{\ell}^{f,j} &= h(x_{\ell}^{f,j}) + \eta_\ell^{j}, \qquad \eta_\ell^{j} \overset{\text{i.i.d.}}{\sim} \nu_\eta^j,\qquad 1 \leq \ell \leq N.
    \end{align*}
    \State \textbf{Analysis:}
    \begin{align*}
    &\hat \mu^{f, j}_{XY,N} = \frac{1}{N}\sum\limits_{\ell = 1}^N\delta_{ (x_{\ell}^{f,j},  y_{\ell}^{f,j})} \\
    &x_{\ell}^{a,j} =  \map^{\mathrm{SM},\hat \mu^{f, j}_{XY,N}}_{y^j} \left(x_{\ell}^{f,j},  y_{\ell}^{f,j}\right), \qquad 1 \leq \ell \leq N.
    \end{align*}
\EndFor
\State \textbf{Output:} Approximate filtering distribution $\hat\mu_{X,N}^{a,j} = \frac{1}{N}\sum\limits_{\ell = 1}^N\delta_{x_{\ell}^{a,j}} $ for $j = 1, 2, \ldots$
\end{algorithmic}
\end{algorithm}
\subsubsection{Convergence Result}
\label{subsubsection:convergence_ensmf}
As in Subsection~\ref{subsection:deriving_mean_field}, the mean-field dynamics to which the ensemble $\hat\mu_{X,N}^{a,j}$ produced by Algorithm~\ref{alg:SMF} converges are given by
\begin{subequations}
\label{eq:ensmf_mf_alg_measure_perspective}
\begin{align}
\tilde \mu_{X}^{a, 0} &= \mu^0 \\ 
\tilde \mu_{XY}^{f, j} &=Q^jP^{j-1}\tilde \mu_X^{a,j-1}  \\ 
\tilde \mu_{X}^{a,j} & = \tilde B^{\text{SM}}_{y^j} \tilde \mu_{XY}^{f, j}  
\end{align}
\end{subequations} 
where $\tilde B^{\text{SM}}_y$ is defined in Equation \eqref{eq:mean_field_operator}. To prove this statement rigorously, we rely on the results from Section~\ref{section:proof_formal} and establish that Assumption~\ref{assumption:algorithm_finite_particle} is satisfied for the map $\map_{y}^{\mathrm{SM},\mu}$ at any analysis step of the algorithm in the following lemma.
\begin{restatable}{lemma}{lemsmfassumptions}
\label{lem:smf_satisfies_assumptions}
$\map^{\mathrm{SM},\mu}_y$ satisfies Assumption \ref{assumption:algorithm_finite_particle} with constants depending only on the quantities introduced in Assumptions \ref{assumption:dyn_assumptions} and \ref{assumption:sm_assumption}, on the feature dimensions \(\iota\), and on the regularization level \(\hat\sigma_f\).
\end{restatable}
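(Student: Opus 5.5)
The plan is to verify the three items of Assumption~\ref{assumption:algorithm_finite_particle} directly from the explicit triangular recursion \eqref{eq:sm_approximate_conditioning_recursion}. Since $\map^{\mathrm{SM},\mu}_{y}$ is deterministic, take $a=0$ and $\kappa=\delta_0$, so item (2) holds with $\sigma_\kappa=0$ and the $\|\omega\|_2$ terms disappear.

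\textbf{Step 0: explicit recursion and parameter bound.} By Assumption~\ref{assumption:sm_assumption}(3), $f_k(w,y^\star)$ depends only on $w_{1:k-1}$. Hence $w=\chi_{y^\star}(x,y;\theta)$ is computed explicitly and sequentially in $k$; no fixed-point argument is needed. Write $\theta_k^\mu=\Cov_{\hat\sigma_f}(f_k(X,Y))^{-1}\Cov(f_k(X,Y),X_k)$ for $(X,Y)\sim\mu$. Two facts bound it:
\begin{itemize}
\item $\opnorm{\Cov_{\hat\sigma_f}(\cdot)^{-1}}\le\hat\sigma_f^{-1}$, by construction of the projection \eqref{eq:frob_proj}.
\item By Cauchy--Schwarz and the $L_f$-Lipschitz property of $f_k$, $\|\Cov(f_k,X_k)\|_2\le \Trcov(f_k(X,Y))^{1/2}\Var(X_k)^{1/2}\le L_f\Trcov(\mu)$.
\end{itemize}
Together these give $\|\theta_k^\mu\|_2\le \hat\sigma_f^{-1}L_f\Trcov(\mu)$. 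This single estimate drives all subsequent bounds.

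\textbf{Step 1: items (1a) and (3) by induction on the component index $k$.} Let $w,w'$ be the outputs for inputs $(x,y),(x',y')$. For (1a), the recursion gives
\[
|w_k-w_k'|\le |x_k-x_k'|+\|\theta_k^\mu\|_2L_f\bigl(\|(x,y)-(x',y')\|_2+\|w_{1:k-1}-w'_{1:k-1}\|_2\bigr).
\]
Inducting over $k=1,\dots,n$ yields a Lipschitz constant that is a polynomial of degree $n$ in $\max_k\|\theta_k^\mu\|_2$. This has the form $C_{\mathrm L}(1+\Trcov(\mu)^{e_{\mathrm L}})$ with $e_{\mathrm L}=n$, using $t^i\le 1+t^n$ for $t\ge 0$ and $i\le n$.

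For (3), apply the same induction to $w-x$:
\[
|w_k-x_k|\le\|\theta_k^\mu\|_2L_f\bigl(\|x_{1:k-1}-w_{1:k-1}\|_2+\|y-y^\star\|_2\bigr).
\]
This gives $\|\map^{\mathrm{SM},\mu}_{y^\star}(x,y)-x\|_2\le L_y(1+\Trcov(\mu)^{e_y})\|y-y^\star\|_2$ with $e_y=n$.

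\textbf{Step 2: item (1b).} Take $g(x,y)=(x,f_1(x,y),\dots,f_n(x,y))\in\R^{b}$ with $b=n+\sum_k\iota_k$. Then $g$ is Lipschitz with $L_g\le 1+\sqrt n L_f$. Every block $\Cov(f_k)$ and $\Cov(f_k,X_k)$ is a submatrix of $\Cov(g_\sharp\mu)$, so its perturbation is controlled by $\opnorm{\Cov(g_\sharp\nu)-\Cov(g_\sharp\mu)}$, up to $\sqrt{\iota_k}$ factors when passing between Frobenius and operator norms.

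The projection in \eqref{eq:frob_proj} is onto a closed convex set, so it is $1$-Lipschitz in Frobenius norm. Combined with $A^{-1}-B^{-1}=A^{-1}(B-A)B^{-1}$, this gives
\[
\|\theta_k^\nu-\theta_k^\mu\|_2\lesssim\bigl(1+\Trcov(\mu)\bigr)\opnorm{\Cov(g_\sharp\nu)-\Cov(g_\sharp\mu)}.
\]
Now fix the inputs and compare the outputs $w^\nu,w^\mu$:
\[
|w_k^\nu-w_k^\mu|\le\|\theta_k^\nu-\theta_k^\mu\|_2\,\|f_k(x,y)-f_k(w^\mu,y^\star)\|_2+\|\theta_k^\nu\|_2L_f\|w^\nu_{1:k-1}-w^\mu_{1:k-1}\|_2 .
\]
By Lipschitzness of $f_k$ and Step~1 item (3),
\[
\|f_k(x,y)-f_k(w^\mu,y^\star)\|_2\le L_f\bigl(\|x-w^\mu\|_2+\|y-y^\star\|_2\bigr)\lesssim(1+\Trcov(\mu)^{n})\|y-y^\star\|_2 .
\]
Inducting in $k$ yields a bound of the form $P(\Trcov\nu,\Trcov\mu)\,\|y-y^\star\|_2\,\opnorm{\Cov(g_\sharp\nu)-\Cov(g_\sharp\mu)}$, where $P$ is a polynomial. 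Young's inequality $st\le s^2+t^2$ separates the mixed monomials into $1+\Trcov(\nu)^{e_{\mathrm{est},1}}+\Trcov(\mu)^{e_{\mathrm{est},2}}$ for explicit exponents. These exponents are of order $2n$ and $2(n+1)$; the precise values are irrelevant.

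\textbf{Main obstacle.} The main difficulty is Step~2. One must ensure that the perturbation of the regularized inverse is controlled purely by the covariance difference of a single Lipschitz feature map $g$, and keep track of the resulting polynomial degrees through the $n$-fold triangular induction. The first thing to check is the $1$-Lipschitz property of the Frobenius projection onto $\{\tilde C\succeq\hat\sigma_f I\}$. The $\iota_k$-dependent norm-equivalence constants then enter the constants, consistent with the dependence on $\iota$ stated in the lemma.
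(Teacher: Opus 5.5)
Your proposal is correct and follows essentially the paper's route. Both arguments use:
\begin{itemize}
\item the explicit triangular recursion with $\|\theta_k\|_2\lesssim\hat\sigma_f^{-1}L_f\Trcov(\mu)$;
\item the same feature map $g(x,y)=(x,f_1(x,y),\dots,f_n(x,y))$;
\item nonexpansiveness of the Frobenius projection (Lemma~\ref{lem:frob_proj_lip}) combined with the resolvent identity $A^{-1}-B^{-1}=A^{-1}(B-A)B^{-1}$;
\item item (3) inside the proof of item (1b).
\end{itemize}
The differences are cosmetic: you run unsquared triangle-inequality recursions, and you get a slightly sharper constant in the $\theta_k$ bound, where the paper works with squared norms.
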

A proof of this lemma can be found in Appendix \ref{subsection:convergence_ensmf_aux} and we can now invoke the general results of Section~\ref{section:proof_formal} to obtain mean-field convergence of the EnSMF as a direct consequence of Theorem~\ref{thm:ips_converges_to_iid_particles} together with Lemma~\ref{lem:smf_satisfies_assumptions}.

\begin{theorem}
\label{thm:ensmf_finite_sample}
Fix $J\in\N_{\ge1}$ and draw $Y^{1:J}$ as in Equation \eqref{eq:dynamics}, i.e., $Y^{1:J} \sim \rho^J$ as defined in Equation~\eqref{eq:rho_def}. Suppose Assumptions~\ref{assumption:dyn_assumptions} and~\ref{assumption:sm_assumption} hold. Let \(\{\hat\mu_{X,N}^{a,j}\}_{j=1}^{J}\) be the analysis empirical measures produced by Algorithm~\ref{alg:SMF}, and let \(\tilde\mu^{a,j}_X\) be the mean-field sequence defined by Equation \eqref{eq:ensmf_mf_alg_measure_perspective} with observations \(y^{1:J}=Y^{1:J}\).  
Then there exists a coupling of i.i.d.\ particles \(v_\ell^{a,j}\sim\tilde\mu_X^{a,j}\) with the interacting ensemble such that the corresponding empirical measure \(\tilde\mu^{a,j}_{X,N}=\frac1N\sum_{\ell=1}^N\delta_{v_\ell^{a,j}}\) satisfies, for all \(k\ge 2\) and $N\geq (\log k)^2$,  with probability at least $1 - \tfrac1k$
\[
W_2\bigl(\hat\mu_{X,N}^{a,j},\tilde\mu_{X,N}^{a,j}\bigr)
\ \le\ C\frac{(\log k)^{1 +j/2}}{\sqrt{N}} \text{ for all }1 \leq j \leq J,
\]
where \(C\coloneqq C(\mathcal{Q})\) depends only on
\[
\mathcal{Q} \coloneqq (n,m,\sigma_{\max},\lambda_{\max},\sigma_X,L_\Psi,L_h,L_f, \hat\sigma_f,\iota,J).
\]
The randomness is over both the particle draws in Algorithm~\ref{alg:SMF} and the observation path \(Y^{1:J}\). Moreover, we have the non-asymptotic bound
\[
\E \left(\sup\limits_{1\le j\le J}  W_2\bigl(\hat\mu_{X,N}^{a,j},\tilde\mu_{X,N}^{a,j}\bigr)\right)
 \le \frac{C'(\mathcal{Q})}{\sqrt{N}},
\]
for a constant \(C'(\mathcal{Q})\) depending only on \(\mathcal{Q}\).
\end{theorem}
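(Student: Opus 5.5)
The plan is to deduce Theorem~\ref{thm:ensmf_finite_sample} directly from the general machinery. The proof has three pieces.

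\emph{Reduction.} Algorithm~\ref{alg:SMF} is Algorithm~\ref{alg:TEF} with the deterministic map $\map^{\mathrm{SM},\mu}_{y}$. We take the latent space trivial ($a=0$, $\kappa=\delta_0$, $\sigma_\kappa=0$). The synchronous coupling in Equation~\eqref{eq:sync-coupling} then produces exactly the required i.i.d.\ particles $v_\ell^{a,j}\sim\tilde\mu_X^{a,j}$. Their law is defined by Equation~\eqref{eq:ensmf_mf_alg_measure_perspective}, which is the specialization of Equation~\eqref{eq:mf_alg_measure_perspective} with $\tilde B_y=\tilde B^{\mathrm{SM}}_y$.

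\emph{Verifying the hypotheses.} Lemma~\ref{lem:smf_satisfies_assumptions} gives Assumption~\ref{assumption:algorithm_finite_particle}. Its constants $C_{\mathrm L},e_{\mathrm L},C_{\mathrm{est}},e_{\mathrm{est},1},e_{\mathrm{est},2},L_y,e_y$, the feature map $g$, and $b,L_g$ depend only on $L_f,\hat\sigma_f,\iota,n,m$, and possibly on $L_\Psi,L_h$, the noise parameters and $J$. Assumption~\ref{assumption:dyn_assumptions} holds by hypothesis. Hence every entry of the list $\mathcal{Q}$ in Theorem~\ref{thm:ips_converges_to_iid_particles} is a function of the reduced list $(n,m,\sigma_{\max},\lambda_{\max},\sigma_X,L_\Psi,L_h,L_f,\hat\sigma_f,\iota,J)$. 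The only point to check is that all exponents $e_\cdot$ are fixed numbers independent of the data, which is what the lemma asserts.

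\emph{Applying the general results.} Theorem~\ref{thm:ips_converges_to_iid_particles} gives, with probability at least $1-1/k$,
\[
W_2\bigl(\hat\mu_{X,N}^{a,j},\tilde\mu_{X,N}^{a,j}\bigr)\le C\bigl(1+\delta_{k,N}^{p_j}\bigr)(\log k)^{1+j/2}N^{-1/2}.
\]
When $N\ge(\log k)^2$ we have $\delta_{k,N}\le1$, so $1+\delta_{k,N}^{p_j}\le 2$. Absorbing the factor $2$ into $C$ yields the claimed bound for each $j$ with the explicit exponent $(\log k)^{1+j/2}$. This matches the theorem's per-$j$ form, which is slightly sharper than its final $\sup$ form. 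The expectation bound is Corollary~\ref{cor:ips_converges_to_iid_particles_exp} with the same identification of constants.

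\emph{Main obstacle.} The substantive work lies in Lemma~\ref{lem:smf_satisfies_assumptions}, which we are allowed to assume. If it had to be checked, the hardest part would be the estimation-stability condition (1b). This requires a perturbation bound for the implicitly defined recursion in Equation~\eqref{eq:sm_approximate_conditioning_recursion} as $\theta$ varies, together with Lipschitz dependence of $\theta_k=\Cov_{\hat\sigma_f}(f_k)^{-1}\Cov(f_k,X_k)$ on covariances of $g=(x,y,f_1,\dots,f_n)$. That Lipschitz dependence is controlled by $\hat\sigma_f^{-1}$ and by the $1$-Lipschitz property of the projection in Equation~\eqref{eq:frob_proj}. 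Given the lemma, the theorem is a direct specialization.
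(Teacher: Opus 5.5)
Your proposal is correct and follows the paper's own route. The paper obtains Theorem~\ref{thm:ensmf_finite_sample} directly from Theorem~\ref{thm:ips_converges_to_iid_particles} and Corollary~\ref{cor:ips_converges_to_iid_particles_exp}, with Lemma~\ref{lem:smf_satisfies_assumptions} verifying Assumption~\ref{assumption:algorithm_finite_particle} ($a=0$, $\sigma_\kappa=0$). Your explicit step, using $N\ge(\log k)^2$ to get $\delta_{k,N}\le 1$ and absorbing $1+\delta_{k,N}^{p_j}\le 2$ into the constant, is exactly what yields the stated per-$j$ bound with $(\log k)^{1+j/2}$.
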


As for the EnKF, we include a result for the convergence of $\hat\mu_{X,N}^{a,j}$ to the mean-field \(\tilde\mu_{X}^{a,j}\) in the \(W_p\) distance. The proof of this result follows the same steps as Corollary~\ref{cor:finite_particle_converges_to_mean_field}.

\begin{corollary}
Consider the same setting as in Theorem \ref{thm:ensmf_finite_sample} and choose any $p\in[1,2]$. Then, there exists a constant $C \geq 0$ depending only on $\mathcal{Q}$ and $p$ such that for all $k\ge 2$ and $N\geq (\log k)^2$, with probability at least $1-\frac1k$,
\[
W_p\bigl(\hat\mu_{X,N}^{a,j},\tilde\mu_X^{a,j}\bigr)
\ \le C\left(\gamma_{k,N}^{p,n}+\frac{(\log k)^{1 +j/2}}{\sqrt{N}}\right) \text{ for all }1 \leq j \leq J,
\]
with the rate \(\gamma_{k,N}^{p,n}\) as in Equation~\eqref{eq:standard_iid_wasserstein_rate} and
\[
\E \left(\sup\limits_{1\le j\le J}W_p\bigl(\hat\mu_{X,N}^{a,j},\tilde\mu_X^{a,j}\bigr)\right)\le  C \bar\gamma_N^{p,n}.
\]
\end{corollary}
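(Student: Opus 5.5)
The plan follows the proof of Corollary~\ref{cor:finite_particle_converges_to_mean_field} verbatim, with the EnSMF-specific input supplied by Lemma~\ref{lem:smf_satisfies_assumptions}. Here $\map^{\mathrm{SM},\mu}_y$ is deterministic, so $a=0$ and $\sigma_\kappa=0$. By that lemma, the EnSMF satisfies Assumption~\ref{assumption:algorithm_finite_particle}, with all constants $(C_{\mathrm L},e_{\mathrm L},C_{\mathrm{est}},e_{\mathrm{est},1},e_{\mathrm{est},2},L_y,e_y,L_g,b)$ depending only on $\mathcal Q=(n,m,\sigma_{\max},\lambda_{\max},\sigma_X,L_\Psi,L_h,L_f,\hat\sigma_f,\iota,J)$. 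Consequently, every constant appearing in Theorem~\ref{thm:ips_converges_to_iid_particles} and Corollary~\ref{cor:ips_converges_to_iid_particles_exp} depends only on this $\mathcal Q$. This is exactly the content of Theorem~\ref{thm:ensmf_finite_sample}: with probability at least $1-\frac1{2k}$ (after replacing $k$ by $2k$ and adjusting constants, using $\log(2k)\lesssim\log k$ for $k\ge2$),
\[
W_2\bigl(\hat\mu_{X,N}^{a,j},\tilde\mu_{X,N}^{a,j}\bigr)\lesssim\frac{(\log k)^{1+j/2}}{\sqrt N}\quad\text{for all }1\le j\le J .
\]
For $p\le 2$ we then use $W_p\le W_2$.

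For the sampling term, we condition on $Y^{1:J}$. Given the path, the $v_\ell^{a,j}$ are i.i.d.\ from the deterministic law $\tilde\mu_X^{a,j}$ defined by Equation~\eqref{eq:ensmf_mf_alg_measure_perspective}. The induction in the proof of Proposition~\ref{prop:mf_dynamics_higher_moments}, applied to the EnSMF map, gives $\|\tilde\mu_X^{a,j}-\E\tilde\mu_X^{a,j}\|_{2,\psi_2}\le\tilde C_{\mathrm{subG}}(\mathcal Q)$ uniformly over paths. This bound is inherited from the Lipschitz bound (1a) applied to the sub-Gaussian forecast law, exactly as in the general corollary. Proposition~\ref{cor:wp_concentration} together with a union bound over $j\le J$ then yields
\[
W_p(\tilde\mu_{X,N}^{a,j},\tilde\mu_X^{a,j})\lesssim\tilde\gamma_{k,N}^{p,n}\le\gamma^{p,n}_{k,N}+\sqrt{\log k/N}
\]
with conditional probability at least $1-\frac1{2k}$. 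The term $\sqrt{\log k/N}$ is absorbed into $(\log k)^{1+j/2}/\sqrt N$. We then integrate over $Y^{1:J}$, intersect with the first event, and apply the triangle inequality~\eqref{eq:triangle_intuitive_1} to obtain the high-probability bound.

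For the expectation bound, we take $\sup_j$ inside the triangle inequality. The interaction term is bounded by the expectation statement of Theorem~\ref{thm:ensmf_finite_sample}, which gives $C'/\sqrt N$. For the sampling term, we condition on $Y^{1:J}$, apply the $k$-free expectation rate $\bar\gamma_N^{p,n}$ (uniform in the path thanks to the uniform sub-Gaussian bound), and sum over $j\le J$. Finally, $N^{-1/2}\le\bar\gamma_N^{p,n}$ for $p\in[1,2]$, since $\bar\gamma_N^{p,n}\ge N^{-1/(2p)}\wedge N^{-1/n}$ and both exponents are at most $1/2$ (for $n\ge2$, or via the third case when $n<2p$).

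The only non-routine point is the uniformity of the sub-Gaussian control of $\tilde\mu_X^{a,j}$ over observation paths when the analysis map is the nonlinear stochastic map. I expect this to go through because Proposition~\ref{prop:mf_dynamics_higher_moments} uses only (1a), whose constant depends on $\Trcov$ and not on $y^j$, plus the transport-sensitivity bound, all supplied by Lemma~\ref{lem:smf_satisfies_assumptions}. If the centering introduces $y^j$-dependence, I would first verify that only the centered norm is needed, since Proposition~\ref{cor:wp_concentration} is translation invariant.
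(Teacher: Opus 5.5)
Your proposal is correct and follows the paper's route: the paper proves this corollary by repeating the proof of Corollary~\ref{cor:finite_particle_converges_to_mean_field}, with Lemma~\ref{lem:smf_satisfies_assumptions} supplying Assumption~\ref{assumption:algorithm_finite_particle}; the ingredients are the interaction bound of Theorem~\ref{thm:ensmf_finite_sample}, path-uniform i.i.d.\ empirical-measure concentration for $\tilde\mu_X^{a,j}$, and the triangle inequality, exactly as you use them. Two minor points: the path-uniform sub-Gaussian bound on $\tilde\mu_X^{a,j}$ uses only (1a) and the auxiliary-noise bound, not transport sensitivity; and for the $k\mapsto 2k$ replacement you should invoke the general form of Theorem~\ref{thm:ips_converges_to_iid_particles}, which is valid for all $k\ge2$, because $N\ge(\log k)^2$ does not imply $N\ge(\log 2k)^2$.
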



\appendix
\section{Auxiliary Results}

\subsection[Auxiliary Results for Mean-Field Limit Arguments]{Auxiliary Results for Section~\ref{subsection:convergence_intro}}
\label{subsection:sup_MFL}

For the empirical-measure concentration result below, define
\[
\gamma_{k,N}^{p,d} =
\begin{cases}
 \Bigl(\frac{\log k}{N}\Bigr)^{1/d}
 &\text{if } 1\le p < \frac d2\\
\Bigl(\frac{\log k}{N}\Bigr)^{1/d}\bigl(\log(2+N)\bigr)^{2/d}
&\text{if } p = \frac{d}{2}\\
\Bigl(\frac{\log k}{N}\Bigr)^{1/(2p)}
&\text{if } \frac d2 < p \le 2\,,
\end{cases}
\]
and
\[
\tilde\gamma_{k,N}^{p,d}
\coloneqq \max\left\{\gamma_{k,N}^{p,d},\sqrt{\frac{\log k}{N}}\right\}.
\]
For convenience, we also recall the corresponding \(k\)-free empirical-measure rate
\[
\bar\gamma_N^{p,d} =
\begin{cases}
N^{-1/d}
&\text{if } 1\le p < \frac d2,\\
N^{-1/d}\bigl(\log(2+N)\bigr)^{2/d}
&\text{if } p = \frac{d}{2},\\
N^{-1/(2p)}
&\text{if } \frac d2 < p \le 2.
\end{cases}
\]

\begin{proposition}
\label{cor:wp_concentration}
Let $d\geq1$ and $p\in[1,2]$. Let $(X_i)_{i=1}^N$ be i.i.d.\ $\mathbb R^d$-valued with law $\mu$, and let
$\mu_N$ be the empirical measure. Assume the centered sub-Gaussian condition
\[
\big\|X_i-\mathbb E X_i\big\|_{2,\psi_2}\le K .
\]
Then there exists a constant $A_1\geq 0$, depending only on $d,p,K$, such that for all
$N\ge1$ and  $k\ge2$ 
\[
\mathbb P\left(W_p(\mu_N,\mu)\ \ge A_1 \tilde\gamma_{k,N}^{p,d}\right)\ \le\ \frac1k\,.
\]
Moreover, there exists a constant $A_2> 0$, depending only on $d,p,K$, such that for all
$N\ge1$,
\[\E W_p(\mu_N,\mu)\le A_2 \bar\gamma_N^{p,d}\,.
\]
\end{proposition}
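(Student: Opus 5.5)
The proof reduces both claims to the known empirical-measure results of Fournier--Guillin and Larsson et al.\ (Example 3.4 of \cite{larsson2024concentration}), after turning the centered sub-Gaussian hypothesis into the moment and exponential-integrability conditions those results need. First, $W_p$ is translation invariant: shifting all $X_i$ and $\mu$ by $\E X_1$ leaves $W_p(\mu_N,\mu)$ unchanged. So we may assume $\E X_i=0$ and $\|X_i\|_{2,\psi_2}\le K$. By Proposition~\ref{prop:equiv_subg}, $\E\exp(\|X_1\|_2^2/K^2)\le 2$ and $\E\|X_1\|_2^q\le (\tilde C_{\psi_2}K\sqrt q)^q$ for all $q\ge1$. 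Every constant below therefore depends only on $(d,p,K)$.

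For the expectation bound we use Theorem~1 of \cite{fournier2015rate}. Under $M_q(\mu)<\infty$ for some $q>2p$ (take $q=5$, say), it gives
\[
\E W_p^p(\mu_N,\mu)\le C\,M_q^{p/q}(\mu)\,\bigl(N^{-p/d}\ \text{or}\ N^{-1/2}\log(1+N)\ \text{or}\ N^{-1/2}\bigr),
\]
according to whether $p<d/2$, $p=d/2$, or $p>d/2$. Jensen's inequality, $\E W_p\le (\E W_p^p)^{1/p}$, then gives exactly $N^{-1/d}$, $N^{-1/d}(\log(1+N))^{2/d}$ (using $p=d/2$, so $1/(2p)=1/d$ and $1/p=2/d$), and $N^{-1/(2p)}$. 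This matches $\bar\gamma_N^{p,d}$ up to $\log(1+N)\le\log(2+N)$, which yields $A_2$.

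For the high-probability bound we split $W_p(\mu_N,\mu)\le \E W_p(\mu_N,\mu)+\bigl(W_p(\mu_N,\mu)-\E W_p(\mu_N,\mu)\bigr)$. We control the fluctuation term with the concentration inequality of Larsson et al., Example 3.4. Alternatively, one can use Theorem~2 of \cite{fournier2015rate} applied to $W_p^p$: its exponential moment condition $\E e^{\gamma\|X\|^\alpha}<\infty$ with $\alpha=2>p$ holds by sub-Gaussianity. That theorem gives tails of the form $\exp(-cNx^2)$ for small $x$, $\exp(-cN x^{d/p})$ in the high-dimensional case, and $\exp(-c(Nx)^{\alpha/p})$ for large $x$, all for $x=t^p$ with $t$ the deviation of $W_p$. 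We set the total failure probability equal to $1/k$ and solve each regime for $t$. The Gaussian-regime term gives $t\lesssim (\log k/N)^{1/(2p)}$ or smaller. The dimension-dependent term gives $(\log k/N)^{1/d}$, with the extra $\log(2+N)^{2/d}$ at $p=d/2$. The large-deviation term gives something dominated by $\sqrt{\log k/N}$ in the relevant range $k$, using $k\ge2$ so that $\log k\ge\log 2$. The deterministic part $\E W_p\lesssim\bar\gamma_N^{p,d}\le(\log 2)^{-1}\gamma_{k,N}^{p,d}$ (up to constants) is absorbed the same way. Taking the maximum over regimes gives $W_p\le A_1\max\{\gamma^{p,d}_{k,N},\sqrt{\log k/N}\}=A_1\tilde\gamma^{p,d}_{k,N}$.

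The main obstacle is this bookkeeping of the tail regimes in Fournier--Guillin's Theorem~2: its piecewise rate functions (indicators like $\mathbf 1_{x\le1}$, the critical log factor) must be inverted carefully so that no case produces a worse exponent than $\tilde\gamma_{k,N}^{p,d}$. The safest route is to cite Example~3.4 of \cite{larsson2024concentration} directly. That example states a sub-Gaussian-input bound of the form $W_p(\mu_N,\mu)\lesssim \bar\gamma_N^{p,d}+(\log k/N)^{\min(1/d,1/(2p))}+\sqrt{\log k/N}$ with probability at least $1-1/k$. We then only check that it is dominated by $\tilde\gamma^{p,d}_{k,N}$ using $\log k\ge\log2$, treating the boundary case $p=d/2$ separately.
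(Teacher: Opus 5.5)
Your main line is correct, and for the high-probability bound it is the paper's proof. You center using translation invariance of $W_p$, invoke Example~3.4(a) of \cite{larsson2024concentration} with $\beta=2$, and invert its tail bound for $\mathcal T_p=W_p^p$ at $x=t^p$ with $t=A_1\tilde\gamma_{k,N}^{p,d}$. This makes the dimension-dependent term and the large-deviation term $C_1e^{-c_1Nt^2}$ each at most $1/(2k)$.

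Two clarifications on this part:
\begin{itemize}
\item Example~3.4 does not itself state a bound of the form $\bar\gamma_N^{p,d}+(\log k/N)^{\min(1/d,1/(2p))}+\sqrt{\log k/N}$. It gives the tail bound with rate function $\phi$, and the case-by-case inversion you sketch in your Fournier--Guillin paragraph is the actual work.
\item Your split $W_p\le\E W_p+(W_p-\E W_p)$ is superfluous, since these results bound $W_p$ itself rather than its deviation from the mean. It is harmless, because $\E W_p\lesssim\gamma_{k,N}^{p,d}$.
\end{itemize}

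For the expectation you take a different route. The paper integrates the same tail bound (layer cake), whereas you use Theorem~1 of \cite{fournier2015rate} with $q=5>2p$ plus Jensen. That is valid: the extra term $N^{-(q-p)/q}$ in that theorem is dominated because $q>2p$. Your route needs only a finite $q$-th moment, which matches the paper's remark that expectation bounds do not require sub-Gaussian tails. The paper's route simply avoids a second citation.

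Do not rely on your alternative via Theorem~2 of \cite{fournier2015rate}. Its exponential-moment condition requires $\alpha>p$ strictly, so sub-Gaussianity ($\alpha=2$) does not qualify at $p=2$, which is in the allowed range. Falling back to the case $\alpha<p$ replaces $e^{-cNt^2}$ by terms like $\exp\bigl(-c(Nt^2)^{\alpha'/2}\bigr)$ with $\alpha'<2$. These only give $t\gtrsim(\log k)^{1/\alpha'}N^{-1/2}$, which is not dominated by $\tilde\gamma_{k,N}^{p,d}$ uniformly in $k$ (take $N\approx\log k\to\infty$). This boundary case is exactly why the paper cites \cite{larsson2024concentration}, which permits $\beta\ge p$. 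Citing Example~3.4 is therefore necessary at $p=2$, not merely the safest option.
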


\begin{proof}
By the sub-Gaussian assumption 
\[
\mathbb E\exp\bigl(\| X-\mathbb EX\|_2^2/K^2\bigr) \leq 2< \infty.
\]
Since a translation does not change Wasserstein distances, we may assume $\mathbb EX=0$ and thus
\(
\int e^{\|y\|_2^2/K^2}\mu(dy) < \infty.
\)  This is the exponential-moment assumption of Example~3.4 in \cite{larsson2024concentration}, with \(\beta=2\ge p\) and constants depending only on \(d,p,K\). Let $\mathcal{T}_p(\mu,\nu):=W_p(\mu,\nu)^p$. Example~3.4(a) with $\beta=2$ gives, for all $N\in\mathbb N$, $x>0$,
\[
\mathbb P\bigl(\mathcal{T}_p(\mu,\mu_N)>x\bigr)
\le
C_1\exp\bigl(-c_1 N\phi(x)\bigr)\mathrm 1_{\{x\le1\}}
+
C_1\exp\bigl(-c_1 N x^{2/p}\bigr)\mathrm 1_{\{x\ge1\}},
\]
where the rate function $\phi$ (depending on $d,p$) is
\[
\phi(x)
=
\begin{cases}
x^2 & p>\frac d2\\
\bigl(x/\log(2+1/x)\bigr)^2 & p=\frac d2\\
x^{d/p} & 0<p<\frac d2.
\end{cases}
\]
Plugging in $x = t^p$ gives
\[
\mathbb P\bigl(W_p(\mu_N,\mu)>t\bigr)
= \mathbb P\bigl(\mathcal{T}_p(\mu,\mu_N)>t^p\bigr)
\le C_1\exp\bigl(-c_1 N\phi(t^p)\bigr)
+
C_1\exp\bigl(-c_1 N t^{2}\bigr).
\]
For $t =A_1 \tilde\gamma_{k,N}^{p,d} $ the second term is upper bounded by  $(2k)^{-1}$ for sufficiently large $A_1 \geq 0$ since $\tilde\gamma_{k,N}^{p,d}  \geq \left(\log k/N \right)^{1/2}$ and $\log(2k)\le 2\log k$ for $k\ge2$. Similarly, plugging in $t =A_1 \tilde\gamma_{k,N}^{p,d}  \geq A_1\gamma_{k,N}^{p,d} $ for the first term and some elementary algebra in each of the three cases shows that it is smaller than $(2k)^{-1}$ for sufficiently large $A_1 \geq 0$. The two bounds sum to the claimed failure probability \(k^{-1}\).

The expectation bound follows by the layer-cake representation.
\end{proof}

\subsection[Auxiliary Results for Dynamics Control]{Auxiliary Results for Section \ref{subsection:dynamics_control}}
\label{subsection:sup_moments_dyn_process}
We derive the auxiliary results used in  Subsection \ref{subsection:dynamics_control}. We start by giving a routine argument for the stability of $\|\cdot\|_{2,\psi_2}$  under Lipschitz maps as we will apply this property repeatedly.

\begin{proposition}
\label{prop:subg_lip}
Let $X\in \R^{u_1}$ be a random vector such that $\|X - \E(X)\|_2$ is sub-Gaussian. Assume that \(f:\R^{u_1}\rightarrow\R^{u_2}\) is $L$-Lipschitz. Then
\[
\left\|f(X) - \E f(X)\right\|_{2,\psi_2}
\leq 2L \left\| X - \E(X)\right\|_{2,\psi_2}.
\]
\end{proposition}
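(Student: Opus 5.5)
The plan is to compare $f(X)$ with an independent copy of itself; this avoids any appeal to the centered moment-generating-function characterization. Let $X'$ be an independent copy of $X$. Since $f$ is $L$-Lipschitz and $X$ is integrable, $f(X)$ is integrable, so $\E f(X)=\E\bigl(f(X')\mid X\bigr)$. Jensen's inequality for the norm then gives
\[
\|f(X)-\E f(X)\|_2 \le \E\bigl(\|f(X)-f(X')\|_2\,\big|\,X\bigr)\le L\,\E\bigl(\|X-X'\|_2\,\big|\,X\bigr).
\]
Set $K\coloneqq\|X-\E X\|_{2,\psi_2}$. The triangle inequality bounds the last expression by
\[
L\|X-\E X\|_2+L\,\E\|X'-\E X\|_2 .
\]

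Next we control the two terms in the $\psi_2$ norm. For the first, homogeneity gives directly $\bigl\|L\|X-\E X\|_2\bigr\|_{\psi_2}=LK$. The second term is a deterministic constant $c= L\,\E\|X-\E X\|_2$. By Jensen applied to the convex function $t\mapsto e^{t^2/K^2}$, we have $e^{(\E\|X-\E X\|_2)^2/K^2}\le \E e^{\|X-\E X\|_2^2/K^2}\le 2$, so $\E\|X-\E X\|_2\le K$. The $\psi_2$ norm of a constant $c$ is $c/\sqrt{\log 2}$.

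To finish, we use monotonicity of the Orlicz norm: $0\le Z_1\le Z_2$ pointwise implies $\|Z_1\|_{\psi_2}\le\|Z_2\|_{\psi_2}$. Combining this with the triangle inequality for $\|\cdot\|_{\psi_2}$ yields
\[
\|f(X)-\E f(X)\|_{2,\psi_2}\le LK+LK/\sqrt{\log 2}.
\]
The main obstacle is that this constant, $1+1/\sqrt{\log 2}\approx 2.2$, slightly exceeds the claimed $2$.

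To obtain exactly $2$, the plan is to avoid splitting off the constant and instead work at the level of the Orlicz norm itself. The key fact is that the norm of the conditional expectation is bounded by the norm of the integrand, as in Proposition~\ref{prop:subG_stability_cond_exp}, since $\E(\cdot\mid X)$ is an expectation and Jensen applies to $e^{(\cdot)^2/t^2}$. Applying this gives
\[
\|f(X)-\E f(X)\|_{2,\psi_2}\le \bigl\|\E(L\|X-X'\|_2\mid X)\bigr\|_{\psi_2}\le L\bigl\|\|X-X'\|_2\bigr\|_{\psi_2}.
\]
Then $\|X-X'\|_2\le\|X-\E X\|_2+\|X'-\E X\|_2$, and the triangle inequality for $\psi_2$ bounds the right-hand side by $2LK$, exactly as claimed. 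The step needing care is justifying Jensen for the conditional expectation inside the Orlicz norm, i.e., that $\E\exp\bigl((\E(Z\mid X))^2/t^2\bigr)\le\E\exp(Z^2/t^2)$ for $Z\ge0$. This follows from conditional Jensen applied to the convex nondecreasing function $s\mapsto e^{s^2/t^2}$ on $[0,\infty)$, followed by the tower property.
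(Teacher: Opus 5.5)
Your final argument is correct and is essentially the paper's proof: you take an independent copy $X'$, use (conditional) Jensen to pass from $\|f(X)-\E f(X)\|_2$ to $L\|X-X'\|_2$ inside the Orlicz norm, and then split $\|X-X'\|_2$ around $\E X$. The only difference is the last step. The paper uses $\|X-X'\|_2^2\le 2\|X-\E X\|_2^2+2\|X'-\E X\|_2^2$, independence to factor the exponential moment, and Jensen in the form $(\E e^{2W})^2\le \E e^{4W}$ with $W=L^2\|X-\E X\|_2^2/t^2$; you instead invoke the triangle inequality of the norm $\|\cdot\|_{\psi_2}$, which gives the same constant $2$ more directly, and your preliminary $1+1/\sqrt{\log 2}$ attempt can simply be dropped.
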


\begin{proof}[Proof of Proposition \ref{prop:subg_lip}]
Consider $X^\prime$, an independent copy of $X$. 
By repeatedly applying Jensen's inequality and independence, we find
\begin{align*}
 \left\|f(X) - \E f(X)\right\|_{2,\psi_2}&= \inf \left\{t>0: \mathbb{E}\left(e^{ \left\|f(X) - \E f(X^\prime)\right\|_2^2  / t^{2}}\right) \leq 2\right\}\\
& \leq \inf \left\{t>0: \mathbb{E}\exp\left(\frac{\|f(X)-f(X^\prime)\|_2^2}{t^2}\right) \leq 2\right\}\\
& \leq \inf \left\{t>0: \mathbb{E}\left(e^{ \left\|X - X^\prime\right\|_2^2  / (t/L)^{2}}\right) \leq 2\right\}\\
& = \inf \left\{t>0: \mathbb{E}\left(e^{ \left\|(X- \E(X)) - (X^\prime - \E(X))\right\|_2^2  / (t/L)^{2}}\right) \leq 2\right\}\\
& \leq \inf \left\{t>0: \mathbb{E}\left(e^{ 2\left(\left\|X- \E(X)\right\|_2^2+ \left\|X^\prime - \E(X)\right\|_2^2\right)  / (t/L)^{2}}\right) \leq 2\right\}\\
& = \inf \left\{t>0: \mathbb{E}\left(e^{ 2\left(\left\|(X- \E(X))\right\|_2^2\right)  / (t/L)^{2}}\right)^2 \leq 2\right\}\\ 
& \leq \inf \left\{t>0: \mathbb{E}\left(e^{ 4\left\|X- \E(X)\right\|_2^2  / (t/L)^{2}}\right)\leq 2\right\}\\
 &= \inf \left\{t>0: \mathbb{E}\left(e^{ \left\|X- \E(X)\right\|_2^2  / (t/2L)^{2}}\right)\leq 2\right\}\\
 &= 2L \inf \left\{t>0: \mathbb{E}\left(e^{ \left\|X- \E(X)\right\|_2^2  / t^{2}}\right)\leq 2\right\}\\
 &= 2L  \left\|X - \E X\right\|_{2,\psi_2}.
\end{align*}
\end{proof}

Now, we can prove Lemma \ref{lem:process_subG}. 

\begin{proof}[Proof of Lemma \ref{lem:process_subG}]
We prove by induction that $\|X^j-\E(X^j)\|_2$ is sub-Gaussian.

For the base case, $\left\|X^0\right\|_{2,\psi_2} \leq \sigma_X < \infty$ by Assumption \ref{assumption:dyn_assumptions}. By $\|X^0 - \E(X^0)\|_2 \leq \|X^0\|_2 + \E(\|X^0\|_2)$ we have $\|X^0 - \E(X^0)\|_{2,\psi_2}\leq 2\sigma_X$ (see also Exercise~2.40e of \cite{vershynin2025high} for tighter centering bounds). Let $j\geq 0$ and $\|X^{j} - \E(X^{j})\|_2$ be sub-Gaussian with norm bounded by a constant depending on the parameters in the statement. We have $X^{j+1} = \Psi(X^j) + \xi^j$ and $\|\Psi(X^j) - \E(\Psi(X^j))\|_2$ is  sub-Gaussian by Proposition \ref{prop:subg_lip}.  Further, 
$$
\|\Psi(X^j) + \xi^j - \E(\Psi(X^j) + \xi^j)\|_2 \leq \|\Psi(X^j) - \E (\Psi(X^j))\|_2 +  \|\xi^j - \E(\xi^j)\|_2
$$
and by triangle inequality of the Orlicz norm the right-hand-side is  sub-Gaussian. By sub-Gaussian summation properties, this also shows that the left-hand side $\left\|X^{j+1} - \E(X^{j+1})\right\|_2$ is sub-Gaussian. This shows that all $X^j, 1\leq j \leq J$ are sub-Gaussian with sub-Gaussian norm upper bounded by a parameter depending only on $\sigma_X, \sigma_{\max}, L_\Psi, J$. 
$\|Y^j - \E(Y^j)\|_2$
is sub-Gaussian by the same argument since  $Y^j = h(X^j) + \eta^j,$ $h$ is Lipschitz, and $\eta^j$ is sub-Gaussian. The final result follows with $$
\left\|(X^j, Y^j) - \E(X^j, Y^j)\right\|_2 \leq \left\|X^j- \E(X^j)\right\|_2+ \left\|Y^j- \E(Y^j)\right\|_2.
$$
We emphasize that each of the cited results demonstrates the stability of sub-Gaussian behavior under operations such as addition, upper bounding, and Lipschitz transformations, with constants that depend solely on the parameters characterizing the sub-Gaussian and Lipschitz parameters. 
\end{proof}

Further, we used Proposition \ref{prop:subG_stability_cond_exp} in the proof of Proposition \ref{prop:process_covariance} and here, we give a short proof. 

\begin{proof}[Proof of Proposition \ref{prop:subG_stability_cond_exp}]
By Jensen's inequality,
\begin{align*}
    e^{\left\|\E(X|Y) / t\right\|_2^{2}}
    \le  \mathbb{E}\left(e^{\|X / t\|_2^{2}}\,|\,Y\right) 
\end{align*}
conditional on $Y$, and hence
\begin{align*}
  \left\|\E(X|Y)\right\|_{2,\psi_2} &=   \inf \left\{t>0: \mathbb{E}\left(e^{\left\|\E(X|Y) / t\right\|_2^{2}}\right) \leq 2\right\} \\
  &\leq  \inf \left\{t>0: \mathbb{E}\left(e^{\|X / t\|_2^{2}}\right) \leq 2\right\} \\
  &= \|X\|_{2,\psi_2} .
\end{align*}
\end{proof}

To prove Proposition \ref{prop:subG_stability_conditioning}, we rely on the following lemma. 

\begin{lemma}
\label{lem:simple_exponential_sum}
Let $c >0$. Then,
$$
\sum\limits_{n = 0}^\infty \exp(-c2^{2n}) \leq \exp({-c})\left(1+\frac{1}{c\log 4}\right). 
$$
\end{lemma}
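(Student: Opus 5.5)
The plan is to isolate the $n=0$ term and dominate the rest of the sum by an integral of a decreasing function. Since $2^{2n}=4^n\ge 1$ for all $n\ge0$, the $n=0$ term equals $e^{-c}$. This accounts for the leading ``$1$'' in the bound. It therefore remains to show
\[
\sum_{n=1}^\infty \exp(-c\,4^n)\le \frac{e^{-c}}{c\log 4}.
\]

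For this, I use that $s\mapsto \exp(-c\,4^s)$ is decreasing on $[0,\infty)$. Hence for every $n\ge1$,
\[
\exp(-c\,4^n)\le \int_{n-1}^{n}\exp(-c\,4^s)\,\dd s,
\]
and summing over $n\ge1$ gives
\[
\sum_{n\ge1}\exp(-c\,4^n)\le\int_0^\infty \exp(-c\,4^s)\,\dd s.
\]

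Next I evaluate this integral with the substitution $u=c\,4^s$, so that $\dd u = u\log 4\,\dd s$. The integral becomes
\[
\frac{1}{\log 4}\int_c^\infty \frac{e^{-u}}{u}\,\dd u.
\]
On $[c,\infty)$ we have $1/u\le 1/c$, and therefore
\[
\frac{1}{\log 4}\int_c^\infty \frac{e^{-u}}{u}\,\dd u\le \frac{1}{c\log 4}\int_c^\infty e^{-u}\,\dd u=\frac{e^{-c}}{c\log 4}.
\]
Adding back the $n=0$ term yields exactly the claimed inequality.

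No step is a real obstacle. The only point needing care is the choice of comparison interval: the bound must use $[n-1,n]$, not $[n,n+1]$, so that the integral starts at $s=0$. That choice is what produces the factor $e^{-c}$ rather than the weaker $e^{-4c}$ or an $e^{-c}$ appearing twice.
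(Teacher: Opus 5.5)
Your proof is correct and follows essentially the same route as the paper: bound the sum by $f(0)+\int_0^\infty f(x)\,\dd x$ for the decreasing function $f(x)=\exp(-c4^x)$, change variables to get $\frac{1}{\log 4}\int_c^\infty u^{-1}e^{-u}\,\dd u$, and use $1/u\le 1/c$ on $[c,\infty)$. One small correction to your closing remark: comparing on $[n,n+1]$ would not produce a weaker bound but an inequality in the wrong direction (for decreasing $f$ one has $f(n)\ge\int_n^{n+1}f$), and that is the real reason the interval $[n-1,n]$ is needed.
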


\begin{proof}
For $c>0$, let $f(x)=\exp({-c4^x})$. Since $f$ is positive and decreasing on $[0,\infty)$,
\[
\sum_{j=0}^\infty f(j)
\le f(0)+\int_0^\infty f(x)\,dx
= \exp({-c})+\int_0^\infty \exp(-c4^x)\,dx.
\]
With $t=4^x$ so $dx=\frac{dt}{t\log 4}$,
\[
\int_0^\infty \exp(-c4^x)dx
= \int_{1}^{\infty}\frac{\exp(-ct)}{t\log 4}dt
= \frac{1}{\log 4}\int_{c}^{\infty}\frac{\exp(-u)}{u}du
\le \frac{\exp(-c)}{c\log 4}.
\]
Hence
\[
\sum_{j=0}^\infty \exp(-c4^j)
\le \exp({-c})\left(1+\frac{1}{c\log 4}\right).
\]

\end{proof}

Now, we prove our new characterization of sub-Gaussianity.

\begin{proof}[Proof of Proposition \ref{prop:subg_mgf_away_from_0}]
Assume there exists a constant $K_5$ such that
for any $|t|\geq 1/K_5$ it holds that 
$\E(\exp(tX)) \leq \exp\left( t^2K_5^2\right).$ 
Fix $x\geq 2K_5$. For any $|\lambda|  \geq1/K_5$,  a Chernoff bound gives
\[
  \P\bigl(X \geq x\bigr)
  \le
  \exp(-\lambda x)
  \E\left(\exp(\lambda X)\right) 
  {\le}
  \exp\!\Bigl(
      -\lambda x + \lambda^{2}K_5^{2}
  \Bigr).
\]
Minimizing the exponent in \(\lambda\) gives \(\lambda^{\star}=x/(2K_5^{2})\), which is valid for the Chernoff bound since $|\lambda^{\star}|=|x|/(2K_5^{2})  = x/(2K_5^{2})  \geq1/K_5$. Hence, $\P\bigl(X \geq x\bigr)
  \le
  \exp\left(-\frac{x^{2}}{4K_5^{2}}\right).$
It follows by applying the same argument to $-X$ and a union bound that for all $x \geq  2K_5$, $\P\bigl(|X | \geq x\bigr)
  \le
  2\exp\!\left(-\frac{x^{2}}{4K_5^{2}}\right).$
For $K_2 = \frac{2K_5}{\sqrt{\log 2}}$, we have that 
$$
\P\bigl(|X | \geq x\bigr)
  \le
  2\exp\!\left(-\frac{x^{2}}{K_2^2}\right) 
$$
for all $x\geq 0$: For $x \geq 2K_5$ this follows from what we just showed, i.e., 
$\P\bigl(|X | \geq x\bigr)
  \le
  2\exp\left(-\frac{x^{2}}{4K_5^2}\right) \leq 2\exp\left(-\log 2\frac{x^{2}}{4K_5^2}\right) = 2\exp\left(-\frac{x^{2}}{K_2^2}\right)$, as $\log 2 < 1$. For $0 \leq x\leq 2K_5$, we have 
  $2\exp\left(-\frac{x^{2}}{K_2^2}\right) \geq 2\exp\left(-\log 2\frac{4K_5^2}{4K_5^2}\right) = 1$, so it is trivially true. 
This shows sub-Gaussianity by Proposition \ref{prop:equiv_subg} with $K_2 = \frac{2}{\sqrt{\log 2}}K_5$. Conversely, let $X$ be sub-Gaussian and write \(K_1=\|X\|_{\psi_2}\). By Proposition~\ref{prop:equiv_subg}, the tail bound in Property 2 holds for some \(K_2\le \tilde C_{\psi_2}K_1\). Then, the moment generating function exists for any $t \in \R$. The case \(t=0\) is immediate, so let $t > 0$ and apply the layer-cake representation as well as Property 2 in Proposition \ref{prop:equiv_subg}:
\begin{align*}
    \E\exp(tX) & = \int\limits_0^\infty \P\left(\exp(tX)\geq s\right)\dd s \\
&\leq 1 + \int\limits_1^\infty \P\left(\exp(tX)\geq s\right)\dd s\;=\; 1 + \int\limits_1^\infty \P\left(X\geq \frac{\log  s }{t}\right)\dd s \\
&= 1 + t\int\limits_0^\infty \exp(tu)\P\left(X\geq u\right)\dd u \;\leq\;  1 + 2t\int\limits_{0}^\infty \exp(tu)\exp\left(-\frac{u^2}{K_2^2}\right)\dd u\\
&\leq  1 + 2t\int\limits_{-\infty}^\infty \exp(tu)\exp\left(-\frac{u^2}{K_2^2}\right)\dd u \;=\;  1 + 2\sqrt{\pi t^2K_{2}^2} \cdot \exp \left(\frac{t^{2} K_{2}^2}{4}\right) \\
\end{align*}
Now, note that the function $\alpha x\exp(-\beta x^2)$ restricted to $x\geq 0$  has its maximum value at $\frac{\alpha}{\sqrt{2\beta}}\exp(-1/2)$. Choosing $\alpha = 2 \sqrt{\pi}$ and $\beta = 2\pi$ yields $2 \sqrt{\pi} x \exp(-2\pi x^2) \leq 1$ for all $x \geq 0$. Therefore, we can further bound, for $x = K_2 t$,
\begin{align*}
    \E\exp(tX) &\leq  1 +  2\sqrt{\pi} x \exp\left(- 2\pi x^2\right) \cdot \exp \left( \frac{x^2}{4} + 2\pi x^2\right) \\
    & \leq 1 + 1 \cdot \exp \left(\left(\frac{1}{4} + 2\pi\right){t^{2} K_{2}^2}\right) \\
    &\leq   \exp \left(\underbrace{\left(\frac{1}{4} + 2\pi\right)K_2^2}_{=:\tilde K_2^2}\cdot {t^{2}} + \log 2\right).
\end{align*}
We can generalize this to all $t\in \R$ by applying the same argument to $-X$ which is sub-Gaussian with the same parameter to obtain 
$$
\E\exp(tX)\leq \exp \left(\tilde K_2^2 {t^{2}} + \log 2\right) \forall t\in\R.
$$
Now, choose $K_5 = 4  \tilde K_2$. Let  $|t|\geq \frac{1}{K_5}$. We have
\begin{align*}
t^2 K_5^2 -     \left({t^{2} \tilde K_{2}^2}+ \log2\right)& = t^2\tilde K_2^2\left( 16-    1\right)- \log2 \;\geq\; \frac{15}{16} - \log2 \;\geq\; 0.
\end{align*}
In particular,
$$
\E\exp(tX) \leq \exp(K_5^2t^2), \quad  \forall |t| \geq \frac{1}{K_5}
$$
and we are done with the reverse bound since $K_5 = 4\sqrt{1/4+2\pi}K_2$. 
Thus, with $$C_{\psi_2}\coloneqq \max\left(1,\tilde C_{\psi_2},\frac{2\tilde C_{\psi_2}}{\sqrt{\log 2}},4\sqrt{1/4+2\pi}\tilde C_{\psi_2}\right)\,,$$ the first direction gives \(\|X\|_{\psi_2}\le C_{\psi_2}K_5\) whenever \eqref{eq:subG_mgf} holds with parameter \(K_5\), while the reverse direction gives an admissible \(K_5\le C_{\psi_2}\|X\|_{\psi_2}\).
We conclude by showing the monotonicity property.  Assume Equation \eqref{eq:subG_mgf} holds and  let $\tilde K_5 > K_5$. For $|t| \geq 1/K_5$ there is nothing to show, so let  $1/\tilde K_5 \leq |t| \leq 1/K_5$. Without loss of generality, $t\geq 0$ since we can apply the same argument to $-X$.   Decompose convexly as $t = (1-p) \cdot 0 +  p\cdot 1/K_5$, so \(p=K_5t\). By log-convexity of the moment generating function, we have that 
$$
\log \E(\exp(tX)) \leq p  \log \E(\exp(X/K_5)) \leq p  \log e = p.
$$
We are done since \(p=K_5t\le \tilde K_5t\le \tilde K_5^2t^2\), where the last inequality uses \(t\ge 1/\tilde K_5\). 
\end{proof}

\begin{remark}
Alternatively, the reverse direction follows directly from the Orlicz characterization: if $\E\exp(X^{2}/K_{1}^{2})\le 2$, then the inequality $ab\le \tfrac{a^{2}}{2}+\tfrac{b^{2}}{2}$ (applied to $a=\sqrt{2}X/K_{1}$ and $b=tK_{1}/\sqrt{2}$) gives
\[
e^{tX}\le \exp\left(\frac{X^{2}}{K_{1}^{2}}\right)\exp\left(\frac{K_{1}^{2}t^{2}}{4}\right),
\]
and taking expectations immediately yields the desired mgf bound for large $|t|$ - the proof can be completed for small $|t|$ after some additional algebra.
\end{remark}

\subsection[Auxiliary Result for Mean-Field Moments]{Auxiliary Result for Subsection \ref{subsection:moments_mean_field}}
\label{subsection:aux_results_mf_moments}
We present a proof of Proposition \ref{prop:mf_dynamics_higher_moments}.

\begin{proof}[Proof of Proposition \ref{prop:mf_dynamics_higher_moments}]
Fix the observation path $y^{1:J}$ throughout this proof. Construct the mean-field dynamics $\tilde Z^j_f\coloneqq (\tilde X^j_f, \tilde Y^j_f) \sim \tilde \mu^{f,j}_{XY}$  at different times on a common probability space by drawing independently $X^0\sim \mu^0$, $\{\xi^{t}\}_{t \ge 0}$ with $\xi^{t} \sim \nu_\xi^t$, $\{\eta^{t}\}_{t \ge 1}$ with $\eta^{t} \sim \nu_\eta^{t}$, $\{\omega^{t}\}_{t \ge 1}$ with $\omega^{t} \sim \kappa$, setting $\tilde X^{a,0}\coloneqq X^0$, 
and defining recursively, for $j \ge 1$, by
\[
\tilde X_f^{j} = \Psi(\tilde X^{a,j-1}) + \xi^{j-1},
\qquad
\tilde Y^{f,j} = h(\tilde X_f^{j}) + \eta^{j},
\qquad
\tilde X^{a,j} = \map_{y^{j}}^{\tilde\mu_{XY}^{f,j}}\big(\tilde Z_f^{j}, \omega^{j}\big). 
\]
Write $s^j  \coloneqq  \big\|\tilde Z_f^{j} - \mathbb{E}\tilde Z_f^{j}\big\|_{2,\psi_2}$ and $a^j  \coloneqq  \big\|\tilde X^{a,j} - \mathbb{E}\tilde X^{a,j}\big\|_{2,\psi_2}$. Let
\[
\Phi(x,\xi,\eta) \coloneqq  \big(\Psi(x) + \xi, h(\Psi(x) + \xi) + \eta\big),
\qquad
L_{PQ} \coloneqq  (1 + L_h)(1 + L_\Psi).
\]
We induct an upper bound $d^j$ on $s^j$ over $j\geq 1$. By Assumption \ref{assumption:dyn_assumptions}, $\Phi$ is $L_{PQ}$-Lipschitz, and
\[
a^0 = \|\tilde X^{a,0}- \mathbb{E}\tilde X^{a,0}\|_{2,\psi_2} \le 2\sigma_X, \quad
\|\xi^{j-1}\|_{2,\psi_2} \le \sigma_{\max}, \quad
\|\eta^{j}\|_{2,\psi_2} \le \lambda_{\max}
\]
for all $j \in \N_{\geq 1}$. Further, by Proposition \ref{prop:subg_lip} with $f=\Phi$,
\begin{equation}
\label{eq:s_and_a_equation}
s^j  \le 2 L_{PQ}  \big(a^{j -1} +2 \sigma_{\max} + 2\lambda_{\max}\big).
\end{equation}
In particular, this implies that 
$$
s^1 \le 4 L_{PQ}  \big(\sigma_X + \sigma_{\max} + \lambda_{\max}\big) =: d^1.
$$
For the inductive step, we use that $\mathrm{Tr}\mathrm{Cov}(\tilde\mu_{XY}^{f,j-1}) = \mathbb{E}\|\tilde Z_f^{j-1} - \mathbb{E}\tilde Z_f^{j-1}\|_2^2 \le 2( s^{j-1})^2$ so that Assumption \ref{assumption:algorithm_finite_particle} (1a) yields
\[
\mathrm{Lip}\big(\map_{y^{j-1}}^{\tilde\mu_{XY}^{f,j-1}}\big)
\le C_{\mathrm{L}} \big(1 + 2^{e_{\mathrm{L}}}( s^{j-1})^{2 e_{\mathrm{L}}}\big).
\]
Thus, by Proposition \ref{prop:subg_lip} 
\begin{align*}
 a^{j -1}
&= \left\|\map_{y^{j-1}}^{\tilde\mu_{XY}^{f,j-1}}(\tilde Z_{j-1}^f, \omega^{j-1}) - \mathbb{E}\left(\map_{y^{j-1}}^{\tilde\mu_{XY}^{f,j-1}}(\tilde Z_{j-1}^f, \omega^{j-1}) \right)\right\|_{2,\psi_2}\\
&\le 2  C_{\mathrm{L}} \big(1 + 2^{e_{\mathrm{L}}}( s^{j-1})^{2 e_{\mathrm{L}}}\big)\cdot \big(s^{j-1} + \sigma_\kappa\big).   
\end{align*}
Combining this with Equation \eqref{eq:s_and_a_equation} gives  the  recursion
\[
s^j  \le A \big(1 + (s^{j -1})^{2 e_{\mathrm{L}}}\big)\big(s^{j -1} + \sigma_\kappa\big) + B
\]
for fixed constants $A, B\geq 0$ depending only on $L_{PQ} $,  $\sigma_{\max},  \lambda_{\max}, C_{\mathrm{L}},e_{\mathrm{L}}, $ and $\sigma_\kappa$   
with initial bound $s^1 \leq d^1$. Define for $j \ge 2$,
\[
d^j \coloneqq  A \big(1 + (d^{j-1})^{2 e_{\mathrm{L}}}\big)\big(d^{j-1} + \sigma_\kappa\big) + B.
\]
By monotonicity in $d^{j-1}$, we have $s^j  \le d^j$ for all $j \le J$. Since the recursion is at most polynomial, $d^j < \infty$ for each finite $J$. Setting
\[
\tilde C_{\mathrm{subG}} \coloneqq  \max_{1 \le j \le J} d^j
\]
gives $s^j  \le \tilde C_{\mathrm{subG}}$ for all $j \le J$, proving the claimed $\psi_2$ control.
\end{proof}

\subsection[Auxiliary Result for Particle Convergence]{Auxiliary Result for Subsection~\ref{subsection:convergence_proof}}
\label{subsection:fp_aux}
We applied the following result in the proof of Theorem \ref{thm:ips_converges_to_iid_particles}. 

\begin{lemma}
\label{lem:cov_perturbation_W2}
Let $g:\R^{n+m}\to\R^{b}$ be $L_g$-Lipschitz. Then for any $\nu,\mu\in \Pp_2(\R^{n+m})$,
\[
\bigl\|\Cov(g_\sharp\nu)-\Cov(g_\sharp\mu)\bigr\|_2
\ \le\ 2L_g\big(\overline M_2(g_\sharp\nu)+\overline M_2(g_\sharp\mu)\big)W_2(\nu,\mu).
\]
\end{lemma}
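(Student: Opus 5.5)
The plan is to reduce the statement to a perturbation bound for covariances of two random vectors coupled optimally. Let $\pi$ be an optimal coupling of $\nu$ and $\mu$ for $W_2$, and let $(Z,Z')\sim\pi$. Set $U\coloneqq g(Z)$ and $V\coloneqq g(Z')$, centered as $\bar U\coloneqq U-\E U$ and $\bar V\coloneqq V-\E V$. Then $\Law(U)=g_\sharp\nu$ and $\Law(V)=g_\sharp\mu$, and Lipschitz continuity of $g$ gives
\[
\bigl(\E\|U-V\|_2^2\bigr)^{1/2}\le L_g\,W_2(\nu,\mu).
\]
Moreover, $\E\|\bar U-\bar V\|_2^2=\E\|U-V\|_2^2-\|\E(U-V)\|_2^2\le \E\|U-V\|_2^2$, so centering costs nothing.

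Next, I will write the covariance difference as a sum of two cross terms:
\[
\Cov(U)-\Cov(V)=\E\bigl(\bar U\bar U^\top-\bar V\bar V^\top\bigr)=\E\bigl(\bar U(\bar U-\bar V)^\top\bigr)+\E\bigl((\bar U-\bar V)\bar V^\top\bigr).
\]
To bound the operator norm of each term, I test against unit vectors $a,b$. For the first term,
\[
\bigl|a^\top\E\bigl(\bar U(\bar U-\bar V)^\top\bigr)b\bigr|\le \E\bigl(\|\bar U\|_2\,\|\bar U-\bar V\|_2\bigr)\le \overline M_2(g_\sharp\nu)\,L_gW_2(\nu,\mu)
\]
by Cauchy--Schwarz, and the second term gives the same bound with $\overline M_2(g_\sharp\mu)$ in place of $\overline M_2(g_\sharp\nu)$. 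Summing the two yields the claim with constant $L_g$, which is within the stated $2L_g$.

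There is no real obstacle here. The only points needing care are the existence of an optimal coupling (standard on $\Pp_2$; alternatively, take a near-optimal coupling and let the slack go to zero) and the observation above that centering does not increase the $L^2$ distance.
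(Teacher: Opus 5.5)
Your proof is correct and follows essentially the same coupling-plus-Cauchy--Schwarz argument as the paper's. The only difference is the treatment of the mean shift: the paper bounds $\|g(z)-g(z')+m_\mu-m_\nu\|_2\le\|g(z)-g(z')\|_2+\|m_\mu-m_\nu\|_2$ and pays $L_gW_2(\nu,\mu)$ twice, whereas your identity $\E\|\bar U-\bar V\|_2^2=\E\|U-V\|_2^2-\|\E(U-V)\|_2^2$ absorbs it and yields the sharper constant $L_g$ in place of $2L_g$.
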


\begin{proof}
Define $\Delta\coloneqq \Cov(g_\sharp\nu)-\Cov(g_\sharp\mu)$. Write $m_\nu\coloneqq \E g_\sharp \nu$ and $m_\mu\coloneqq \E g_\sharp\mu$. For any coupling $\pi\in\Pi(\nu,\mu)$,
\[
\Delta
=\int\Big((g(z)-m_\nu)(g(z)-m_\nu)^\top-(g(z')-m_\mu)(g(z')-m_\mu)^\top\Big)d\pi(z,z').
\]
Using $\|aa^\top-bb^\top\|_2\le(\|a\|_2+\|b\|_2)\|a-b\|_2$, we get
\[
\bigl\|\Delta\bigr\|_2
\le \int \big(\|g(z)-m_\nu\|_2+\|g(z')-m_\mu\|_2\big)\|g(z)-g(z')+m_\mu-m_\nu\|_2d\pi(z,z').
\]
Split the last norm and apply Cauchy--Schwarz:
\[
\begin{aligned}
&\int \big(\|g(z)-m_\nu\|_2+\|g(z')-m_\mu\|_2\big)\|g(z)-g(z')\|_2d\pi(z,z') \\
\le& \Big(\E_\nu\|g-m_\nu\|_2^2+\E_\mu\|g-m_\mu\|_2^2\Big)^{\!1/2}
      \Big(\int\|g(z)-g(z')\|_2^2d\pi(z,z')\Big)^{1/2} \\
\le &\big(\overline M_2(g_\sharp\nu)+\overline M_2(g_\sharp\mu)\big)L_g\Big(\int\|z-z'\|_2^2d\pi(z,z')\Big)^{\!1/2}.
\end{aligned}
\]
Noting that \[
\|m_\mu-m_\nu\|_2\le L_g\Big(\int\|z-z'\|_2^2d\pi(z,z')\Big)^{1/2}.
\]
and combining the previous inequalities, 
\[
\bigl\|\Delta\bigr\|_2
 \le 2L_g\big(\overline M_2(g_\sharp\nu)+\overline M_2(g_\sharp\mu)\big)
      \Big(\int\|z-z'\|_2^2d\pi(z,z')\Big)^{1/2}.
\]
Optimizing over $\pi$ yields the bound with $W_2(\nu,\mu)$.
\end{proof}

\subsection[Auxiliary Results for the EnSMF Application]{Auxiliary Results for Section \ref{subsection:application_ensmf}}

In this section, we provide proofs for the stated results about the inverse of the stochastic map and well-posedness of the cross-entropy optimization problem in Equation~\eqref{eq:cross_entropy_min_sm}.

\begin{lemma}
\label{lem:inverting_sm}
For every $S=\mathcal S(\alpha,c,\theta)\in\mathcal{S}$ with \(\alpha\in\R_{\ge\alpha_{\min}}^n\), and every $y\in \R^m$, the function $S(\cdot, y): x \mapsto S(x,y)$ defined in Subsection~\ref{subsection:application_ensmf} with the conditions in Assumption~\ref{assumption:sm_assumption} is a diffeomorphism.
\end{lemma}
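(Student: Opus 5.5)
We argue by induction over the state coordinate $k=1,\dots,n$, exploiting the strictly triangular structure of $S$. Fix $y\in\R^m$ and write $S(\cdot,y)=(S_1,\dots,S_n)$ with
\[
S_k(x,y)=\alpha_k x_k + c_k + \theta_k^\top f_k(x_{1:k-1},y),
\]
where, by Assumption~\ref{assumption:sm_assumption}(3), $f_k$ depends on $x$ only through $x_{1:k-1}$. The map $S(\cdot,y)$ is $C^1$ in $x$ by Assumption~\ref{assumption:sm_assumption}(2), since each $f_k^i$ is continuously differentiable in $x$ and the remaining terms are affine. It therefore suffices to show two things: that $S(\cdot,y)$ is a bijection of $\R^n$, and that its inverse is $C^1$.

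\emph{Bijectivity.} Given a target $z\in\R^n$, we solve $S(x,y)=z$ coordinate by coordinate. The first equation, $\alpha_1x_1+c_1+\theta_1^\top f_1(y)=z_1$, has the unique solution $x_1=(z_1-c_1-\theta_1^\top f_1(y))/\alpha_1$, because $\alpha_1\ge\alpha_{\min}>0$. Now suppose $x_{1:k-1}$ has already been uniquely determined by $z_{1:k-1}$. Then
\[
x_k=\alpha_k^{-1}\bigl(z_k-c_k-\theta_k^\top f_k(x_{1:k-1},y)\bigr)
\]
is the unique solution of the $k$-th equation. This forward substitution produces the unique preimage, so $S(\cdot,y)$ is bijective. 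Moreover, each inverse coordinate is built from $C^1$ functions of $z$ by composition, so the inverse map $z\mapsto x$ is $C^1$ as well.

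\emph{Jacobian check.} Alternatively, triangularity gives a lower-triangular Jacobian $D_xS(x,y)$ with diagonal entries $\alpha_k\ge\alpha_{\min}$. Its determinant is $\prod_k\alpha_k>0$ everywhere, so the inverse function theorem applies at every point. Combined with bijectivity, this shows that $S(\cdot,y)$ is a global diffeomorphism.

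There is no real obstacle here. The only point needing care is to record explicitly that Assumption~\ref{assumption:sm_assumption}(3) makes the Jacobian triangular, and that the lower bound $\alpha_{\min}$ guarantees the divisions in the forward substitution are well defined. Lipschitz continuity of $f_k$ is not needed for this argument.
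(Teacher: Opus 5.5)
Your proof is correct and matches the paper's argument. The paper also obtains unique solvability by forward substitution, using $\alpha_k\ge\alpha_{\min}>0$, and gets a $C^1$ inverse from the triangular Jacobian with determinant $\prod_k\alpha_k\ge\alpha_{\min}^n$ together with the inverse function theorem; your observation that the inverse is $C^1$ directly by composition is a valid, slightly more explicit alternative to that last step.
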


\begin{proof}
For any $z \in \R^n, y \in\R^m$, we will show that the equation $S(x, y) = z$ has a unique solution. Since $\alpha_k > 0$, we can rewrite component $k$ of the inverse map as 
\begin{align*}
x_k  &= \frac{1}{\alpha_k} \left(z_k - c_k - \sum_{i=1}^{\iota_k} \theta_k^i f_k^i(x_{1:k-1},y)\right).
\end{align*}
The existence result for the inverse then follows by induction. 

We now show the smoothness of the inverse map. For $S = \mathcal{S}(\alpha, c, \theta)$ with $\alpha \in \R_{\geq \alpha_{\min}}^n$, the Jacobian determinant is lower bounded by construction:
\[
|\det\nabla_x S(x,y)|\geq \alpha_{\min}^n.
\]
Then, the inverse function theorem guarantees that the inverse is continuously differentiable.
\end{proof}

We now prove Proposition \ref{prop:existence_minimizer_sm_KL}, which we restate here first. 
\propexistenceminimizer*

\begin{remark}
$R_k(\mu)= 0$ if and only if $X_k$ is almost surely an affine linear function of  $f_k(X,Y)$. In this case, the state variable $X_k$ contains no more information than $(X_{1:k-1}, Y)$, i.e., we are over-parameterizing our state space. In this regard, Proposition~\ref{prop:existence_minimizer_sm_KL} guarantees that, provided the state space is parametrized appropriately, the regression task is well-posed. 
\end{remark}

\begin{proof}[Proof of Proposition \ref{prop:existence_minimizer_sm_KL}]
For any $y$,
the density of $\left(S(\cdot, y)\right)^\sharp\gamma_{\text{G}}$ is given by
$$
 \left(\left(S(\cdot, y)\right)^\sharp\gamma_{\text{G}}\right)(x) =  {(2\pi)^{-n/2}} \exp\left(-\left\|S(x,y)\right\|_2^2/2\right) |\det\nabla_x S(x,y)|,
$$
where $|\det\nabla_x S(x,y)| = |\prod_{k = 1}^n \alpha_k|$ is the Jacobian determinant of $S(\cdot, y)$. Thus, the cross-entropy objective in Equation~\eqref{eq:cross_entropy_min_sm} is given by 
\begin{align*}
 J(S) :=&-\int \dd\mu(x,y) \log\left(\left(S(\cdot, y)\right)^\sharp\gamma_{\text{G}}\right)(x) \\
 =& \int \dd\mu(x,y) \left(\left\|S(x, y) \right\|_2^2/2 - \log|\det\nabla_x S(x,y)| \right) + \frac{n}{2}\log(2\pi) \\
 =& \sum\limits_{k = 1}^n\int \dd\mu(x,y) \left(|S_k(x, y)|^2/2 - \log|\partial_{x_k}S_k| \right) + \frac{n}{2}\log(2\pi).
\end{align*}
For $S = \mathcal{S}(\alpha,  c, \theta)$, we can rewrite the $k$-th summand in terms of only the $k$-lower indexed parameters. Thus, the cross-entropy objective is separable into objectives $J_k$ for each map component $k=1,\dots,n$ given by:
\begin{align*}
J_k\left(\alpha_k, \tilde c_k, \{\tilde \theta_k^i\}_{i = 1,\ldots, \iota_k}\right) &= \int \dd\mu(x,y) \left(|S_k(x, y)|^2/2 - \log|\partial_{x_k}S_k| \right) \\
&= \frac{\alpha_k^2}{2}\int \dd\mu(x,y)\left|x_k  + \tilde c_k + \sum_{i=1}^{\iota_k} \tilde  \theta_k^i f_k^i(x_{1:k-1},y)\right|^2 - \log\alpha_k,
\end{align*}
where $\tilde \theta^i_k = \frac{\theta^i_k}{\alpha_k},  \tilde \theta_k \coloneqq ( \tilde \theta_k^1, \ldots,  \tilde \theta_k^{\iota_k})^\top$; see \cite{parno2018transport} and \cite{baptista2024representation} for similar derivations of this fact.

It follows that for the map to have a unique minimizer, it is sufficient to show that for every $k = 1,\ldots, n$, the function $J_k$
constrained to $\R_{\geq \alpha_{\min}} \times \R \times \R^{\iota_k}$ attains a minimum. $J_k$ is of the form
$$
 J_k\left(\alpha_k, \tilde c_k, \{\tilde \theta_k^i\}_{i = 1,\ldots, \iota_k}\right) 
 = \frac{\alpha_k^2}{2}\tilde J_k(\tilde c_k, \{\tilde \theta_k^i\}_{i = 1,\ldots, \iota_k})- \log \alpha_k
$$
for
$$
\tilde J_k(\tilde c_k, \{\tilde \theta_k^i\}_{i = 1,\ldots, \iota_k}) =\int \dd\mu(x,y) \left|x_k  + \tilde c_k + \sum_{i=1}^{\iota_k} \tilde  \theta_k^i f_k^i(x_{1:k-1},y)\right|^2.
$$
As $J_k$ is strictly monotone in $\tilde J_k$ and depends on $\tilde c_k$ and $\{\tilde \theta_k^i\}_{i=1,\ldots,\iota_k}$ only through $\tilde J_k$, continuity allows us to take the minimum over $\tilde J_k$, provided it exists, before taking the infimum over $\alpha_k$. By the $L_2$-characterization of the expectation, for every fixed $\{\tilde \theta_k^i\}_{i=1,\ldots,\iota_k}$, there is a unique minimizer in $\tilde c_k$, given by
$$
\tilde c_k =- \int \dd\mu(x,y) \left(x_k  + \sum_{i=1}^{\iota_k} \tilde  \theta_k^i f_k^i(x_{1:k-1},y)\right).
$$
Therefore, we must choose $\{\tilde \theta_k^i\}_{i = 1,\ldots, \iota_k}$ to minimize 
$$
\int \dd\mu(x,y) \left| \overline x_k  +  \sum_{i=1}^{\iota_k} \tilde  \theta_k^i  \overline f_k^i(x_{1:k-1},y)\right|^2
$$
where 
\begin{align*}
     \overline x_k   &\coloneqq x_k -  \int \dd\mu(x^\prime,y^\prime)x^\prime_k \\
  \overline f_k^i(x_{1:k-1},y) & \coloneqq  f_k^i(x_{1:k-1},y) - \int \dd\mu(x^\prime,y^\prime)  f_k^i(x^\prime_{1:k-1},y^\prime).
\end{align*}
We can rewrite this as the unconstrained quadratic program
$$
\inf_{\tilde \theta_k \in \R^{\iota_k}} \left(\Cov\left(X_k\right)  + \tilde \theta_k^\top \Cov\left( f_k(X,Y) \right) \tilde \theta_k + 2\Cov\left( f_k(X,Y), X_k \right)^\top \tilde \theta_k\right).
$$
This quadratic program has a unique minimizer if and only if 
$\Cov\left(f_k(X,Y) \right)  \succ 0,$ which is given by 
$$
\tilde \theta_k =  -  \Cov\left( f_k(X,Y) \right)^{-1} \Cov\left(  f_k(X,Y), X_k \right).
$$
This shows that $\tilde J_k$ has a unique minimizer that achieves
$$
\tilde J_k(\tilde c_k, \{\tilde \theta_k^i\}_{i = 1,\ldots, \iota_k}) = R_k(\mu) . 
$$
Note that $ R_k(\mu)  > 0$ by assumption.  So far, we have shown that 
$$
 \inf\limits_{\alpha_k \in \R_{\geq \alpha_{\min}}, \tilde c_k\in \R, \tilde \theta_k \in \R^{\iota_k}}J_k\left(\alpha_k, \tilde c_k, \tilde \theta_k\right)
 =  \inf\limits_{\alpha_k \in \R_{\geq \alpha_{\min}}}\frac{\alpha_k^2}{2} R_k(\mu)- \log \alpha_k
$$
and that this infimum is uniquely achieved in $\tilde c_k, \tilde \theta_k$.  The function $Cx^2 - \log x$ is convex for every $C > 0$ and therefore the infimum is also achieved in $\alpha_k$, uniquely given by
\begin{equation}
\label{eq:alpha_first_order}
    \alpha_k =  \max\left(\frac{1}{\sqrt{R_k(\mu)}}, \alpha_{\min}\right).
\end{equation}
\end{proof}

\subsection[Implementation of the Regularized Estimator]{Implementation of the Regularized Estimator in Subsection \ref{subsection:application_ensmf}}
\label{subsection:implementation_regularization}
In this section, we demonstrate that inverting a regularized covariance estimator of size $\iota \times \iota$ for a small ensemble of size $N$ can be done at a cost of $\mathcal{O}(\iota N^2)$ without forming the full sample covariance matrix. 
For a fixed component $k$, let $A \in \R^{\iota_k \times N}$ be the ensemble matrix containing $\iota_k$ feature evaluations, i.e., $A_{\cdot,\ell} = f_k(x_{\ell}^{f,j}, y_{\ell}^{f,j}) \in \R^{\iota_k}$ for $\ell = 1, \ldots, N$. We let $\bar A  \in \R^{\iota_k \times N}$ denote the centered matrix computed by subtracting the  feature mean from each row. The centered sample covariance is then
\[
C\coloneqq\frac{1}{N}\bar{A}\bar A^\top \in \R^{\iota_k \times \iota_k}.
\]
Algorithm~\ref{alg:stable_theta} presents a numerically stable and computationally efficient method for solving the regularized linear system $\theta =C_{\hat\sigma_f}^{-1}b,$ for any \(b\in\R^\iota\), without ever forming the full sample covariance matrix. For simplicity, we assume \(N\le\iota\). For \(N>\iota\), direct computation is less costly, so this shortcut is unnecessary. Computing the vector $\theta$ has a total complexity based on the $\text{cost of SVD}({\bar A}) + \mathcal{O}(\iota N),$ which is $\mathcal{O}(\iota N^2)$ for $N \leq \iota$.
\begin{algorithm}[t]
\caption{Stable computation of $\theta = C_{\hat{\sigma}_f}^{-1}b$}
\label{alg:stable_theta}
\begin{algorithmic}[1]
\Require Centered matrix $A \in \mathbb{R}^{\iota \times N}$ such that $C\coloneqq\frac{1}{N}{A} A^\top $, vector $b \in \mathbb{R}^\iota$, threshold $\hat{\sigma}_f$, assuming $N \leq \iota$
\State $U, D, V \leftarrow \text{thin SVD}\left(\frac{1}{\sqrt{N}}{A}\right)$, with diagonal $D \in \mathbb{R}^{N \times N}$
\State $\left(D_{\hat{\sigma}_f}\right)_{ii} \leftarrow \max(D_{ii}^2, \hat{\sigma}_f)$ 
\State $y \leftarrow U^\top b$
\State $z \leftarrow D_{\hat{\sigma}_f}^{-1} y$
\State $\theta \leftarrow U z + \hat{\sigma}_f^{-1} \left(b - U y\right)$
\end{algorithmic}
\end{algorithm}

\subsection[Auxiliary Results for EnSMF Convergence]{Auxiliary Results for Section \ref{subsubsection:convergence_ensmf}}
\label{subsection:convergence_ensmf_aux}

In this section, we give a proof of Lemma~\ref{lem:smf_satisfies_assumptions}, which we restate here first.
\lemsmfassumptions*
Some of the constants for the guarantees on the map $\map_y^{\text{SM},\mu}$ are given in Table~\ref{tab:smf_constants_values}. 
We split the proof of this lemma into three forthcoming results:
\begin{enumerate}
    \item Assumption \textit{(1a)} is established in Lemma \ref{lem:sm_lipschitz_alg}.
\item Assumption \textit{(1b)} is shown in Proposition \ref{prop:sm_estimation_stability_map}.
\item Assumption \textit{(3)} is proved in Lemma \ref{lem:distance_x_to_beta_tilde}. 
\end{enumerate}

\begin{lemma}
\label{lem:sm_lipschitz_alg}
Suppose Assumption \ref{assumption:sm_assumption}  holds.  Let $\mu \in \Pp_2\left(\R^n\times\R^m\right)$. For any $y^\star \in \R^m$, the map $\map^{\text{SM},\mu}_{y^\star}: \R^n \times \R^m \rightarrow \R^n$ is Lipschitz-continuous with constant
$$
 \tilde L^\mu = \tilde C_\text{II}^\text{SM}\left(1 + \Trcov(\mu)^{\tilde e_{\text{II}}^\text{SM}}\right),
$$
where
\begin{equation}
   \tilde C_{\text{II}}^\text{SM} = \sqrt{n}2^{n-1} \left({2}^{n/2} +  4^n\hat\sigma_f^{-n}L_f^{2n}\right),\quad  \tilde  e_{\text{II}}^\text{SM} = n.
\end{equation}
\end{lemma}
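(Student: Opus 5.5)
The plan is to exploit the triangular recursion \eqref{eq:sm_approximate_conditioning_recursion}. Fix $y^\star$ and write $w=\chi_{y^\star}(x,y;\theta)$ and $w'=\chi_{y^\star}(x',y';\theta)$ for two inputs, with $\theta_k=\Cov_{\hat\sigma_f}(f_k(X,Y))^{-1}\Cov(f_k(X,Y),X_k)$ for $(X,Y)\sim\mu$.

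\textbf{Step 1: bound the coefficients.} By the projection \eqref{eq:frob_proj}, which is eigenvalue thresholding, we have $\|\Cov_{\hat\sigma_f}(f_k)^{-1}\|_2\le\hat\sigma_f^{-1}$. By Cauchy--Schwarz and the Lipschitz property of $f_k$ (Assumption~\ref{assumption:sm_assumption}),
\[
\|\Cov(f_k(X,Y),X_k)\|_2\le \Trcov(f_{k\sharp}\mu)^{1/2}\Trcov(\mu)^{1/2}\le L_f\Trcov(\mu).
\]
Hence $\|\theta_k\|_2\le \hat\sigma_f^{-1}L_f\Trcov(\mu)=:\beta$.

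\textbf{Step 2: one-step inequality.} Subtracting the recursions componentwise and using that $f_k$ depends only on $x_{1:k-1}$ and $y$ (triangularity) gives
\[
|w_k-w'_k|\le |x_k-x'_k|+\beta L_f\bigl(\|(x,y)-(x',y')\|_2+\|w_{1:k-1}-w'_{1:k-1}\|_2\bigr).
\]
The $y^\star$ argument is shared by both inputs, so it cancels in the $f_k(w,y^\star)$ terms.

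\textbf{Step 3: induct over $k$.} Set $D_k=\|w_{1:k}-w'_{1:k}\|_2$ and $\delta=\|(x,y)-(x',y')\|_2$. Step 2 gives $D_k\le D_{k-1}+(1+\beta L_f)(\delta+D_{k-1})$, so roughly $D_k\le(2+\beta L_f)D_{k-1}+(1+\beta L_f)\delta$. Unrolling over $n$ components yields
\[
D_n\lesssim 2^{n}(1+\beta L_f)^{n}\delta .
\]
A factor $\sqrt n$ appears when converting between the $\ell^1$-type accumulation of components and Euclidean norms.

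\textbf{Step 4: match the stated form.} Expand $(1+\beta L_f)^n\le 2^{n-1}(1+\beta^nL_f^n)$ and substitute $\beta=\hat\sigma_f^{-1}L_f\Trcov(\mu)$. This gives a Lipschitz constant of the form $C\bigl(1+\hat\sigma_f^{-n}L_f^{2n}\Trcov(\mu)^n\bigr)$, which is the claimed $\tilde C^{\mathrm{SM}}_{\mathrm{II}}(1+\Trcov(\mu)^n)$ with exponent $n$. The exact powers of $2$ and $4$ in $\tilde C^{\mathrm{SM}}_{\mathrm{II}}$ come from careful bookkeeping: squaring the per-component bounds and using $(a+b)^2\le 2a^2+2b^2$ yields the $2^{n/2}$ and $4^n$ factors.

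The main obstacle is this constant bookkeeping in the triangular induction, particularly tracking squared versus unsquared norms so as to land exactly on $\sqrt n\,2^{n-1}(2^{n/2}+4^n\hat\sigma_f^{-n}L_f^{2n})$. I would first do the induction on squared norms, $D_k^2\le 2D_{k-1}^2+2|w_k-w'_k|^2$, and adjust as needed. The structural Lipschitz conclusion with exponent $n$ is robust to these choices.
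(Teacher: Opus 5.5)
Your route is the paper's: bound $\|\theta_k\|_2$ using $\|\Cov_{\hat\sigma_f}(f_k(X,Y))^{-1}\|_2\le\hat\sigma_f^{-1}$ and Cauchy--Schwarz, run the triangular recursion over components, and finish with $(a+b)^n\le 2^{n-1}(a^n+b^n)$. The paper runs the recursion on squared norms, $\Delta_{1:k}^2\le(2+4\|\theta_k\|_2^2L_f^2)(\tilde\Delta^2+\Delta_{1:k-1}^2)$. Its $\sqrt n$ comes from bounding $\sum_{\ell\le n}\prod_{g\ge\ell}(\cdot)$ by $n\prod_{g}(\cdot)$ and then taking a square root, not from an $\ell^1$/$\ell^2$ conversion. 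Your bound $\|\theta_k\|_2\le\hat\sigma_f^{-1}L_f\Trcov(\mu)$ is correct, and a factor $2$ sharper than the paper's.

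The one step that does not deliver the statement as written is the explicit constant. Your chain
\[
D_n\le(2+\beta L_f)^n\delta\le 2^n(1+\beta L_f)^n\delta\le 2^{2n-1}\bigl(1+\hat\sigma_f^{-n}L_f^{2n}\Trcov(\mu)^n\bigr)\delta
\]
is valid, but it is not bounded by $\tilde C^{\mathrm{SM}}_{\mathrm{II}}(1+\Trcov(\mu)^n)\delta$. Take $\Trcov(\mu)=0$ and let $\hat\sigma_f^{-1}L_f^2\to0$. Then the stated constant tends to $\sqrt n\,2^{3n/2-1}$, which is strictly smaller than $2^{2n-1}$ for every $n$. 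So the claim in Step~4 that your constant "is the claimed" one is not justified.

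The loss comes from Step~3, where you replace $\delta+\beta L_f(\delta+D_{k-1})$ by $(1+\beta L_f)(\delta+D_{k-1})$, and then from using $2+\beta L_f\le2(1+\beta L_f)$. Instead, keep your Step~2 bound as is and combine it with $D_k\le D_{k-1}+|w_k-w_k'|$. This gives $D_k\le(1+\beta L_f)(D_{k-1}+\delta)$, hence
\[
D_n\le\sum_{i=1}^n(1+\beta L_f)^i\,\delta\le n\,2^{n-1}\bigl(1+\hat\sigma_f^{-n}L_f^{2n}\Trcov(\mu)^n\bigr)\delta .
\]
Since $\sqrt n\le 2^{n/2}$ and $\sqrt n\le 4^n$, this is at most $\tilde C^{\mathrm{SM}}_{\mathrm{II}}(1+\Trcov(\mu)^n)\delta$. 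With that change your unsquared argument proves the lemma with the stated constant, and no squared-norm bookkeeping is needed.
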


\afterpage{%
\begin{table}[t]
\centering
\setlength{\tabcolsep}{4pt} 
\renewcommand{\arraystretch}{1.2} 
\begin{tabular}{ccccccc}
\toprule
 $e_{\mathrm{L}}$ & $g(x,y)$& $b$ & $L_g$ &  $e_{\mathrm{est},1}$  & $a$ & $\sigma_\kappa$ \\
\midrule
$n$ &  $(x, f_1(x, y), \ldots, f_n(x,y))$ &$n + \sum\limits_{k=1}^n\iota_k$ & $\sqrt{1+nL_f^2}$ & $2(n+1)$ & $0$ & $0$ \\
\bottomrule
\end{tabular}
\caption{Constants for the ensemble stochastic map filter in Assumption~\ref{assumption:algorithm_finite_particle}.}
\label{tab:smf_constants_values}
\end{table}
}

\begin{proof}
Fix $x, x^\prime\in\R^n, y, y', y^\star \in \R^m$ and let $w =  \map^{\text{SM},\mu}_{y^\star}(x,y), w '=  \map^{\text{SM},\mu}_{y^\star}(x',y')$.  By definition of Equation \eqref{eq:sm_approximate_conditioning_recursion},  we have the recursion 
$$
w_k  = x_k    - \theta_k^\top\left(f_k(x,y) -f_k(w,y^\star)\right)
$$
with $\theta_k \coloneqq \Cov_{\hat \sigma_f}\left( f_k( X, Y) \right)^{-1} \Cov\left(  f_k( X, Y),  X_k \right)$ and $(X,Y) \sim \mu$. Therefore, 
\begin{align*}
    |w_k - w_k'|^2
    &= \left|x_k  - x_k'
    - \theta_k^\top\left(f_k(x,y) - f_k(x',y')
    -(f_k(w,y^\star) - f_k(w',y^\star)) \right) \right|^2\\
    &\leq 2 \left|x_k  - x_k'\right|^2
    +4 \|\theta_k\|_2^2\Bigl(
    \left\|f_k(x,y) - f_k(x',y')\right\|^2_2\\
    &\qquad\qquad
    + \left\|f_k(w,y^\star) - f_k(w',y^\star) \right\|^2_2\Bigr)\\
    &\leq 2 \left|x_k  - x_k'\right|^2
    +4 \|\theta_k\|_2^2L_f^2\Bigl(
    \left\|(x_{1:k-1},y) - (x_{1:k-1}',y')\right\|^2_2\\
    &\qquad\qquad
    + \left\|w_{1:k-1} - w_{1:k-1}' \right\|^2_2\Bigr),
\end{align*}
where the last inequality follows from $f_k$ being $L_f$-Lipschitz. 
Let $\Delta_{1:k}^2 := \|w_{1:k} - w_{1:k}'\|_2^2$ and $\tilde \Delta^2 := \|(x,y) - (x',y')\|_2^2$. Then, another upper bound for the result above implies the recursion:
\begin{align*}
    \Delta_{1:k}^2 \leq \left(2 + 4 \|\theta_k\|_2^2L_f^2\right)\tilde \Delta^2  +  \left(2 +  4 \|\theta_k\|_2^2L_f^2 \right)\Delta_{1:k-1}^2
\end{align*}
The solution to this recursion is 
\begin{align*}
\Delta_{1:k}^2 & \leq \tilde \Delta^2\sum\limits_{\ell = 1}^k  \left(\prod\limits_{g =\ell}^{k}\left(2 + {4\left\| \theta_g\right\|_2^2 L_f^2} \right)\right) \\
& \leq \tilde \Delta^2 k \prod\limits_{g =1}^{k}\left(2 + {4\left\| \theta_g\right\|_2^2 L_f^2} \right).
\end{align*}
Finally, we need to bound $\|\theta_k\|_2^2$. By applying Cauchy-Schwarz and bounding the change in moments under the pushforward of Lipschitz transformations, we have
\begin{align}
    \left\|\theta_k \right\|_2^2& = \left\|\Cov_{\hat \sigma_f}\left( f_k( X, Y) \right)^{-1} \Cov\left(  f_k( X, Y),  X_k \right)\right\|_2^2 \nonumber \\
    & \leq \hat\sigma_f^{-2}\left\|\Cov\left(  f_k( X, Y),  X_k \right) \right\|_2^2 \nonumber \\
    & \leq 4\hat\sigma_f^{-2}L_f^2 \Trcov(\mu)^2. \label{eq:theta_k_norm_bound}
\end{align}
Therefore, applying Jensen's inequality repeatedly results in the following upper bound for the  Lipschitz constant:
\begin{align*}
\frac{\Delta_{1:n}}{\tilde \Delta} &\leq \sqrt{n} \sqrt{\prod\limits_{g = 1}^n\left(2 +  16\hat\sigma_f^{-2}L_f^4 \Trcov(\mu)^2\right)} \\
& \leq  \sqrt{n} {\left(\sqrt{2} +  4\hat\sigma_f^{-1}L_f^{2} {\Trcov(\mu)}\right)^n} \\
& \leq \sqrt{n}2^{n-1} \left({2}^{n/2} +  4^n\hat\sigma_f^{-n}L_f^{2n} \Trcov(\mu)^{n}\right)\\
& \leq \sqrt{n}2^{n-1} \left({2}^{n/2} +  4^n\hat\sigma_f^{-n}L_f^{2n}\right)\left(1+  \Trcov(\mu)^{n}\right).
\end{align*}
\end{proof}

\begin{lemma}
\label{lem:distance_x_to_beta_tilde} Suppose Assumption \ref{assumption:sm_assumption}  holds. 
Let $\mu \in \Pp_2\left(\R^n\times\R^m\right)$. Then, for any $x\in\R^n, y,y^\star \in \R^m$ we have
$$
\left\|x -  \map^{\text{SM},\mu}_{y^\star}(x,y)\right\|_2 ^2
\leq  n2^{n-1}\bigl(1 + 4^nL_f^{4n}\hat\sigma_f^{-2n} \Trcov(\mu)^{2n}\bigr)
\left\|y - y^\star\right\|_2^2. 
$$
\end{lemma}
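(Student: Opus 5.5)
Set $w\coloneqq \map^{\mathrm{SM},\mu}_{y^\star}(x,y)$ and $D_k\coloneqq |x_k-w_k|$. The plan is to mimic the proof of Lemma~\ref{lem:sm_lipschitz_alg}: use the triangular recursion \eqref{eq:sm_approximate_conditioning_recursion}, bound each component difference by the earlier ones, and unroll the resulting recursion.

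From \eqref{eq:sm_approximate_conditioning_recursion},
\[
x_k-w_k=\theta_k^\top\bigl(f_k(x,y)-f_k(w,y^\star)\bigr).
\]
By Assumption~\ref{assumption:sm_assumption}, $f_k$ depends only on $(x_{1:k-1},y)$ and is $L_f$-Lipschitz. Hence
\[
D_k^2\le \|\theta_k\|_2^2L_f^2\bigl(\|x_{1:k-1}-w_{1:k-1}\|_2^2+\|y-y^\star\|_2^2\bigr).
\]
Writing $S_k\coloneqq\|x_{1:k}-w_{1:k}\|_2^2$ and $a_k\coloneqq \|\theta_k\|_2^2L_f^2$, this becomes
\[
S_k\le (1+a_k)S_{k-1}+a_k\|y-y^\star\|_2^2,\qquad S_0=0 .
\]
For $k=1$ there is no state dependence, so $D_1^2\le a_1\|y-y^\star\|_2^2$.

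To control $\theta_k$, reuse the estimate \eqref{eq:theta_k_norm_bound}, which gives $\|\theta_k\|_2^2\le 4\hat\sigma_f^{-2}L_f^2\Trcov(\mu)^2$ and hence $a_k\le \beta\coloneqq 4\hat\sigma_f^{-2}L_f^4\Trcov(\mu)^2$. Unrolling the recursion yields
\[
S_n\le \Bigl(\sum_{k=1}^n\beta(1+\beta)^{n-k}\Bigr)\|y-y^\star\|_2^2=\bigl((1+\beta)^n-1\bigr)\|y-y^\star\|_2^2 .
\]

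It remains to match the stated form. By convexity, $(1+\beta)^n\le 2^{n-1}(1+\beta^n)$. Moreover $\beta^n=4^nL_f^{4n}\hat\sigma_f^{-2n}\Trcov(\mu)^{2n}$. This gives
\[
S_n\le 2^{n-1}\bigl(1+4^nL_f^{4n}\hat\sigma_f^{-2n}\Trcov(\mu)^{2n}\bigr)\|y-y^\star\|_2^2,
\]
which is at most the claimed bound, since the extra factor $n\ge1$ only weakens it. One could also reach the factor $n$ directly by summing the $n$ component bounds after crudely bounding each by the product form, as in Lemma~\ref{lem:sm_lipschitz_alg}. Either route suffices.

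The main point to check is the bound on $\|\theta_k\|_2$. We need $\|\Cov_{\hat\sigma_f}(f_k)^{-1}\|_2\le\hat\sigma_f^{-1}$, which holds because the projection \eqref{eq:frob_proj} keeps all eigenvalues at least $\hat\sigma_f$. We also need $\|\Cov(f_k(X,Y),X_k)\|_2\le \sqrt{\Trcov(f_{k\sharp}\mu)\Var(X_k)}\le 2L_f\Trcov(\mu)$, using Cauchy--Schwarz and Proposition~\ref{prop:subg_lip}-type Lipschitz moment bounds. Together these give the quadratic dependence on $\Trcov(\mu)$ and complete the plan. With $e_y=2n$ and $L_y$ read off from this bound, Assumption~\ref{assumption:algorithm_finite_particle}(3) then follows (with $\omega$ absent).
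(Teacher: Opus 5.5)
Your proposal is correct and follows the paper's proof essentially line by line: the same triangular recursion for $\|x_{1:k}-w_{1:k}\|_2^2$, the same bound \eqref{eq:theta_k_norm_bound} on $\|\theta_k\|_2$, and the same convexity step $(1+\beta)^n\le 2^{n-1}(1+\beta^n)$. The one difference is that you sum the recursion exactly to $(1+\beta)^n-1$. The paper instead bounds the sum of products crudely by $n\prod_{g=1}^n(1+L_f^2\|\theta_g\|_2^2)$, and that is where its extra factor $n$ comes from.
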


\begin{proof}
Let
\[
\theta_k \coloneqq \Cov_{\hat \sigma_f}\left( f_k( X, Y) \right)^{-1} \Cov\left(  f_k( X, Y),  X_k \right)
\]
for $(X, Y) \sim \mu$. Define $w =  \map^{\text{SM},\mu}_{y^\star}(x,y)$.  By definition from Equations \eqref{eq:sm_approximate_conditioning_recursion} and~\eqref{eq:alg_conditioning_transport_SM}, for all $k \geq 1$:
\begin{align*}
\left|x_k - w_k\right|^2 & = \left| \theta_k^\top (f_k(x,y) - f_k(w, y^\star))\right|^2 \\
&\leq L_f^2\left\|\theta_k\right\|_2^2\left(  \left\|x_{1:k-1} - w_{1:k-1}\right\|_2^2 +\|y - y^\star\|_2^2\right). 
\end{align*}
This shows inductively that 
$$
\left\|x_{1:k} - w_{1:k}\right\|_2^2  \leq \left\|y - y^\star\right\|_2^2\sum\limits_{\ell = 1}^k \prod\limits_{g = \ell}^k (1 + L_f^2\left\|\theta_g\right\|_2^2).
$$
We can further bound this using elementary inequalities and Equation \eqref{eq:theta_k_norm_bound} as 
\begin{align*}
\left\|x -  \map^{\text{SM},\mu}_{y^\star}(x,y)\right\|_2^2 & \leq \left\|x_{1:n} - w_{1:n}\right\|_2^2 \\
&\leq n\left\|y - y^\star\right\|_2^2 \prod\limits_{g = 1}^n (1 + L_f^2\left\|\theta_g\right\|_2^2) \\
&\leq n\left\|y - y^\star\right\|_2^2 (1 + 4L_f^4\hat\sigma_f^{-2} \Trcov(\mu)^2)^n \\
&\leq n2^{n-1}\bigl(1 + 4^nL_f^{4n}\hat\sigma_f^{-2n} \Trcov(\mu)^{2n}\bigr)
\left\|y - y^\star\right\|_2^2 
\end{align*}
\end{proof}

\begin{lemma}
\label{lem:frob_proj_lip}
Consider the Frobenius-projection defined in Equation~\eqref{eq:frob_proj} for a fixed $\upsilon \geq0$. Then for symmetric matrices $A, B \in \R^{u \times u}$, the projection $A \mapsto A_\upsilon$ is a Lipschitz function in the Frobenius norm. That is,
$$
    \|A_{\upsilon} -   B_{\upsilon}\|_F \leq \|A -   B\|_F.
$$
\end{lemma}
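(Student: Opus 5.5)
The plan is to view the Frobenius projection \eqref{eq:frob_proj} as the metric projection onto a closed convex set in a Hilbert space, and then use the fact that such projections are nonexpansive. Work in the space of symmetric $u\times u$ matrices with the inner product $\langle A,B\rangle_F=\Tr(A^\top B)$. This is a finite-dimensional Hilbert space whose norm is $\|\cdot\|_F$. The constraint set
\[
\mathcal K_\upsilon \coloneqq \{\tilde C \text{ symmetric} : \tilde C \succeq \upsilon I_u\}
\]
is nonempty, since it contains $\upsilon I_u$. It is closed, being an intersection over unit vectors $v$ of the closed half-spaces $\{\tilde C: v^\top \tilde C v\ge \upsilon\}$. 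It is convex for the same reason. One small point to address first is that the argmin in \eqref{eq:frob_proj} is taken over matrices $\tilde C \succeq \upsilon I_u$; positive semidefiniteness is understood for symmetric matrices, so the minimization effectively runs over $\mathcal K_\upsilon$. Any nonsymmetric competitor can only do worse, since replacing it by its symmetric part reduces the Frobenius distance to a symmetric $C$.

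Next, invoke existence and uniqueness of the nearest point in a closed convex set of a Hilbert space. This shows that $A_\upsilon=P_{\mathcal K_\upsilon}(A)$ is well defined. It is characterized by the variational inequality
\[
\langle A-A_\upsilon,\ Z-A_\upsilon\rangle_F\le 0\qquad\text{for all } Z\in\mathcal K_\upsilon.
\]

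For the key step, apply this inequality twice: once for $A$ with $Z=B_\upsilon$, and once for $B$ with $Z=A_\upsilon$. Adding the two inequalities gives
\[
\|A_\upsilon-B_\upsilon\|_F^2\le \langle A-B,\ A_\upsilon-B_\upsilon\rangle_F .
\]
Cauchy--Schwarz then yields $\|A_\upsilon-B_\upsilon\|_F\le\|A-B\|_F$. If $A_\upsilon=B_\upsilon$ the claim is trivial, so the division involved in this last step is harmless.

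No step here is a serious obstacle. The only point needing care is the closedness and convexity of $\mathcal K_\upsilon$ together with the symmetric-matrix convention. As a cross-check, one can note the explicit form of the projection: $A_\upsilon = U\max(\Lambda,\upsilon)U^\top$ for an eigendecomposition $A=U\Lambda U^\top$, matching the eigenvalue thresholding used in Algorithm~\ref{alg:stable_theta}. The proof itself does not need this formula.
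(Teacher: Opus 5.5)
Your proof is correct and follows the paper's argument. The paper likewise uses the first-order optimality (variational) inequality for the convex projection problem, applies it twice with the roles of $A$ and $B$ swapped, adds the results to obtain $\|A_\upsilon-B_\upsilon\|_F^2\le\langle A-B,\,A_\upsilon-B_\upsilon\rangle$, and concludes by Cauchy--Schwarz; your remarks on closedness, convexity, existence and uniqueness of the nearest point, and the symmetric-matrix convention just make explicit what the paper leaves implicit.
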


\begin{proof}
By definition, 
$$
A_\upsilon =\argmin_{\tilde A \succeq \upsilon I_{u}} f_{A}(\tilde A), \qquad f_A(\tilde A) \coloneqq\frac{1}{2}\left\|\tilde A - A\right\|_F^2.
 $$
Let $\langle\cdot,\cdot\rangle$ be the inner product inducing the Frobenius norm, i.e.,
\[
\langle A, B\rangle \coloneqq \sum_{i,j=1}^u A_{ij}B_{ij}.
\]
 This is a convex problem with $f_A$ convex and differentiable. Moreover, the set of matrices $\{\tilde A \in \R^{u \times u}: \tilde A \succeq \upsilon I_{u}\}$ where the optimization is performed is convex. Thus, by Equation~4.21 of \cite{boyd2004convex}, the KKT conditions give the following optimality criterion:
 $$
\langle\nabla_{\tilde A}f_{A} (A_\upsilon), \tilde A-A_\upsilon\rangle = \langle A_\upsilon - A, \tilde A-A_\upsilon\rangle \geq 0 \text{ for all }\tilde A\succeq \upsilon I_u.
 $$
Similarly,
$$
\langle B_\upsilon - B, \tilde B-B_\upsilon\rangle \geq 0 \text{ for all }\tilde B\succeq \upsilon I_u.
$$
In particular,
$$
\langle A_\upsilon - A, B_\upsilon-A_\upsilon\rangle \geq 0 ,\qquad \langle B_\upsilon - B, A_\upsilon-B_\upsilon\rangle \geq 0.
$$
Adding these together gives
$$
\langle A_\upsilon - B_\upsilon, B_\upsilon-A_\upsilon\rangle + \langle B - A, B_\upsilon-A_\upsilon\rangle \geq 0 \Leftrightarrow \langle B - A, B_\upsilon-A_\upsilon\rangle \geq \|A_\upsilon - B_\upsilon\|_F^2.
$$
The result then follows from Cauchy-Schwarz. 
\end{proof}

\begin{proposition}
\label{prop:sm_estimation_stability_map} 
Suppose Assumption \ref{assumption:sm_assumption}  holds.  Let  \( \mu, \nu \in \Pp_{2}\left(\R^n\times\R^m\right) \). Then, for any $y^\star,y \in \R^m, x \in \R^n$ the  mean-field stochastic map approximate conditioning  map satisfies
\begin{align*}
        &\left\| \map^{\text{SM},\mu}_{y^\star}(x,y)
        -   \map^{\text{SM},\nu}_{y^\star}(x,y)\right\|_2\\
        &\quad \leq  \tilde C_{\text{est}, \text{II}}
        \left(1 +  \Trcov( \mu)^{2n+1}  + \Trcov( \nu)^{2n+1}  \right)
        \left\|y^\star - y\right\|_2 \\
        &\qquad\cdot\left\|\Cov(g_\sharp\mu) - \Cov(g_\sharp\nu)\right\|_2
\end{align*}
for a constant $\tilde C_{\text{est}, \text{II}} =  \tilde C_{\text{est}, \text{II}}(n, b, L_f, \hat\sigma_f)$.
\end{proposition}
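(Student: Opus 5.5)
We prove the estimate by comparing the two recursions \eqref{eq:sm_approximate_conditioning_recursion} component by component. Let $\theta_k^\mu$ and $\theta_k^\nu$ be the regression parameters of $\mu$ and $\nu$. Let $w=\map^{\text{SM},\mu}_{y^\star}(x,y)$ and $w'=\map^{\text{SM},\nu}_{y^\star}(x,y)$. Both maps are evaluated at the same input $(x,y)$, so
\[
w_k-w_k' = -(\theta_k^\mu-\theta_k^\nu)^\top\bigl(f_k(x,y)-f_k(w,y^\star)\bigr) + (\theta_k^\nu)^\top\bigl(f_k(w,y^\star)-f_k(w',y^\star)\bigr).
\]
The first term is bounded by $\|\theta_k^\mu-\theta_k^\nu\|_2 L_f\bigl(\|x_{1:k-1}-w_{1:k-1}\|_2+\|y-y^\star\|_2\bigr)$. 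Lemma~\ref{lem:distance_x_to_beta_tilde} gives $\|x-w\|_2\lesssim (1+\Trcov(\mu)^{n})\|y-y^\star\|_2$. The second term is bounded by $L_f\|\theta_k^\nu\|_2\|w_{1:k-1}-w'_{1:k-1}\|_2$. Equation~\eqref{eq:theta_k_norm_bound} gives $\|\theta_k^\nu\|_2\le 2\hat\sigma_f^{-1}L_f\Trcov(\nu)$.

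Writing $D_k\coloneqq\|w_{1:k}-w'_{1:k}\|_2$, this yields the recursion
\[
D_k\le D_{k-1}\bigl(1+2\hat\sigma_f^{-1}L_f^2\Trcov(\nu)\bigr)+L_f\|\theta_k^\mu-\theta_k^\nu\|_2\,(1+\Trcov(\mu)^{n})\|y-y^\star\|_2,
\qquad D_0=0.
\]
Unrolling it over $k\le n$ produces a prefactor polynomial in $\Trcov(\nu)$ of degree $n-1$. This prefactor multiplies $\max_k\|\theta_k^\mu-\theta_k^\nu\|_2\,(1+\Trcov(\mu)^n)\|y-y^\star\|_2$.

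The key remaining estimate is the parameter perturbation. Abbreviate $A_\mu\coloneqq\Cov_{\hat\sigma_f}(f_k)$ and $b_\mu\coloneqq\Cov(f_k,X_k)$ under $\mu$, and likewise for $\nu$. Then
\[
\theta_k^\mu-\theta_k^\nu = A_\mu^{-1}(b_\mu-b_\nu)+A_\mu^{-1}(A_\nu-A_\mu)A_\nu^{-1}b_\nu .
\]
Both inverses have operator norm at most $\hat\sigma_f^{-1}$. Moreover $\|b_\nu\|_2\le 2L_f\Trcov(\nu)$, by Cauchy--Schwarz together with the Lipschitz bound on the $f_k$.

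Each of $b_\mu$ and $\Cov(f_k)$ under $\mu$ is a sub-block of $\Cov(g_\sharp\mu)$, with $g=(x,f_1,\dots,f_n)$ as in Table~\ref{tab:smf_constants_values}. Hence $\|b_\mu-b_\nu\|_2$ and $\|\Cov(f_k)_\mu-\Cov(f_k)_\nu\|_2$ are both at most $\|\Cov(g_\sharp\mu)-\Cov(g_\sharp\nu)\|_2$. For the regularized covariances we invoke Lemma~\ref{lem:frob_proj_lip}, passing between the Frobenius and operator norms at a cost of a factor $\sqrt{b}$. This gives
\[
\|A_\mu-A_\nu\|_2\le\sqrt b\,\|\Cov(g_\sharp\mu)-\Cov(g_\sharp\nu)\|_2 .
\]
Therefore
\[
\|\theta_k^\mu-\theta_k^\nu\|_2\lesssim(1+\Trcov(\nu))\,\|\Cov(g_\sharp\mu)-\Cov(g_\sharp\nu)\|_2 .
\]

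Finally we collect the powers of the traces. The prefactor contributes $\Trcov(\nu)^{n-1}$, the parameter perturbation contributes a further factor $1+\Trcov(\nu)$, and Lemma~\ref{lem:distance_x_to_beta_tilde} contributes $1+\Trcov(\mu)^{n}$. The resulting product has total degree at most $2n$. Using $a^ib^j\le a^{i+j}+b^{i+j}$ and $t^r\le 1+t^{2n+1}$ for $r\le 2n+1$, it is bounded by a constant times $1+\Trcov(\mu)^{2n+1}+\Trcov(\nu)^{2n+1}$, which is the claimed form.

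The main obstacle is the bookkeeping of these polynomial trace factors and the projection step. The latter needs care because the Frobenius-projection is nonexpansive only in Frobenius norm, which is why the dimension-dependent constant $\sqrt b$ enters $\tilde C_{\text{est},\text{II}}$. The argument also uses that the constants from Lemma~\ref{lem:distance_x_to_beta_tilde} and from \eqref{eq:theta_k_norm_bound} depend only on $n,L_f,\hat\sigma_f$.
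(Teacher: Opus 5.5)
Your proposal is correct and follows essentially the same route as the paper. Both arguments run a componentwise recursion for $w_{1:k}-w'_{1:k}$ driven by Lemma~\ref{lem:distance_x_to_beta_tilde} and \eqref{eq:theta_k_norm_bound}, and both use the two-term perturbation $A_\mu^{-1}(b_\mu-b_\nu)+A_\mu^{-1}(A_\nu-A_\mu)A_\nu^{-1}b_\nu$, the sub-block comparison with $\Cov(g_\sharp\mu)-\Cov(g_\sharp\nu)$, and Lemma~\ref{lem:frob_proj_lip} with the $\sqrt{b}$ Frobenius-to-operator conversion. The differences are only cosmetic: you use unsquared norms and a two-term rather than three-term split (giving total trace degree $2n$ before relaxing to $2n+1$), and your recursion inequality should read $\lesssim$ rather than $\le$, since the bound from Lemma~\ref{lem:distance_x_to_beta_tilde} carries a constant.
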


\begin{proof}
Let 
\begin{align*}
    \theta_k &\coloneqq \Cov_{\hat \sigma_f}\left( f_k( X, Y) \right)^{-1} \Cov\left(  f_k( X, Y),  X_k \right) \\
   \theta_k' &\coloneqq \Cov_{\hat \sigma_f}\left( f_k(  X',  Y') \right)^{-1} \Cov\left(  f_k(  X',  Y'),   X_k '\right) 
\end{align*}
for $(X, Y) \sim \mu, ( X',  Y') \sim \nu$. Further, let 
$$
w = \map^{\text{SM},\mu}_{y^\star}(x,y), \qquad  w' =  \map^{\text{SM},\nu}_{y^\star}(x,y).
$$
By definition from Equations\eqref{eq:sm_approximate_conditioning_recursion} and  \eqref{eq:alg_conditioning_transport_SM}, we have 
\begin{align*}
        \left|w_k - w_k'\right|^2 & = \left|\theta_k^\top\left(f_k(x,y) -f_k(w,y^\star)\right) - \theta_k'^\top\left(f_k(x,y) -f_k(w',y^\star)\right)  \right|^2\\
& = \left|\theta_k^\top\left(f_k(x,y) -f_k(x, y^\star) + f_k(x,y^\star)- f_k(w,y^\star)\right)\right. \\
&\left.- \theta_k'^\top\left(f_k(x,y) -f_k(x, y^\star) + f_k(x,y^\star) -f_k(w',y^\star)\right)  \right|^2\\
& = \left|(\theta_k-\theta_k')^\top\left(f_k(x,y) -f_k(x, y^\star)\right)\right. \\
&\qquad\left. + (\theta_k -\theta_k')^\top(f_k(x,y^\star)- f_k(w,y^\star))\right. \\
& \left.- \theta_k'^\top\left(f_k(w,y^\star) -f_k(w',y^\star)\right)  \right|^2\\
& \leq 3\Bigl(\|\theta_k -\theta_k'\|_2^2
\left(\|f_k(x,y) - f_k(x,y^\star)\|_2^2
+ \|f_k(x,y^\star) - f_k(w,y^\star)\|_2^2 \right)\\
&\qquad
+  \|\theta_k'\|_2^2\|f_k(w,y^\star) - f_k(w',y^\star)\|_2^2\Bigr)\\
& \leq 3L_f^2\Bigl(\|\theta_k -\theta_k'\|_2^2
\left(\|y-y^\star\|_2^2 + \|x - w\|_2^2\right)\\
&\qquad
+  \|\theta_k'\|_2^2\|w_{1:k-1}-w'_{1:k-1}\|_2^2\Bigr).
    \end{align*}
Plugging in Lemma \ref{lem:distance_x_to_beta_tilde} and defining $\Delta_{1:k}^2 = \|w_{1:k} - w_{1:k}'\|_2^2$, this yields 
\begin{align*}
\Delta_{1:k}^2
&\leq 3L_f^2\|\theta_k -\theta_k'\|_2^2
\left(1 + n2^{n-1}(1 + 4^nL_f^{4n}\hat\sigma_f^{-2n} \Trcov(\mu)^{2n}) \right)
\|y-y^\star\|_2^2  \\
&\quad+\left(1 +  3L_f^2\|\theta_k'\|_2^2\right)\Delta_{1:k-1}^2
\end{align*}
Further, applying Equation \eqref{eq:theta_k_norm_bound} to $\theta_k'$ with $(X',Y')\sim\nu$ shows 
\begin{align*}
\Delta_{1:k}^2
&\leq 3L_f^2\|\theta_k -\theta_k'\|_2^2
\left(1 + n2^{n-1}(1 + 4^nL_f^{4n}\hat\sigma_f^{-2n} \Trcov(\mu)^{2n}) \right)
\|y-y^\star\|_2^2  \\
&\quad+\left(1 + 12L_f^4\hat\sigma_f^{-2} \Trcov(\nu)^2\right)\Delta_{1:k-1}^2.
\end{align*}
Letting 
\begin{align*}
    C_1& =3L_f^2\max_{1\leq k\leq n}\|\theta_k -\theta_k'\|_2^2
    \left(1 + n2^{n-1}(1 + 4^nL_f^{4n}\hat\sigma_f^{-2n} \Trcov(\mu)^{2n}) \right)
    \|y-y^\star\|_2^2 \\
    C_2&  =  \left(1 + 12L_f^4\hat\sigma_f^{-2} \Trcov(\nu)^2\right),
\end{align*}
this shows inductively that 
$$
\Delta_{1:n}^2 \leq \sum\limits_{\ell = 1}^n C_2^{n-\ell} C_1 \leq n C_1 C_2^n.
$$
Now, we bound
\begin{align*}
    \|\theta_k -\theta_k'\|_2^2 &= \left\| \Cov_{\hat \sigma_f}\left( f_k( X, Y) \right)^{-1} \Cov\left(  f_k( X, Y),  X_k \right) \right.\\
    &\left.-  \Cov_{\hat \sigma_f}\left( f_k( X', Y') \right)^{-1} \Cov\left(  f_k( X', Y'),  X_k' \right)\right\|_2^2\\
    &\leq 2\left\| \Cov_{\hat \sigma_f}\left( f_k( X, Y) \right)^{-1} \Cov\left(  f_k( X, Y),  X_k \right) \right.\\
    &-\left.  \Cov_{\hat \sigma_f}\left( f_k( X, Y) \right)^{-1} \Cov\left(  f_k( X', Y'),  X_k' \right)\right\|_2^2 \\
    &+2\left\| \Cov_{\hat \sigma_f}\left( f_k( X, Y) \right)^{-1} \Cov\left(  f_k( X', Y'),  X_k' \right)\right.\\
    &-\left.  \Cov_{\hat \sigma_f}\left( f_k( X', Y') \right)^{-1} \Cov\left(  f_k( X', Y'),  X_k' \right)\right\|_2^2 \\
      &\leq 2\hat\sigma_f^{-2} \left\| \Cov\left(  f_k( X, Y),  X_k \right) -  \Cov\left(  f_k( X', Y'),  X_k' \right)\right\|_2^2 \\
    &+2\left\|\Cov\left(  f_k( X', Y'),  X_k' \right)\right\|_2^2\left\|\Cov_{\hat \sigma_f}\left( f_k( X, Y) \right)^{-1}\right.\\
    &\left. \cdot  \left(\Cov_{\hat \sigma_f}\left( f_k( X, Y) \right) -  \Cov_{\hat \sigma_f}\left( f_k( X', Y') \right)\right) \Cov_{\hat \sigma_f}\left( f_k( X', Y') \right)^{-1} \right\|_2^2 \\
      &\leq 2\hat\sigma_f^{-2}
      \left\| \Cov\left(  f_k( X, Y),  X_k \right)
      -  \Cov\left(  f_k( X', Y'),  X_k' \right)\right\|_2^2 \\
    &\quad+2\hat\sigma_f^{-4}
    \left\|\Cov\left(  f_k( X', Y'),  X_k' \right)\right\|_2^2\\
    &\qquad\cdot
    \left\|\Cov_{\hat \sigma_f}\left( f_k( X, Y) \right)
    -  \Cov_{\hat \sigma_f}\left( f_k( X', Y') \right) \right\|_2^2.
\end{align*}
Set
\[
D_g\coloneqq
\left\| \Cov\left(  g( X, Y)\right) -  \Cov\left(  g( X', Y') \right)\right\|_2.
\]
Invoking Lemma \ref{lem:frob_proj_lip}, we find the following results:
\begin{align*}
    &\left\|\Cov_{\hat \sigma_f}\left( f_k( X, Y) \right)
    -  \Cov_{\hat \sigma_f}\left( f_k( X', Y') \right) \right\|_2^2\\
    &\qquad \leq \left\|\Cov_{\hat \sigma_f}\left( f_k( X, Y) \right)
    -  \Cov_{\hat \sigma_f}\left( f_k( X', Y') \right) \right\|_F^2 \\
    &\qquad \leq \left\|\Cov\left( f_k( X, Y) \right) -  \Cov\left( f_k( X', Y') \right) \right\|_F^2 \\
    &\qquad \leq bD_g^2.\\
     &\left\| \Cov\left(  f_k( X, Y),  X_k \right)
     -  \Cov\left(  f_k( X', Y'),  X_k' \right)\right\|_2^2\\
     &\qquad \leq D_g^2.
\end{align*}
Moreover, 
$$
\left\|\Cov\left(  f_k( X', Y'),  X_k' \right)\right\|_2^2  \leq 2(L_f^2 + 1)^2\Trcov(\nu)^2.
$$
Thus, it follows that 
\begin{align*}
      \|\theta_k -\theta_k'\|_2^2
      &\leq C_3 \left(1 + \Trcov( \mu) + \Trcov(\nu)\right)^2 D_g^2
\end{align*}
for $C_3 \coloneqq 2\hat\sigma_f^{-2}+4b(L_f^2 + 1)^2\hat\sigma_f^{-4}$. Plugging into $\Delta_{1:n}^2 $, we showed
\begin{align*}
    \Delta_{1:n}^2
    &\leq n \left(1 + 12L_f^4\hat\sigma_f^{-2} \Trcov(\nu)^2\right)^n
    3L_f^2\|\theta_k -\theta_k'\|_2^2\\
    & \cdot \left(1 + n2^{n-1}(1 + 4^nL_f^{4n}\hat\sigma_f^{-2n} \Trcov(\mu)^{2n}) \right)\|y-y^\star\|_2^2  \\
    & \leq C_4\left(1 + \Trcov(\mu)^{2n}\right)
    \left(1 + \Trcov(\nu)^{2n}\right)
    \|y-y^\star\|_2^2\|\theta_k -\theta_k'\|_2^2\\
    &\leq  C_3C_4\left(1 + \Trcov(\mu)^{2n}\right)
    \left(1 + \Trcov(\nu)^{2n}\right)\\
    &\quad\cdot \|y-y^\star\|_2^2
    \left(1 + \Trcov( \mu) + \Trcov(\nu)\right)^2 \\
    & \cdot D_g^2 \\
     &\leq  C_5 \|y-y^\star\|_2^2
     \left(1 + \Trcov( \mu)^{2n+1} + \Trcov(\nu)^{2n+1}\right)^2\\
     &\cdot D_g^2,
\end{align*}
where we defined the intermediary constant 
\begin{align*}
    C_4 &\coloneqq 3L_f^2n2^{n-1}
    \left(1 + 12^nL_f^{4n}\hat\sigma_f^{-2n} \right)\\
    &\quad\cdot
    \left(1 + n2^{n-1}(1 + 4^nL_f^{4n}\hat\sigma_f^{-2n}) \right),
\end{align*}
and $C_5 = C_5(n,b,L_f,\hat\sigma_f)$ absorbs $C_3C_4$ and the polynomial bound in $\Trcov(\mu)$ and $\Trcov(\nu)$.
Taking the square root gives the final result.
\end{proof}

\ifpreprint
\renewcommand{\bibliofont}{\footnotesize}
\bibliographystyle{amsplain} 
\else
\bibliographystyle{imsart-number} 
\fi
\bibliography{bibliography} 

@incollection{mckean1967propagation,
  title={Propagation of Chaos for a Class of Non-linear Parabolic Equations},
  author={McKean, Jr., Henry P.},
  booktitle={Stochastic Differential Equations (Lecture Series in Differential Equations, Session 7, Catholic University, 1967)},
  pages={41--57},
  year={1967},
  publisher={Air Force Office of Scientific Research},
  address={Arlington, VA}
}

@article{baptista2024representation,
  title={On the Representation and Learning of Monotone Triangular Transport Maps},
  author={Baptista, Ricardo and Marzouk, Youssef and Zahm, Olivier},
  journal={Foundations of Computational Mathematics},
  volume={24},
  number={6},
  pages={2063--2108},
  year={2024},
  doi={10.1007/s10208-023-09630-x}
}

@misc{jorgensen2026exactaffineconditioninggaussians,
      title={Exact affine conditioning beyond {Gaussians}: a unique characterization of the ensemble {Kalman} update}, 
      author={Frederic J. N. Jorgensen and Youssef M. Marzouk},
      year={2025},
      eprint={2510.00158},
      archivePrefix={arXiv},
      primaryClass={math.ST},
      note={arXiv preprint arXiv:2510.00158},
      url={https://arxiv.org/abs/2510.00158}, 
}

@article{baptista2024conditional,
  title={Conditional Sampling with Monotone {GANs}: From Generative Models to Likelihood-Free Inference},
  author={Baptista, Ricardo and Hosseini, Bamdad and Kovachki, Nikola B. and Marzouk, Youssef},
  journal={SIAM/ASA Journal on Uncertainty Quantification},
  volume={12},
  number={3},
  pages={868--900},
  year={2024},
  doi={10.1137/23M1581546}
}

@article{crisan2002survey,
  title={A Survey of Convergence Results on Particle Filtering Methods for Practitioners},
  author={Crisan, Dan and Doucet, Arnaud},
  journal={IEEE Transactions on Signal Processing},
  volume={50},
  number={3},
  pages={736--746},
  year={2002},
  doi={10.1109/78.984773}
}

@incollection{bengtsson2008curse,
  title={Curse-of-Dimensionality Revisited: Collapse of the Particle Filter in Very Large Scale Systems},
  author={Bengtsson, Thomas and Bickel, Peter and Li, Bo},
  booktitle={Probability and Statistics: Essays in Honor of David A. Freedman},
  editor={Nolan, Deborah and Speed, Terry},
  volume={2},
  pages={316--334},
  year={2008},
  publisher={Institute of Mathematical Statistics},
  doi={10.1214/193940307000000518}
}

@book{law2015data,
  title={Data Assimilation: A Mathematical Introduction},
  author={Law, Kody and Stuart, Andrew and Zygalakis, Konstantinos},
  series={Texts in Applied Mathematics},
  volume={62},
  year={2015},
  publisher={Springer International Publishing},
  address={Cham},
  doi={10.1007/978-3-319-20325-6}
}

@article{bach2025learning,
  title={Learning enhanced ensemble filters},
  author={Bach, Eviatar and Baptista, Ricardo and Calvello, Edoardo and Chen, Bohan and Stuart, Andrew M.},
  journal={Journal of Computational Physics},
  volume={547},
  pages={114550},
  year={2026},
  doi={10.1016/j.jcp.2025.114550}
}

@book{reich2015probabilistic,
  title={Probabilistic Forecasting and {B}ayesian Data Assimilation},
  author={Reich, Sebastian and Cotter, Colin},
  year={2015},
  publisher={Cambridge University Press},
  doi={10.1017/CBO9781107706804}
}

@book{vershynin2018high,
  title={High-Dimensional Probability: An Introduction with Applications in Data Science},
  author={Vershynin, Roman},
  volume={47},
  year={2018},
  publisher={Cambridge University Press},
  doi={10.1017/9781108231596}
}

@article{fournier2015rate,
  title={On the Rate of Convergence in {Wasserstein} Distance of the Empirical Measure},
  author={Fournier, Nicolas and Guillin, Arnaud},
  journal={Probability Theory and Related Fields},
  volume={162},
  number={3--4},
  pages={707--738},
  year={2015},
  doi={10.1007/s00440-014-0583-7}
}

@article{parno2018transport,
  title={Transport Map Accelerated {Markov} chain {Monte Carlo}},
  author={Parno, Matthew and Marzouk, Youssef},
  journal={SIAM/ASA Journal on Uncertainty Quantification},
  volume={6},
  number={2},
  pages={645--682},
  year={2018},
  doi={10.1137/17M1134640}
}

@book{kallenberg1997foundations,
  title={Foundations of Modern Probability},
  author={Kallenberg, Olav},
  edition={2nd},
  year={2002},
  publisher={Springer},
  address={New York},
  doi={10.1007/978-1-4757-4015-8}
}

@article{hsu2012tail,
  title={A Tail Inequality for Quadratic Forms of Subgaussian Random Vectors},
  author={Hsu, Daniel and Kakade, Sham M. and Zhang, Tong},
  journal={Electronic Communications in Probability},
  volume={17},
  number={52},
  pages={1--6},
  year={2012},
  doi={10.1214/ECP.v17-2079}
}

@book{vershynin2025high,
  title={High-Dimensional Probability: An Introduction with Applications in Data Science},
  author={Vershynin, Roman},
  edition={2nd},
  year={2026},
  publisher={Cambridge University Press},
  url={https://www.math.uci.edu/~rvershyn/papers/HDP-book/HDP-2.pdf},
  note={Pre-publication version dated 19~May~2026, accessed 21~May~2026}
}

@book{boyd2004convex,
  title={Convex Optimization},
  author={Boyd, Stephen P. and Vandenberghe, Lieven},
  year={2004},
  publisher={Cambridge University Press},
  doi={10.1017/CBO9780511804441}
}

@article{evensen2003ensemble,
  title={The Ensemble {Kalman} Filter: Theoretical Formulation and Practical Implementation},
  author={Evensen, Geir},
  journal={Ocean Dynamics},
  volume={53},
  number={4},
  pages={343--367},
  year={2003},
  doi={10.1007/s10236-003-0036-9}
}

@article{spantini2022coupling,
  title={Coupling Techniques for Nonlinear Ensemble Filtering},
  author={Spantini, Alessio and Baptista, Ricardo and Marzouk, Youssef},
  journal={SIAM Review},
  volume={64},
  number={4},
  pages={921--953},
  year={2022},
  doi={10.1137/20M1312204}
}

@article{reich2013nonparametric,
  title={A nonparametric ensemble transform method for {B}ayesian inference},
  author={Reich, Sebastian},
  journal={SIAM Journal on Scientific Computing},
  volume={35},
  number={4},
  pages={A2013--A2024},
  year={2013},
  doi={10.1137/130907367}
}

@inproceedings{aljarrah2023optimal,
  title={Optimal Transport Particle Filters},
  author={Al-Jarrah, Mohammad and Hosseini, Bamdad and Taghvaei, Amirhossein},
  booktitle={2023 62nd {IEEE} Conference on Decision and Control ({CDC})},
  pages={6798--6805},
  year={2023},
  doi={10.1109/CDC49753.2023.10383337}
}

@article{zeng2024ensemble,
  title={Ensemble Transport Filter via Optimized Maximum Mean Discrepancy},
  author={Zeng, Dengfei and Jiang, Lijian},
  journal={Journal of Computational Physics},
  volume={548},
  pages={114582},
  year={2026},
  doi={10.1016/j.jcp.2025.114582}
}

@article{bishop2001adaptive,
  title={Adaptive Sampling with the Ensemble Transform {Kalman} Filter. Part {I}: Theoretical Aspects},
  author={Bishop, Craig H. and Etherton, Brian J. and Majumdar, Sharanya J.},
  journal={Monthly Weather Review},
  volume={129},
  number={3},
  pages={420--436},
  year={2001},
  doi={10.1175/1520-0493(2001)129\%3C0420:ASWTET\%3E2.0.CO;2}
}

@article{chaintron2022propagation,
  title={Propagation of chaos: A review of models, methods and applications. {I}. Models and methods},
  author={Chaintron, Louis-Pierre and Diez, Antoine},
  journal={Kinetic and Related Models},
  volume={15},
  number={6},
  pages={895--1015},
  year={2022},
  doi={10.3934/krm.2022017}
}

@article{calvello2024accuracy,
  title={Accuracy of the Ensemble {Kalman} Filter in the Near-Linear Setting},
  author={Calvello, Edoardo and Monmarch{\'e}, Pierre and Stuart, Andrew M. and Vaes, Urbain},
  journal={SIAM Journal on Numerical Analysis},
  volume={64},
  number={2},
  pages={391--429},
  year={2026},
  doi={10.1137/25M1732544}
}

@incollection{le2009large,
  title={Large Sample Asymptotics for the Ensemble {Kalman} Filter},
  author={Le Gland, Fran{\c{c}}ois and Monbet, Val{\'e}rie and Tran, Vu-Duc},
  booktitle={The Oxford Handbook of Nonlinear Filtering},
  editor={Crisan, Dan and Rozovskii, Boris},
  pages={598--631},
  year={2011},
  publisher={Oxford University Press},
  address={Oxford},
  note={Earlier version: INRIA Research Report RR-7014, 2009}
}

@article{mandel2011convergence,
  title={On the Convergence of the Ensemble {Kalman} Filter},
  author={Mandel, Jan and Cobb, Loren and Beezley, Jonathan D.},
  journal={Applications of Mathematics},
  volume={56},
  number={6},
  pages={533--541},
  year={2011},
  doi={10.1007/s10492-011-0031-2}
}

@article{kelly2014well,
  title={Well-Posedness and Accuracy of the Ensemble {Kalman} Filter in Discrete and Continuous Time},
  author={Kelly, David T. B. and Law, Kody J. H. and Stuart, Andrew M.},
  journal={Nonlinearity},
  volume={27},
  number={10},
  pages={2579--2603},
  year={2014},
  doi={10.1088/0951-7715/27/10/2579}
}

@article{tong2015nonlinear,
  title={Nonlinear Stability of the Ensemble {Kalman} Filter with Adaptive Covariance Inflation},
  author={Tong, Xin T. and Majda, Andrew J. and Kelly, David},
  journal={Communications in Mathematical Sciences},
  volume={14},
  number={5},
  pages={1283--1313},
  year={2016},
  doi={10.4310/CMS.2016.v14.n5.a5}
}

@article{tong2023localized,
  title={Localized Ensemble {Kalman} Inversion},
  author={Tong, Xin T. and Morzfeld, Matthias},
  journal={Inverse Problems},
  volume={39},
  number={6},
  pages={064002},
  year={2023},
  doi={10.1088/1361-6420/accb08}
}

@article{sanz2024long,
  title={Long-Time Accuracy of Ensemble {Kalman} Filters for Chaotic Dynamical Systems and Machine-Learned Dynamical Systems},
  author={Sanz-Alonso, Daniel and Waniorek, Nathan},
  journal={SIAM Journal on Applied Dynamical Systems},
  volume={24},
  number={3},
  pages={2246--2286},
  year={2025},
  doi={10.1137/24M1719232}
}

@article{al2024non,
  title={Non-asymptotic Analysis of ensemble {Kalman} Updates: Effective Dimension and Localization},
  author={Al-Ghattas, Omar and Sanz-Alonso, Daniel},
  journal={Information and Inference: A Journal of the IMA},
  volume={13},
  number={1},
  pages={iaad043},
  year={2024},
  doi={10.1093/imaiai/iaad043}
}

@article{takeda2024uniform,
  title={Uniform Error Bounds of the Ensemble Transform {Kalman} Filter for Chaotic Dynamics with Multiplicative Covariance Inflation},
  author={Takeda, Kota and Sakajo, Takashi},
  journal={SIAM/ASA Journal on Uncertainty Quantification},
  volume={12},
  number={4},
  pages={1315--1335},
  year={2024},
  doi={10.1137/24M1637192}
}

@article{de2018long,
  title={Long-Time Stability and Accuracy of the Ensemble {Kalman--Bucy} Filter for Fully Observed Processes and Small Measurement Noise},
  author={{de Wiljes}, Jana and Reich, Sebastian and Stannat, Wilhelm},
  journal={SIAM Journal on Applied Dynamical Systems},
  volume={17},
  number={2},
  pages={1152--1181},
  year={2018},
  doi={10.1137/17M1119056}
}

@article{hoang2021machine,
  title={Machine Learning-Based Conditional Mean Filter: A Generalization of the Ensemble {Kalman} Filter for Nonlinear Data Assimilation},
  author={Hoang, Truong-Vinh and Krumscheid, Sebastian and Matthies, Hermann G. and Tempone, Ra{\'u}l},
  journal={Foundations of Data Science},
  volume={5},
  number={1},
  pages={56--80},
  year={2023},
  doi={10.3934/fods.2022016}
}

@article{bao2025nonlinear,
  title={Nonlinear Ensemble Filtering with Diffusion Models: Application to the Surface Quasigeostrophic Dynamics},
  author={Bao, Feng and Chipilski, Hristo G. and Liang, Siming and Zhang, Guannan and Whitaker, Jeffrey S.},
  journal={Monthly Weather Review},
  volume={153},
  number={7},
  pages={1155--1169},
  year={2025},
  doi={10.1175/MWR-D-24-0069.1}
}

@incollection{marzouk2017sampling,
  title={Sampling via Measure Transport: An Introduction},
  author={Marzouk, Youssef and Moselhy, Tarek and Parno, Matthew and Spantini, Alessio},
  booktitle={Handbook of Uncertainty Quantification},
  editor={Ghanem, Roger and Higdon, David and Owhadi, Houman},
  pages={785--825},
  year={2017},
  publisher={Springer International Publishing},
  address={Cham},
  doi={10.1007/978-3-319-12385-1_23}
}

@article{weed2019sharp,
  title={Sharp Asymptotic and Finite-Sample Rates of Convergence of Empirical Measures in {Wasserstein} Distance},
  author={Weed, Jonathan and Bach, Francis},
  journal={Bernoulli},
  volume={25},
  number={4A},
  pages={2620--2648},
  year={2019},
  doi={10.3150/18-BEJ1065}
}

@article{koltchinskii2017concentration,
  title={Concentration Inequalities and Moment Bounds for Sample Covariance Operators},
  author={Koltchinskii, Vladimir and Lounici, Karim},
  journal={Bernoulli},
  volume={23},
  number={1},
  pages={110--133},
  year={2017},
  doi={10.3150/15-BEJ730}
}

@book{wainwright2019high,
  title={High-Dimensional Statistics: A Non-Asymptotic Viewpoint},
  author={Wainwright, Martin J.},
  volume={48},
  year={2019},
  publisher={Cambridge University Press},
  doi={10.1017/9781108627771}
}

@article{gordon1993novel,
  title={Novel Approach to Nonlinear/Non-{Gaussian} {Bayesian} State Estimation},
  author={Gordon, Neil J. and Salmond, David J. and Smith, Adrian F. M.},
  journal={IEE Proceedings F (Radar and Signal Processing)},
  volume={140},
  number={2},
  pages={107--113},
  year={1993},
  doi={10.1049/ip-f-2.1993.0015}
}

@incollection{doucet2001introduction,
  title={An Introduction to Sequential {Monte Carlo} Methods},
  author={Doucet, Arnaud and de Freitas, Nando and Gordon, Neil},
  booktitle={Sequential {Monte Carlo} Methods in Practice},
  editor={Doucet, Arnaud and de Freitas, Nando and Gordon, Neil},
  pages={3--14},
  year={2001},
  publisher={Springer},
  address={New York},
  doi={10.1007/978-1-4757-3437-9_1}
}

@incollection{sznitman2006topics,
  title={Topics in Propagation of Chaos},
  author={Sznitman, Alain-Sol},
  booktitle={{\'E}cole d'{\'E}t{\'e} de Probabilit{\'e}s de Saint-Flour {XIX}---1989},
  editor={Hennequin, Paul-Louis},
  series={Lecture Notes in Mathematics},
  volume={1464},
  pages={165--251},
  year={1991},
  publisher={Springer},
  address={Berlin},
  doi={10.1007/BFb0085169}
}

@inproceedings{kac1956foundations,
  title={Foundations of Kinetic Theory},
  author={Kac, Mark},
  booktitle={Proceedings of the Third Berkeley Symposium on Mathematical Statistics and Probability, 1954--1955},
  editor={Neyman, Jerzy},
  volume={3},
  pages={171--197},
  year={1956},
  publisher={University of California Press},
  address={Berkeley and Los Angeles}
}

@article{del2018stability,
  title={On the Stability and the Uniform Propagation of Chaos Properties of Ensemble {Kalman--Bucy} Filters},
  author={Del Moral, Pierre and Tugaut, Julian},
  journal={The Annals of Applied Probability},
  volume={28},
  number={2},
  pages={790--850},
  year={2018},
  doi={10.1214/17-AAP1317}
}

@article{law2016deterministic,
  title={Deterministic Mean-Field Ensemble {Kalman} Filtering},
  author={Law, Kody J. H. and Tembine, Hamidou and Tempone, Ra{\'u}l},
  journal={SIAM Journal on Scientific Computing},
  volume={38},
  number={3},
  pages={A1251--A1279},
  year={2016},
  doi={10.1137/140984415}
}

@article{kwiatkowski2015convergence,
  title={Convergence of the Square Root Ensemble {Kalman} Filter in the Large Ensemble Limit},
  author={Kwiatkowski, Evan and Mandel, Jan},
  journal={SIAM/ASA Journal on Uncertainty Quantification},
  volume={3},
  number={1},
  pages={1--17},
  year={2015},
  doi={10.1137/140965363}
}

@article{bishop2023mathematical,
  title={On the Mathematical Theory of Ensemble (Linear-{Gaussian}) {Kalman--Bucy} Filtering},
  author={Bishop, Adrian N. and Del Moral, Pierre},
  journal={Mathematics of Control, Signals, and Systems},
  volume={35},
  number={4},
  pages={835--903},
  year={2023},
  doi={10.1007/s00498-023-00357-2}
}

@article{dudley1969speed,
  title={The Speed of Mean {Glivenko--Cantelli} Convergence},
  author={Dudley, Richard Mansfield},
  journal={The Annals of Mathematical Statistics},
  volume={40},
  number={1},
  pages={40--50},
  year={1969},
  doi={10.1214/aoms/1177697802}
}

@article{dobric1995asymptotics,
  title={Asymptotics for Transportation Cost in High Dimensions},
  author={Dobri{\'c}, Vladimir and Yukich, Joseph E.},
  journal={Journal of Theoretical Probability},
  volume={8},
  number={1},
  pages={97--118},
  year={1995},
  doi={10.1007/BF02213456}
}

@article{del2017stability,
  title={On the Stability and the Uniform Propagation of Chaos of a Class of Extended Ensemble {Kalman--Bucy} Filters},
  author={Del Moral, Pierre and Kurtzmann, Aline and Tugaut, Julian},
  journal={SIAM Journal on Control and Optimization},
  volume={55},
  number={1},
  pages={119--155},
  year={2017},
  doi={10.1137/16M1087497}
}

@article{larsson2024concentration,
  title={On Concentration of the Empirical Measure for Radial Transport Costs},
  author={Larsson, Martin and Park, Jonghwa and Wiesel, Johannes},
  journal={Stochastic Processes and their Applications},
  volume={178},
  pages={104466},
  year={2024},
  doi={10.1016/j.spa.2024.104466}
}

@article{ramgraber2023Bensemble,
  title={Ensemble transport smoothing. {P}art {II}: nonlinear updates},
  author={Ramgraber, Maximilian and Baptista, Ricardo and McLaughlin, Dennis and Marzouk, Youssef},
  journal={Journal of Computational Physics: X},
  volume={17},
  pages={100133},
  year={2023},
  publisher={Elsevier}
}

\end{document}